\documentclass[12pt,reqno]{amsart}
\usepackage{amssymb,longtable}


\textwidth=15cm \textheight=23cm
\oddsidemargin=0.5cm \evensidemargin=0.5cm
\topmargin=-0.5cm

\numberwithin{equation}{section}
\newtheorem{thm}{Theorem}[section]
\newtheorem{prop}[thm]{Proposition}
\newtheorem{lem}[thm]{Lemma}
\newtheorem{cor}[thm]{Corollary}

\theoremstyle{remark}
\newtheorem{rem}[thm]{Remark}

\newcommand{\R}{\mathbb{R}}
\newcommand{\Z}{\mathbb{Z}}
\newcommand{\N}{\mathbb{N}}
\newcommand{\C}{\mathbb{C}}
\newcommand{\F}{\mathcal{F}}

\renewcommand{\L}{\mathcal{L}}
\newcommand{\K}{\mathcal{K}}
\newcommand{\m}{m}
\newcommand{\mm}{\mathfrak{M}}

\newcommand{\Dab}{\L_{\al,\be}}
\newcommand{\U}{\underline}

\newcommand{\al}{\alpha}
\newcommand{\be}{\beta}
\newcommand{\ga}{\gamma}

\newcommand{\X}{\mathcal{X}}
\newcommand{\Y}{\mathcal{Y}}
\newcommand{\De}{\Delta}
\newcommand{\Th}{\Theta}
\newcommand{\La}{\Lambda}
\newcommand{\ka}{\kappa}
\newcommand{\si}{\sigma}
\newcommand{\fy}{\varphi}
\newcommand{\e}{\varepsilon}
\newcommand{\p}{\partial}
\newcommand{\la}{\lambda}
\newcommand{\de}{\delta}

\newcommand{\s}{\sigma}
\renewcommand{\th}{\theta}
\newcommand{\x}{\xi}
\newcommand{\y}{\eta}

\newcommand{\na}{\nabla}
\newcommand{\I}{\infty}
\newcommand{\LR}[1]{\langle {#1} \rangle}
\newcommand{\lec}{{\, \lesssim \, }}
\newcommand{\gec}{{\, \gtrsim \, }}

\renewcommand{\bar}{\overline}
\renewcommand{\hat}{\widehat}
\newcommand{\ti}{\widetilde}

\renewcommand{\S}{\mathcal{S}}

\newcommand{\om}{\omega}

\newcommand{\supp}{\operatorname{supp}}

\newcommand{\EQ}[1]{\begin{equation} \begin{split} #1
 \end{split} \end{equation}}
\newcommand{\pr}{\\ &}
\newcommand{\pt}{&}
\newcommand{\pq}{\quad }
\newcommand{\pn}{}

\newcommand{\CAS}[1]{\begin{cases} #1 \end{cases}}
\newcommand{\tabhead}[1]{\hline \quad {\it #1} & \\} 
\newcommand{\Del}[1]{}

\newcommand{\IN}[1]{\text{ in }#1}

\newcommand{\V}{\vec}
\newcommand{\Da}{\LR{\na}}

\newcommand{\paren}[1]{\left( #1 \right)}
\newcommand{\reg}{\operatorname{reg}}
\newcommand{\str}{\operatorname{str}}
\newcommand{\dec}{\operatorname{dec}}

\newcommand{\iD}{i\LR{\na}}
\renewcommand{\limsup}{\varlimsup}
\renewcommand{\liminf}{\varliminf}
\newcommand{\emf}[1]{{\bf #1}}
\newcommand{\CTM}{C_{\scriptscriptstyle{\operatorname{TM}}}}
\newcommand{\CTMS}{\CTM^\star}
\newcommand{\bul}{\bullet}
\newcommand{\MP}{\mathcal{MC}}


\newcommand{\prq}{\\ &\quad}
\newcommand{\prqq}{\\ &\qquad}

\newcommand{\mat}[1]{\begin{pmatrix} #1 \end{pmatrix}}
\newcommand{\NLKG}{{\sf NLKG}}
\newcommand{\NLS}{{\sf NLS}}
\newcommand{\NLW}{{\sf NLW}}


\author{S.~Ibrahim}
\address{Department of Mathematics and Statistics,\\University of Victoria\\
 PO Box 3060 STN CSC\\   Victoria, BC, V8P 5C3\\ Canada}
\email{ibrahim@math.uvic.ca}
\urladdr{http://www.math.uvic.ca/~ibrahim/}

\author{N.~Masmoudi}
\address{The Courant Institute for Mathematical Sciences,\\New York University} 
\email{masmoudi@courant.nyu.edu} 
\urladdr{http://www.math.nyu.edu/faculty/masmoudi} 

\author{K.~Nakanishi}
\address{Department of Mathematics, Kyoto University}
\email{n-kenji@math.kyoto-u.ac.jp}

\title[Scattering threshold for NLKG]{Scattering threshold for \\ the focusing nonlinear Klein-Gordon equation}


\begin{document}
\begin{abstract} 
We show scattering versus blow-up dichotomy below the ground state energy for the focusing nonlinear Klein-Gordon equation, in the spirit of Kenig-Merle for the $H^1$ critical wave and Schr\"odinger equations. 
Our result includes the $H^1$ critical case, where the threshold is given by the ground state for the massless equation, and the 2D square-exponential case, where the mass for the ground state may be modified, depending on the constant in the sharp Trudinger-Moser inequality. 
The main difficulty is the lack of scaling invariance in both the linear and the nonlinear terms. 
\end{abstract}


\subjclass[2010]{35L70, 35B40, 35B44, 47J30} 
\keywords{nonlinear Klein-Gordon equation, scattering theory, blow-up solution, ground state, Sobolev critical exponent, Trudinger-Moser inequality} 


\maketitle
\tableofcontents


\section{Introduction}
\subsection{The problem and overview}
We study global and asymptotic behavior of solutions in the energy space for the nonlinear Klein-Gordon equation (\NLKG):
\EQ{ \label{NLKG} 
  \ddot u - \Delta u + u = f'(u), \quad u:\R^{1+d}\to\R, \pq(d\in\N)}
where $f:\R\to\R$ is a given function. 
Typical examples that we can treat are the power nonlinearities in any dimensions 
\EQ{ \label{power f}
 f(u) = \la |u|^{p+2}, \pq (2_\star< {p+2} \le 2^\star, \pq \la\ge 0),}
where $2_\star$ and $2^\star$ respectively denote the $L^2$ and $H^1$ critical powers
\EQ{ \label{def 2star}
 2_\star= 2 + \frac{4}{d}, \pq 2^\star=\CAS{2+\frac{4}{d-2} &(d\ge 3) \\ \I &(d\le 2)},}
and the square-exponential nonlinearity in two spatial dimensions
\EQ{
 f(u) = \la |u|^p e^{\ka|u|^2}, \pq (d=2,\ p>4,\ \la\ge 0,\ \ka>0 ),}
which is related to the critical case for the Trudinger-Moser inequality. 
The equation conserves (at least formally) the energy 
\EQ{ \label{def E}
 E(u;t)=E(u(t),\dot u(t)):=\int_{\R^d}\frac{|\dot u|^2+|\na u|^2+|u|^2}{2}-f(u)dx.}

The main goal in this paper is to give necessary and sufficient conditions for the solution $u$ to scatter, which means that $u$ is asymptotic to some free solutions as $t\to\pm\I$, under the condition that $u$ has less energy than the least energy static solution, namely the ground state. In the defocusing case, where $f$ has the opposite sign, one has the scattering result for all finite energy solutions, see \cite{Brenner,GV1,3Dcrit,2Dsubcrit,Nak-rem,2DcNLKG}. 
In the focusing case, it turns out that the solutions below the ground energy split into the scattering solutions and the blow-up solutions (in both time directions in both cases). 
Such results have been recently established for many other equations including the nonlinear wave equation (\NLW), the nonlinear Schr\"odinger equation (\NLS), the Yang-Mills system and the wave maps, since Kenig-Merle's \cite{KM2} on NLS with the $H^1$ critical power (i.e. $p+2=2^\star$ in \eqref{power f}), see \cite{AN,CKM,DHR,KM1,KVZ1,KVZ2,KS,ST,Tao} and the references therein. 

To be more precise, let us recall the result by Kenig-Merle for the critical nonlinear wave equation 
\EQ{ \label{CNW}
 \ddot u - \Delta u = f'(u), \pq f(u)=|u|^{2^\star}.}
Let $E^{(0)}(u)$ be the conserved energy, and $Q$ be a static solution with the least energy: 
\EQ{
 E^{(0)}(u):=\int_{\R^d}\frac{|\dot u|^2+|\na u|^2}{2} + f(u) dx, 
 \pq Q(x):=\left[1+\frac{|x|^2}{d(d-2)}\right]^{-(d-2)/2}.} 
Kenig-Merle \cite{KM1} proved that every solution with $E^{(0)}(u)<E^{(0)}(Q)$ scatters in the energy space as $t\to\pm\I$, provided that $\|\na u(0)\|_{L^2}<\|\na Q\|_{L^2}$, and otherwise it blows up in finite time both for $t>0$ and for $t<0$. 
The idea of their proof is to bring the concentration compactness argument into the scattering problem by using space-time norms and the concept of ``critical element", that is the minimal non-scattering solution.

The equations in those papers following Kenig-Merle have a common important property---the scaling invariance. It is further shared with the solution space (either the energy space or $L^2$, i.e. the critical case), except for the NLS with a subcritical power \cite{DHR,AN}. The scaling invariance brings significant difficulties for the analysis, but also a lot of algebraic or geometric structures and simplifications. Hence it is a natural question what happens if the invariance is broken in the linear and the nonlinear parts of the equation. This is the main technical challenge in this paper. 

The dichotomy into the global existence and the blow-up has been known \cite{PS} long before the scattering result of Kenig-Merle, under the name of ``potential well", which is defined by derivatives of the static energy functional. More precisely, Payne-Sattinger \cite{PS} proved on bounded domains the dichotomy into blow-up and global existence for solutions below the ground energy, by the sign of the functional 
\EQ{
 K_{1,0}(u) := \int |\na u|^2+|u|^2-uf'(u) dx.}
It is easy to observe that their argument applies to the whole space $\R^d$ as soon as one has the local wellposedness in the energy space. Hence our primary task is to prove the scattering result in the region of global existence. Then our first problem due to the inhomogeneity is that the above functional $K_{1,0}$ is not suited for the scattering proof, though it is useful for the blow-up and global existence. More specifically, we want to use the functional 
\EQ{
 K_{d,-2}(u) := \int 2|\na u|^2+d[uf'(u)-2f(u)] dx,}
which is related to the virial identity. There is actually a one-parameter family of functionals, corresponding to various scalings, each of which defines a splitting of the solutions below the ground energy by its sign. For example, Shatah \cite{Shatah} used another functional 
\EQ{
 K_{0,1}(u) := \int \frac{d-2}{2}|\na u|^2 + \frac{d}{2} |u|^2 - d f(u) dx,}
to prove the instability of the standing waves. Note that in his proof the instability is not given by blow-up in the region $K_{0,1}(u)<0$. More recently, Ohta-Todorova \cite{OT} proved blow-up in the region $K_{d,-2}(u)<0$, but they need radial symmetry for the powers $p$ close to $2^\star$. 

The special feature of the critical wave equation \eqref{CNW} is that those functionals are the same modulo constant multiples, which is exactly due to the scaling invariance. For the NLS with a subcritical power \cite{DHR,AN}, the functionals are different from each other, but the situation is much better than NLKG, because they contain only two terms (without the $L^2$ norm), the $L^2$ is another conserved quantity, and the virial identity is used both for the blow-up and for the scattering, while $K_{1,0}$ is not so useful for NLS. 

It turns out, however, that those algebraically different functionals for NLKG define the same splitting below the threshold energy. This observation does not seem to be well recognized, but it is indeed crucial for the proof of the dichotomy, since we need different functionals for the blow-up and for the scattering. 

One interesting feature resulting from the breakdown of the scaling is that, for some nonlinearity, the energy threshold is not given by the ground state of the original NLKG, but by that of a modified equation. More precisely, for the $H^1$ critical power ($p+2=2^\star$) in three dimensions or higher, the threshold is given by that of the critical wave equation, or massless Klein-Gordon equation. This can be expected because the concentration by the critical scaling makes 
  the $L^2$ norm vanish  while preserving other components, namely the massless energy. However the transition from the Klein-Gordon to the wave requires non-trivial amount of effort in the scattering proof. 

We find another instance of mass modification, which is more surprising. That is in two dimensions and for nonlinearities  which grow slightly slower than the square exponential $e^{|u|^2}$, where the mass for the threshold ground energy can change to any number between $0$ and $1$, depending on the constant in the sharp ($L^2$) Trudinger-Moser inequality. Thus we prove the existence of extremizers as well as the ground states with mass less than or equal to the sharp constant, which also seems new for general nonlinearity on the whole plane. For the existence of the ground state on bounded domains, we refer to \cite{FMR,A,AS}. One should be warned, however, that the situation on the whole plane is different from that on disks, unlike the higher dimensional Sobolev critical case, since here the concentration compactness has to be accompanied with a leak of $L^2$ norm to the spatial infinity. This will be discussed separately in a forthcoming paper \cite{TM}. 

It is worth noting that the scattering result in the focusing exponential case is actually easier to obtain than in the defocusing case, concerning the global Strichartz estimate.  This is because the (mass-modified) ground energy threshold implies that our solutions are in the subcritical regime for the Trudinger-Moser inequality.  Hence concentration of energy is a priori precluded, and so we do not need the concentration radius or the localized Strichartz estimate used in \cite{2DcNLKG} on the Trudinger-Moser threshold in the defocusing case. This is another striking difference from the power case, where the analysis for the focusing case essentially contains that for the defocusing case. 

\subsection{Main result}
To state the main results of this paper, we need to introduce some notation and assumptions for the variational setting and the nonlinear setting of the problem. 
\subsubsection{Variational setting} 
To specify our class of solutions, we need the static energy 
\EQ{ \label{def J}
 J(\fy) := \frac{1}{2}\int_{\R^d} [|\na \fy|^2 + |\fy|^2] dx - F(\fy), \pq F(\fy):=\int_{\R^d} f(\fy) dx,}
and its derivatives with respect to different scalings. In the critical/exponential cases, we also need the energy with a modified mass $c\ge 0$, 
\EQ{ \label{def Jc}
 J^{(c)}(\fy) = \frac{1}{2}\int_{\R^d} [|\na \fy|^2 + c|\fy|^2] dx - F(\fy).}
For any $\al,\be,\la\in\R$ and $\fy:\R^d\to\R$, we define the two-parameter rescaling family
\EQ{ \label{def scaling}
 \fy^\la_{\al,\be}(x) = e^{\al\la}\fy(e^{-\be\la}x),}
and the differential operator $\Dab$ acting on any functional $S:H^1(\R^d)\to\R$ by 
\EQ{ \label{def Dab}
 \Dab S(\fy) = \left.\frac{d}{d\la}\right|_{\la=0} S(\fy_{\al,\be}^\la).}
The scaling derivative of the static energy is denoted by 
\EQ{ \label{def K}
 \pt K_{\al,\be}(\fy) := \Dab J(\fy)
 \pr= \int_{\R^d} \left[\frac{2\al+(d-2)\be}{2}|\na \fy|^2 + \frac{2\al+d\be}{2}|\fy|^2 - \al \fy f'(\fy) - d\be f(\fy)\right] dx, 
 \pr K_{\al,\be}^{(c)}(\fy) := \Dab J^{(c)}(\fy).}
For each $(\al,\be)\in\R^2$ in the range
\EQ{ \label{range albe}
 \al \ge 0, \pq 2\al+d\be \ge 0, \pq 2\al +(d-2)\be \ge 0, \pq (\al,\be)\not=(0,0),}
we consider the constrained minimization problem 
\EQ{ \label{min J}
 \m_{\al,\be} = \inf\{J(\fy) \mid \fy \in H^1(\R^d),\ \fy\not=0,\ K_{\al,\be}(\fy)=0\}.}
We will prove that it is attained, (after a modification of the mass in some cases), provided that $(\al,\be)$ is in the above range \eqref{range albe}. 
The condition on $(\al,\be)$ is also necessary in general (see Proposition \ref{need range}). 

Our solutions start from the following subsets of the energy space
\EQ{ \label{def Kpm}
 \pt \K_{\al,\be}^+ = \{(u_0,u_1)\in H^1(\R^d)\times L^2(\R^d) \mid E(u_0,u_1)<\m_{\al,\be},\ K_{\al,\be}(u_0)\ge 0\},
 \pr \K_{\al,\be}^- = \{(u_0,u_1)\in H^1(\R^d)\times L^2(\R^d) \mid E(u_0,u_1)<\m_{\al,\be},\ K_{\al,\be}(u_0)< 0\}.}

\subsubsection{Nonlinear setting} 
For the nonlinearity $f$, we consider the following three cases: the $H^1$ subcritical ($d\ge 1$), the 2D exponential case, and the $H^1$ critical ($d\ge 3$) cases. First we assume that $f:\R\to\R$ is $C^2$ and 
\EQ{ \label{f sym} 
 f(0)=f'(0)=f''(0)=0.}

Secondly for the variational arguments, we need some monotonicity and convexity conditions. 
Let $D$ denote the linear operator defined by 
\EQ{ \label{def D}
 Df(u):=uf'(u).} 
We assume that $f$ satisfies for some $\e>0$, 
\EQ{ \label{f conv}
 (D-2_\star-\e) f \ge 0, \pq (D-2)(D-2_\star-\e) f \ge 0,}
which implies in particular that 
\EQ{
 D^2 f \ge (2_\star+\e)Df \ge (2_\star+\e)^2 f \ge 0.}

Finally we need regularity and growth conditions, which can differ for small $|u|$ and large $|u|$. Fix a cut-off function $\chi\in C_0^\I(\R)$ satisfying $\chi(r)=1$ for $|r|\le 1$ and $\chi(r)=0$ for $|r|\ge 2$, and denote 
\EQ{ \label{def chiR}
 \chi_R(x):=\chi(|x|/R),}
for arbitrary vector $x$ and $R>0$. Decompose the nonlinearity by 
\EQ{ \label{def fSL}
 f_S(u) := \chi_1(u) f(u), \pq f_L(u) = f(u)-f_S(u).}
We assume that for some $p_1>2_\star-2$ 
\EQ{ \label{f_S}
 \CAS{
 |f_S''(u)|\lec |u|^{p_1} & (d\le 4),\\ 
 |f_S''(u_1)-f_S''(u_2)| \lec |u_1-u_2|^{p_1} &(d\ge 5),}}
where we should choose $p_1<1$ for $d\ge 5$. 

For the behavior of $f$ for large $|u|$, we distinguish three cases: 

(1) $H^1$ subcritical case: We assume that for some $p_2< 2^\star-2$ 
\EQ{ \label{f sub}
 \CAS{|f_L''(u)|\lec |u|^{p_2} &(2 \le d \le 4)\\
 |f_L''(u_1)-f_L''(u_2)| \lec (|u_1|+|u_2|)^{p_2-1}|u_1-u_2| &(d\ge 5\text{ and }p_2\ge 1)\\
 |f_L''(u_1)-f_L''(u_2)| \lec |u_1-u_2|^{p_2} &(d\ge 5 \text{ and } p_2<1).}}
$p_2=2^\star-2$ will be allowed in some of the later arguments. There is no growth restriction for $d=1$. A typical example is 
\EQ{
 f(u) = \la_1 |u|^{q_1} + \cdots \la_k |u|^{q_k},}
where $\la_j>0$ and $2_\star<q_j<2^\star$ for all $j$, which satisfies \eqref{f sub} as well as \eqref{f sym}, \eqref{f conv} and \eqref{f_S}.

(2) $H^1$ critical case. We assume 
\EQ{ \label{f crit}
 d\ge 3, \pq   f(u) = |u|^{2^\star}/2^\star.}
In this case, we do not include lower powers in order to avoid their nontrivial effects in the variational characterization. The absence of lower powers will 
only be used in section \ref{sect:var}. In particular the Strichartz  spaces 
we use in section \ref{sect:str} can handle the sum of a critical power 
with a subcritical function. 
For  the variational  characterization,   the case where lower  powers
  are included will be treated in 
a forthcoming work.

(3) $2D$ exponential case: We assume that 
\EQ{ \label{f exp}
 d=2,\pq \pt\exists\ka_0\ge 0,\text{ s.t. } 
  \CAS{ \forall\ka>\ka_0,\pq \lim_{|u|\to\I} f_L''(u)e^{-\ka |u|^2}=0, \\ 
  \forall\ka<\ka_0,\pq \lim_{|u|\to\I} f_L(u)e^{-\ka|u|^2}=\I,}
 \pr\text{and if $\ka_0>0$ then}  \lim_{|u|\to\I} f_L(u)/Df_L(u) = 0.}
Then we define $\CTMS$ by 
\EQ{ \label{def C*F}
 \CTMS(F) = \sup\{2F(\fy)\|\fy\|_{L^2(\R^2)}^{-2} \mid 0\not=\fy\in H^1(\R^2),\ \ka_0\|\na\fy\|_{L^2(\R^2)}^2\le 4\pi\}.}
For example, all the conditions are satisfied by 
\EQ{
 f(u) = e^{\ka_0 |u|^2 } - 1 -\ka_0 |u|^2 - \frac{\ka_0^2}2 |u|^4}
and by 
\EQ{ \label{exp example}
 \pt f(u) = |u|^p e^{\ka_0 |u|^2+\ga|u|},}
where $p>4$, $\ka_0\ge 0$, and $\max(-\ga,0)\ll 1$ (depending on $\ka_0(p-4)$).  More specifically, it suffices to have for all $u\in[0,\I)$ that 
\EQ{ \label{exp cond}
 8\ka_0 u^2 + 3\ga u + 2(p-4) > 0,}
since, putting $g:=Df/f=2\ka_0u^2+\ga u+p$, we have $2_\star=4$ and 
\EQ{
 \pt (D-2)(D-4)f=[(g-4)^2+Dg+2(g-4)]f, 
 \pr Dg + 2(g-4) = 8\ka_0 u^2 + 3\ga u + 2(p-4) = 2[g(3u/2)-4]-u^2/2.}
In addition, one can easily observe that $\CTMS(F)=\I$ if $\ga\ge 0$ and $\CTMS(F)<\I$ if $\ga<0$, using Moser's sequence of functions for the former, and by the spherical symmetrization for the latter (cf. \cite{Moser,AT,Ruf}).\footnote{ 
Actually, the optimal (fastest) growth to have $\CTMS(F)<\I$ is given by 
\EQ{
 f(u) \sim e^{\ka_0|u|^2}/|u|^2 \pq (|u|\to\I),}
which will be shown in a forthcoming paper \cite{TM}. The results in this paper do not rely on this observation, though it seems to have its own interest.}

In short, our assumption on $f$ is that  
\EQ{ \label{asm f}
 \pt \eqref{f sym},\ \eqref{f conv},\ \eqref{f_S}, 
 \text{ and }[\eqref{f sub} \text{ or } \eqref{f crit} \text{ or } \eqref{f exp}].}
Then by Sobolev or Trudinger-Moser, we observe that $F$, $\Dab F$ and $\Dab^2 F$ are continuous functionals on $H^1(\R^d)$. 

Now we can state our main result. Denote the quadratic part of the energy (i.e. the linear energy) by 
\EQ{ \label{def E^Q}
 E^Q(u;t)=E^Q(u(t),\dot u(t)):=\int_{\R^d}\frac{|\dot u|^2+|\na u|^2+|u|^2}{2} dx.}
\begin{thm} \label{main thm} 
Assume \eqref{asm f} for $f$. Then for all $(\al,\be)$ in \eqref{range albe}, 
both $\m_{\al,\be}$ and $\K_{\al,\be}^\pm$ are independent of $(\al,\be)$. 
Moreover \eqref{NLKG} is locally wellposed in the energy space $H^1\times L^2$, and  
\begin{enumerate}
\item If $(u(0),\dot u(0))\in\K_{\al,\be}^-$, then $u$ extends neither for $t\to\I$ nor for $t\to-\I$ as the unique strong solution in $H^1\times L^2$. 
\item If $(u(0),\dot u(0))\in\K_{\al,\be}^+$, then $u$ scatters both in $t\to\pm\I$ in the energy space. In other words, $u$ is a global solution and there are $v_\pm$ satisfying 
\EQ{
 \ddot v_\pm - \De v_\pm + v_\pm = 0,\pq E^Q(u-v_\pm,\dot u-\dot v_\pm) \to 0 \pq (t\to\pm\I).}
\end{enumerate}
\end{thm}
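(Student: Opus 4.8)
The plan is to run the concentration--compactness/rigidity machine of Kenig--Merle, resting it on four pillars: a variational characterization of the threshold, the local Cauchy theory with Strichartz bounds, finite--time blow--up in $\K^-_{\al,\be}$, and the scattering--rigidity argument in $\K^+_{\al,\be}$; the common theme is that each pillar must be retooled to cope with the lack of scaling invariance. I would begin with the variational step. Along a scaling curve $\la\mapsto\fy^\la_{\al,\be}$ the convexity/monotonicity hypotheses \eqref{f conv} force $\la\mapsto J(\fy^\la_{\al,\be})$ to rise to a single maximum and then fall, with the scaling derivative $K_{\al,\be}(\fy^\la_{\al,\be})$ changing sign exactly once there; this yields the mountain--pass description $\m_{\al,\be}=\inf_{\fy\ne0}\sup_\la J(\fy^\la_{\al,\be})$ and, crucially, $\Dab^2 J<0$ at any nonzero critical point of $J$, which lets one discard the Lagrange multiplier and conclude that for every admissible $(\al,\be)$ the minimizer in \eqref{min J} is a ground state --- a least--energy nonzero critical point of $J$ (possibly of a mass--modified functional in the critical/exponential cases; see below) --- with $\m_{\al,\be}$ its static energy. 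Since the nonzero critical points of $J$ do not depend on $(\al,\be)$, $\m:=\m_{\al,\be}$ is $(\al,\be)$--independent; a further argument, again exploiting the monotonicity/convexity along scaling curves, shows that for $J(\fy)<\m$ the sign of $K_{\al,\be}(\fy)$ is also independent of $(\al,\be)$ --- this is the (as the authors note, underappreciated) fact that the algebraically different functionals cut out the \emph{same} splitting --- which identifies $\K^\pm_{\al,\be}$ with $(\al,\be)$--independent sets $\K^\pm$. In the $H^1$--critical ($d\ge3$, $f=|u|^{2^\star}/2^\star$) and $2D$ exponential cases the constrained minimization loses compactness through concentration; here I would run a concentration--compactness analysis (Lions; Trudinger--Moser) showing that the defect is carried by a bubble solving the massless equation $-\De Q=f'(Q)$, respectively a Moser--type concentrating profile, which modifies the threshold to the ground energy of $J^{(0)}$ (resp.\ a value read off from $\CTMS(F)$) and simultaneously produces an extremizer / ground state of the modified functional $J^{(c)}$.

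Local well--posedness in $H^1\times L^2$ then follows from Strichartz estimates for the Klein--Gordon group together with the growth conditions \eqref{f_S} and \eqref{f sub}/\eqref{f crit}/\eqref{f exp} (using critical Strichartz/Besov spaces in the $H^1$--critical case and a Trudinger--Moser--Strichartz estimate in the $2D$ exponential case). Combining this with the variational gap from the first step --- that on $\{E\le\m-\de\}\cap\K^+$ one has $K_{\al,\be}(u)\gec\de$ and $E^Q(u)\sim E(u)$ --- yields small--data scattering, flow--invariance of $\K^+$, and hence global existence there (the global half of the Payne--Sattinger dichotomy, valid on $\R^d$ once one has the local theory). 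On $\K^-$, likewise flow--invariant, I would use $\operatorname{sgn}K_{1,0}=\operatorname{sgn}K_{\al,\be}$ to get a uniform bound $K_{1,0}(u(t))\le-\de<0$; then the identity $\frac{d^2}{dt^2}\|u\|_{L^2}^2=2\|\dot u\|_{L^2}^2-2K_{1,0}(u)$ together with the superlinearity margin $\e$ in \eqref{f conv} yields a Payne--Sattinger/Levine--type concavity inequality (as extended to $\R^d$ and to $K_{d,-2}<0$ by Ohta--Todorova) forcing blow--up in finite time in both time directions (the backward one by time--reversal symmetry of \NLKG).

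For the scattering half I would let $E_c$ be the supremum of all $E_\ast$ such that every solution with data in $\K^+$ and energy below $E_\ast$ scatters in both time directions; small--data scattering gives $E_c>0$, and one argues by contradiction assuming $E_c<\m$. Via the linear profile decomposition for the Klein--Gordon flow --- whose non--compact parameters are spacetime translations in the subcritical case, augmented in the critical/exponential cases by concentration scales whose limiting profiles solve the wave (massless) equation --- together with the nonlinear perturbation lemma, I would extract a critical element $u_c$: a global, non--scattering solution with $E(u_c)=E_c$, $(u_c,\dot u_c)\in\K^+$, whose trajectory is precompact in $H^1\times L^2$ modulo a spatial translation $x(t)$, the latter being sublinear (a standard consequence of $E_c<\m$ via a momentum/Lorentz argument). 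In the $H^1$--critical case a concentrating profile could carry energy exactly $J^{(0)}(Q)$; since $E_c<\m=J^{(0)}(Q)$ this is excluded, and the surviving nonlinear profiles are either subcritical \NLKG{} solutions or genuinely massless wave solutions that scatter by the critical \NLW{} theory of Kenig--Merle --- this Klein--Gordon--to--wave transition being a real difficulty --- while the $2D$ exponential case is analogous (and in fact easier for the global Strichartz bound, since $E_c<\m$ keeps the solution strictly subcritical for Trudinger--Moser, so no concentration occurs). The rigidity step then uses the virial functional $K_{d,-2}$: from the localized virial/Morawetz identity associated to the $(\al,\be)=(d,-2)$ scaling, $\frac{d}{dt}(\text{localized dilation quantity})=-c\,K_{d,-2}(u(t))+O(\text{spatial tail at }|x|\gec R)$; precompactness of the trajectory makes the tail negligible as $R\to\I$ uniformly in $t$, while the variational bound gives $K_{d,-2}(u_c)\gec\de>0$, so integrating over a long time interval contradicts the boundedness of the dilation quantity. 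Hence $u_c\equiv0$, a contradiction, so $E_c\ge\m$ and every solution in $\K^+$ scatters.

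I expect the two genuinely hard points to be: (i) the variational claim that the algebraically distinct $K_{\al,\be}$ cut out the same sub-/super--threshold regions --- this is what legitimizes using $K_{1,0}$ (and $K_{d,-2}<0$) for blow--up but $K_{d,-2}$ for the scattering virial; and (ii) in the concentration--compactness step, identifying the defect of Klein--Gordon compactness with a massless (wave) or Moser bubble, matching the modified thresholds ($J^{(0)}$ or $\CTMS(F)$), and importing the corresponding critical scattering theory, since here --- unlike in scaling--invariant models --- the limiting equation is structurally different from \NLKG{}.
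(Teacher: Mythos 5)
Your proposal follows essentially the same route as the paper on every pillar: mountain-pass/convexity analysis of $J$ along the scaling curves to identify $m$ with the (possibly mass-modified) ground energy and to establish the $(\al,\be)$-independence of $m$ and $\K^\pm$ (the latter is proved in the paper by a topological connectedness argument --- $\K^{+\de}_{\al,\be}$ contracts to $\{0\}$ along the scaling flow while remaining inside $\K^{+\de}_{\al,\be}$, so it cannot be split by another $\K^{\pm\de}_{\al',\be'}$); Payne--Sattinger concavity for blow-up on $\K^-$; and Kenig--Merle profile decomposition, critical-element extraction, and localized-virial rigidity (zero momentum via Lorentz boosts plus a quantitative non-propagation bound on the energy center) for scattering on $\K^+$. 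One small internal slip: your outline lists the $2D$ exponential case alongside the $H^1$-critical one as producing concentration-scale bubbles in the linear profile decomposition, but in the paper the exponential case uses only spacetime translations (no scales), precisely because --- as you yourself correctly note later --- $\K^+$ sits strictly below the Trudinger--Moser threshold, so concentration is a priori precluded; only the $H^1$-critical case requires the massless-wave limit profiles and the import of Kenig--Merle's \NLW{} theory.
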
 
The dichotomy of global existence versus blow-up in the subcritical case was essentially given by Payne-Sattinger \cite{PS}, using $K_{1,0}$, on bounded domains. 
Hence our main contribution is the scattering part, and the parameter independence of $\K_{\al,\be}^\pm$. The corresponding result in the defocusing case (hence only the scattering) has been shown by \cite{Brenner,GV2} for the subcritical $f$ in three dimensions and higher, by \cite{2Dsubcrit} in lower dimensions, by \cite{3Dcrit} for the $H^1$ critical $f$, and by \cite{2DcNLKG} for the 2D exponential nonlinearity. The massless $H^1$ critical case (the other powers cannot be controlled by the massless energy) was solved by \cite{BS,BG} for the defocusing $f$ and by \cite{KM1} for the focusing nonlinearity. 

The parameter independence of $m_{\al,\be}$ seems to be known in the study of stability of standing waves, but the authors could not find an available result as general as the above one. See \cite{OT,Z} for partial results. 
We quote a recent paper \cite{JL} for a pure power nonlinearity, but unfortunately their range of $(\al,\be)$ was not correct (the condition $\al\ge 0$ was overlooked; its necessity is shown by Proposition \ref{need range}). 

The parameter independence of $\K^\pm_{\al,\be}$, on the other hand, does not seem to have got much attention from the stability analysis, but it is essential in our proof of the scattering, since the monotonicity is given for the blow-up and for the scattering in terms of different $K_{\al,\be}$, respectively $K_{1,0}$ and $K_{d,-2}$. 

Thanks to the parameter independence, we may write $\m=\m_{\al,\be}$ and $\K^\pm=\K_{\al,\be}^\pm$. We will also show the following important properties of the energy threshold. 
\begin{prop} \label{Gstate}
Under the assumptions of the above theorem, 
\begin{enumerate}
\item In the subcritical case \eqref{f sub}, the threshold energy $m$ is attained by some $Q\in H^1(\R^d)$, independent of $(\al,\be)$, solving the static equation 
\EQ{ \label{static NLKG} 
 -\De Q + Q = f'(Q),}
with the least energy $J(Q)=m$ among the solutions in $H^1(\R^d)$. In other words, $m$ is attained by the ground states. 
\item In the critical case \eqref{f crit}, there is no minimizer for \eqref{min J}, but we have 
\EQ{
 m = J^{(0)}(Q),}
for a static solution $Q\in\dot H^1(\R^d)$ of the massless equation 
\EQ{ \label{static NLW}
 -\De Q = f'(Q),}
with the least massless energy $J^{(0)}$. In other words, $m$ equals to the massless ground energy. 
\item In the exponential case \eqref{f exp}, let $c:=\min(1,\CTMS(F))$, where $\CTMS(F)$ is as in \eqref{def C*F}. Then we have 
\EQ{
 m = J^{(c)}(Q),}
for a static solution $Q\in H^1(\R^2)$ of the mass-modified equation 
\EQ{ \label{static mmKG}
 -\De Q + c Q = f'(Q),}
with the least energy $J^{(c)}(Q)$. Moreover we have 
\EQ{
 m \le 2\pi/\ka_0,}
where the equality holds if and only if $\CTMS(F)\le 1$, and $m=\m_{\al,\be}$ is attained in \eqref{min J} if and only if $\CTMS(F)\ge 1$.
\end{enumerate}
\end{prop}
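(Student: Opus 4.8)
The plan is to treat the three cases on a common template, using throughout the freedom provided by Theorem~\ref{main thm}: since $\m_{\al,\be}$ and $\K_{\al,\be}^\pm$ are independent of $(\al,\be)$, for each sub-claim we may choose the scaling pair that makes the variational geometry cleanest. The template is: (i) on the constraint $\{K_{\al,\be}=0\}$ rewrite $J$ as a manifestly positive functional comparable to a Sobolev (resp.\ Trudinger--Moser) energy, which yields $\m>0$ and boundedness of minimizing sequences; (ii) produce a minimizer or an optimal concentration profile by concentration-compactness; (iii) show the Lagrange multiplier vanishes --- here the second part of \eqref{f conv} makes the constraint a strict maximum of $J$ along the scaling orbit --- so the profile solves the relevant Euler--Lagrange equation; (iv) identify the level with the (possibly mass-modified) ground energy via the Pohozaev/Nehari identities, which are exactly $\Dab J(Q)=0$ for a critical point $Q$. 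In the \emph{subcritical case} I would take $(\al,\be)=(1,0)$, so that $K_{1,0}(\fy)=\|\fy\|_{H^1}^2-\int\fy f'(\fy)\,dx$ and $J(\fy)=\tfrac12\int(D-2)f(\fy)\,dx$ on the constraint; the first part of \eqref{f conv} gives $(D-2)f\gec Df\ge0$, hence $J(\fy)\gec\|\fy\|_{H^1}^2$ there, while the subcritical bound $\int Df(\fy)\,dx\lec\|\fy\|_{H^1}^{p_1+2}+\|\fy\|_{H^1}^{p_2+2}$ coming from \eqref{f_S}, \eqref{f sub} and Sobolev (both exponents exceeding $2$) forces $\|\fy\|_{H^1}\gec1$, so $\m>0$ and minimizing sequences are $H^1$-bounded. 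To extract a minimizer I would pass to the mountain-pass description $\m=\inf_{\fy\neq0}\sup_\la J(\fy^\la_{1,0})$ --- the supremum attained at the unique zero of $\la\mapsto K_{1,0}(\fy^\la_{1,0})$, again by the first part of \eqref{f conv} --- and use Ekeland's principle to obtain a Palais--Smale sequence at level $\m$; such a sequence is precompact modulo translations in the subcritical regime (a bubble would split off a nontrivial solution of energy $\ge\m$, leaving a residual Palais--Smale sequence of nonpositive energy, which must vanish). The resulting critical point $Q$ satisfies $J(Q)=\m$, and since every $H^1$ solution $\tilde Q$ of \eqref{static NLKG} obeys $K_{1,0}(\tilde Q)=\L_{1,0}J(\tilde Q)=0$ and hence $J(\tilde Q)\ge\m$, we conclude $\m=\min\{J(\tilde Q):-\De\tilde Q+\tilde Q=f'(\tilde Q)\}$, the ground energy.

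\emph{Critical case.} With $f(u)=|u|^{2^\star}/2^\star$, on $\{K_{1,0}=0\}$ one has $J(\fy)=\tfrac1d\|\fy\|_{2^\star}^{2^\star}$ and $\|\na\fy\|_2^2\le\|\fy\|_{2^\star}^{2^\star}$; the sharp Sobolev inequality $\|\na\fy\|_2^2\ge S_d\|\fy\|_{2^\star}^2$ then gives $\|\fy\|_{2^\star}^{2^\star-2}\ge S_d$, whence $J(\fy)\ge\tfrac1d S_d^{d/2}=J^{(0)}(Q)$ for every admissible $\fy\neq0$, with equality impossible (it would force $\|\fy\|_2=0$). This already yields $\m\ge J^{(0)}(Q)$ together with non-attainment. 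For the reverse inequality I would test \eqref{min J} with rescaled Aubin--Talenti bubbles $s_\e Q_\e$ --- $Q_\e$ concentrated at scale $\e$, truncated for $d\le4$ to lie in $H^1$, and $s_\e=1+O(\e^2)$ chosen so that $K_{1,0}(s_\e Q_\e)=0$ --- for which $\|Q_\e\|_2^2\to0$ while $\|\na Q_\e\|_2^2$ and $\|Q_\e\|_{2^\star}^{2^\star}$ stay at the extremal value, so $J(s_\e Q_\e)\to J^{(0)}(Q)$ and hence $\m\le J^{(0)}(Q)$. The same computation exhibits $Q$ as the Aubin--Talenti solution of \eqref{static NLW}.

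\emph{Exponential case.} This is the delicate case, handled in the spirit of the Trudinger--Moser literature. First, testing \eqref{min J} with Moser's concentrating functions $M_n$, normalized to $\ka_0\|\na M_n\|_2^2=4\pi$ and then rescaled by $s_n\uparrow1$ so that $K_{1,0}(s_n M_n)=0$, gives $\m\le2\pi/\ka_0$ --- the hypothesis $\lim f_L/Df_L=0$ (when $\ka_0>0$) being what forces $\tfrac12\int(D-2)f(s_n M_n)\,dx\to2\pi/\ka_0$. For the value of $\m$ I would run a concentration-compactness analysis of a minimizing sequence $\fy_n$ for \eqref{min J} adapted to the critical Trudinger--Moser functional (Lions, Adimurthi--Tintarev, Ruf): modulo translations, $\fy_n$ splits into a ``compact'' profile and a possible ``Moser bubble'' with $\ka_0\|\na\cdot\|_2^2\to4\pi$, vanishing $L^2$-mass and --- when $\CTMS(F)<\I$ --- vanishing $F$, hence energy contribution exactly $2\pi/\ka_0$; the compact profile, after the Lagrange multiplier is killed as before, is a least-energy solution of the $c$-modified equation \eqref{static mmKG}, whose existence I would establish by a separate constrained minimization, again invoking $\lim f_L/Df_L=0$ to control a possible concentration at the critical threshold. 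Comparing the two scenarios through \eqref{def C*F} --- which precisely governs the energy-to-$L^2$-mass ratio available in the Trudinger--Moser-critical regime --- then shows $\m=J^{(c)}(Q)$ with $c=\min(1,\CTMS(F))$, that $\m\le2\pi/\ka_0$ with equality iff the Moser-bubble scenario is the cheaper one, i.e.\ $\CTMS(F)\le1$, and that \eqref{min J} is attained iff the bubble is unnecessary, i.e.\ $\CTMS(F)\ge1$.

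\emph{Main obstacle.} The genuinely hard part is the exponential case: pinning down the exact effective mass $c=\min(1,\CTMS(F))$ and the sharp alternative $\m=2\pi/\ka_0\iff\CTMS(F)\le1$ requires a careful accounting of the energy and the $L^2$-mass carried by a Moser-type bubble versus those of the regular profile, precisely at the critical Trudinger--Moser threshold where compactness fails; this interplay among the scaling structure, the mass modification, and the $L^2$ Trudinger--Moser constant is the new analytic input. By comparison, the critical Sobolev case is easy at the variational level (though the transition from Klein--Gordon to wave is serious in the scattering part), and the subcritical case is standard once the functional identities of Theorem~\ref{main thm} are available.
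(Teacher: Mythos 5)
The proposal captures the overall template — constrained minimization, compactness, Lagrange-multiplier kill via the second convexity inequality in \eqref{f conv}, identification of $m$ with the ground energy — and the critical case is essentially the same calculation as the paper's Lemma~\ref{Q crit} (Sobolev sharp constant on the Nehari constraint plus concentrating Aubin--Talenti bubbles). Two issues, one structural and one substantive, keep this from being a complete independent proof.

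First, the opening sentence is circular: you appeal to the $(\al,\be)$-independence of $\m_{\al,\be}$ from Theorem~\ref{main thm}, but in the paper that independence is a \emph{corollary} of the ground-state characterizations you are asked to prove (Lemmas~\ref{Q sub}--\ref{Q exp} establish both simultaneously). The circularity is easily repaired --- prove $m_{\al,\be}=J(Q)$ (or $J^{(c)}(Q)$) for one fixed convenient pair, then observe that the ground state $Q$ satisfies $K_{\al',\be'}(Q)=0$ for \emph{all} admissible $(\al',\be')$, which forces $m_{\al',\be'}\le J(Q)$, and run the reverse comparison --- but this step must be made explicit rather than assumed. In the subcritical case your mountain-pass/Ekeland/Palais--Smale route is genuinely different from the paper's (the paper uses Schwartz symmetrization plus the compact radial embedding $H^1_{\mathrm{rad}}\hookrightarrow L^p$, $2<p<2^\star$, which sidesteps the need to rule out vanishing and dichotomy along a PS sequence); both are viable, though the paper's symmetrization argument is more self-contained and also handles the degenerate sub-case $(d,\al)=(2,0)$ where the scaling $\fy\mapsto\nu\fy$ rather than $\fy^\la$ must be used.

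Second, and more seriously, the exponential case is under-argued precisely where the paper's novelty lies. Your account asserts ``a concentration-compactness analysis \dots splits off a Moser bubble \dots the compact profile, after the Lagrange multiplier is killed as before, is a least-energy solution of \eqref{static mmKG}, whose existence I would establish by a separate constrained minimization.'' That last clause is the whole problem: existence of a ground state for the $c$-modified equation at $c=\CTMS(F)\le 1$ sits exactly at the Trudinger--Moser threshold, where the usual compactness fails and the generic Lions--Adimurthi--Ruf machinery does not apply off the shelf. The paper's resolution is concrete and cannot be waved away: it first maximizes the Trudinger--Moser ratio at strictly subcritical kinetic levels $\|\na\fy\|_{L^2}\le\mm(F)-\tfrac1n$, obtaining maximizers $\fy_n$ and --- after killing the Lagrange multiplier --- solutions $Q_n$ of $-\De Q_n+c_nQ_n=f'(Q_n)$ with $c_n=\CTM^{\mm(F)-1/n}(F)\nearrow\CTMS(F)$; it then uses the uniform bounds $\|\na Q_n\|_{L^2}\le\mm(F)$ and the Pohozaev relations, together with the dominated-convergence compactness of Lemma~\ref{TM compact} (the role of the hypothesis $\lim f_L/Df_L=0$ in \eqref{f exp} is to supply the dominating function $h=Df$ there), to pass to the limit $Q$. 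Without this subcritical approximation, your sketch leaves open why the ``compact profile'' is nonzero and why its mass comes out exactly as $\min(1,\CTMS(F))$; and the upper bound $m\le2\pi/\ka_0$ obtained by testing with Moser functions requires checking that the renormalization $s_n$ can be taken $\le 1$ on the constraint, which is not automatic. In addition, the paper works with $K_{0,1}$ rather than $K_{1,0}$ in this case so that $J=\|\na\fy\|_{L^2}^2/2$ on the constraint, directly exposing the Dirichlet energy that the Trudinger--Moser inequality controls; your choice of $K_{1,0}$ and the representation $J=\tfrac12\int(D-2)f$ on the constraint makes that bookkeeping noticeably harder.
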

Again this is well known in the subcritical case. 
Hence the main novelty is in the mass change in the critical/exponential cases. Note that the ground state $Q$ with a different mass $c\in[0,1)$ yields standing wave solutions $e^{\pm it\om}Q(x)$ with $1-\om^2=c$. 
But it is not a true obstruction for the scattering, because its dynamical energy is above $m$, although $m$ is the right threshold in the sense that for higher energy level $E>m$ the sets $\K^\pm$ are no longer separated from each other, that is, $\p\K^+\cap\p\K^-\not=\emptyset$.

\subsection{Some notation}
Here we recall some standard notation. $\F$ denotes the Fourier transform on $\R^d$, and 
\EQ{
 \LR{\na}:=\sqrt{1-\De}=\F^{-1}\sqrt{1+|\x|^2}\F.}
$L^p$, $H^s$, $B^s_{p,q}$ and $\dot B^s_{p,q}$ respectively denote the Lebesgue, Sobolev, inhomogeneous and homogeneous Besov spaces on $\R^d$. 
For later use we recall the most used functionals $K_{\alpha,\beta}$ and 
$H_{\al,\be}$: 
\EQ{ K_{1,0} (\fy) & = \int_{\R^d} \left[ |\na \fy|^2  +| \fy|^2  
 -  \fy f'(\fy) \right] dx , \\ 
 K_{0,1} (\fy) & = \int_{\R^d} \left[\frac{d-2}{2}|\na \fy|^2 + \frac{d}{2}|\fy|^2
  - d f(\fy)\right]  dx, \\ 
 K_{d,-2} (\fy) & = \int_{\R^d}  \left[{2}|\na \fy|^2  - d  (D-2 ) f(\fy) \right]  dx, }
\EQ{ H_{1,0} (\fy) & =\frac{1}{2}  \int_{\R^d} \left[  (D-2) f(\fy)\right] dx , \\ 
 H_{0,1} (\fy) & = \int_{\R^d} \left[\frac{1}{d}|\na \fy|^2 
\right]  dx, \\ 
 H_{d,-2} (\fy) & = \int_{\R^d}  \left[\frac12|  \fy|^2   +  
 \frac{d}4  (D-2_* ) f(\fy) \right]  dx.  }
We give a table of notation in Appendix B. 

\section{Variational characterizations} \label{sect:var}
In this section, we prove Proposition \ref{Gstate}. In particular we prove 
 the existence of ground states as constrained minimizers, the $(\al,\be)$-independence of the splittings, together with various estimates for solutions below the threshold by variational arguments, which will be used for the scattering and blowup. 

Throughout this section, we assume that $(\al,\be)$ is in the range \eqref{range albe}. 
For ease of presentation, we often omit $(\al,\be)$ from the subscript. 
We associate with it the following two numbers: 
\EQ{ \label{def mu}
 \bar\mu=\max(2\al+d\be,2\al+(d-2)\be), \pq \U\mu=\min(2\al+d\be,2\al+(d-2)\be),}
which come from the scaling exponents for $\dot H^1$ and $L^2$ in \eqref{def scaling}. 
Notice that in the range \eqref{range albe}, we have $\bar\mu>0$, $\U\mu\ge 0$, and that $\al=\U\mu=0$ if and only if $(d,\al)=(2,0)$, which will often be an exceptional case in the following arguments. 

We decompose $K_{\al,\be}=\Dab J$ into the quadratic and the nonlinear parts:
\EQ{ \label{def KQN}
 K_{\al,\be}=K^Q_{\al,\be}+K^N_{\al,\be}, \pq K^Q_{\al,\be}(\fy)=\Dab \|\fy\|_{H^1}^2/2, \pq K^N_{\al,\be}(\fy)=-\Dab F(\fy).} 
Then $K^Q_{\al,\be}(\fy_{\al,\be}^\la)$ is non-negative and non-decreasing with respect to $\la\in\R$, and 
\EQ{ \label{KL conc}
 \lim_{\la\to-\I} K^Q_{\al,\be}(\fy_{\al,\be}^\la) = 0,}
from its explicit form. 

\subsection{Energy landscape in various scales} 
First we investigate how $J$ and its derivatives behave with respect to the scaling $\fy_{\al,\be}^\la$, in order to get $\m_{\al,\be}$ as a minimax value. 
The results  of  this subsection are essentially known, at least under more restrictions on the nonlinearity and $(\al,\be)$. 

We start from the origin of the energy space. 
\begin{lem}[Positivity of $K$ near $0$] \label{K>0 near 0}
Assume that $f$ satisfies \eqref{asm f}, and that $(\al,\be)$ satisfies \eqref{range albe} and $(d,\al)\not=(2,0)$. 
Then for any bounded sequence $\fy_n\in H^1(\R^d)\setminus\{0\}$ such that $K_{\al,\be}^Q(\fy_n)\to 0$, we have for large $n$, 
\EQ{
 K_{\al,\be}(\fy_n)>0.}
\end{lem}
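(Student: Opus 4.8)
The plan is to decompose $K_{\al,\be}=K_{\al,\be}^Q+K_{\al,\be}^N$ as in \eqref{def KQN} and show that on the constraint set $\{K_{\al,\be}^Q\to 0\}$ the nonlinear part is negligible compared to the quadratic part, so that $K_{\al,\be}$ inherits the sign of $K_{\al,\be}^Q$, which is $\ge 0$ (and in fact we will argue it is strictly positive). First I would observe that since $(d,\al)\neq(2,0)$, we have $\U\mu>0$ or $\al>0$, and in either case $K_{\al,\be}^Q(\fy)$ controls a genuine power of a Sobolev norm from below: more precisely, $K_{\al,\be}^Q(\fy)=\tfrac{2\al+(d-2)\be}{2}\|\na\fy\|_2^2+\tfrac{2\al+d\be}{2}\|\fy\|_2^2$ is a nonnegative combination with at least one strictly positive coefficient (one checks that under \eqref{range albe} with $(d,\al)\neq(2,0)$ the two coefficients cannot both vanish), so $K_{\al,\be}^Q(\fy_n)\to 0$ forces the corresponding norm ($\|\fy_n\|_{\dot H^1}$ or $\|\fy_n\|_{L^2}$, or both) to vanish; combined with boundedness of $\fy_n$ in $H^1$, one gets $\|\fy_n\|\to 0$ in a suitable space by interpolation.

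Next I would estimate $|K_{\al,\be}^N(\fy_n)|=|\Dab F(\fy_n)|$. From \eqref{def K}, $K_{\al,\be}^N(\fy)=-\int[\al\,\fy f'(\fy)-d\be\,f(\fy)]\,dx$, a combination of $\int \fy f'(\fy)$ and $\int f(\fy)$. Using the structural hypotheses on $f$: the small-$|u|$ bound from \eqref{f_S} gives $|f_S(u)|,|uf_S'(u)|\lesssim |u|^{p_1+2}$ with $p_1+2>2_\star$, and the large-$|u|$ bounds \eqref{f sub}/\eqref{f crit}/\eqref{f exp} give, together with the a priori $H^1$-boundedness and Sobolev (resp. Trudinger–Moser in the 2D exponential case, where the bounded $H^1$ norm keeps us in the subcritical regime), a bound of the form $|K_{\al,\be}^N(\fy_n)|\lesssim \|\fy_n\|^{\theta}$ for some $\theta>2$, times a harmless constant depending on the uniform $H^1$ bound. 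Since the small-power contribution scales like $\|\fy_n\|_{H^1}^{2_\star+\e}$ (using \eqref{f conv}, effectively $p_1\ge 2_\star-2+\e'$ is available, but even $p_1>2_\star-2$ suffices) and the large-power part is controlled similarly with an exponent strictly above $2$, we conclude $|K_{\al,\be}^N(\fy_n)| = o(\|\fy_n\|_X^2)$ where $\|\cdot\|_X$ is the norm that $K_{\al,\be}^Q$ bounds from below. Hence $K_{\al,\be}(\fy_n)=K_{\al,\be}^Q(\fy_n)+K_{\al,\be}^N(\fy_n) \ge \tfrac12 K_{\al,\be}^Q(\fy_n) > 0$ for large $n$, once we know $K_{\al,\be}^Q(\fy_n)>0$; and $K_{\al,\be}^Q(\fy_n)=0$ would force $\fy_n=0$ by the coefficient positivity, contradicting $\fy_n\neq 0$, so $K_{\al,\be}^Q(\fy_n)>0$ for every $n$.

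The main obstacle, and the point requiring care, is making the nonlinear estimate uniform and of the right \emph{relative} order. The subtlety is that $K_{\al,\be}^Q$ need not control the full $H^1$ norm (it may only control $\|\fy\|_{\dot H^1}$, as in $(\al,\be)=(d,-2)$, or only $\|\fy\|_{L^2}$), so one cannot simply bound $|K_{\al,\be}^N|$ by $\|\fy\|_{H^1}^{2+}$; one must split $f=f_S+f_L$ and show each piece's contribution is dominated by whichever norm is vanishing. For $f_S$ this is clean since $p_1>2_\star-2$ gives a power strictly above the critical exponent for the vanishing norm. For $f_L$ in the critical case $f=|u|^{2^\star}/2^\star$, the relevant norm $\|\fy\|_{\dot H^1}$ is exactly critical, so $|\int \fy f'(\fy)| \lesssim \|\fy\|_{\dot H^1}^{2^\star}$, and $2^\star>2$ does the job; in the 2D exponential case one invokes the Trudinger–Moser inequality with the uniform $H^1$ bound to get an exponential-moment bound and then extracts the factor $\|\fy_n\|_{L^2}^2$ (the part $K^Q$ controls when $\al=0$ is excluded, i.e. $(d,\al)\neq(2,0)$ precisely ensures $K^Q$ sees the $L^2$ norm or the gradient with positive weight). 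Organizing these cases and verifying that the exponent is strictly bigger than the one $K^Q$ provides — in every allowed $(\al,\be)$ — is the real content; the rest is bookkeeping.
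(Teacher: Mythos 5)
Your approach---decompose $K_{\al,\be}=K^Q_{\al,\be}+K^N_{\al,\be}$, use the structural hypotheses on $f$ together with Gagliardo--Nirenberg, the $1$D Sobolev embedding, and Trudinger--Moser to show $|K^N_{\al,\be}(\fy_n)|=o(K^Q_{\al,\be}(\fy_n))$, tracking case by case which of $\|\na\fy_n\|_{L^2}$ and $\|\fy_n\|_{L^2}$ the quadratic form actually controls---is exactly the paper's, including the observation that $\fy_n\ne 0$ forces $K^Q_{\al,\be}(\fy_n)>0$.

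One point in your sketch of the 2D exponential case needs tightening. You write that one ``invokes the Trudinger--Moser inequality with the uniform $H^1$ bound'' and ``extracts the factor $\|\fy_n\|_{L^2}^2$.'' As stated this does not close: Trudinger--Moser in the form \eqref{TM} is usable only once $\ka\|\na\fy_n\|_{L^2}^2$ drops below $4\pi$, which a mere $H^1$ bound does not give; what supplies it is that $(d,\al)\ne(2,0)$ forces $\al>0$ in 2D, so $K^Q_{\al,\be}(\fy_n)\gtrsim\|\na\fy_n\|_{L^2}^2\to 0$. And since $K^Q_{\al,\be}$ here is comparable to $\|\na\fy_n\|_{L^2}^2$ (not $\|\fy_n\|_{L^2}^2$, which is merely bounded), the relevant vanishing factor you need is a power of $\|\na\fy_n\|_{L^2}$ strictly greater than $2$. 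The paper obtains exactly this by combining H\"older (exponents $q,q'$), Gagliardo--Nirenberg, and \eqref{TM} to get $|K^N_{\al,\be}(\fy_n)|\lesssim\|\fy_n\|_{L^2}^2\|\na\fy_n\|_{L^2}^{\,p-2/q'}$ with $p-2/q'>2$, the bounded $L^2$ factor being absorbed into the constant. With this correction your plan reproduces the paper's proof.
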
 
Note that if $(d,\al)=(2,0)$ then the conclusion is false, since in that case $K^Q(\fy^\la)=e^{d\be\la}K^Q(\fy)\to 0$ as $\la\to-\I$, but $K(\fy^\la)=e^{d\be\la}K(\fy)$ can be negative. 
\begin{proof}
First we consider the $H^1$ subcritical/critical cases. If $d\ge 2$ then
\EQ{
 |Df(\fy)|+|f(\fy)| \lec |\fy|^{p_1+2} + |\fy|^{p_2+2},}
for some $2_\star<p_1+2<p_2+2\le 2^\star$, hence by the Gagliardo-Nirenberg inequality
\EQ{ \label{GN} 
 \|\fy\|_{L^q_x}^q \lec \|\na\fy\|_{L^2_x}^{d(q/2-1)} \|\fy\|_{L^2_x}^{d-q(d-2)/2},\pq (2\le q\le 2^\star)}
we obtain
\EQ{ \label{GN bd F}
 |F(\fy)|+|\L F(\fy)| \lec \sum_{q=p_1+2,\ p_2+2} \|\na\fy\|_{L^2_x}^{d(q/2-1)} \|\fy\|_{L^2_x}^{d-q(d-2)/2}.}
If $d=1$ then we can dispose of $f_L$ by Sobolev $H^1(\R)\subset L^\I(\R)$. Then we get 
\EQ{ \label{GN bd 1D}
 |F(\fy)|+|\L F(\fy)| \lec \|\na\fy\|_{L^2_x}^{p_1/2+1} \|\fy\|_{L^2_x}^{p_1/2+1}C(\|\fy\|_{H^1}),}
for some function $C$ determined by $f_L$. 

Hence if $2\al+(d-2)\be>0$ then for any $d$ we have 
\EQ{
 |K^N(\fy)| = o(\|\na\fy\|_{L^2_x}^2) = o(K^Q(\fy)).}
Under the assumption, $2\al+(d-2)\be=0$ is possible only for $d=1$, then using \eqref{GN bd 1D}, 
\EQ{
 |K^N(\fy)| = o(\|\fy\|_{L^2_x}^2) = o(K^Q(\fy)).}

Finally we consider the 2D exponential case \eqref{f exp}. Then we have 
\EQ{
 |Df(\fy)| + |f(\fy)| \lec |\fy|^p (e^{\ka|\fy|^2}-1),}
for some $p>2$ and any $\ka>\ka_0$. 
Since $\al>0$, we have $K^Q(\fy_n)\gec \|\na \fy_n\|_{L^2}^2 \to 0$, so it suffices to consider $\fy\in H^1$ satisfying for some $q>1$ satisfying $(4-p)q<2$, 
\EQ{
 q\ka\|\na\fy\|_{L^2}^2 \le 2\pi.} 
Let $q'=q/(q-1)$ be the H\"older conjugate. 
Then by H\"older, Gagliardo-Nirenberg \eqref{GN} and the Trudinger-Moser inequality:
\EQ{ \label{TM}
  \|\na\fy\|_{L^2(\R^2)}< \sqrt{4\pi} \implies \int_{\R^2} (e^{|\fy|^2}-1) dx \lec \frac{\|\fy\|_{L^2(\R^2)}^2}{4\pi-\|\na\fy\|_{L^2(\R^2)}^2},}
we obtain
\EQ{ \label{TM bd F}
 |\L F(\fy)|+|F(\fy)| \pt\lec \|\fy\|_{L^{pq'}}^{p} \|e^{q\ka|\fy|^2}-1\|_{L^1}^{1/q},
 \pr\lec \|\fy\|_{L^2}^{2/q'} \|\na\fy\|_{L^2}^{p-2/q'}\left[\frac{\|\fy\|_{L^2}^2}{4\pi-q\ka\|\na\fy\|_{L^2}^2}\right]^{1/q}
 \pr\lec \|\fy\|_{L^2}^2 \|\na\fy\|_{L^2}^{p-2/q'}.}
Since $p-2/q'>2$ by the choice of $q$, we get 
\EQ{
 |K^N(\fy)|=o(\|\na\fy\|_{L^2}^2)=o(K^Q(\fy)).} 

Thus in all cases $K(\fy)\sim K^Q(\fy)>0$ when $0<K^Q(\fy)\ll 1$. 
\end{proof}

The following inequalities describe the graph of $J$, and will play the central role in the succeeding arguments. 
\begin{lem}[Mountain-pass structure] \label{mount}
Assume that $f$ satisfies \eqref{asm f} and $(\al,\be)$ satisfies \eqref{range albe}. Then for any $\fy\in H^1(\R^d)$ we have 
\EQ{ \label{K F mono} 
 \pt (\Dab-\bar\mu)\|\fy\|_{H^1}^2 \le -2|\be|\min(\|\fy\|_{L^2}^2,\|\na\fy\|_{L^2}^2),
 \pr (\Dab-\bar\mu)F(\fy) \ge \al\e F(\fy), }
where $\e>0$ is given in \eqref{f conv}. Hence 
\EQ{ \label{J mono}
 (\bar\mu-\Dab)J(\fy) \ge \al\e F(\fy)  + |\be|\min(\|\fy\|_{L^2}^2,\|\na\fy\|_{L^2}^2).}
Moreover we have
\EQ{ \label{J conv}
 -(\Dab-\bar\mu)(\Dab-\U\mu)J(\fy) \pt= (\Dab-\bar\mu)(\Dab-\U\mu)F(\fy) 
 \pr\ge \frac{2\al\e}{d+1}\Dab F(\fy) \ge \frac{2\al\e\bar\mu}{d+1}F(\fy).}
\end{lem}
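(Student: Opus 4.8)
The plan is to split $J=\tfrac12\|\cdot\|_{H^1}^2-F$ and track the action of $\Dab$ on the quadratic part $\|\fy\|_{H^1}^2=\|\na\fy\|_{L^2}^2+\|\fy\|_{L^2}^2$ and on $F(\fy)=\int_{\R^d}f(\fy)\,dx$ separately. A change of variables in the definition of $\fy^\la_{\al,\be}$ shows that $\|\na\fy\|_{L^2}^2$ and $\|\fy\|_{L^2}^2$ are eigenvectors of $\Dab$ with eigenvalues $2\al+(d-2)\be$ and $2\al+d\be$, which are precisely $\bar\mu$ and $\U\mu$ in one order or the other; in particular $(\Dab-\bar\mu)(\Dab-\U\mu)$ annihilates $\|\fy\|_{H^1}^2$. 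Differentiating $F(\fy^\la_{\al,\be})=e^{d\be\la}\int_{\R^d}f(e^{\al\la}\fy)\,dx$ in $\la$ (legitimate by the continuity of $F,\Dab F,\Dab^2F$ on $H^1$ noted after \eqref{asm f}) gives $\Dab F(\fy)=\int_{\R^d}[(\al D+d\be)f](\fy)\,dx$, and since $f\in C^2$, iterating gives $\Dab^2F(\fy)=\int_{\R^d}[(\al D+d\be)^2f](\fy)\,dx$.

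For the first line of \eqref{K F mono}, subtracting $\bar\mu$ cancels whichever eigenvalue coefficient equals $\bar\mu$, leaving $(\Dab-\bar\mu)\|\fy\|_{H^1}^2$ equal to $-2\be\|\na\fy\|_{L^2}^2$ for $\be\ge0$ and to $2\be\|\fy\|_{L^2}^2$ for $\be<0$; in either case the right side is $\le-2|\be|\min(\|\fy\|_{L^2}^2,\|\na\fy\|_{L^2}^2)$ since each summand dominates the minimum. For the second line I would write
\[
 (\Dab-\bar\mu)F(\fy)=\al\int_{\R^d}[(D-2_\star-\e)f](\fy)\,dx+\bigl[\al(2_\star+\e)+d\be-\bar\mu\bigr]F(\fy);
\]
the first term is $\ge0$ by \eqref{f conv} and $\al\ge0$, while subtracting $\al\e$ from the bracket leaves $2_\star\al+d\be-\bar\mu$, which a case split on the sign of $\be$ using \eqref{range albe} shows equals $\tfrac4d\al$ (if $\be\ge0$) or $\tfrac2d(2\al+d\be)$ (if $\be<0$), hence $\ge0$; since $f\ge0$ by \eqref{f conv}, the bracket contributes at least $\al\e F(\fy)$. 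Then \eqref{J mono} is immediate from $(\bar\mu-\Dab)J(\fy)=-\tfrac12(\Dab-\bar\mu)\|\fy\|_{H^1}^2+(\Dab-\bar\mu)F(\fy)$ and the two bounds just obtained.

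For \eqref{J conv}, annihilation of the quadratic part gives $-(\Dab-\bar\mu)(\Dab-\U\mu)J(\fy)=(\Dab-\bar\mu)(\Dab-\U\mu)F(\fy)$, and by the formula for $\Dab$ on $F$ this equals $\int_{\R^d}\bigl[\al(D-2)\bigl(\al(D-2)+2\be\bigr)f\bigr](\fy)\,dx$, because $\al D+d\be-(2\al+d\be)=\al(D-2)$ and $\al D+d\be-(2\al+(d-2)\be)=\al(D-2)+2\be$. The heart is then the elementary operator identity
\[
 \al(D-2)\bigl(\al(D-2)+2\be\bigr)f=\sigma\bigl[(\e+\tfrac4d)f+(D-2_\star-\e)f\bigr]+\al^2(D-2)(D-2_\star-\e)f,\qquad \sigma:=\al^2(\e+\tfrac4d)+2\al\be,
\]
which exhibits $(\Dab-\bar\mu)(\Dab-\U\mu)F(\fy)$ as a nonnegative combination of $F(\fy)$, $\int(D-2_\star-\e)f(\fy)\,dx$ and $\int(D-2)(D-2_\star-\e)f(\fy)\,dx$, using $\sigma=\al^2\e+\tfrac{2\al}{d}(2\al+d\be)\ge\al^2\e\ge0$ from \eqref{range albe} together with both clauses of \eqref{f conv}. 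Comparing with $\Dab F(\fy)=\al\int(D-2_\star-\e)f(\fy)\,dx+[\al(2_\star+\e)+d\be]F(\fy)$ coefficient by coefficient then yields $(\Dab-\bar\mu)(\Dab-\U\mu)F(\fy)\ge\tfrac{2\al\e}{d+1}\Dab F(\fy)$: the weight on $\int(D-2_\star-\e)f$ requires only $\sigma\ge\tfrac{2\al^2\e}{d+1}$ (true since $d\ge1$), and the weight on $F$ requires $(\e+\tfrac4d)\sigma\ge\tfrac{2\al\e}{d+1}[\al(2_\star+\e)+d\be]$, which, after inserting $\sigma=\al^2\e+\tfrac{2\al}{d}(2\al+d\be)$ and using $2\al+d\be\ge0$, becomes a sum of manifestly nonnegative terms for every $d\ge1$. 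Finally $\Dab F(\fy)\ge\bar\mu F(\fy)$ by the second line of \eqref{K F mono}, which completes \eqref{J conv}.

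The step I expect to be the main obstacle is \eqref{J conv}: one must find the correct grouping of the second-order-in-$D$, quadratic-in-$(\al,\be)$ differential expression so that \emph{both} convexity hypotheses in \eqref{f conv} can be fed in with nonnegative weights and the precise constant $\tfrac{2\al\e}{d+1}$ comes out, and then verify that the residual scalar inequalities survive throughout the two-parameter range \eqref{range albe}, including the degenerate boundary cases $\al=0$ and $2\al+d\be=0$ — the latter being the scaling behind $K_{d,-2}$, where $\U\mu=0$.
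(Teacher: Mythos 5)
Your proof is correct and follows essentially the same route as the paper's: both compute $\Dab$ on the quadratic part by the eigenvector structure, write $\Dab F(\fy)=\int(\al D+d\be)f$, decompose $\al D+d\be-\bar\mu$ into $\al(D-2_\star)$ plus a nonnegative multiple of the identity, and feed both clauses of \eqref{f conv} in with nonnegative weights to extract the constant $\tfrac{2\al\e}{d+1}$. Your explicit operator identity and coefficient-by-coefficient comparison with $\Dab F$ is just a more spelled-out organization of the paper's bound via $\al\e\int[\al(D-2)+\tfrac2d(2\al+d\be)]f\,dx$ and $\min(1,2/d)\ge 2/(d+1)$, not a different argument.
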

\begin{proof}
First we observe that 
\EQ{
 \pt (\L-2\al-(d-2)\be)\|\na\fy\|_{L^2_x}^2 = 0,
 \pq (\L-2\al-d\be)\|\fy\|_{L^2_x}^2 = 0,}
and for any functional $S$ of the form $S(\fy)=\int_{\R^d}s(\fy)dx$, 
\EQ{ \label{Dab & D}
 \Dab S(\fy) = \int_{\R^d}[(\al D+\be d) s](\fy) dx,}
where $Df(\fy)=\fy f'(\fy)$ as defined in \eqref{def D}. 
Using them, we obtain 
\EQ{
 \pt (\L-\bar\mu)\|\fy\|_{H^1}^2 = -2 |\be| \times \CAS{\|\na\fy\|_{L^2}^2 &(\be\ge 0) \\ \|\fy\|_{L^2}^2 &(\be\le 0)},}
and also 
\EQ{ \label{LF expand}
 \L F(\fy) \pt= \int [\al(D-2)+2\al+d\be] f(\fy) dx
 \pr= \int [(\al D - 2\al + 2\be) + 2\al+(d-2)\be]f(\fy) dx.}
Since 
\EQ{ \label{D2 F}
 \al D - 2 \al + 2\be = \al(D-2_\star) + \frac{2}{d}(2\al+d\be),}
using \eqref{f conv}, we obtain
\EQ{ \label{low bd DF}
 \L F \ge (\bar\mu + \al\e) F.}
 
Using the above computations, we have 
\EQ{
 -(\L-\bar\mu)(\L-\U\mu)J(\fy) \pt= (\L-\bar\mu)(\L-\U\mu) F(\fy) 
 \pr= \al\int (\al D -2\al+2\be)(D-2) f(\fy)dx
 \pr\ge \al\e \int\left[\al(D-2)+\frac{2}{d}(2\al+d\be)\right]f(\fy)dx
 \pr\ge \frac{2}{d+1}\al\e\L F(\fy) \ge \frac{2\al\e\bar\mu}{d+1}F(\fy),}
where we used \eqref{D2 F} and \eqref{f conv} in the first inequality, 
$\min(1,2/d)\ge 2/(d+1)$ in the second, and \eqref{low bd DF} in the last. 
\end{proof}

Using the above inequalities, we can replace the minimized quantity in \eqref{min J} with a positive definite one, while extending the minimizing region from ``the mountain ridge'' to ``the mountain flank''. Let 
\EQ{ \label{def H}
 H_{\al,\be}:=(1-\Dab/\bar\mu)J.}
Then the above lemma implies that $H_{\al,\be}>0$ and 
\EQ{ \label{DH>0}
 \Dab H_{\al,\be} \pt= -(\L-\U\mu)(\L-\bar\mu)J/\bar\mu + \U\mu(1-\L/\bar\mu)J
 \pr\ge \frac{2\al\e}{d+1} F + \U\mu H_{\al,\be} \ge 0.}
We can rewrite the minimization problem \eqref{min J} by using $H$:
\begin{lem}[Minimization of $H$]
Assume that $f$ satisfies \eqref{asm f} and $(\al,\be)$ satisfies \eqref{range albe}. Then $\m_{\al,\be}$ in \eqref{min J} equals 
\EQ{ \label{min H}
 \m_{\al,\be} =  \inf\{H_{\al,\be}(\fy) \mid \fy\in H^1(\R^d),\ \fy\not=0,\ K_{\al,\be}(\fy) \le 0\}.}
\end{lem}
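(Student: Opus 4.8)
The plan is to prove the two inclusions implied by the identity \eqref{min H}. Write $\m_{\al,\be}$ for the quantity in \eqref{min J} and $\ti\m$ for the quantity in \eqref{min H}; we must show $\m_{\al,\be}=\ti\m$. Since $\{K_{\al,\be}(\fy)=0\}\subset\{K_{\al,\be}(\fy)\le 0\}$, and since on the constraint surface $K_{\al,\be}(\fy)=0$ we have $H_{\al,\be}(\fy)=J(\fy)-\Dab J(\fy)/\bar\mu=J(\fy)-K_{\al,\be}(\fy)/\bar\mu=J(\fy)$, it is immediate that $\ti\m\le\m_{\al,\be}$. The real content is the reverse inequality $\m_{\al,\be}\le\ti\m$, i.e. that any $\fy\not=0$ with $K_{\al,\be}(\fy)\le 0$ can be used to produce a competitor for \eqref{min J} with $J$-value at most $H_{\al,\be}(\fy)$.

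The mechanism for this is a scaling argument along the ray $\la\mapsto\fy_{\al,\be}^\la$. First I would record the one-variable function $j(\la):=J(\fy_{\al,\be}^\la)$, whose derivative is $j'(\la)=K_{\al,\be}(\fy_{\al,\be}^\la)$ by \eqref{def Dab}. By Lemma~\ref{K>0 near 0} together with \eqref{KL conc} (noting $K^Q_{\al,\be}(\fy^\la_{\al,\be})\to 0$ as $\la\to-\I$, and handling the exceptional case $(d,\al)=(2,0)$ separately — there $K=e^{d\be\la}K(\fy)$ keeps its sign, so if $K(\fy)<0$ the infimum over that ray is $-\I$ for $j$ but one checks $H$ is still the right object via \eqref{DH>0}, or one simply notes $m_{\al,\be}$ is approached from $K=0$ which forces $K(\fy)=0$ identically on the ray and the claim is trivial), we get $K_{\al,\be}(\fy_{\al,\be}^\la)>0$ for $\la\ll 0$. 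Hence if $K_{\al,\be}(\fy)\le 0$, the continuous function $j'$ changes sign, so there is $\la_0\le 0$ with $K_{\al,\be}(\fy_{\al,\be}^{\la_0})=0$; thus $\fy_{\al,\be}^{\la_0}$ is admissible for \eqref{min J} and $\m_{\al,\be}\le J(\fy_{\al,\be}^{\la_0})$. It remains to compare $J(\fy_{\al,\be}^{\la_0})$ with $H_{\al,\be}(\fy)$.

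For that comparison I would use the monotonicity \eqref{DH>0}: $h(\la):=H_{\al,\be}(\fy_{\al,\be}^\la)$ satisfies $h'(\la)=\Dab H_{\al,\be}(\fy^\la_{\al,\be})\ge 0$ (applying the lemma to $\fy^\la_{\al,\be}$), so $h$ is non-decreasing in $\la$; since $\la_0\le 0$ this gives $H_{\al,\be}(\fy_{\al,\be}^{\la_0})\le H_{\al,\be}(\fy)$. Combined with the observation above that $H_{\al,\be}=J$ on $\{K_{\al,\be}=0\}$, we get $J(\fy_{\al,\be}^{\la_0})=H_{\al,\be}(\fy_{\al,\be}^{\la_0})\le H_{\al,\be}(\fy)$, hence $\m_{\al,\be}\le H_{\al,\be}(\fy)$. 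Taking the infimum over all admissible $\fy$ in \eqref{min H} yields $\m_{\al,\be}\le\ti\m$, completing the proof.

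The step I expect to be the main obstacle is the very first assertion that $K_{\al,\be}$ is positive far out on the ray ($\la\to-\I$), because Lemma~\ref{K>0 near 0} is stated for \emph{bounded} sequences with $K^Q\to 0$, and one must verify that $\fy^\la_{\al,\be}$ indeed stays bounded in $H^1$ along $\la\to-\I$ while $K^Q(\fy^\la_{\al,\be})\to 0$ — this is clear from the explicit exponents $e^{(2\al+(d-2)\be)\la}$ and $e^{(2\al+d\be)\la}$ in \eqref{range albe}, both nonnegative, except precisely in the borderline case $(d,\al)=(2,0)$, which is why that case is singled out in the lemma's hypotheses and must be argued by hand (there $K$ retains a fixed sign along the ray, and one checks the identity \eqref{min H} still holds, or reduces to it, directly). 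A secondary point to be careful about is the existence of the zero $\la_0$ and that it satisfies $\la_0\le 0$: this uses $K_{\al,\be}(\fy)\le 0$ at $\la=0$ and $K_{\al,\be}>0$ for $\la\ll 0$, plus continuity of $\la\mapsto K_{\al,\be}(\fy^\la_{\al,\be})$, which follows from the continuity of $F,\Dab F,\Dab^2 F$ on $H^1$ noted after \eqref{asm f}.
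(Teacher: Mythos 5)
Your main argument (the case $(d,\al)\ne(2,0)$) is exactly the paper's: $H=J$ on the null set $\{K=0\}$ gives the trivial inequality, and for the reverse, Lemma~\ref{K>0 near 0} plus \eqref{KL conc} produces a zero $\la_0\le 0$ of $\la\mapsto K(\fy^\la)$, and the monotonicity \eqref{DH>0} transports $H(\fy^{\la_0})\le H(\fy)$. This part is correct and nothing more needs to be said.

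The gap is the exceptional case $(d,\al)=(2,0)$, which you correctly flag as the obstacle but do not actually resolve. Here $\al=0$, $d=2$, $\be>0$, so the dilation $\fy^\la_{0,\be}=\fy(e^{-\be\la}x)$ leaves $\|\na\fy\|_{L^2}$ invariant; consequently $H(\fy)=\tfrac12\|\na\fy\|_{L^2}^2$ is \emph{constant} along the ray and $K(\fy^\la)=e^{2\be\la}K(\fy)$ never changes sign. Both of your suggested remedies therefore fail: invoking \eqref{DH>0} gives only $\Dab H=0$, which is vacuous, and the dilation never produces the zero $\la_0$ you need to create a competitor for \eqref{min J}; and "$m_{\al,\be}$ is approached from $K=0$" is circular, since the whole content of the lemma is that one may relax $K=0$ to $K\le 0$. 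The paper's fix is to abandon the $(\al,\be)$-scaling in this case and use the amplitude scaling $\nu\mapsto\nu\fy$ with $\nu\in(0,1)$: since $K^Q(\nu\fy)=\nu^2K^Q(\fy)$ while $|K^N(\nu\fy)|=o(\nu^4)$ by \eqref{GN bd F} or \eqref{TM bd F}, one has $K(\nu\fy)>0$ for small $\nu$, hence by continuity there is $\nu_0\in(0,1)$ with $K(\nu_0\fy)=0$, and then $H(\nu_0\fy)=\tfrac{\nu_0^2}{2}\|\na\fy\|_{L^2}^2\le H(\fy)$. You should replace the parenthetical hand-waving with this two-line argument.
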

\begin{proof}
Let $m'$ denote the right hand side of \eqref{min H}. 
Then $m\ge m'$ is trivial because $J=H$ if $K=0$, so it suffices to show $m\le m'$. Take $\fy\in H^1$ such that $K(\fy)<0$. 

If $(d,\al)\not=(2,0)$, then from Lemma \ref{K>0 near 0} together with \eqref{KL conc}, we deduce that 
\EQ{ \label{K0 by scal}
 (d,\al)\not=(2,0),\ K(\fy)<0 \implies \exists\la<0,\ K(\fy^\la)=0,\ H(\fy^\la)\le H(\fy),}
where the latter inequality follows from \eqref{DH>0} since $H(\fy^\la)$ is nondecreasing in $\la$. 
Hence $m\le m'$. 

If $(d,\al)=(2,0)$, then we use another scaling $\nu u$ with $\nu\in(0,1)$. 
We have $K^Q(\nu\fy)=\nu^2K^Q(\fy)$ and $|K^N(\nu\fy)|=o(\nu^4)$ by \eqref{GN bd F} or \eqref{TM bd F}. 
Hence $K(\nu\fy)>0$ for small $\nu>0$, and so we deduce  
\EQ{ \label{K0 by amp}
 (d,\al)=(2,0),\ K(\fy)<0 \implies \exists\nu\in(0,1),\ K(\nu\fy)=0,\ H(\nu\fy)\le H(\fy),} 
where the inequality follows from $H(\fy)= \|\na\fy\|_{L^2_x}^2/2$ in this case. Hence $m\le m'$.

\end{proof}

\subsection{Ground state as common minimizer}
Now we can prove the parameter independence of $\m_{\al,\be}$ via its characterization by the ground states. 
First we consider the $H^1$ subcritical case. 
\begin{lem}[Ground state in the subcritical case] \label{Q sub} 
Assume that $f$ satisfies \eqref{asm f} and \eqref{f sub}, and that $(\al,\be)$ satisfies \eqref{range albe}. Then $\m_{\al,\be}$ in \eqref{min J} is positive and independent of $(\al,\be)$. Moreover $\m_{\al,\be}=J(Q)$ for some $Q\in H^1(\R^d)$ solving the static NLKG \eqref{static NLKG} with the minimal $J(Q)$ among the solutions in $H^1(\R^d)$. 
\end{lem}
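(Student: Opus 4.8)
The plan is to show that $\m_{\al,\be}$ coincides, for every $(\al,\be)$ in \eqref{range albe}, with the least energy
\[
 \m_{\mathrm{gs}}:=\inf\{J(Q)\mid Q\in H^1(\R^d)\setminus\{0\},\ -\De Q+Q=f'(Q)\}
\]
of the nontrivial static solutions; since $\m_{\mathrm{gs}}$ is visibly free of $(\al,\be)$, this yields at once the asserted independence and the ground-state characterization, and with $Q$ as above positivity follows from $\m=J(Q)=H_{\al,\be}(Q)>0$ by Lemma~\ref{mount}. One inequality is free: if $Q$ solves \eqref{static NLKG} then $J'(Q)=0$, hence $K_{\al,\be}(Q)=\Dab J(Q)=\LR{J'(Q),\,\partial_\la Q^\la_{\al,\be}|_{\la=0}}=0$ for \emph{every} $(\al,\be)$, so $Q$ is admissible in \eqref{min J} with $J(Q)=H_{\al,\be}(Q)$, giving $\m_{\al,\be}\le\m_{\mathrm{gs}}$. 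The real work is the reverse bound, which I would obtain by proving that $\m_{\al,\be}$ is attained and that a minimizer produces a nontrivial solution of \eqref{static NLKG} of the same energy.

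For attainment I would work with the reformulation \eqref{min H}. Starting from a minimizing sequence $\fy_n$ with $K_{\al,\be}(\fy_n)\le0$, I first normalize: when $(d,\al)\ne(2,0)$, \eqref{K0 by scal} (using the monotonicity \eqref{DH>0} of $H_{\al,\be}$ along the scaling) lets me rescale so that $K_{\al,\be}(\fy_n)=0$, whence $H_{\al,\be}(\fy_n)=J(\fy_n)\to\m_{\al,\be}$; in the exceptional case $(d,\al)=(2,0)$, where $H_{0,\be}(\fy)=\tfrac12\|\na\fy\|_{L^2}^2$ and the problem is invariant under the dilation $\fy\mapsto\fy(\cdot/R)$, I additionally use this dilation to normalize $F(\fy_n)=1$. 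The sequence is then bounded in $H^1$: one of $\|\na\fy_n\|_{L^2}$, $\|\fy_n\|_{L^2}$ is controlled directly by $H_{\al,\be}(\fy_n)$ via Lemma~\ref{mount}, while the identity $K^Q_{\al,\be}(\fy_n)=\Dab F(\fy_n)$ together with the convexity \eqref{f conv} (which bounds $\Dab F$ by $H_{\al,\be}$) and Gagliardo--Nirenberg \eqref{GN} controls the other. A standard concentration--compactness argument then applies: vanishing is excluded because $K^Q_{\al,\be}(\fy_n)\not\to0$ by Lemma~\ref{K>0 near 0} (so $\liminf\|\fy_n\|_{L^q}>0$ for some subcritical $q$), dichotomy is excluded by strict subadditivity, and the subcritical growth \eqref{f sub} makes $F,\Dab F,\Dab^2F$ continuous along the recentered sequence; one gets a translate with $\fy_n\rightharpoonup Q_\ast\ne0$, and weak lower semicontinuity of $K^Q_{\al,\be}$ and $H_{\al,\be}$ together with weak continuity of $K^N_{\al,\be}$ give $K_{\al,\be}(Q_\ast)\le0$ and $H_{\al,\be}(Q_\ast)\le\m_{\al,\be}$. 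If $K_{\al,\be}(Q_\ast)<0$ one rescales (by \eqref{K0 by scal} or \eqref{K0 by amp}) to a strictly smaller value of $H_{\al,\be}$, a contradiction; so $K_{\al,\be}(Q_\ast)=0$ and $Q_\ast$ is a minimizer.

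Finally, I would show the minimizer $Q_\ast$ solves \eqref{static NLKG}. As $K_{\al,\be}(Q_\ast)=0$ and $H_{\al,\be}=J$ on $\{K_{\al,\be}=0\}$, $Q_\ast$ minimizes $J$ on that set; moreover \eqref{J conv} evaluated at $\Dab J(Q_\ast)=0$ gives
\[
 \Dab K_{\al,\be}(Q_\ast)=\Dab^2 J(Q_\ast)\le-\bar\mu\U\mu J(Q_\ast)-\tfrac{2\al\e\bar\mu}{d+1}F(Q_\ast)<0,
\]
using $J(Q_\ast)=\m_{\al,\be}>0$ and, when $\U\mu=0$, that $\al>0$ with $F(Q_\ast)>0$. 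In particular $K_{\al,\be}'(Q_\ast)\ne0$, so Lagrange's rule gives $J'(Q_\ast)=\ga K_{\al,\be}'(Q_\ast)$, and pairing with $\partial_\la Q^\la_{\al,\be}|_{\la=0}$ turns this into $0=\Dab J(Q_\ast)=\ga\,\Dab^2 J(Q_\ast)$, hence $\ga=0$ and $-\De Q_\ast+Q_\ast=f'(Q_\ast)$. In the exceptional case $(d,\al)=(2,0)$ the analogous step yields only a mass-modified equation $-\De Q_\ast+cQ_\ast=cf'(Q_\ast)$ with $c=\|\na Q_\ast\|_{L^2}^2 / \int(D-2)f(Q_\ast)>0$ (a multiplier for the constraint $\|\fy\|_{L^2}^2=2F(\fy)$); but then $Q(x):=Q_\ast(x/\sqrt c)$ solves \eqref{static NLKG}, and, because $\|\na\cdot\|_{L^2}^2$ is dilation-invariant in two dimensions, $J(Q)=\tfrac12\|\na Q_\ast\|_{L^2}^2=H_{0,\be}(Q_\ast)=\m_{0,\be}$. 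Either way $\m_{\al,\be}$ equals $J$ of a nontrivial solution of \eqref{static NLKG}, so $\m_{\al,\be}\ge\m_{\mathrm{gs}}$, and combined with the easy direction $\m_{\al,\be}=\m_{\mathrm{gs}}$ for all admissible $(\al,\be)$ — which is precisely the claimed independence and attainment by a least-energy solution.

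The step I expect to be most delicate is showing that the constraint $K_{\al,\be}=0$ is \emph{natural}, i.e.\ that the Lagrange multiplier vanishes, together with the whole-space concentration--compactness (ruling out vanishing and splitting, and the continuity of $F,\Dab F,\Dab^2F$ along the recentered minimizing sequence), both of which rest essentially on the subcritical growth \eqref{f sub}; the degenerate case $(d,\al)=(2,0)$, where $H_{\al,\be}$ collapses to a scaling-invariant Dirichlet energy, additionally forces the dilation-normalization and rescaling argument indicated above.
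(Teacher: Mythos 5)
Your overall strategy coincides with the paper's: reformulate via $H_{\al,\be}$, normalize $K_{\al,\be}=0$ by scaling, show attainment, prove the Lagrange multiplier vanishes via $\Dab^2 J<0$, treat $(d,\al)=(2,0)$ separately by a further rescaling, and deduce $\m_{\al,\be}=\m_{\mathrm{gs}}$ and its $(\al,\be)$-independence. Your Lagrange multiplier step and your treatment of the degenerate case are essentially identical to the paper's (your $c=1-2\y$ is a reparameterization of their $(\y d\be-1)$, and your $Q_*(x/\sqrt c)$ is their $\psi^\la$).

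Where you genuinely diverge is in the compactness step. The paper applies the \emph{Schwartz symmetrization} to the minimizing sequence — this preserves the nonlinear parts and shrinks $\|\na\cdot\|_{L^2}$, hence stays in the admissible set and improves $H$ — and then uses the compact embedding $H^1_{\mathrm{rad}}\hookrightarrow L^p$ ($2<p<2^\star$) to pass to a strong $L^p$ limit, immediately giving weak continuity of the nonlinear terms. You instead invoke translation-based concentration–compactness, ruling out vanishing via Lemma~\ref{K>0 near 0} (fine) and dichotomy via strict subadditivity of $\m$. That last point is the one real gap: for a general subcritical $f$ (not a pure power) strict subadditivity of the constrained infimum is not automatic, and you do not sketch a proof; the paper's symmetrization route circumvents this issue entirely and is both shorter and more robust. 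You do flag this as the "most delicate" point, and your plan is salvageable (e.g.\ via a scaling argument to prove strict subadditivity), but as written it is the one place where your proposal has an unfilled hole. Everything else — the $H^1$-boundedness via Lemma~\ref{mount}/\eqref{J mono}, exclusion of the trivial limit, the rescaling to restore $K=0$, and the final naturalness-of-the-constraint argument — matches the paper.

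One small slip: when you say that if $K_{\al,\be}(Q_*)<0$ a rescaling gives "a contradiction," what you actually conclude is $H(Q_*^\la)<H(Q_*)\le m$ with $K(Q_*^\la)=0$, which contradicts the \emph{definition} of $m$ (so the rescaled function would be a minimizer with strictly smaller $H$ than $m$, impossible). Worth stating explicitly; as phrased it reads as if you already knew $Q_*$ were a minimizer.
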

\begin{proof}
Let $\fy_n\in H^1$ be a minimizing sequence for \eqref{min H}, namely $K(\fy_n)\le 0$, $\fy_n\not=0$ and $H(\fy_n)\searrow m$. 

First we consider the case $(d,\al)\not=(2,0)$. Let $\fy_n^*$ be the Schwartz symmetrization of $\fy_n$, i.e.~the radial decreasing rearrangement. 
Since the symmetrization preserves the nonlinear parts and does not increase the $\dot H^1$ part, we have $\fy_n^*\not=0$, $K(\fy_n^*)\le K(\fy_n)\le 0$ and $H(\fy_n)\ge H(\fy_n^*)\to m$. 
Then using \eqref{K0 by scal}, we may replace it by symmetric $\psi_n\in H^1$ such that 
\EQ{ \label{def psin}
 \psi_n\not=0,\pq K(\psi_n)=0, \pq J(\psi_n)=H(\psi_n) \to m.}

If $\al>0$, then \eqref{J mono} implies 
\EQ{
 (\bar\mu + \al\e)J(\psi_n) \ge \al\e\|\psi_n\|_{H^1}^2/2,}
hence $\psi_n$ is bounded in $H^1$. 

If $\al=0$ (and $d>2$), then $H(\psi_n)=\|\na\psi_n\|_{L^2_x}^2/d$ is bounded, 
and if $\|\psi_n\|_{L^2}\to \I$, then by \eqref{GN bd F} 
\EQ{
 d\be \|\psi_n\|_{L^2}^2 \le 2K^Q(\psi_n) = -2K^N(\psi_n) \le o(\|\psi_n\|_{L^2}^{d-2_\star(d-2)/2}),}
but since $d-2_\star(d-2)/2<2$, this is a contradiction. Hence $\psi_n$ is bounded in $H^1$. 

Since $\psi_n$ is bounded in $H^1$, after replacing with some appropriate subsequence, it converges to some $\psi$ weakly in $H^1$. 
By the radial symmetry, it also converges strongly in $L^p$ for all $2<p<2^\star$. Then in the subcritical case \eqref{f sub}, the nonlinear parts converge, and so $K(\psi)\le 0$ and $H(\psi)\le m$. 

If $\psi=0$, then $K(\psi_n)=0$ implies that $K^Q(\psi_n)=-K^N(\psi_n)\to 0$, and by Lemma \ref{K>0 near 0} we have $K(\psi_n)>0$ for large $n$, a contradiction. Hence $\psi\not=0$. 

By \eqref{K0 by scal}, we may replace $\psi$ by its rescaling, so that $K(\psi)=0$, $J(\psi)=H(\psi)\le m$ and $\psi\not=0$. 
Then $\psi$ is a minimizer and $m=H(\psi)>0$. 

Since $\psi$ is a minimizer for \eqref{min J}, there is a Lagrange multiplier $\y\in\R$ such that 
\EQ{ \label{Lagrange}
 J'(\psi) = \y K'(\psi).}
Then denoting $\L\psi=\p_\la \psi^\la_{\al,\be}|_{\la=0}$, we get 
\EQ{
 0 = K(\psi) = \L J(\psi) = \LR{J'(\psi)|\L \psi} = \y\LR{K'(\psi)|\L\psi}
 = \y \L^2 J(\psi).}
By \eqref{J conv} and $\L J(\psi)=0$, we have  
\EQ{
 \L^2 J(\psi) \le -\bar\mu\U\mu J(\psi) - \frac{2\al\e\bar\mu}{d+1}F(\psi)<0,}
since $\U\mu>0$ or $\al>0$. 
 
Therefore $\y=0$ and $\psi$ is a solution to \eqref{static NLKG}. The minimality of $J(\psi)$ among the solutions is clear from \eqref{min J}, since every solution $Q$ in $H^1$ of \eqref{static NLKG} satisfies $K(Q)=\LR{J'(Q)|\L Q}=0$. 
This implies that $\m_{\al,\be}$ is independent of $(\al,\be)$. 

In the exceptional case $(d,\al)=(2,0)$, the above argument needs considerable modifications, due to the scaling invariance 
\EQ{
 H(\fy)=\|\na\fy\|_{L^2}^2/2 = H(\fy^\la), \pq K(\fy^\la)=e^{d\be\la}K(\fy).} 
First, we should use \eqref{K0 by amp} instead of \eqref{K0 by scal} to get $\psi_n$ satisfying \eqref{def psin}. 
Next, the invariance breaks the $H^1$ boundedness of $\psi_n$. 
But we are free to replace each $\psi_n$ by its rescaling so that $\|\psi_n\|_{L^2}=1$, without losing its properties \eqref{def psin}. 
Then $1=\|\psi_n\|_{L^2}^2=2F(\psi_n)\to 2F(\psi)$, which clearly implies that the limit $\psi\not=0$. 
By \eqref{K0 by amp}, we may replace $\psi$ by its constant multiple, so that $K(\psi)=0$, $J(\psi)=H(\psi)\le m$ and $\psi\not=0$. 
Then $\psi$ is a minimizer and $m=H(\psi)>0$. 

Finally, the invariance gives us $\L^2J(\psi)=0$ and the Lagrange multiplier $\y$ may be nonzero. The equation \eqref{Lagrange} is written in this case 
\EQ{
 -\De \psi = (\y d \be -1)[\psi-f'(\psi)].}
Since $\LR{-\De\psi|\psi}_{L^2_x}>0$ and 
\EQ{
 \LR{\psi-f'(\psi)|\psi}_{L^2_x} = K_{0,2/d}(\psi)-\int(D-2)f(\psi)dx < 0,}
we have $(\y d \be -1)<0$. Hence there exists $\la>0$ such that $\psi^\la$ solves the static NLKG \eqref{static NLKG}, and it is also a minimizer. 
\end{proof}

\subsection{$H^1$ critical case; massless threshold} 
In the $H^1$ critical case \eqref{f crit}, we still have the $(\al,\be)$ independence, but $\m_{\al,\be}$ is equal to the massless energy of the massless ground state. This is a consequence of the invariance of the massless energy with respect to the $\dot H^1$ scaling. 
\begin{lem}[Ground state in $H^1$ critical case] \label{Q crit}
Assume that $f$ satisfies \eqref{f crit}, and that $(\al,\be)$ satisfies \eqref{range albe}. Then $\m_{\al,\be}$ in \eqref{min J} is positive and independent of $(\al,\be)$. Moreover $\m_{\al,\be} = J^{(0)}(Q)$ for some $Q\in\dot H^1(\R^d)$ solving the static massless NLKG \eqref{static NLW}, with the minimal $J^{(0)}(Q)$ among the solutions in $\dot H^1(\R^d)$. 
\end{lem}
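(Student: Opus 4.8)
The plan is to prove that $\m_{\al,\be}$ equals, for every $(\al,\be)$ in the range \eqref{range albe}, the massless energy $J^{(0)}(W)$ of the Aubin--Talenti function $W\in\dot H^1(\R^d)$: the extremizer of the Sobolev inequality $\|\fy\|_{L^{2^\star}}\le C_S\|\na\fy\|_{L^2}$, which solves $-\De W=|W|^{2^\star-2}W=f'(W)$ and, by testing the equation against $W$ and using equality in Sobolev, satisfies $\|\na W\|_{L^2}^2=\int|W|^{2^\star}=2^\star F(W)=C_S^{-d}$ and $J^{(0)}(W)=(\frac12-\frac1{2^\star})\|\na W\|_{L^2}^2=\frac1d C_S^{-d}$. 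The key point, in contrast with the subcritical Lemma \ref{Q sub}, is that no concentration--compactness is needed: by \eqref{min H} we have $\m_{\al,\be}=\inf\{H_{\al,\be}(\fy)\mid\fy\in H^1(\R^d),\ \fy\not=0,\ K_{\al,\be}(\fy)\le0\}$, and for the pure power $Df(u)=|u|^{2^\star}=2^\star f(u)$, so writing $a:=2\al+(d-2)\be$, $b:=2\al+d\be$ and $\bar\mu=\max(a,b)$, one has the fully explicit expressions $K_{\al,\be}(\fy)=\frac a2\|\na\fy\|_{L^2}^2+\frac b2\|\fy\|_{L^2}^2-\frac{da}{d-2}F(\fy)$ and, by \eqref{def H}, $H_{\al,\be}=J-K_{\al,\be}/\bar\mu$. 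I would first record that $a>0$ on the whole range (the equality $a=0$ forces $(\al,\be)=(0,0)$), so these expressions stay nondegenerate.

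For the lower bound $\m_{\al,\be}\ge J^{(0)}(W)$ I would argue pointwise in $\fy$: if $K_{\al,\be}(\fy)\le0$ then, dropping the nonnegative $L^2$ term, $\|\na\fy\|_{L^2}^2\le 2^\star F(\fy)$, and combining with the sharp Sobolev inequality $2^\star F(\fy)=\|\fy\|_{L^{2^\star}}^{2^\star}\le C_S^{2^\star}\|\na\fy\|_{L^2}^{2^\star}$ gives both $\|\na\fy\|_{L^2}^2\ge C_S^{-d}=:\La$ and $F(\fy)\ge\frac1{2^\star}\|\na\fy\|_{L^2}^2$. Then I would expand $H_{\al,\be}=J-K_{\al,\be}/\bar\mu$ in the two cases: for $\be\le0$ (so $\bar\mu=a$) it reduces to $\frac12(1-b/a)\|\fy\|_{L^2}^2+\frac2{d-2}F(\fy)$, and for $\be\ge0$ (so $\bar\mu=b$) to $\frac\be b\|\na\fy\|_{L^2}^2+\frac{4\al}{(d-2)b}F(\fy)$, using the identity $da-(d-2)b=4\al$. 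In each case all coefficients are nonnegative, and the bound $F(\fy)\ge\frac1{2^\star}\|\na\fy\|_{L^2}^2$ collapses the right-hand side to $H_{\al,\be}(\fy)\ge\frac1d\|\na\fy\|_{L^2}^2\ge\La/d=J^{(0)}(W)$; hence $\m_{\al,\be}\ge J^{(0)}(W)>0$.

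For the matching upper bound I would use the ``$L^2$ mass vanishes, massless energy survives'' mechanism announced in the introduction. Truncate $W$ to $W_\e:=\chi_{1/\e}W\in H^1(\R^d)$, so that $W_\e\to W$ in $\dot H^1$ and in $L^{2^\star}$, and apply the $\dot H^1$-critical scaling $g^L(x):=L^{(d-2)/2}g(Lx)$, which preserves $\|\na\cdot\|_{L^2}$ and $F$ while $\|(W_\e)^L\|_{L^2}^2=L^{-2}\|W_\e\|_{L^2}^2\to0$. Since $K^Q_{\al,\be}((W_\e)^L)>0$, $F((W_\e)^L)>0$ and $2^\star>2$, there is a unique amplitude $\nu_*=\nu_*(\e,L)>0$ with $K_{\al,\be}(\nu_*(W_\e)^L)=0$, and as $L\to\I$ and then $\e\to0$ one checks $\nu_*\to1$ (because $\|\na W_\e\|_{L^2}^2\to\|\na W\|_{L^2}^2=2^\star F(W)$); therefore $H_{\al,\be}(\nu_*(W_\e)^L)=J(\nu_*(W_\e)^L)\to J^{(0)}(W)$ along a diagonal sequence, so $\m_{\al,\be}\le J^{(0)}(W)$. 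Together with the lower bound this proves $\m_{\al,\be}=J^{(0)}(W)=C_S^{-d}/d$ for every admissible $(\al,\be)$, hence positivity and $(\al,\be)$-independence; and minimality of $J^{(0)}$ among $\dot H^1$ solutions $Q$ of \eqref{static NLW} follows from the same Pohozaev-plus-Sobolev computation, since $\|\na Q\|_{L^2}^2=2^\star F(Q)=\|Q\|_{L^{2^\star}}^{2^\star}\le C_S^{2^\star}\|\na Q\|_{L^2}^{2^\star}$ forces $\|\na Q\|_{L^2}^2\ge\La$ and hence $J^{(0)}(Q)=\frac1d\|\na Q\|_{L^2}^2\ge\La/d$.

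The main obstacle is conceptual rather than computational: one cannot imitate the proof of Lemma \ref{Q sub}, since a bounded minimizing sequence for \eqref{min J} converges only weakly in $H^1$ and not in $L^{2^\star}$, so no minimizer is produced --- indeed none exists. The remedy above replaces that compactness step by the sharp Sobolev inequality for the lower bound and by an explicit concentrating family for the upper bound, but it relies on the classical fact that the Sobolev extremizer $W$ exists and is the least-energy solution of the massless equation \eqref{static NLW}, and it requires a little care in checking that the constants $2^\star,\bar\mu,a,b$ conspire so that $H_{\al,\be}(\fy)\ge\frac1d\|\na\fy\|_{L^2}^2$ holds in all cases whenever $K_{\al,\be}(\fy)\le0$.
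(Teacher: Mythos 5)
Your proof is correct and reaches the same conclusion, but the route to the lower bound is organized differently from the paper's. The paper first shows $m_{\al,\be}=m^w$ where $m^w$ is a \emph{separate} massless minimization problem with $H^w=(1-\L/\bar\mu)J^{(0)}$ and $K^w$ replacing $H,K$; it proves $m\ge m^w$ by comparing the constraint sets and objective functionals directly ($H^w\le H$ and $K^w<K$ when $2\al+d\be>0$, with a density-in-$\{K<0\}$ argument for the boundary case $2\al+d\be=0$), then proves $m\le m^w$ via the $\dot H^1$-invariant scaling $\fy^\nu_{d/2-1,-1}$, and only at the last step evaluates $m^w=(C_S^\star)^{-d}/d$ by sharp Sobolev, noting that all the $K^w_{\al,\be}$ are proportional so $m^w$ is trivially $(\al,\be)$-independent. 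You instead write out $H_{\al,\be}$ and $K_{\al,\be}$ explicitly in terms of $a=2\al+(d-2)\be$ and $b=2\al+d\be$ and prove the sharp pointwise inequality $H_{\al,\be}(\fy)\ge\tfrac1d\|\na\fy\|_{L^2}^2$ on the set $\{K_{\al,\be}\le0\}$, which, combined with sharp Sobolev applied inside the constraint (giving $\|\na\fy\|_{L^2}^2\ge C_S^{-d}$), gives the lower bound in one shot; you then get the matching upper bound by truncating the Aubin--Talenti function and applying the same $\dot H^1$ scaling. The underlying mechanism — squeezing out the $L^2$ mass along the $\dot H^1$-invariant scaling — is identical. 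What your version buys is that the $(\al,\be)$-independence and the identification $m=J^{(0)}(W)$ drop out simultaneously from the same two-sided estimate, and you avoid the separate treatment of the boundary case $2\al+d\be=0$; what the paper's version buys is a cleaner conceptual separation (``the problem is really the massless one'') that does not require writing out the coefficients, and it postpones the use of the sharp Sobolev constant to the very end.

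Two small points worth flagging, neither of which is a gap: (i) you correctly notice that the truncation $W_\e=\chi_{1/\e}W$ is needed since $W\notin L^2$ for $d\le4$, a subtlety the paper sidesteps by working with general $H^1$ test functions in $m^w$ and invoking density; (ii) your observation that $a>0$ on all of \eqref{range albe} (for $d\ge3$) is exactly right and is what keeps the coefficients in your explicit expansion of $H_{\al,\be}$ nondegenerate.
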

\begin{proof}
Let $H^w$ and $K^w$ be the massless versions of $H$ and $K$, respectively.
Then 
\EQ{ \label{massless H/K}
 m = m^w:= \inf\{H^w(\fy) \mid \fy\in H^1,\ K^w(\fy)< 0.\}.}
Indeed, comparing the above with \eqref{min H}, we easily get $m\ge m^w$ from $H^w\le H$ and $K^w<K$ if $2\al+d\be>0$. 
If $2\al+d\be=0$, then we may replace $K\le 0$ in \eqref{min H} by $K<0$, because for any nonzero $\fy\in H^1$ satisfying $K(\fy)\le 0$, we have by \eqref{J conv}
\EQ{
 \L K(\fy) \le \bar\mu K(\fy)-\frac{2\al\e\bar\mu}{d+1}F(\fy)<0,}
which implies that $K(\fy^\la)<0$ for all $\la>0$, and so the set $K<0$ is dense in the minimization set of \eqref{min H}. Hence $m\ge m^w$ in this case too. 

To prove $m\le m^w$, let 
\EQ{
 \fy^\nu=\fy^\nu_{d/2-1,-1}} 
denote the $\dot H^1$ invariant scaling. Then $K(\fy^\nu)\to K^w(\fy)$ and $H(\fy^\nu)\to H^w(\fy)$ as $\nu\to\I$. Hence if $K^w(\fy)<0$ then $K(\fy^\nu)<0$ for large $\nu$, and so $m\le m^w$. 

Due to the $\dot H^1$ scale invariance, $K^w_{\al,\be}$ for all $(\al,\be)$ are constant multiples of the same functional, and $H^w$ is independent of $(\al,\be)$, so is the minimization for $m^w$. In fact we have 
\EQ{
 m^w = \inf\{\|\na\fy\|_{L^2}^2/d \mid \fy\in H^1,\ \|\na\fy\|_{L^2}^2 < \|\fy\|_{L^{2^\star}}^{2^\star}\}.}
By the homogeneity and the scaling $\fy\mapsto\nu\fy$, it is equal to 
\EQ{
 \inf_{0\not=\fy\in H^1}\frac{1}{d}\|\na\fy\|_{L^2}^2\left[\frac{\|\na\fy\|_{L^2}^2}{\|\fy\|_{L^{2^\star}}^{2^\star}}\right]^{\frac{d-2}{2}} 
 = \inf_{0\not=\fy\in H^1}\frac{1}{d}\left[\frac{\|\na\fy\|_{L^2}}{\|\fy\|_{L^{2^\star}}}\right]^d
 =\frac{(C_S^\star)^{-d}}{d},}
where $C_S^\star$ denotes the best constant for the Sobolev inequality
\EQ{
 \|\fy\|_{L^{2^\star}} \le C_S^\star \|\na\fy\|_{L^2},}
which is well known to be attained by the following explicit $Q\in\dot H^1$ 
\EQ{
 Q(x) = \left[1+\frac{|x|^2}{d(d-2)}\right]^{-\frac{d-2}{2}},}
which solves \eqref{static NLW}.
\end{proof}

\subsection{Exponential case; mass-modified threshold} 
In the 2D exponential case \eqref{f exp}, the conclusion is somewhat intermediate between the above two cases. 
If $\CTMS(F)\ge 1$ then $m_{\al,\be}$ is achieved by a ground state, but if $\CTMS(F)<1$ then we can still see  $m_{\al,\be}$ as the energy of a ground state to an equation \eqref{static mmKG} where  the mass is changed to $c=\min(1,\CTMS(F))\in(0,1)$. 
\begin{lem}[Ground state in the exponential case] \label{Q exp}
Assume that $f$ satisfies \eqref{asm f} and \eqref{f exp}, and that $(\al,\be)$ satisfies \eqref{range albe}. 
Then $\m_{\al,\be}$ in \eqref{min J} is independent of $(\al,\be)$ and $0<m_{\al,\be}\le 2\pi/\ka_0$, where the second inequality is strict if and only if $\CTMS(F)>1$. Moreover $\m_{\al,\be}=J^{(c)}(Q)$ with $c=\min(1,\CTMS(F))$ for some $Q\in H^1(\R^2)$ solving the modified static NLKG \eqref{static mmKG} with the minimal $J^{(c)}(Q)$ among the solutions in $H^1(\R^2)$. 
\end{lem}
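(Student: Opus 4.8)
The plan is to adapt the variational scheme already used for Lemmas \ref{Q sub} and \ref{Q crit}, but to keep careful track of where the $L^2$ mass can ``leak'' to spatial infinity, which is exactly what produces the mass modification in two dimensions. Throughout we use the reformulation $\m_{\al,\be} = \inf\{H_{\al,\be}(\fy) \mid \fy\not=0,\ K_{\al,\be}(\fy)\le 0\}$. As in the subcritical case I would first reduce to $(d,\al)\not=(2,0)$ (the exceptional $(2,0)$ case being handled separately by amplitude scaling $\nu\fy$ as in \eqref{K0 by amp}), take a minimizing sequence $\fy_n$, pass to Schwarz symmetrizations $\fy_n^*$, and then use \eqref{K0 by scal} to arrange $K_{\al,\be}(\psi_n)=0$, $\psi_n\not=0$ radial decreasing, $J(\psi_n)=H(\psi_n)\to m$. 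Since $\al>0$ here, \eqref{J mono} gives $H^1$-boundedness of $\psi_n$, so after a subsequence $\psi_n\rightharpoonup\psi$ weakly in $H^1$ and strongly in $L^p_{loc}$ and (by radial symmetry) in $L^p(\R^2)$ for $2<p<\I$. The subtle point is that the exponential nonlinear functionals $F$, $\L F$ are controlled via the Trudinger--Moser inequality \eqref{TM}, \eqref{TM bd F}, and these are \emph{not} automatically continuous under weak convergence unless one controls the total $\dot H^1$ energy; moreover even when $F(\psi_n)\to F(\psi)$ is fine, the $L^2$ norm can drop, $\|\psi\|_{L^2}^2 < \lim\|\psi_n\|_{L^2}^2$, with the deficit escaping to $|x|=\I$.

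The key new ingredient is a dichotomy: either the minimizing sequence $\psi_n$ concentrates (in the sense $\limsup\|\na\psi_n\|_{L^2}^2 \ge 4\pi/\ka_0$, the Trudinger--Moser threshold), or it does not. In the non-concentrating case, $\ka_0\|\na\psi_n\|_{L^2}^2$ stays below $4\pi$, the Trudinger--Moser bound \eqref{TM} is uniform, $F$ and $\L F$ pass to the weak limit, $\psi\not=0$ (by Lemma \ref{K>0 near 0} applied to rule out $\psi=0$, exactly as in Lemma \ref{Q sub}), and one obtains a genuine minimizer $\psi$ of \eqref{min J} with $\m_{\al,\be}=J(\psi)$; the Lagrange multiplier argument using \eqref{J conv} (which gives $\L^2 J(\psi)<0$ since $\al>0$) then kills the multiplier and yields $-\De\psi+\psi=f'(\psi)$, i.e. $c=1$, and one checks $\CTMS(F)\ge 1$ in this scenario. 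In the concentrating case the $\dot H^1$ energy saturates the Trudinger--Moser level; here one splits $\psi_n$ as (weak limit) $+$ (remainder carrying the remaining $L^2$ mass out to infinity), and the effective minimization problem becomes one where a portion $1-c$ of the unit-``mass'' constraint is spent freely at infinity (contributing zero energy), so that the limiting profile $\psi$ solves the mass-modified equation $-\De\psi+c\psi=f'(\psi)$ with $c=\CTMS(F)\in(0,1)$ and $\m_{\al,\be}=J^{(c)}(\psi)$. The value $c=\min(1,\CTMS(F))$ arises because $c$ cannot exceed $1$ (the original mass) nor $\CTMS(F)$ (the largest mass ratio compatible with the subthreshold Trudinger--Moser regime defined by \eqref{def C*F}).

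For the bound $\m_{\al,\be}\le 2\pi/\ka_0$ I would test $H_{\al,\be}$ (equivalently $J$ on the constraint $K_{\al,\be}=0$, via \eqref{K0 by scal}) against Moser's concentrating sequence of radial functions with $\ka_0\|\na\fy\|_{L^2}^2 \to 4\pi$: on such functions the nonlinear term $F$ stays bounded while $H_{\al,\be}$, being essentially $\tfrac{1}{d+\cdots}$ of the quadratic energy up to the $F$-correction, approaches $\tfrac12\cdot\tfrac{4\pi}{\ka_0}\cdot(\text{scaling factor})$; a direct computation with the explicit form of $H_{\al,\be}$ and $K_{\al,\be}$ gives the upper bound $2\pi/\ka_0$. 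Strictness of this inequality when $\CTMS(F)>1$ follows because then $\CTMS(F)$ is attained (or nearly attained by non-concentrating functions, using the third clause of \eqref{f exp} when $\ka_0>0$), and such functions give $J<2\pi/\ka_0$; conversely if $\CTMS(F)\le 1$ the minimizing sequences must concentrate and $m=2\pi/\ka_0$ exactly, while a minimizer of \eqref{min J} exists precisely when $\CTMS(F)\ge 1$. Positivity $m>0$ comes from Lemma \ref{K>0 near 0} as usual. The main obstacle is the concentrating case: making rigorous the claim that the $L^2$ deficit escapes to infinity and ``decouples'' in the energy, i.e. establishing the precise profile decomposition for Trudinger--Moser-type functionals on $\R^2$ at the critical level and identifying the limiting constrained problem as the mass-modified one; this is where the interplay between the Gagliardo--Nirenberg/Trudinger--Moser scaling and the loss of mass at infinity has to be handled with care, and is the genuinely new analytic content beyond the higher-dimensional Sobolev case.
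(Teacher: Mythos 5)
Your high-level picture of the mass-modification mechanism is correct, and the dichotomy you propose (non-concentrating vs.\ concentrating minimizing sequences, governed by comparing $\|\na\psi_n\|_{L^2}^2$ to $4\pi/\ka_0$, equivalently by $\CTMS(F)$ vs.\ $1$) is the right organizing principle. The non-concentrating branch (where $\CTMS(F)>1$, hence $m<2\pi/\ka_0$) is handled essentially as in the paper: the Trudinger--Moser bound is uniformly subcritical along the sequence, one invokes compactness (the paper packages this as Lemma~\ref{TM compact}), and the Lagrange multiplier argument with $\L^2J(\psi)<0$ delivers a minimizer with $c=1$.

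However, there is a genuine gap in your treatment of the concentrating branch ($\CTMS(F)\le 1$), and the route you sketch would fail as stated. You propose to take a minimizing sequence of the \emph{original} problem \eqref{min J}, extract its weak $H^1$ limit $\psi$, and argue that $\psi$ solves the mass-modified equation $-\De\psi + c\psi = f'(\psi)$, the deficit having ``escaped to infinity.'' But in the concentrating regime, after Schwarz symmetrization a minimizing sequence is Moser-like: it concentrates its $\dot H^1$ energy near the origin while the $L^2$ mass spreads out, and the weak $H^1$ limit of such a sequence is $\psi=0$. There is no nontrivial limiting profile to harvest, and indeed Proposition~\ref{Gstate}(3) states that \eqref{min J} has \emph{no} minimizer when $\CTMS(F)<1$. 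The ground state $Q$ realizing $m=J^{(c)}(Q)$ must be produced by a different device, not by extracting it from a minimizing sequence of the critical problem. The paper's construction is to approximate from below: for each $n$ maximize the Trudinger--Moser ratio $\CTM^{\mm(F)-1/n}(F)$ (a \emph{subcritical} problem, hence compact via Lemma~\ref{TM compact}), obtain maximizers $\fy_n$, derive from the Euler--Lagrange equation that a rescaling $Q_n$ solves $-\De Q_n + c_n Q_n = f'(Q_n)$ with $c_n\nearrow\CTMS(F)$, and then pass to the limit $n\to\I$ using uniform bounds from the equations themselves (not from the minimizing sequence of $m$). This subcritical-approximation step is the genuinely new analytic content you flagged as ``to be handled with care,'' and it is not obtained by the profile-decomposition scheme you outline. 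A secondary structural remark: the paper proves the result first for $(\al,\be)=(0,1)$, where $H_{0,1}(\fy)=\|\na\fy\|_{L^2}^2/2$ and $K_{0,1}\le 0$ is literally the Trudinger--Moser constraint $2F\ge\|\fy\|_{L^2}^2$, and only then transfers to $\al>0$ via the parameter-independence argument; your plan starts by reducing to $\al>0$, which obscures exactly the case where the threshold $\mm(F)^2/2=2\pi/\ka_0$ is most transparently visible.
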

For the proof, we prepare some notations and lemmas. For any functional $G$ on $H^1(\R^2)$ and any $A> 0$, we introduce the Trudinger-Moser ratio 
\EQ{ \label{def CTMA}
 \CTM^A(G):= \sup\{ 2G(\fy)\|\fy\|_{L^2}^{-2} \mid 0\not=\fy\in H^1(\R^2),\ \|\na\fy\|_{L^2}\le A \},}
the Trudinger-Moser threshold on the $\dot H^1$ norm:
\EQ{ \label{def mm}
 \mm(G) := \sup \{A>0 \mid \CTM^A(G)<\I \},}
and the ratio on the threshold: 
\EQ{ \label{def CTMS}
 \CTMS(G) := \CTM^{\mm(G)}(G).}
The growth condition \eqref{f exp} together with \eqref{f conv} implies 
\EQ{
 \mm(\L_{\al,\be} F)=\mm(F)=\sqrt{4\pi/\ka_0}}
for any $(\al,\be)$ satisfying \eqref{range albe}, by the Trudinger-Moser inequality \eqref{TM}. Hence the above definition of $\CTMS$ is consistent with \eqref{def C*F}. 

For any functional $G$ of the form $G(\fy)=\int g(\fy)dx$, and for any sequence $(\fy_n)_{n \in \N}\in H^1(\R^2)^\N$, we define its concentration (at $x=0$) 
 $conc.G((\fy_n)_{n \in \N} )$ by 
\EQ{ \label{def conc}
 conc.G((\fy_n)_{n \in \N} ) := \limsup_{\e\to+0} \limsup_{n\to\I} \int_{|x|<\e}g(\fy_n)dx.}
We will use the following compactness by dominated convergence. 
\begin{lem} \label{TM compact}
Let $g,h:\R\to\R$ be continuous functions satisfying 
\EQ{ \label{LS dominance}
 \lim_{ u \to \pm \I}  \frac{|g(u)|}{h(u)}=0, \pq 
 \lim_{ u \to 0} \frac{|g(u)|}{|u|^2}=0.}
Let $\fy_n$ be a sequence of radial functions, weakly convergent to $\fy$ in $H^1(\R^2)$ such that $\{h(\fy_n)\}_n$ is bounded in $L^1(\R^2)$. Then $g(\fy_n)\to g(\fy)$ strongly in $L^1(\R^2)$. 
\end{lem}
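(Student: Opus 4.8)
The plan is to combine a radial-function decay estimate with a two-sided domination (for large and small values of $|u|$) to apply the dominated convergence theorem on $\R^2$, handling separately the bounded region near the origin, the region where $|\fy_n|$ is large, and the exterior region where the radial Strauss-type estimate forces $\fy_n$ to be small. First I would recall that radial $H^1(\R^2)$ functions enjoy the pointwise bound $|\fy_n(x)| \lec \|\fy_n\|_{H^1}|x|^{-1/2}$ away from the origin (a standard radial Sobolev estimate), so on $\{|x|\ge R\}$ with $R$ large we have $|\fy_n(x)|$ uniformly small; there, the second condition in \eqref{LS dominance} gives $|g(\fy_n)| \lec \e_R |\fy_n|^2$ with $\e_R\to 0$, so the tail contribution is controlled by $\e_R\|\fy_n\|_{L^2}^2$, uniformly small. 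On the complementary ball $\{|x|<R\}$, which has finite measure, compact Sobolev embedding gives $\fy_n\to\fy$ strongly in $L^q$ for every finite $q\ge 2$, hence (after passing to a subsequence) $\fy_n\to\fy$ a.e. and $\fy_n$ is dominated in $L^q$ by a fixed $L^q$ function.

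The remaining point is to produce an $L^1$-dominating function for $g(\fy_n)$ on $\{|x|<R\}$. Here the first condition in \eqref{LS dominance} is used: fix $\de>0$, pick $M$ so that $|g(u)|\le \de\, h(u)$ for $|u|\ge M$; then split $g(\fy_n) = g(\fy_n)\mathbf{1}_{|\fy_n|\le M} + g(\fy_n)\mathbf{1}_{|\fy_n|>M}$. The first term is bounded by $\sup_{|u|\le M}|g(u)| < \I$ on a finite-measure set, hence dominated. The second term is bounded by $\de\, h(\fy_n)$, whose $L^1$ norm is uniformly bounded by hypothesis; so its contribution to $\int_{|x|<R}|g(\fy_n)-g(\fy)|$ is at most $2\de\sup_n\|h(\fy_n)\|_{L^1}$, which is as small as we like. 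On the complementary set $\{|\fy_n|\le M\}\cap\{|x|<R\}$, $g$ is uniformly continuous on $[-M,M]$ and $\fy_n\to\fy$ a.e. with a fixed $L^1$ (indeed $L^\I$) bound there, so the dominated convergence theorem applies. Assembling the three regions and letting $R\to\I$, $\de\to 0$ along with the DCT conclusion on the core region yields $\|g(\fy_n)-g(\fy)\|_{L^1(\R^2)}\to 0$.

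The main obstacle is the uniform smallness of the "large-value" part $g(\fy_n)\mathbf{1}_{|\fy_n|>M}$: weak convergence in $H^1$ alone gives no control on where $\fy_n$ is large, so one genuinely needs the quantitative hypothesis that $\{h(\fy_n)\}_n$ is bounded in $L^1$ together with the asymptotic domination $|g|/h\to 0$ to absorb that region — this is exactly the mechanism by which, in the applications, the Trudinger–Moser bound on an exponential $h$ tames a slightly slower-growing nonlinearity $g$. A minor technical care is the subsequence extraction for a.e. convergence; since the limit $g(\fy)$ is independent of the subsequence and $L^1$ convergence along every subsequence of every subsequence implies full convergence, this causes no real difficulty. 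Finally, one should note that the radial decay estimate is what lets us treat the exterior region by the "$|u|$ small" branch of \eqref{LS dominance} rather than needing a global growth bound on $g$.
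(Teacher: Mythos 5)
Your argument is correct and follows essentially the same strategy as the paper's proof: radial Sobolev decay controls the tail region, the large-value condition $|g|/h\to 0$ together with the $L^1$ bound on $h(\fy_n)$ tames the possibly-unbounded part, and dominated convergence handles the bounded core once a.e.\ convergence is established. The paper differs only in bookkeeping: it obtains the pointwise convergence $\fy_n(x)\to\fy(x)$ for $x\neq 0$ directly from the radial Sobolev bound plus the identity $\fy_n(R_1)-\fy_n(R_2)=\int_{R_1}^{R_2}\p_r\fy_n\,dr$ rather than from compact Sobolev embedding on a ball, it combines the small- and large-$u$ regimes into a single estimate $|g(u)|<\e\,(h(u)+|u|^2)$, and it uses the smooth truncation $g^{(\de)}(u)=(1-\chi_\de(u))\chi_{1/\de}(u)g(u)$ in place of your sharp cut-off $\mathbf{1}_{|\fy_n|\le M}$. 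The last choice quietly avoids a small wrinkle in your version — the indicator $\mathbf{1}_{|\fy_n|\le M}$ need not converge a.e.\ on the level set $\{|\fy|=M\}$, and the split region $\{|\fy_n|\le M,\ |\fy|>M\}$ needs its own estimate — but both points are easily repaired (choose $M$ outside the countable set of levels with positive measure, or simply smooth the cut-off as the paper does), so this is a presentational rather than substantive difference.
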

\begin{proof}
By assumption \eqref{LS dominance}, for any $\e>0$ there exist $\de>0$ such that 
\EQ{
 |u|>1/(2\de)  \text{ or } |u|<2\de \implies |g(u)|<\e(h(u)+|u|^2). }
Then we have  
\EQ{ \label{fyn LS}
 \int_{|\fy_n|>1/(2\de) \text{ or } |\fy_n|<2 \de} |g(\fy_n)| dx \lec \e \int h(\fy_n)+|\fy_n|^2 dx \lec \e.}
The radial Sobolev inequality $\|r^{1/2}\fy_n\|_{L^\I}\lec\|\fy_n\|_{L^2}^{1/2}\|\na\fy_n\|_{L^2}^{1/2}$ implies that $\fy_n(x)$ are uniformly small for large $x$. Then the weak convergence together with 
\EQ{
 \fy_n(R_1)-\fy_n(R_2)=\int_{R_1}^{R_2}\p_r\fy_n(r)dr}
implies that $\fy_n(x)\to\fy(x)$ for $x\not=0$. Then Fatou's lemma implies 
\EQ{ \label{fy LS}
  \int_{|\fy|>1/(2\de) \text{ or } |\fy|<2\de} |g(\fy)| dx \lec \e,}
and the dominated convergence theorem implies 
\EQ{ \label{conv med}
 \|g^{(\de)}(\fy_n)-g^{(\de)}(\fy)\|_{L^1} \to 0,\pq(n\to\I)}
where $g^{(\de)}$ is defined by $g^{(\de)}(u) = (1-\chi_\de(u))\chi_{1/\de}(u)g(u)$ using the cut-off defined in \eqref{def chiR}. 
Combining \eqref{fyn LS}, \eqref{fy LS} and \eqref{conv med}, we deduce the desired convergence. 
\end{proof}

\begin{proof}[Proof of Lemma \ref{Q exp}]
We start with the exceptional case $(d,\al)=(2,0)$. 
First, let $A>0$ and assume $\CTM^A(F)>1$. Then there exists $0\not=\fy\in H^1$ such that $\|\na\fy\|_{L^2}\le A$ and $F(\fy)>\|\fy\|_{L^2}^2/2$. 
For small $\e>0$ we have $K_{0,1}((1-\e)\fy)<0$, 
and hence $m_{0,1}\le\|\na(1-\e)\fy_n\|_{L^2}^2/2< A^2/2$. 
Hence $m_{0,1}\le \mm(F)^2/2$. 

Consider the case $\CTMS(F)>1$. Then by choosing $A=\mm(F)$ in the above argument, we get $m_{0,1}<\mm(F)^2/2$. Now we take a minimizing sequence for $m_{0,1}$. 
By the Schwartz symmetrization and rescalings as in the proof of Lemma \ref{Q sub} for $(d,\al)=(2,0)$, we get a sequence of radial functions $\psi_n\in H^1$ such that 
\EQ{
 \|\psi_n\|_{L^2}=1, \pq H_{0,1}(\psi_n)\to m_{0,1}, \pq K_{0,1}(\psi_n)=1-2F(\psi_n)=0,}
and $\psi_n\to\psi$ in $H^1$. 
Because of $m_{0,1}<\mm(F)^2/2$, we can choose some $\ka\in(\ka_0,2\pi/m_{0,1})$, so that $e^{\ka|\psi_n|^2}-1$ is bounded in $L^1$ by the Trudinger-Moser inequality \eqref{TM}. Then we can use Lemma \ref{TM compact} with $\fy_n:=\psi_n$, $g:=f$ and $h(u):=e^{\ka|u|^2}-1$, which implies $F(\psi_n)\to F(\psi)$. Hence $\psi$ attains $m_{0,1}$. 
After appropriate rescalings, we obtain a ground state $Q$, as in the proof of Lemma \ref{Q sub}.

Next consider the case $\CTMS(F)\le 1$. Then for any $\psi\in H^1$ satisfying $\|\na\psi\|_{L^2}\le \mm(F)$ we have $K_{0,1}(\psi)\ge 0$. Hence 
\EQ{
 m_{0,1}=\inf\{\|\na\fy\|_{L^2}^2/2\mid K_{0,1}(\fy)<0\} \ge \mm(F)^2/2,}
and so $m_{0,1}=\mm(F)^2/2$. Now we show that there exists $\fy\in H^1$ satisfying 
\EQ{
 \|\na\fy\|_{L^2} = \mm(F), \pq F(\fy)=\CTMS(F)/2, \pq \|\fy\|_{L^2}=1.}
After rescaling this $\fy$, we obtain a ground state $Q$. However, due to the 
criticality, we have to approximate the problem by a subcritical one, namely 
we first prove the existence of 
$\fy_n \in H^1$ satisfying 
\EQ{
 \|\na\fy_n \|_{L^2} \leq  \mm(F)-\frac1n, \pq F(\fy_n)=c_n/2, \pq \|\fy_n\|_{L^2}=1}
where   $c_n:=\CTM^{\mm(F)-\frac1n}(F)$, then $0<c_n\nearrow \CTMS(F)\le 1$. 
Fix $n\gg 1$ and let $\fy^k\in H^1(\R^2)$ be a maximizing sequence for $c_n$ (see 
\eqref{def CTMA}): 
\EQ{
 \|\na\fy^k\|_{L^2} \le \mm(F)-\frac1n, \pq F(\fy^k)\nearrow c_n/2, \pq \|\fy^k\|_{L^2}=1,}
where the $L^2$ norm is normalized by the rescaling $\fy_{0,1}^\la$. 
The Schwartz symmetrization enables us to assume that $\fy^k$ are radial functions, and convergent to some $\fy_n$ weakly in $H^1$, by extracting a subsequence. 
Moreover, we have $F(\fy^k)\to F(\fy_n)=c_n/2$, by Lemma \ref{TM compact} with $g:=f$ and $h=e^{\ka|u|^2}-1$ for some $\ka\in(\ka_0,4\pi/(\mm(F)-1/n)^2)$. 

Thus $\fy_n$ is a maximizer, which implies that  $ \|\fy_n\|_{L^2}=1 $ and  
\EQ{ 
 -\y \De\fy_n = f'(\fy_n)-c_n\fy_n,}
for a Lagrange multiplier $\y(n)\in\R$. Multiplying it with $\fy_n$, we obtain  
\EQ{
 \y\|\na\fy_n\|_{L^2}^2 = \int Df(\fy_n)dx - c_n\|\fy_n\|_{L^2}^2
 =  \int (D-2)f(\fy_n)dx >0,}
since  $(D-2)f>0$. 
Hence $\y>0$, and so $Q_n(x):=\fy_n(\y^{1/2}x)\in H^1$ satisfies 
\EQ{
 \|\na Q_n\|_{L^2} \le \mm(F) - \frac1n, \pq -\De Q_n + c_n Q_n = f'(Q_n).}
 
Now consider the limit $n\to\I$. The equation for $Q_n$ implies that $0=K_{0,1}^{(c_n)}(Q_n)=K_{1,-1}^{(c_n)}(Q_n)$, that is 
\EQ{
 c_n\|Q_n\|_{L^2}^2=2F(Q_n), \pq \|\na Q_n\|_{L^2}^2=2\int(D-2)f(Q_n)dx \ge 4F(Q_n),}
where the last inequality follows from $(D-4)f\ge 0$. 
Since $\|\na Q_n\|_{L^2}$ is bounded and $c_n$ is positive non-decreasing, we deduce that $\|Q_n\|_{L^2}$ and $\int Df(Q_n)dx$ are bounded as $n\to\I$. 
Hence we may extract a subsequence so that $Q_n$ converges to some $Q$ weakly in $H^1$, and then apply Lemma \ref{TM compact} with $\fy_n:=Q_n$, $g=f'$ and $h:=Df$. 
Then $f'(Q_n)\to f'(Q)$ strongly in $L^1$, and so $Q$ solves
\EQ{ \label{stmmKG}
 -\De Q + c Q = f'(Q), \pq c:=\CTMS(F).}
This implies that 
\EQ{
 K_{0,1}^{(c)}(Q)=\LR{{J^{(c)}}'(Q)|\L_{0,1}Q}=0,}
namely $2F(Q)=c\|Q\|_{L^2}^2$. Hence $Q$ is a maximizer for $\CTM^{\mm(F)}(F)$ with a non-zero Lagrange multiplier, which implies that $\|\na Q\|_{L^2}=\mm(F)$. 
Thus $J^{(c)}(Q)=\mm(F)^2/2$ is unique for any solution $Q$ of \eqref{stmmKG}. 

Next we consider $m_{\al,\be}$ with $\al>0$. 
If $m_{0,1}<\mm(F)^2/2$, then there exists a ground state $Q$, which satisfies $K_{\al,\be}(Q)=0$ for all $(\al,\be)$. 
Hence $m_{\al,\be}\le J(Q)=m_{0,1}$. 

Otherwise, $m_{0,1}=\mm(F)^2/2=\mm(\L F)^2/2$. 
For any $A>\mm(\L F)$, there exists a sequence $\fy_n\in H^1$ satisfying 
\EQ{
 \|\na\fy_n\|_{L^2} \le A, \pq \|\fy_n\|_{L^2} \to 0, \pq \L F(\fy_n)\to \I.}
Since $K(\fy) = \al\|\na\fy\|_{L^2}^2 + (\al+\be)\|\fy\|_{L^2}^2 - \L F(\fy)$ 
and $\al>0$, we can replace each $\fy_n$ with $\fy_n(x/\nu_n)$ with some $\nu_n\to+0$, so that we have after the rescaling 
\EQ{
 \|\na\fy_n\|_{L^2} \le A, \pq K(\fy_n)=0, \pq \|\fy_n\|_{L^2}\to 0.}
Hence $m_{\al,\be}\le\liminf_{n\to\I}J(\fy_n)\le A^2/2$, and so $m_{\al,\be}\le \mm(\L F)^2/2=m_{0,1}$. 
Thus in both cases we have $m_{\al,\be}\le m_{0,1}\le \mm(F)^2/2$. 

Now suppose that $m_{\al,\be}<m_{0,1}\le \mm(F)^2/2$. As in the proof of Lemma \ref{Q sub} for $(d,\al)\not=(2,0)$, we may find a sequence of radial $\fy_n \in H^1$ such that  
\EQ{
 K(\fy_n)=0, \pq H(\fy_n)\searrow m,}
and $\fy_n\to\exists\fy$ weakly in $H^1$, and pointwise for $x\not=0$. 

Let $\psi_n=\fy_n-\fy$. Then $\psi_n\to 0$ weakly in $H^1$, and so
\EQ{ \label{KL split}
 \lim_{n\to\I} K^Q(\fy_n) \pt = \lim_{n\to\I} K^Q(\psi_n) + K^Q(\fy)
 \pr = \lim_{n\to\I} \L F(\fy_n) = conc.\L F((\fy_n)_n) + \L F(\fy),}
where the second identity is because $K(\fy_n)=0$, and the last one follows from $\fy_n(x)\to \fy(x)$ for $x\not=0$ and the radial Sobolev inequality $\|r^{1/2}\fy_n\|_{L^\I}\lec\|\fy_n\|_{H^1}$. 
Since $H(\fy)\le m$ by Fatou's lemma, we have $K(\fy)\ge 0$, otherwise there would be some $\la<0$ such that $K(\fy^\la)=0$ and $H(\fy^\la)<H(\fy)\le m$, a contradiction. 
Thus $K^Q(\fy)\ge \L F(\fy)$, and so from \eqref{KL split}, we deduce 
\EQ{ \label{vn bd}
 \lim_{n\to\I} K^Q(\psi_n) \le conc.\L F((\fy_n)_n).}

Since $\L F(\fy_n)$ is bounded by \eqref{KL split}, Lemma \ref{TM compact} with $h_n:=(\al D+\be d)f$ implies that $conc.F((\fy_n)_n)=0$. Hence by \eqref{vn bd} and $(\L-\bar\mu)F\ge 0$, we get 
\EQ{
  \lim_{n\to\I} K^Q(\psi_n) \le conc.(\L -\bar\mu)F((\fy_n)_n) \le \lim_{n\to\I}(\L-\bar\mu)F(\fy_n).}
On the other hand we have 
\EQ{
 m = \lim_{n\to\I} H(\fy_n) = \lim_{n\to\I} H^Q(\psi_n) + H^Q(\fy) + \lim_{n\to\I}(\L-\bar\mu)F(\fy_n)/\bar\mu,}
where $H^Q(\psi):=(1-\L/\bar\mu)\|\psi\|_{H^1}^2/2$ denotes the quadratic part of $H$. 
Combining the above two, and discarding $H^Q(\fy)\ge 0$, we obtain
\EQ{
 \lim_{n\to\I} \|\psi_n\|_{H^1}^2/2 \le m < \mm(F)^2/2 = 2\pi/\ka_0. }

Hence applying Lemma \ref{TM compact} to $\fy_n$ with $h (u):=e^{\ka|u|^2}-1$ for some $\ka\in(\ka_0,2\pi/m)$, we get $\L F(\fy_n)\to \L F(\fy)$, and so $\fy$ is a minimizer for $m_{\al,\be}$. Indeed, we have 
\EQ{ e^{\ka |\fy_n|^2} -1  \leq  e^{  C_{\ka, \ka'}  |\fy|^2} -1 +  e^{\ka' |\psi_n|^2} -1      } 
for some $\ka' \in  (\ka,2\pi/m)$ and constant  $C_{\ka, \ka'} > 0  $. 
   Hence  $h (\fy_n) $ is uniformly 
 bounded in $L^1$. Recall that for a fixed $\fy \in H^1$, 
$ e^{  C_{\ka, \ka'}  |\fy|^2}-1  \in L^1 $.

Then as in the proof of Lemma \ref{Q sub}, we obtain a ground state $Q$ with $J(Q)=m_{\al,\be}<m_{0,1}$, which is a contradiction since $K_{0,1}(Q)=0$. 
Hence $m_{\al,\be}=m_{0,1}$ for all $(\al,\be)$ in the range \eqref{range albe}. 
\end{proof}
\begin{rem}
In the above argument for $(\al,\be)=(0,1)$ in the case $\CTMS(F)\le 1$, we used a priori bounds on the ground state to get the compactness. 
For general sequences, we can have concentrating loss of compactness on the kinetic threshold $\|\na \fy\|_{L^2}=\mm(F)$ if and only if $f$ satisfies 
\EQ{
 \limsup_{|u|\to\I} e^{-\ka_0|u|^2}|u|^2f(u) \in (0,\I).}
The above result implies that the concentration requires more energy than the (mass-modified) ground state. 
Similar phenomena have been observed in slightly different settings (either on a bounded domain or on the $H^1(\R^2)$ threshold, where $e^{\ka_0|u|^2}$ appears as the critical growth instead of $e^{\ka_0|u|^2}/|u|^2$, see \cite{CC,Flu,Ruf}). 
More details about this issue, including the above concentration compactness, will be addressed in a forthcoming paper \cite{TM}. 
\end{rem}

\subsection{Parameter independence of the splitting}
The $(\al,\be)$-independence of $\K^\pm_{\al,\be}$ follows from that of $\m_{\al,\be}$ and contractivity of $\K^+_{\al,\be}$. 
\begin{lem}[Parameter independence of $\K^\pm$] \label{K indp}
Assume that $f$ satisfies \eqref{asm f}, and that $(\al,\be)$ satisfies \eqref{range albe}. Then $\K^\pm_{\al,\be}$ in \eqref{def Kpm} are independent of $(\al,\be)$. 
\end{lem}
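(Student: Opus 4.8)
The plan is to leverage the already-established fact that $\m_{\al,\be}=:\m$ is independent of $(\al,\be)$ (Lemmas \ref{Q sub}, \ref{Q crit}, \ref{Q exp}), and to show that on the sublevel set $\{E<\m\}$ the sign of $K_{\al,\be}(u_0)$ does not depend on $(\al,\be)$. Since $\K^+_{\al,\be}$ and $\K^-_{\al,\be}$ partition $\{(u_0,u_1): E(u_0,u_1)<\m\}$ according to whether $K_{\al,\be}(u_0)\ge 0$ or $K_{\al,\be}(u_0)<0$, and since $E$ and $\m$ are already $(\al,\be)$-free, it suffices to prove: for any $\fy\in H^1$ with $J(\fy)<\m$ (which holds whenever $(\fy,u_1)$ is in either $\K^\pm$, because $J(\fy)\le E(\fy,u_1)<\m$), the sign of $K_{\al,\be}(\fy)$ is the same for all $(\al,\be)$ in the range \eqref{range albe}; and that the borderline case $K_{\al,\be}(\fy)=0$ with $J(\fy)<\m$ cannot occur (except at $\fy=0$), so the two regions $K\ge 0$ and $K<0$ genuinely coincide across parameters.

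First I would treat the region $K_{\al,\be}(\fy)\le 0$. Suppose $\fy\ne 0$, $J(\fy)<\m$, and $K_{\al,\be}(\fy)\le 0$ for some admissible $(\al,\be)$. By the characterization \eqref{min H}, $\m=\inf\{H_{\al,\be}(\psi):\psi\ne 0,\ K_{\al,\be}(\psi)\le 0\}$, so $H_{\al,\be}(\fy)\ge\m$. On the other hand, $H_{\al,\be}(\fy)=(1-\Dab/\bar\mu)J(\fy)=J(\fy)-\Dab J(\fy)/\bar\mu=J(\fy)-K_{\al,\be}(\fy)/\bar\mu\le J(\fy)<\m$ (using $K_{\al,\be}(\fy)\le 0$ and $\bar\mu>0$), a contradiction. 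Hence $K_{\al,\be}(\fy)\le 0$ together with $\fy\ne 0$ forces $J(\fy)\ge\m$. Consequently, whenever $J(\fy)<\m$ and $\fy\ne 0$, we must have $K_{\al,\be}(\fy)>0$ for every admissible $(\al,\be)$. This already shows that the set $\{\fy\ne 0: J(\fy)<\m,\ K_{\al,\be}(\fy)<0\}$ is empty, and that the set $\{\fy: J(\fy)<\m,\ K_{\al,\be}(\fy)\ge 0\}$ equals $\{\fy: J(\fy)<\m\}$, both independently of $(\al,\be)$.

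It remains to transfer this from the static sublevel set $\{J(\fy)<\m\}$ to the dynamical sets $\K^\pm_{\al,\be}$, which are cut by $E(u_0,u_1)<\m$ rather than $J(u_0)<\m$. For $(u_0,u_1)$ with $E(u_0,u_1)<\m$ we have $J(u_0)=E(u_0,u_1)-\tfrac12\|u_1\|_{L^2}^2\le E(u_0,u_1)<\m$; if moreover $u_0=0$ then $K_{\al,\be}(0)=0\ge 0$ for all $(\al,\be)$, so such points always lie in $\K^+_{\al,\be}$ for every $(\al,\be)$. If $u_0\ne 0$, the previous paragraph gives $K_{\al,\be}(u_0)>0$ for all admissible $(\al,\be)$, hence $(u_0,u_1)\in\K^+_{\al,\be}$ for all of them and $\K^-_{\al,\be}=\emptyset$ among these points — wait: this would wrongly make $\K^-$ empty. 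The resolution is that the genuine content is that $\K^-_{\al,\be}$ consists of data with $E<\m$ and $K_{\al,\be}(u_0)<0$, and such $(u_0,u_1)$ automatically has $J(u_0)<\m$; but then the argument above shows $K_{\al,\be}(u_0)>0$, which is absurd unless one reexamines: in fact the sign $K_{\al,\be}(u_0)<0$ is compatible only when we are NOT below $\m$ in the static sense — I would instead argue directly that $K_{\al,\be}(u_0)<0$ and $E<\m$ imply $K_{\al',\be'}(u_0)<0$ for any other admissible $(\al',\be')$, by noting $J(u_0)<\m$ forces, via \eqref{min H} applied with $(\al',\be')$, that $u_0$ cannot satisfy $K_{\al',\be'}(u_0)\le 0$ with $H_{\al',\be'}(u_0)<\m$ — so the honest statement is the contrapositive chain, and the clean way to present it is exactly the dichotomy: for $u_0\ne 0$ with $J(u_0)<\m$, $K_{\al,\be}(u_0)$ is $>0$ for all $(\al,\be)$ or...

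Let me restate cleanly. **The main obstacle** is precisely this bookkeeping: one must show $\K^-_{\al,\be}$ is nonempty and $(\al,\be)$-independent, yet the inequality $H_{\al,\be}(\fy)\le J(\fy)$ used above required $K_{\al,\be}(\fy)\le 0$ and would seem to rule out $K_{\al,\be}(\fy)<0$ with $J(\fy)<\m$. The point is that for data in $\K^-$ one has $E<\m$ but the relevant static quantity is not $J(u_0)$ alone relative to $\m$; rather, $\K^-$ is nonempty because $H_{\al,\be}(u_0) = J(u_0) - K_{\al,\be}(u_0)/\bar\mu$ can exceed $\m$ while $J(u_0)<\m$ precisely when $K_{\al,\be}(u_0)<0$. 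So the correct argument for the $\K^-$ case is: if $E(u_0,u_1)<\m$ and $K_{\al,\be}(u_0)<0$, then for any admissible $(\al',\be')$, I claim $K_{\al',\be'}(u_0)<0$ too. Indeed, suppose not, i.e.\ $K_{\al',\be'}(u_0)\ge 0$; then $(u_0,u_1)\in\K^+_{\al',\be'}$, so by the invariance (to be established once this lemma's statement is known, or by a direct variational estimate) the trajectory through $(u_0,u_1)$ would be confined to $\{K_{\al',\be'}\ge 0\}$ — this is circular. So the truly correct route, which I now commit to, is the trapping/continuity argument: define $\K^+_{\al,\be}$ via its two defining conditions and show $\partial_t$ along the NLKG flow preserves the sign of $K_{\al,\be}$ on $\{E<\m\}$ (standard: $K_{\al,\be}$ cannot vanish on $\{J<\m,\ \fy\ne 0\}$ by the first paragraph, so the continuous function $t\mapsto K_{\al,\be}(u(t))$ keeps its sign), and then observe that for each fixed time the sign of $K_{\al,\be}(u(t))$ — being determined by membership in the open/closed complementary sets whose common boundary is the $(\al,\be)$-free set $\{J=\m\}\cup\{0\}$ — must agree for all $(\al,\be)$. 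Concretely: the nonemptiness of the interior $\{K_{\al,\be}<0,\ J<\m\}$ is impossible by paragraph one, so $\K^-_{\al,\be}$ is characterized purely by $E<\m$ together with $u_0\ne 0$ and $J(u_0)\ge\m$ being false — forcing $\K^-_{\al,\be}=\emptyset$ unless... At this point I would simply invoke the clean fact proved in paragraph one — namely $\{ \fy \ne 0 : J(\fy) < \m,\ K_{\al,\be}(\fy) \le 0\} = \emptyset$ — which is manifestly $(\al,\be)$-independent, conclude that for data with $E<\m$ one has $K_{\al,\be}(u_0) < 0 \iff J(u_0) \ge \m$ is impossible $\iff$ the condition is vacuous, hence $\K^-_{\al,\be} = \{(u_0,u_1): E<\m,\ K_{\al,\be}(u_0)<0\}$ and membership is detected by the single, parameter-free criterion "$u_0$ lies outside the forward light cone of the variational problem" — and since the partition $\K^+_{\al,\be} \sqcup \K^-_{\al,\be} = \{E<\m\}$ has both pieces determined by the $(\al,\be)$-free data $(J(u_0),\m)$ through the dichotomy of paragraph one (applied to $u_0$: either $J(u_0)<\m$, whence $K_{\al,\be}(u_0)>0$ for all $(\al,\be)$ and $(u_0,u_1)\in\K^+$; or $J(u_0)\ge\m$, and then one uses a scaling/continuity argument along $\la\mapsto K_{\al,\be}((u_0)^\la_{\al,\be})$ to detect the sign in an $(\al,\be)$-coherent way), the independence follows. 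I would write out this last dichotomy carefully, as it is the crux.
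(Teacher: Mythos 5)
Your first paragraph contains a fatal sign error that the rest of the proposal never recovers from. You write that with $K_{\al,\be}(\fy)\le 0$ one has
\[
H_{\al,\be}(\fy)=J(\fy)-\frac{K_{\al,\be}(\fy)}{\bar\mu}\ \le\ J(\fy)<\m,
\]
but the inequality goes the other way: since $K_{\al,\be}(\fy)\le 0$ and $\bar\mu>0$, the term $-K_{\al,\be}(\fy)/\bar\mu$ is \emph{nonnegative}, so $H_{\al,\be}(\fy)\ge J(\fy)$, and no contradiction with $H_{\al,\be}(\fy)\ge\m$ follows. Consequently your claimed conclusion --- that $\{\fy\ne 0: J(\fy)<\m,\ K_{\al,\be}(\fy)\le 0\}$ is empty and hence $\K^-_{\al,\be}$ is forced to be vacuous --- is false. (It must be false: if $\K^-$ were empty, part (1) of Theorem \ref{main thm} would be vacuous, and the Payne--Sattinger blow-up dichotomy would have no content.) You do notice, halfway through, that $H_{\al,\be}(u_0)$ can exceed $\m$ while $J(u_0)<\m$ precisely when $K_{\al,\be}(u_0)<0$, which is exactly why the first paragraph is wrong, but the remainder of the write-up circles around this without ever producing a correct replacement; it tails off with "I would write out this last dichotomy carefully," which is where the actual work lies.

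The paper's own argument does not try to compare $K_{\al,\be}$ and $K_{\al',\be'}$ pointwise or to reduce everything to a single static inequality. Instead it introduces, for each $\de\ge 0$, the sliced sets $\K^{\pm\de}_{\al,\be}\subset H^1$ (which recover $\K^\pm_{\al,\be}$ by $(u_0,u_1)\in\K^\pm_{\al,\be}\Leftrightarrow u_0\in\K^{\pm\de}_{\al,\be}$ with $\de=\|u_1\|_{L^2}^2/2$), and observes that the disjoint union $\K^{+\de}_{\al,\be}\cup\K^{-\de}_{\al,\be}=\{J<\m-\de\}$ is $(\al,\be)$-free. It then shows that for interior exponents each $\K^{+\de}_{\al,\be}$ is open and connected: openness uses that $J(\fy)<\m$ with $K_{\al,\be}(\fy)=0$ forces $\fy=0$ together with a small-data estimate around the origin, and connectedness follows from the rescaling $\la\mapsto\fy^\la_{\al,\be}$ with $\la\to -\infty$, which contracts $\K^{+\de}_{\al,\be}$ to $\{0\}$ while staying inside it (because $K>0$ is preserved below $\m$ and $J$ does not increase along the flow). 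Since a connected set containing $0$ cannot be separated by two disjoint open sets both meeting it, and since $0\in\K^{+\de}_{\al,\be}\cap\K^{+\de}_{\al',\be'}$, one gets $\K^{+\de}_{\al,\be}=\K^{+\de}_{\al',\be'}$ for all interior $(\al,\be),(\al',\be')$; the boundary exponents are handled by approximation. This topological (connectedness plus common basepoint) argument is the missing idea: there is no single pointwise inequality that identifies the sign of $K_{\al,\be}$ across parameters, and you must instead exploit the structure of the set $\{K_{\al,\be}\ge 0,\ J<\m\}$ as a whole.
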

\begin{proof}
Since $\m_{\al,\be}$ is independent of $(\al,\be)$, we only need to see that the sign of $K$ is independent under the threshold $m$. 
Moreover, we may restrict to the first component. For any $\de\ge 0$, we define $\K_{\al,\be}^{\pm\de}\subset H^1$ by  
\EQ{
 \pt \K_{\al,\be}^{+\de} = \{\fy \in H^1 \mid J(\fy)<m-\de,\ K_{\al,\be}(\fy)\ge 0\},
 \pr \K_{\al,\be}^{-\de} = \{\fy \in H^1 \mid J(\fy)<m-\de,\ K_{\al,\be}(\fy)<0\}.}
Then $(u_0,u_1)\in\K_{\al,\be}^\pm$ if and only if $u_0\in\K_{\al,\be}^{\pm\de}$ with $\de=\|u_1\|_{L^2}^2/2$. 
In addition, the disjoint union $\K_{\al,\be}^{+\de}\cup\K_{\al,\be}^{-\de}$ is already independent of $\alpha$ and $\beta$. Hence it suffices to show the independence of $\K_{\al,\be}^{+\de}$. 

First we consider the interior exponents satisfying $2\al+d\be>0$ and $2\al+(d-2)\be>0$. Then $\K^{+\de}_{\al,\be}$ is contracted to $\{0\}$ by the rescaling $\fy\mapsto\fy^\la$ with $0\ge\la\to-\I$. This is due to the following facts
\begin{enumerate}
\item $K(\fy^\la)>0$ is preserved as long as $J(\fy^\la)<m$, by the definition of $m$. 
\item $J(\fy^\la)$ does not increase as $\la$ decreases, as long as $\L J(\fy^\la)= K(\fy^\la)>0$. 
\item $\fy^\la\to 0$ in $H^1$ as $\la\to-\I$, since $2\al+d\be>0$ and $2\al+(d-2)\be>0$.
\end{enumerate}
In particular, $J$ cannot be negative on $\K^+_{\al,\be}$, and so $\K^{+\de}_{\al,\be}=\emptyset$ for $\de\ge m$. 
For $0\le\de<m$, both $\K^{\pm \de}_{\al,\be}$ are open in $H^1$. 
It follows for $\K^{-\de}$ from the definition, and for $\K^{+\de}$ from the facts that $J(\fy)<m$ and $K(\fy)=0$ imply $\fy=0$, and that a neighborhood of $0$ is contained in $\K^{+\de}$, which follows from \eqref{GN bd F}, \eqref{GN bd 1D} or \eqref{TM bd F}. Then the above argument of the scaling contraction shows that $\K^{+\de}_{\al,\be}$ is connected. 
Hence each $\K^{+\de}_{\al,\be}$ cannot be separated by $\K^{+\de}_{\al',\be'}$ and $\K^{-\de}_{\al',\be'}$ with any other $(\al',\be')$ in the interior range. Since $\K^{+\de}_{\al,\be}\cap \K^{+\de}_{\al',\be'}\ni 0$, we conclude that $\K^{+\de}_{\al,\be}=\K^{+\de}_{\al',\be'}$. 

Finally for $(\al,\be)$ on the boundary $2\al+d\be=0$ or $2\al+(d-2)\be=0$, 
take a sequence $(\al_n,\be_n)$ in the interior converging to $(\al,\be)$. Then $K_{\al_n,\be_n}\to K_{\al,\be}$, and so 
\EQ{
 \K^{\pm\de}_{\al,\be}\subset\bigcup_n \K^{\pm\de}_{\al_n,\be_n}.} 
Since the right hand side is independent of the parameter, so is the left. 
\end{proof}
 
\subsection{Variational estimates}
We conclude this section with a few estimates on the energy-type functionals, which will be important in the proof of the blow-up and the scattering. 
We start with the easy observation that the free energy and the nonlinear energy are equivalent in the set $\K^+$. 
\begin{lem}[Free energy equivalence in $\K^+$] \label{E0 equiv K+}
Assume that $f$ satisfies \eqref{asm f}. Then for any $(u_0,u_1)\in H^1(\R^d)\times L^2(\R^d)$ we have 
\EQ{ \label{J0 equiv}
 K_{1,0}(u_0)\ge 0 \implies 
 \CAS{ J(u_0) \le \|u_0\|_{H^1_x}^2/2 \le (1+d/2)J(u_0), \\
   E(u_0,u_1) \le E^Q(u_0,u_1) \le (1+d/2) E(u_0,u_1).}}
\end{lem}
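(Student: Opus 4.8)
The plan is to deduce the two chained inequalities in \eqref{J0 equiv} from the single hypothesis $K_{1,0}(u_0)\ge 0$, using the convexity/monotonicity structure of $f$ encoded in \eqref{f conv}. First I would unwind the definitions: since
\[
 K_{1,0}(u_0) = \|u_0\|_{H^1_x}^2 - \int_{\R^d} u_0 f'(u_0)\,dx = \|u_0\|_{H^1_x}^2 - \int_{\R^d} Df(u_0)\,dx,
\]
and
\[
 J(u_0) = \tfrac12\|u_0\|_{H^1_x}^2 - F(u_0),
\]
the left inequality $J(u_0)\le \tfrac12\|u_0\|_{H^1_x}^2$ is equivalent to $F(u_0)\ge 0$, which follows immediately from \eqref{f conv} (recall $D^2 f\ge (2_\star+\e)Df\ge(2_\star+\e)^2 f\ge 0$, so $f\ge 0$ pointwise, hence $F\ge 0$). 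That part is essentially free.

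The substantive half is the upper bound $\tfrac12\|u_0\|_{H^1_x}^2\le (1+d/2)J(u_0)$. I would rewrite it as
\[
 \tfrac12\|u_0\|_{H^1_x}^2 \le (1+\tfrac d2)\Bigl(\tfrac12\|u_0\|_{H^1_x}^2 - F(u_0)\Bigr)
 \iff (1+\tfrac d2)F(u_0)\le \tfrac d4\|u_0\|_{H^1_x}^2,
\]
i.e. it suffices to show
\[
 \Bigl(\tfrac{2}{d}+1\Bigr)F(u_0)\le \tfrac12\|u_0\|_{H^1_x}^2 = \tfrac12\int_{\R^d} Df(u_0)\,dx + \tfrac12 K_{1,0}(u_0).
\]
Since $K_{1,0}(u_0)\ge 0$ by hypothesis, it is enough to prove the pointwise inequality $(\tfrac2d+1)\,2f(r)\le Df(r) = rf'(r)$ for all $r\in\R$, i.e. $Df\ge 2_\star f$ with $2_\star = 2+\tfrac4d$; but $2_\star\cdot 2 = 4 + 8/d$... let me recompute: we need $(2/d+1)\cdot 2 = 4/d + 2$, and indeed $2_\star = 2+4/d$, so $Df\ge 2_\star f$ is exactly $(D-2_\star)f\ge 0$, which is the first half of \eqref{f conv} (with $\e\ge 0$). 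Integrating this pointwise bound over $\R^d$ gives $2_\star F(u_0)\le \int Df(u_0)\,dx$, and combining with $K_{1,0}(u_0)\ge 0$ closes the chain. The main obstacle, such as it is, is just getting the arithmetic of the critical exponent $2_\star = 2+4/d$ aligned with the factor $(1+d/2)$ — there is no analytic difficulty once the pointwise consequence $Df\ge 2_\star f$ of \eqref{f conv} is in hand.

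Finally, the energy version follows by adding $\tfrac12\|u_1\|_{L^2_x}^2$ to everything: writing $E(u_0,u_1) = J(u_0) + \tfrac12\|u_1\|_{L^2_x}^2$ and $E^Q(u_0,u_1) = \tfrac12\|u_0\|_{H^1_x}^2 + \tfrac12\|u_1\|_{L^2_x}^2$, the bound $J(u_0)\le \tfrac12\|u_0\|_{H^1_x}^2\le(1+d/2)J(u_0)$ gives $E\le E^Q$ directly, and for the right inequality $E^Q\le (1+d/2)E$ one uses $\tfrac12\|u_0\|_{H^1_x}^2\le (1+d/2)J(u_0)$ together with $\tfrac12\|u_1\|_{L^2_x}^2\le (1+d/2)\tfrac12\|u_1\|_{L^2_x}^2$ (since $d\ge 1$) and adds. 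This is purely bookkeeping. I would present the pointwise lemma $Df\ge 2_\star f\ge 0$ as the one real ingredient, cite \eqref{f conv}, and let the rest be a two-line computation.
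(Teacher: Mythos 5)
Your proof is correct and uses the same key ingredient as the paper — the pointwise bound $(D-2_\star)f\ge 0$ from \eqref{f conv}, combined with $K_{1,0}(u_0)\ge 0$ — so the approach is essentially identical; the paper just packages the same algebra into the single chain \eqref{K0 E bd} (writing $\int Df = (2+c)F + \int(D-2-c)f$ with $c=4/d$) rather than separating the two inequalities as you do.
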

\begin{proof}
Since $(D-2-c)f(u)\ge 0$ with $c:=4/d>0$ by \eqref{f conv}, we have for any $(u_0,u_1)\in H^1\times L^2$, 
\EQ{ \label{K0 E bd} 
 \pt K_{1,0}(u_0) \pn= \|u_0\|_{H^1_x}^2 - (2+c) F(u_0) - \int (D-2-c)f(u_0) dx 
 \pr\le (2+c)J(u_0) - c\|u_0\|_{H^1_x}^2/2  
 \pn= (2+c)E(u_0,u_1) - c E^Q(u_0,u_1) - \|\dot u\|_{L^2_x}^2,}
and hence we obtain the desired estimate.
\end{proof}

In the 2D exponential case, we have a sharper bound on the derivatives, which implies that $\K^+$ is in the subcritical regime for the Trudinger-Moser inequality. 
\begin{lem}[Subcritical bound in $\K^+$ in the 2D exponential case] \label{E bd exp}
Assume that $f$ satisfies \eqref{asm f} and \eqref{f exp}. Then for any $(u_0,u_1)\in\K^+$ we have 
\EQ{
 \|\na u_0\|_{L^2}^2 + \|u_1\|_{L^2}^2 < 2m \le \mm(F)^2=4\pi/\ka_0.}
\end{lem}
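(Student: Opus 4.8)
The plan is to reduce everything to the single scaling $(\al,\be) = (0,1)$, where in two space dimensions the constraint functional $K_{0,1}$ carries \emph{no} gradient term. Its nonnegativity then pins the nonlinear energy below half the $L^2$ norm, so that the remaining (kinetic plus gradient) part of the dynamical energy is controlled by $E$ alone, and the threshold bound on $m$ finishes the job.

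First I would invoke the parameter independence of $\K^+$ (Lemma \ref{K indp}) to write $\K^+ = \K^+_{0,1}$; here $(0,1)$ lies in the admissible range \eqref{range albe} precisely because at $d=2$ one has $2\al+(d-2)\be = 0$. Hence $(u_0,u_1)\in\K^+$ gives $E(u_0,u_1) < m$ together with, by the formula \eqref{def K} specialized to $d = 2$,
\[
 K_{0,1}(u_0) = \|u_0\|_{L^2}^2 - 2F(u_0) \ge 0,
\]
so that $\tfrac12\|u_0\|_{L^2}^2 - F(u_0) \ge 0$.

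Next I would split the energy as
\[
 E(u_0,u_1) = \tfrac12\|u_1\|_{L^2}^2 + \tfrac12\|\na u_0\|_{L^2}^2 + \Big(\tfrac12\|u_0\|_{L^2}^2 - F(u_0)\Big),
\]
and discard the last, nonnegative, bracket to obtain $\tfrac12\big(\|\na u_0\|_{L^2}^2 + \|u_1\|_{L^2}^2\big) \le E(u_0,u_1) < m$. Finally, Lemma \ref{Q exp} supplies $m = m_{0,1} \le \mm(F)^2/2 = 2\pi/\ka_0$, and combining the two inequalities yields $\|\na u_0\|_{L^2}^2 + \|u_1\|_{L^2}^2 < 2m \le \mm(F)^2 = 4\pi/\ka_0$, as claimed.

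There is no genuine obstacle: the lemma is an immediate corollary of Lemmas \ref{K indp} and \ref{Q exp}. The only point needing a moment's thought is the choice of scaling — one must use $(\al,\be) = (0,1)$ rather than, say, $(1,0)$, since it is exactly the vanishing of the coefficient $\tfrac{d-2}{2}$ at $d = 2$ that allows $K_{0,1}\ge 0$ to absorb the entire nonlinear energy against $\|u_0\|_{L^2}^2$, leaving only $\|\na u_0\|_{L^2}^2 + \|u_1\|_{L^2}^2$ to be bounded by $2E < 2m$.
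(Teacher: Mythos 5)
Your proof is correct and matches the paper's argument exactly: both use the parameter independence to pass to $K_{0,1}(u_0)\ge 0$, observe that at $d=2$ this is $\|u_0\|_{L^2}^2-2F(u_0)\ge 0$, add it to $\|\na u_0\|_{L^2}^2+\|u_1\|_{L^2}^2$ to recover $2E(u_0,u_1)<2m$, and then invoke the bound $m\le 2\pi/\ka_0$ from Lemma~\ref{Q exp}. No meaningful difference.
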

\begin{proof}
Since $K_{0,1}(u_0)\ge 0$, we have 
\EQ{
 \|\na u_0\|_{L^2}^2 + \|u_1\|_{L^2}^2
 \pt\le \|\na u_0\|_{L^2}^2 + \|u_1\|_{L^2}^2 + K_{0,1}(u_0) 
 \pn= 2E(u_0,u_1) < 2m.}
\end{proof}

The next estimate gives a lower bound on $|K|$ under the threshold $m$, which will be important both for the blow-up and for the scattering. 
\begin{lem}[Uniform bounds on $K$] \label{lem:K bd}
Assume that $f$ satisfies \eqref{f conv}, and that $(\al,\be)$ satisfies \eqref{range albe} and $(d,\al)\not=(2,0)$. 
Then there exists $\de>0$ determined by $(\al,\be)$, $d$ and $\e$ in \eqref{f conv}, such that for any $\fy\in H^1$ with $J(\fy)<m$ we have
\EQ{
 K_{\al,\be}(\fy) \ge \min(\bar\mu(m-J(\fy)),\de K_{\al,\be}^Q(\fy)) 
 \text{ or }
 K_{\al,\be}(\fy)
  \le -\bar\mu(m-J(\fy)).}
\end{lem}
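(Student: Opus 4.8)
The plan is to read everything off the variational structure already established. Write $K_{\al,\be}=\bar\mu(J-H_{\al,\be})$ with $H_{\al,\be}=(1-\Dab/\bar\mu)J$ as in \eqref{def H}, and recall the characterizations $\m_{\al,\be}=\inf\{H_{\al,\be}(\psi)\mid 0\ne\psi\in H^1(\R^d),\ K_{\al,\be}(\psi)\le0\}$ from \eqref{min H} and $\m_{\al,\be}=\inf\{J(\psi)\mid 0\ne\psi,\ K_{\al,\be}(\psi)=0\}$ from \eqref{min J}. I would first dispose of the case $K_{\al,\be}(\fy)<0$: then $\fy\ne0$, so $H_{\al,\be}(\fy)\ge m$, whence $K_{\al,\be}(\fy)=\bar\mu(J(\fy)-H_{\al,\be}(\fy))\le\bar\mu(J(\fy)-m)=-\bar\mu(m-J(\fy))$, the second alternative; and $\fy=0$ gives the first trivially. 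So it remains to fix $\fy\ne0$ with $K_{\al,\be}(\fy)\ge0$ and $J(\fy)<m$ and prove the first alternative.

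Next I would confine $\fy$: since $(\fy,0)\in\K^+_{\al,\be}=\K^+$, Lemma \ref{K indp} gives $K_{1,0}(\fy)\ge0$, so by Lemma \ref{E0 equiv K+} (and Lemma \ref{E bd exp} in the exponential case) $\fy$ lies in a fixed bounded subset of $H^1$, with $\|\na\fy\|_{L^2}^2<2m\le4\pi/\ka_0$ in the exponential case (a safe Trudinger--Moser margin). Then \eqref{GN bd F}, \eqref{GN bd 1D}, \eqref{TM bd F} hold uniformly, so $|K^N_{\al,\be}(\fy)|=o(K^Q_{\al,\be}(\fy))$ as $K^Q_{\al,\be}(\fy)\to0$; choose $\rho_0>0$ with $K^Q_{\al,\be}(\fy)\le\rho_0\Rightarrow|K^N_{\al,\be}(\fy)|\le\tfrac12K^Q_{\al,\be}(\fy)$, so $K_{\al,\be}(\fy)\ge\tfrac12K^Q_{\al,\be}(\fy)$ there, which suffices once $\de\le\tfrac12$. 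The remaining case is $K^Q_{\al,\be}(\fy)\ge\rho_0$; there, if $K_{\al,\be}(\fy)\ge\de K^Q_{\al,\be}(\fy)$ we are done, and otherwise $\Dab F(\fy)=K^Q_{\al,\be}(\fy)-K_{\al,\be}(\fy)>(1-\de)K^Q_{\al,\be}(\fy)$, i.e.\ $\fy$ is nearly on $\{K_{\al,\be}=0\}$, and I must show $K_{\al,\be}(\fy)\ge\bar\mu(m-J(\fy))$.

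For $\al>0$ I would follow the scaling path $\la\mapsto\fy^\la_{\al,\be}$. By Lemma \ref{K>0 near 0} and \eqref{KL conc}, $K_{\al,\be}(\fy^\la_{\al,\be})>0$ for $\la\ll0$, hence---since $\la\mapsto J(\fy^\la_{\al,\be})$ is nondecreasing where $K_{\al,\be}(\fy^\la_{\al,\be})\ge0$ and is $\ge m$ on $\{K_{\al,\be}=0\}$---for all $\la\le0$; by \eqref{J mono}--\eqref{J conv} and the super-$2_\star$ growth \eqref{f conv}, $J(\fy^\la_{\al,\be})$ is bounded above and tends to $-\I$ as $\la\to+\I$, so it attains an interior maximum at some $\la_*>0$ with $K_{\al,\be}(\fy^{\la_*}_{\al,\be})=0$, $J(\fy^{\la_*}_{\al,\be})\ge m$. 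The key observation is that \eqref{f conv} makes $\Dab F(\fy^\la_{\al,\be})$ grow in $\la$ strictly faster than $K^Q_{\al,\be}(\fy^\la_{\al,\be})$---by a factor at least $e^{\frac{2\al\e}{d+1}\la}$---so the ratio $\Dab F/K^Q_{\al,\be}$, which is $>1-\de$ at $\la=0$ and $=1$ at $\la_*$, pins down $\la_*\le\tfrac{d+1}{2\al\e}\log\tfrac1{1-\de}=O(\de)$; monotonicity of that ratio also makes $\la\mapsto K_{\al,\be}(\fy^\la_{\al,\be})$ nonincreasing on $[0,\la_*]$. Hence, with $\de$ small enough that $\la_*\le\bar\mu^{-1}$,
\[
m-J(\fy)\le J(\fy^{\la_*}_{\al,\be})-J(\fy)=\int_0^{\la_*}K_{\al,\be}(\fy^\la_{\al,\be})\,d\la\le\la_*\,K_{\al,\be}(\fy)\le\bar\mu^{-1}K_{\al,\be}(\fy),
\]
the first alternative. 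For $\al=0$ (which by \eqref{range albe} and $(d,\al)\ne(2,0)$ forces $d\ge3$, $\be>0$, $\U\mu>0$), the $(\al,\be)$-scaling preserves amplitude, so I would instead use the amplitude scaling $\nu\mapsto\nu\fy$ with $K_{1,0}$ and $\m=\m_{1,0}$: the amplitude form of \eqref{f conv}, $D^2 f\ge(2_\star+\e)Df$, bounds the crossing point $\nu_*$ of $\{K_{1,0}=0\}$ by $1+O(\de)$, and integrating $\tfrac{d}{d\nu}J(\nu\fy)=K_{1,0}(\nu\fy)/\nu$ from $1$ to $\nu_*$ closes the estimate after comparing $K_{1,0}(\fy)$ with $K_{0,\be}(\fy)$ (both nonnegative below $m$).

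The main obstacle is precisely this uniform control of the crossing point, $\la_*=O(\de)$ (resp.\ $\nu_*-1=O(\de)$): here the absence of scaling invariance bites, since one must quantify---from \eqref{f conv} and the a priori $H^1$-bound alone---how fast the nonlinear scaling-derivative $\Dab F$ overtakes the quadratic one $K^Q_{\al,\be}$, while simultaneously checking monotonicity of $K_{\al,\be}$ along the path. The exceptional cases need the same ingredients with extra bookkeeping: $(\al,\be)$ on the boundary of \eqref{range albe} (where $\U\mu=0$, so the concavity in \eqref{J conv} comes from the $F$-term), the $H^1$-critical case \eqref{f crit} (where $\m=J^{(0)}(Q)$ and \eqref{min J} has no minimizer, though $\m$ is still the infimum over $\{K_{\al,\be}=0\}$), and the 2D exponential case \eqref{f exp} at the threshold $\CTMS(F)\le1$ (where the margin in \eqref{TM bd F} degenerates and one leans on the strict gap $J(\fy)<m$). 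The resulting $\de$ depends only on $(\al,\be)$, $d$ and $\e$.
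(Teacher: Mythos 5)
Your treatment of the case $K_{\al,\be}(\fy)<0$ via the identity $K_{\al,\be}=\bar\mu(J-H_{\al,\be})$ and the characterization \eqref{min H} is correct and in fact cleaner than the paper's, which integrates the second-order inequality for $j(\la)=J(\fy^\la_{\al,\be})$ backward to the first zero $\la_0<0$ of $j'$. For $\al>0$, the exponential growth of the ratio $\Dab F(\fy^\la)/K^Q_{\al,\be}(\fy^\la)$ (from $n''-\bar\mu n'\ge\tfrac{2\al\e}{d+1}n'$, $n'\ge\bar\mu n$, and $K^Q(\fy^\la)\le e^{\bar\mu\la}K^Q(\fy)$) is a genuine alternative to the paper's argument and does pin $\la_*\lesssim_{\al,\e,d}\log\tfrac{1}{1-\de}$. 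One small repair: the ratio's growth does not make $\la\mapsto K_{\al,\be}(\fy^\la)$ nonincreasing as you claim; it only gives $K_{\al,\be}(\fy^\la)\le e^{\bar\mu\la}K_{\al,\be}(\fy)$, which still closes the estimate if you shrink $\de$ so that $e^{\bar\mu\la_*}\le 2$, since then $\int_0^{\la_*}K(\fy^\la)d\la\le K(\fy)(e^{\bar\mu\la_*}-1)/\bar\mu\le K(\fy)/\bar\mu$. (The paper gets the sharp $\bar\mu$ by deriving $j''<-\bar\mu j'$ and integrating, without ever bounding $\la_*$.)

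The serious gap is the case $\al=0$. There $\Dab F(\fy^\la)=\bar\mu e^{\bar\mu\la}F(\fy)$ exactly, while $K^Q_{\al,\be}(\fy^\la)$ contains an $e^{\bar\mu\la}$ term of the same order, so the ratio $\Dab F/K^Q$ gains no factor $e^{\frac{2\al\e}{d+1}\la}$; it tends to the finite limit $2F(\fy)/\|\fy\|_{L^2}^2$ as $\la\to\I$ and need not reach $1$, so $\la_*$ cannot be pinned this way and may not even exist. Your fallback of switching to the amplitude scaling and ``comparing $K_{1,0}(\fy)$ with $K_{0,\be}(\fy)$'' cannot close, because the comparison runs the wrong way: for $\al=0$, $\be>0$, $d\ge3$ one has the identity $\bar\mu K_{1,0}(\fy)-2K_{0,\be}(\fy)=\be K_{d,-2}(\fy)$, and $K_{d,-2}(\fy)\ge0$ on $\K^+$ by Lemma \ref{K indp}; hence $K_{0,\be}(\fy)\le\tfrac{\bar\mu}{2}K_{1,0}(\fy)$, so a lower bound on $K_{1,0}$ delivers nothing for $K_{0,\be}$. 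The paper handles $\al>0$ and $\al=0$ uniformly by dichotomizing on whether $(2\bar\mu+\U\mu)K_{\al,\be}(\fy)\ge\bar\mu\U\mu J(\fy)+\tfrac{2\al\e}{d+1}\Dab F(\fy)$: if it holds, \eqref{J0 equiv} (applicable because $K_{1,0}(\fy)\ge0$ by Lemma \ref{K indp}) gives $K\ge\de K^Q$; if not, $j''<-\bar\mu j'$ is preserved until $j'$ hits $0$ at some $\la_0>0$, and integrating yields $K_{\al,\be}(\fy)=j'(0)\ge\bar\mu(j(\la_0)-j(0))\ge\bar\mu(m-J(\fy))$. When $\al=0$ the concavity comes entirely from the $\bar\mu\U\mu j$ term (with $\U\mu>0$), not from the $\al\e$ term---precisely the contribution your ratio computation discards. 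You need an argument of this type for $\al=0$; the sketch as written does not supply one.
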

Note that if $(d,\al)=(2,0)$ then the conclusion is false, since in that case $K(\fy^\la_{\al,\be})=e^{d\be\la}K(\fy)\to 0$ as $\la\to-\I$, while $J(\fy^\la)$ is away from $m$, since it is decreasing if $K(\fy)>0$ and $J(\fy^\la)\nearrow H(\fy)<m$ if $K(\fy)<0$. 
\begin{proof}
We may assume $\fy\not=0$. 
Let $j(\la)=J(\fy^\la)$ and $n(\la)=F(\fy^\la)$, where $\fy^\la_{\al,\be}=\fy^\la$ is the rescaling \eqref{def scaling}. 
Then $j(0)=J(\fy)$ and $j'(0)=K(\fy)$, and \eqref{J conv} implies 
\EQ{ \label{dineq j}
 j'' \le (\bar\mu+\U\mu) j' - \bar\mu\U\mu j - \frac{2\al\e}{d+1} n'.}

First we consider the case $K(\fy)<0$. 
By Lemma \ref{K>0 near 0} together with \eqref{KL conc}, there exists $\la_0<0$ such that $j'(\la)<0$ for $\la_0<\la\le 0$ and $j'(\la_0)=0$. For $\la_0\le\la\le 0$ we have from \eqref{K F mono}, 
\EQ{
 (\bar\mu+\U\mu)j'-\bar\mu\U\mu j \le \bar\mu j'.}
Inserting this in \eqref{dineq j} and integrating it, we get 
\EQ{
 \int_{\la_0}^0 j''(\la) d\la \le \bar\mu \int_{\la_0}^0 j'(\la) d\la,}
and hence 
\EQ{
 K(\fy) = j'(0) \le \bar\mu(j(0)-j(\la_0)).}
Since $K(\fy^{\la_0})=0$ and $\fy^{\la_0}\not=0$, we have 
$j(\la_0)=J(\fy^{\la_0})\ge m$. Thus we obtain
\EQ{
 K(\fy) \le - \bar\mu(m-J(\fy)).}

Next we consider the case $K(\fy)>0$. 
If 
\EQ{ \label{K dominance}
 (2\bar\mu+\U\mu)K(\fy) \ge \bar\mu\U\mu J(\fy) + \frac{2\al\e}{d+1}\L F(\fy),}
then applying \eqref{J0 equiv} to the first term on the right hand side, and $K=K^Q-\L F$ to the second one, we get 
\EQ{
 \left[2\bar\mu+\U\mu+\frac{2\al\e}{d+1}\right]K(\fy)  
 \ge \frac{\bar\mu\U\mu}{2+d}\|\fy\|_{H^1}^2 + \frac{2\al\e}{d+1}K^Q(\fy),}
and so $K(\fy)\ge\de K^Q(\fy)$ for some $\de>0$, since $\U\mu>0$ or $\al>0$. 
If \eqref{K dominance} fails, then 
\EQ{ \label{j' bd}
 (2\bar\mu+\U\mu)j' < \bar\mu\U\mu j + \frac{2\al\e}{d+1}n',}
at $\la=0$, and so from \eqref{dineq j}, 
\EQ{ \label{bd j''}
 j'' < - \bar\mu j'.}
Now let $\la$ increase. 
As long as \eqref{j' bd} holds and $j'>0$, we have $j''<0$ and so $j'$ decreases and $j$ increases. Also by \eqref{J conv} and \eqref{K F mono} we have 
\EQ{
 n'' \ge (\bar\mu+\U\mu)n' - \bar\mu\U\mu n \ge \bar\mu n' \ge \bar\mu^2 n > 0.}
Hence \eqref{j' bd} is preserved until $j'$ reaches $0$. It does reach at finite $\la_0>0$, because the right hand side of \eqref{dineq j} is negative and decreasing as long as $j'>0$. Now integrating \eqref{bd j''} we obtain
\EQ{
 K(\fy)= j'(0) \ge \bar\mu(j(\la_0)-j(0)) \ge \bar\mu(m-J(\fy)),}
where we used that $J(\fy^{\la_0})\ge m$ which follows from $K(\fy^{\la_0})=0$ and $\fy^{\la_0}\not=0$. 
\end{proof}

\section{Blow-up}
Here we prove the blow-up part of Theorem \ref{main thm}. 
The idea is essentially due to Payne-Sattinger \cite{PS}, but we give a full proof for convenience. We will use that $\K^-$ is stable under the flow. 

By contradiction we assume that the solution $u$ exists for all $t>0$. 
The proof for $t<0$ is the same and omitted.  
Let 
\EQ{
 y(t):=\|u(t,x)\|_{L^2_x(\R^d)}^2.} 
Multiplying the equation with $u$, and using \eqref{K0 E bd}, we get  
\EQ{ \label{convex1}
 \ddot y = 2\|\dot u\|_{L^2}^2 - 2K_{1,0}(u) 
  \ge (4+c)\|\dot u\|_{L^2}^2-2(2+c)E(u) + c \|u\|_{H^1}^2,}
for some $c>0$. Sine $u(t)\in\K^-$, Lemma \ref{lem:K bd} implies that there is some positive $\de\le-K_{1,0}(u(t))$. 
Thus for all $t>0$ we have 
\EQ{ 
 \ddot y(t)\ge 2\de >0,}
and so $y(t)=\|u(t)\|_{L^2}^2\to\I$ as $t\to\I$. 
Going back to \eqref{convex1}, and using Schwarz, we deduce that for large $t$
\EQ{
 {\ddot y} \geq (4+c)\|\dot u\|_{L^2}^2 > \frac{4+c}{4}\frac{{\dot y}^2}{y},} 
therefore 
\EQ{
 (y^{-c/4})_{tt} = -\frac{c}{4}y^{-c/4-2}\left[y\ddot y-\frac{4+c}{4}\dot y^2\right] < 0,}
which contradicts that $y\to\I$. 

\section{Global space-time norm}\label{sect:str} 
In this section we introduce Strichartz-type estimates and a perturbation lemma for global space-time bounds of the solution. 

The inhomogeneity of the Klein-Gordon equation makes the exponents a bit more complicated than the case of wave or Schr\"odinger equation. 
In the $H^1$ critical case, we get another complication in higher dimensions, due to the fact that we have to estimate the difference of solutions in some Sobolev (or Besov) spaces with positive regularity but the nonlinearity is not twice differentiable\footnote{The problem is not on the local regularity of the nonlinearity (at $u=0$), but rather on the global H\"older continuity for $f_L$.}. 
This is not a problem in the subcritical case, where we are allowed to lose small regularity, so that we can estimate the difference in some $L^p$ spaces and then interpolate. 
This technical issue was solved in the pure critical case in \cite{3Dcrit} by using space-time norms with exponents away from the admissible region for the standard Strichartz estimate, which was later called ``exotic Strichartz estimates" in the Schr\"odinger case \cite{TaoVisan}. 

Here we have a further complication by the presence of lower powers, for which we need the exotic Strichartz for the Klein-Gordon equation. 
Note that it is not a big trouble in the Schr\"odinger case (see \cite{TaoVisanZhang}), because the same Strichartz estimate is used both for higher and lower powers. 
In the Klein-Gordon case, in contrast, we have to use different Strichartz norms, with better regularity for higher powers and with better decay for lower powers. 
It is easy in the standard Strichartz estimate, where we can freely mix different norms by the duality argument, but this does not work for the exotic Strichartz estimate, which uses exponents away from the duality. 
Hence we are forced to use a common exponent for different powers, which makes our estimates much more involved. 
In particular, when we have both the $H^1$ critical and the $L^2$ critical powers, we need three steps to close our estimates. 

\subsection{Reduction to the first order equation} \label{rdc 1st}
To simplify the notation, we rewrite NLKG in the first order equation. With any real-valued function $u(t,x)$, we associate the complex-valued function $\V u(t,x)$ by 
\EQ{ \label{def vec}
 \V{u} = \LR{\na}u - i \dot u, \pq u = \LR{\na}^{-1}\Re\V u.}
This relation $u\leftrightarrow\V u$ will be assumed for any space-time function $u$ throughout this paper. 
Here we use $i$ purely for notational convenience, and we could use a vector form instead\footnote{We chose the complex form rather than the vector one, in order to avoid adding a subscript, for this notation will be applied mostly to sequences.}, especially if $u$ is originally complex-valued. 
The free and nonlinear Klein-Gordon equations are given by 
\EQ{
  \pt(\square+1)u=0 \iff (i\p_t + \Da)\V{u}=0, 
  \pr(\square+1)u = f'(u) \iff (i\p_t + \Da)\V{u} = f'(\Da^{-1}\Re\V{u}),}
and the free energy is given by $E^Q(u)=\|\V{u}\|_{L^2_x}^2/2$. We denote 
\EQ{ \label{def tiE}
 \pt \ti E(\fy) := \|\fy\|_{L^2_x}^2/2 - F(\LR{\na}^{-1}\Re\fy), 
 \pr \ti K_{\al,\be}(\fy):= K^Q_{\al,\be}(\LR{\na}^{-1}\fy) + K^N_{\al,\be}(\LR{\na}^{-1}\Re\fy).}
Remark that 
\EQ{
 \ti E(\V u(t))=E(u;t), \pq \ti K(\V u(t))\ge K(u(t)),} 
where the equality in the latter holds if and only if $\dot u(t)=0$. 
Nevertheless, the invariant set $\K^+=\K^+_{\al,\be}$ for $\V u$ is given by 
\EQ{ \label{def tiK}
 \ti\K^+ \pt:= \{\fy\in L^2(\R^d) \mid \ti E(\fy)<m,\ K(\Re\LR{\na}^{-1}\fy)\ge 0\} \pr=\{\fy\in L^2(\R^d) \mid \ti E(\fy)<m,\ \ti K(\fy)\ge 0\}.}
The second identity (the first one is definition) is proved as follows. 
Let $\fy\in L^2(\R^d)$ satisfy $\ti E(\fy)<m$ and $K(\Re\LR{\na}^{-1}\fy)<0$. 
Let $\psi_1=\Re\LR{\na}^{-1}\fy$ and $\psi_2=\Im\LR{\na}^{-1}\fy$. 
Then Lemma \ref{lem:K bd} implies that 
\EQ{
 K(\psi_1) \le -\bar\mu(m-J(\psi_1)) < -\bar\mu\|\psi_2\|_{H^1_x}^2/2
 \le -K^Q(\psi_2),} 
so $\ti K(\fy)=K(\psi_1)+K^Q(\psi_2)<0$. 
Hence under the condition $\ti E(\fy)<m$, the signs of $K(\psi_1)$ and $\ti K(\fy)$ are the same, which proves \eqref{def tiK}. 

\subsection{Strichartz-type estimates and exponents} \label{Stz notation}
Here we recall the Strichartz estimate for the free Klein-Gordon equation, introducing some notation for the space-time norms and special exponents. 

With any triplet $(b,c,\s)\in [0,1]^2\times\R$ and any $q\in(0,\I]$, we associate the following Banach function spaces on $I\times\R^d$ for any interval $I$: 
\EQ{ \label{def Yq}
 \pt [(b,c,\s)]_q(I) := L^{1/b}_t (I;B^{\s}_{1/c,q}(\R^d)), 
 \pq [(b,c,\s)]_0(I) := L^{1/b}_t (I;L^{1/c}(\R^d)),
 \pr [(b,c,\s)]_q^\bul(I) := L^{1/b}_t (I;\dot B^{\s}_{1/c,q}(\R^d)),}
where $B^s_{p,q}$ and $\dot B^s_{p,q}$ respectively denote the inhomogeneous and homogeneous Besov spaces, 
and the following characteristic numbers with a parameter $\th\in[0,1]$:
\EQ{ \label{def regstr}
 \pt \reg^\th(b,c,\s) := \si - (1-2\th/d)b - d(c-1/2),
 \pr \str^\th(b,c,\s) := 2b + (d-1+\th)(c-1/2),
 \pr \dec^\th(b,c,\s) := b + (d-1+\th)(c-1/2).}
$\th=0,1$ correspond respectively to the wave and the Klein-Gordon equations.  
$\reg^\th$ indicates the regularity of the space, while $\str^\th$ and $\dec^\th$ indicate the space-time decay, corresponding respectively to the Strichartz and the $L^p-L^q$ decay estimates. 
We denote the regularity change and the duality in $H^{s-1/2}$ (here $-1/2$ takes account of one regularity gain in the wave equation) respectively by 
\EQ{ \label{exp change}
 (b,c,\s)^s := (b,c,s), \pq (b,c,\s)^{*(s)} := (1-b,1-c,-\s+2s-1).}

Given a real number $s$, we say $Z=(Z_1,Z_2,Z_3)$ is Strichartz $s$-admissible 
if for some $\th\in[0,1]$ we have 
\EQ{
 \pt 0\le Z_1\le 1/2, \pq 0\le Z_2< 1/2, \pq \reg^\th(Z) \le s, 
 \pq \str^\th(Z) \le 0.}
We avoid the endpoint $Z_2=1/2$ to mix different $\th$. The Strichartz estimates read 
\begin{lem}[see \cite{Brenner,GV1,MNO}] 
For any $s\in\R$, let $Z$ and $T$ be $s$-admissible. 
Then for any space-time function $u(t,x)$, any interval $I\subset\R$, and any $t_0\in I$, we have 
\EQ{
 \|u\|_{[Z]_2(I)} \lec \|u(t_0)\|_{H^s} + \|\dot u(t_0)\|_{H^{s-1}} + \|\ddot u - \De u + u\|_{[T^{*(s)}]_2(I)},}
where the implicit constant does not depend on $I$ or $t_0$. 
\end{lem}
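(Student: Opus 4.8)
The plan is to use the first-order reduction of Section~\ref{rdc 1st}: writing $\V u=\Da u-i\dot u$, the equation $\ddot u-\De u+u=g$ becomes $(i\p_t+\Da)\V u=\Da^{-1}g$, whose free propagator is the half--Klein--Gordon group $e^{it\Da}$; moreover $\|u(t_0)\|_{H^s}+\|\dot u(t_0)\|_{H^{s-1}}\sim\|\V u(t_0)\|_{H^{s-1}}$, and the relation $u=\Da^{-1}\Re\V u$ only shifts by one derivative the regularity indices in the spaces $[\cdot]_2$ and the dual exponents in \eqref{exp change}. By Duhamel's formula it then suffices to prove the homogeneous estimate $\|e^{it\Da}\fy\|_{[Z]_2(\R)}\lec\|\fy\|_{H^{s-1}}$ for $s$-admissible $Z$, together with its retarded analogue on an arbitrary interval $I$ with an arbitrary base point $t_0\in I$. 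One decomposes $\fy$ into Littlewood--Paley pieces $P_N\fy$, bounds each piece with the appropriate power of $N$, and sums in $\ell^2_N$; the summation is legitimate precisely because $1/Z_1\ge 2$ (since $Z_1\le 1/2$), which is why $[\cdot]_2$ with Besov exponent $2$ is the natural target and produces the inhomogeneous Besov space $B^\s_{1/c,2}$ occurring in $[Z]_2$.

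First I would establish the dispersive estimate for $e^{it\Da}P_N$, which is the analytic heart of the matter. For $N\gec1$ the symbol is comparable to $|\x|$, so stationary phase on the light cone gives wave-type decay $\|e^{it\Da}P_N\fy\|_{L^\I_x}\lec|t|^{-(d-1)/2}N^{(d+1)/2}\|P_N\fy\|_{L^1_x}$; for $N\lec1$ the Hessian of $\LR{\x}$ is nondegenerate, so one gets Schr\"odinger-type decay $\|e^{it\Da}P_N\fy\|_{L^\I_x}\lec|t|^{-d/2}\|P_N\fy\|_{L^1_x}$. Introducing the interpolation parameter $\th\in[0,1]$ (the pure wave estimate being $\th=0$, the pure Klein--Gordon one $\th=1$) and interpolating in the spatial Lebesgue exponent and in $\th$ simultaneously yields a one-parameter family of frequency-localized bounds whose decay and regularity accounting is exactly what $\dec^\th$, $\str^\th$ and $\reg^\th$ in \eqref{def regstr} encode; this is the content of \cite{Brenner,GV1,MNO}. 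I expect this step, and in particular having to run it uniformly in $\th$ so as to reach every admissible triplet $Z$, to be the main obstacle.

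Given the dispersive estimate, the homogeneous Strichartz bound $\|e^{it\Da}P_N\fy\|_{[Z]_0(\R)}\lec N^{\reg^\th(Z)}\|P_N\fy\|_{L^2_x}$ for $Z$ with $\str^\th(Z)\le 0$ follows from the abstract $TT^*$ argument of Keel--Tao applied to this family; the $L^2_x$ endpoint $Z_2=1/2$ lies outside our admissibility class (it is excluded so as to leave room for mixing different $\th$), so no genuine endpoint issue arises. Summing in $\ell^2_N$ as above and using $\reg^\th(Z)\le s$, together with Bernstein to absorb the one-derivative gap, yields the homogeneous half of the lemma. For the Duhamel term the same framework gives the retarded estimate over all of $\R$; restricting it to the interval $I$ and to the base point $t_0$ is done with the Christ--Kiselev lemma, which applies whenever the two time exponents are separated, $Z_1+T_1<1$, while the residual double time-endpoint case $Z_1=T_1=1/2$ is handled directly by the endpoint version of the retarded estimate, for which the $\ell^2$-Besov structure of $[\cdot]_2$ again provides the needed room. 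Undoing the reduction $u\leftrightarrow\V u$ and recombining the wave ($\th=0$) and Klein--Gordon ($\th=1$) contributions finishes the proof.
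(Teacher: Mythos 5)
The paper does not prove this lemma itself; it simply cites Brenner, Ginibre--Velo, and Machihara--Nakanishi--Ozawa, and your sketch is essentially a faithful outline of what those references do. The only point worth flagging is a slight imprecision in your remark that excluding $Z_2=1/2$ ``means no genuine endpoint issue arises'': the bound $Z_2<1/2$ (i.e.\ avoiding $L^2_x$) is not the same as avoiding the Keel--Tao forbidden endpoint, which sits elsewhere on the admissibility boundary. At $Z_1=1/2$ with $Z_2<1/2$ you are at the time endpoint $L^2_t$ and you still need either the endpoint Keel--Tao theorem (which does cover this, since you stay strictly inside the spatial range) or the extra Besov room in $[\,\cdot\,]_2$, as you indeed invoke in the last step; so the conclusion stands, but the reason given is not quite the right one. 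Everything else --- first-order reduction, frequency-localized dispersive estimates with wave-type decay at high frequency and Schr\"odinger-type decay at low frequency, interpolation in $\th$, $TT^*$, $\ell^2_N$ summation via the Besov target, and Christ--Kiselev off the double time endpoint --- is exactly the standard argument underlying the cited lemma.
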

The ``exotic Strichartz estimate" is given for the Klein-Gordon equation by 
\begin{lem}
Let $Z,T\in\R^3$ satisfy for some $\th\in[0,1]$ 
\EQ{
 \pt \reg^\th(Z) \le \reg^\th(T)+2, \pq \str^\th(Z) \le \str^\th(T)-2, \pq 0<Z_1,T_1<1, 
 \pr \dec^\th(Z) < 0 < \dec^\th(T)-1,  \pq 0< \frac 12 -  Z_2,  T_2 - \frac 12 < \frac{1}{d-1+\th}.}
Then we have for any interval $I\subset\R$, $t_0\in I$, and $u(t,x)$ satisfying $u(t_0)=\dot u(t_0)=0$, 
\EQ{
 \|u\|_{[Z]_2(I)} \lec \|\ddot u - \De u + u\|_{[T]_2(I)}.}
\end{lem}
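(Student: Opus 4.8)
The plan is to combine Duhamel's formula with a fixed-time dispersive estimate for the Klein--Gordon propagator and a one-dimensional fractional integration in time. Writing $F:=\ddot u-\Delta u+u$ and using $u(t_0)=\dot u(t_0)=0$, one has
\[
  u(t)=\int_{t_0}^{t}\Da^{-1}\sin((t-s)\Da)\,F(s)\,ds.
\]
By Minkowski's inequality it then suffices to estimate, for each fixed $\tau=t-s$, the action of the single propagator $\Da^{-1}\sin(\tau\Da)$ from $\dot B^{\sigma_T}_{1/T_2,2}$ into $\dot B^{\sigma_Z}_{1/Z_2,2}$ with a time weight $|\tau|^{-\rho}$, and then carry out the $s$-integration. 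The inhomogeneous Besov spaces of $[Z]_2,[T]_2$ are handled by the standard low/high frequency splitting, the mass term making the low-frequency regime harmless.

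For the fixed-time bound one uses the $L^p$--$L^q$ decay estimate for $\Da^{-1}\sin(\tau\Da)$. Starting from the $L^2$ energy identity and the $L^1\to L^\I$ dispersive decay --- wave-type at high frequency and non-degenerate (Schr\"odinger-type) at bounded frequency --- and organizing the result by Littlewood--Paley pieces, real interpolation and dyadic summation, one obtains a Besov estimate of the schematic form
\[
  \|\Da^{-1}\sin(\tau\Da)\,g\|_{\dot B^{\sigma_Z}_{1/Z_2,2}}\lec |\tau|^{-\rho}\,\|g\|_{\dot B^{\sigma_T}_{1/T_2,2}},
\]
valid for every $\th\in[0,1]$, in which the decay rate $\rho$ and the admissible regularity gap $\sigma_Z-\sigma_T$ are exactly the quantities governed by $\dec^{\th}$ and $\reg^{\th}$; the interpolation parameter $\th$ is the device that tunes between the wave and the Schr\"odinger decay, at the price of the $(1-2\th/d)b$ term in $\reg^{\th}$. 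The hypotheses $\reg^{\th}(Z)\le\reg^{\th}(T)+2$ (the ``$+2$'' absorbing the smoothing of $\Da^{-1}$ and the regularity shift built into the definitions) and $0<\frac{1}{2}-Z_2,\ T_2-\frac{1}{2}<\frac{1}{d-1+\th}$ (which keep the spatial exponents near-dual, so the dispersive estimate is available with $\ell^2$ Besov summation) are precisely what make this estimate legitimate.

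Collecting the above gives
\[
  \|u(t)\|_{\dot B^{\sigma_Z}_{1/Z_2,2}}\lec \int_{I}|t-s|^{-\rho}\,\|F(s)\|_{\dot B^{\sigma_T}_{1/T_2,2}}\,ds,
\]
and the one-dimensional Hardy--Littlewood--Sobolev inequality in $t$ maps $L^{1/T_1}_t(I)\to L^{1/Z_1}_t(I)$ once $\rho$ can be taken equal to $1+Z_1-T_1$ and lies in $(0,1)$. The remaining hypotheses secure this: $\str^{\th}(Z)\le\str^{\th}(T)-2$ together with the near-duality constraints on $Z_2,T_2$ force $Z_1<T_1$ (hence $\rho<1$), $0<Z_1,T_1<1$ gives $\rho>0$, and the \emph{strict} inequalities $\dec^{\th}(Z)<0<\dec^{\th}(T)-1$ place $\rho$ strictly inside the admissible range --- ruling out the endpoint logarithm --- while also making the dyadic Littlewood--Paley sums converge absolutely, so that the $\ell^2$-Besov summation may be pulled outside the time integral. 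All constants are independent of $I$ and $t_0$, since every step uses only $F|_I$ and is translation-invariant in $t$.

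The step I expect to be the main obstacle is the fixed-time decay estimate and its dyadic summation: one must fuse the high-frequency regime (wave behavior) and the bounded-frequency regime (massive, Schr\"odinger-like behavior) into a single inequality depending continuously on $\th$, and then check that the stated hypotheses leave just enough room for the frequency sums to converge and for all Lebesgue and Besov exponents to stay admissible --- in particular that letting $Z_2,T_2$ approach $\frac{1}{2}$ does not break the dispersive estimate. Once this estimate is in place, the Hardy--Littlewood--Sobolev step and the bookkeeping with $\reg^{\th},\str^{\th},\dec^{\th}$ are routine.
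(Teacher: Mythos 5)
Your proposal has a genuine gap at the central step: the claimed fixed-time dispersive estimate
\[
  \|\Da^{-1}\sin(\tau\Da)\,g\|_{\dot B^{\sigma_Z}_{1/Z_2,2}}\lec |\tau|^{-\rho}\,\|g\|_{\dot B^{\sigma_T}_{1/T_2,2}}
\]
is only available when the spatial exponents are exactly dual, $Z_2+T_2=1$. The $L^{p'}\to L^p$ decay estimate for the Klein--Gordon propagator (from $L^2$ energy conservation interpolated with $L^1\to L^\infty$ dispersive decay) intrinsically produces conjugate exponents, and all further real/complex interpolation with the $L^2\to L^2$ bound preserves that duality; the parameter $\th$ tunes between wave- and Schr\"odinger-type decay but does not move you off the dual line. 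Nothing in the lemma's hypothesis $0<\tfrac12-Z_2,\ T_2-\tfrac12<\tfrac{1}{d-1+\th}$ forces $Z_2+T_2=1$ — and in the paper's own application, e.g.\ the pair $(Z,T)=(\ti N,Y)$ in Lemma \ref{PerturLem} with a subcritical upper power $p_2<2^\star-2$, one has $\ti N_2+Y_2>1$. Your attempt to reach such non-dual exponents by Sobolev embedding does not close either: Sobolev preserves $\reg^\th$ but strictly changes $\str^\th$ and $\dec^\th$, so adjusting $Z_2$ or $T_2$ to a dual configuration while keeping the HLS exponent balance $\rho=1+Z_1-T_1$ equal to the dispersive decay rate imposes an extra inequality of the form $\dec^\th(T)\le 1+Z_1$ (or $\dec^\th(Z)\le T_1-1$) that the hypotheses of the lemma do not grant.

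The paper resolves this by interpolating at the level of the \emph{Duhamel operator}, not the single propagator. It first records two families of estimates for the Duhamel integral: (i) the boundary case $\str^\th(Z)=0=\str^\th(T)-2$, $\reg^\th(Z)=\reg^\th(T)+2$, which is the standard Strichartz estimate (and imposes no duality constraint on $Z_2,T_2$), and (ii) the dual case $Z_2+T_2=1$ combined with translations $Z\mapsto Z+(b,0,s)$, $T\mapsto T+(b,0,s)$, where your dispersive-plus-HLS argument works verbatim. Complex interpolation between (i) and (ii) then fills the entire plane $\str^\th(Z)=\str^\th(T)-2$, $\reg^\th(Z)=\reg^\th(T)+2$, including non-dual $(Z_2,T_2)$, and finally Sobolev embedding handles the strict-inequality cases. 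Adding the Strichartz family (i) as an interpolation endpoint is the missing ingredient in your argument; without it, Minkowski-plus-fixed-time-decay-plus-HLS cannot reach the full range of exponents the lemma asserts.
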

\begin{proof}
The wave case $\th=0$ was essentially proved in \cite[Lemma 7.4]{3Dcrit}, where the borderline case $\str^0(Z)=\str^0(T)-2$ was excluded for the real interpolation to improve the Besov exponent $2$. 
Here we discard that improvement, restoring the borderline case, which is needed for the lower critical power $p_1=4/d$. 

The proof is rather immediate from the standard Strichartz estimate and the $L^p$ decay estimate. 
Indeed, if $\str^\th(Z)=0=\str^\th(T)-2$ and $\reg^\th(Z)=\reg^\th(T)+2$, then the above estimate is nothing but Strichartz. 
If moreover $Z_2+T_2=1$, then the estimate directly follows from the $L^p$ decay and Hardy-Littlewood-Sobolev
\EQ{
 \pt\|\int_{t_0}^t \LR{\na}^{-1}e^{\pm i(t-s)\LR{\na}}h(s)ds\|_{[Z]_2(I)}
  \pr\lec \|\int_{t_0}^t |t-s|^{-2Z_1}\|h(s)\|_{B^{T_3}_{1/T_2,2}}ds\|_{L^{1/T_1}(I)}
  \pn\lec \|h\|_{[T]_2(I)}.}
This estimate can be translated in the time and the regularity exponents as 
\EQ{
 Z \mapsto Z'=Z+(b,0,s), \pq T \mapsto T'=T+(b,0,s)}
for any $s\in\R$ and $b\in(-1/2,1/2)$, as long as $0<Z'_1,T'_1<1$. 
By the complex interpolation for those estimates and the standard Strichartz estimate, we obtain the desired estimate in the case $\str^\th(Z)=\str^\th(T)-2$ and $\reg^\th(Z)=\reg^\th(T)+2$. 
It is extended to the remaining cases (with inequality in these relations) by the Sobolev embedding. 
\end{proof}

The following interpolation is convenient to switch from some exponents to others, 
\begin{lem} \label{lem:trinterpole}
Let $Z,A,B,C\in[0,1]\times\R$ and $\th\in[0,1]$. Assume that $A_1<Z_1<B_1$ and one of the followings
\begin{enumerate}
\item $\min(\str^{\th}(A),\str^{\th}(B),\str^{\th}(C)) \ge \str^{\th}(Z)$ and $\min(\reg^{\th}(A),\reg^{\th}(B)) > \reg^{\th}(Z)$
\item $\min(\str^{\th}(A),\str^{\th}(B)) > \str^{\th}(Z)$ and $\min(\reg^{\th}(A),\reg^{\th}(B),\reg^\th(C)) \ge  \reg^{\th}(Z)$.
\end{enumerate}
Then there exist $\al,\be,\ga\in(0,1)$ satisfying $\al+\be+\ga=1$ and for all $q\in(0,\I]$ we have the interpolation inequality 
\EQ{
 \|u\|_{[Z]_q} \lec \|u\|_{[A]_\I}^\al \|u\|_{[B]_\I}^\be \|u\|_{[C]_\I}^\ga.}
\end{lem}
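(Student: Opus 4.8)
The plan is to reduce the inequality, by a Littlewood--Paley decomposition, to a single frequency-block estimate together with a geometric summation over dyadic frequencies; the summation is what forces one of the strict hypotheses to enter essentially. Since the norms are taken over an arbitrary interval $I$ with constants independent of $I$, one takes $I=\R$. Writing $u=\sum_N P_Nu$ over dyadic $N$, the spatial Besov index $q$ on the left appears only as an $\ell^q_N$ sum over the blocks, while the right-hand norms $\|u\|_{[A]_\I},\|u\|_{[B]_\I},\|u\|_{[C]_\I}$ correspond to $\ell^\I_N$; hence it suffices to bound each block in the natural $\ell^\I$ (i.e.\ pointwise-in-$N$) sense and then to absorb a factor $N^{-\e_0}$, $\e_0\ge0$, in the $\ell^q_N$ sum.

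For the weights: since $A_1<Z_1<B_1$, for each sufficiently small $\ga\in(0,1)$ there are unique $\al,\be\in(0,1)$ with $\al+\be+\ga=1$ and $\al A_1+\be B_1+\ga C_1=Z_1$; fix such a small $\ga$ and put $W:=\al A+\be B+\ga C$, so $W_1=Z_1$. As $\reg^\th$ and $\str^\th$ are affine, $W$ inherits $\reg^\th(W)\ge\reg^\th(Z)$ and $\str^\th(W)\ge\str^\th(Z)$ from the hypotheses, and moreover the functional that is assumed strict at $A$ and $B$ is strict at $W$: $\reg^\th(W)>\reg^\th(Z)$ under hypothesis (1), $\str^\th(W)>\str^\th(Z)$ under hypothesis (2). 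Here $C$, which need not satisfy the strict inequality, does no harm because its weight $\ga$ is small compared with the fixed positive margin coming from $A$ and $B$.

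The block estimate is then assembled from standard ingredients. On a fixed block and a.e.\ in $t$, the log-convexity of $L^p_x$ interpolates the spatial integrability exactly to $1/W_2$ (as $W_2=\al A_2+\be B_2+\ga C_2$); Bernstein on $\R^d$ passes from $L^{1/W_2}$ back to $L^{1/Z_2}$, which is legitimate because $\str^\th(W)\ge\str^\th(Z)$ forces $W_2\ge Z_2$ (the degenerate wave case $d=1,\ \th=0$, where $\str^\th$ does not see $W_2$, being handled separately); Hölder in $t$ then recombines the three time norms since $W_1=Z_1$; and each frequency-localized $L^{1/A_2},L^{1/B_2},L^{1/C_2}$ norm of $P_Nu$ is dominated by $N^{-A_3},N^{-B_3},N^{-C_3}$ times the corresponding $B^\cdot_{\cdot,\I}$ norm. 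Tracking the powers of $N$, the net factor produced is $N^{-\e_0}$ with $\e_0=\reg^\th(W)-\reg^\th(Z)=(W_3-Z_3)-d(W_2-Z_2)\ge0$, multiplied by $\|u\|_{[A]_\I}^\al\|u\|_{[B]_\I}^\be\|u\|_{[C]_\I}^\ga$.

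The last step, the $\ell^q_N$ summation, is the crux. Under hypothesis (1) one has $\e_0>0$, so $\sum_N N^{-\e_0}$ converges for every $q\in(0,\I]$ and the lemma follows. Under hypothesis (2) the exponent $\e_0$ may vanish; in that case one first applies the sharp Sobolev--Besov embedding $B^{W_3}_{1/W_2,q}\hookrightarrow B^{Z_3}_{1/Z_2,q}$ (valid with the same $q$ since $W_2\ge Z_2$ and $\e_0\ge0$) to replace the target spatial scale by $1/W_2$, and then spends a small part of the strict slack $\str^\th(W)>\str^\th(Z)$, i.e.\ of $W_2>Z_2$, through a non-endpoint Bernstein/embedding step, which restores a strictly negative power of $N$ and again makes the sum converge. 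The real difficulty is exactly this upgrade of the Besov summation index from $\I$ on the right to an arbitrary $q$ on the left: a naive interpolation would only yield the right-hand norms with index $q$, so one must genuinely extract a decaying dyadic factor from whichever of $\reg^\th,\str^\th$ is strict, do so uniformly in $q$, and arrange that a single frequency scaling serves all three factors at once. By contrast the time integration, once the exponents are aligned, is routine Hölder.
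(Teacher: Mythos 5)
Your argument for case (1) is correct and essentially the paper's: you form an intermediate exponent $W=\al A+\be B+\ga C$ with $W_1=Z_1$ and $\ga$ small (the paper's $\tilde Z$), then use the Sobolev embedding $[W]_\I\subset[Z]_q$, which holds because $\reg^\th(W)>\reg^\th(Z)$ gives a strict spatial-regularity gain; your per-block Littlewood--Paley bookkeeping is just this embedding unwound dyadically.

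Case (2), however, has a genuine gap. Under hypothesis (2) it is perfectly possible that $\reg^\th(A)=\reg^\th(B)=\reg^\th(C)=\reg^\th(Z)$, so $\reg^\th(W)=\reg^\th(Z)$ and your block exponent is $N^{-\r_0}$ with $\r_0=0$; the dyadic sum then diverges for any finite $q$. The strict $\str$-slack, which since $W_1=Z_1$ amounts to $W_2>Z_2$, cannot be turned into a strictly negative power of $N$ by any purely spatial Bernstein or embedding step: Bernstein from $L^{1/W_2}_x$ to $L^{1/Z_2}_x$ on $P_Nu$ costs $N^{d(W_2-Z_2)}$, and the accompanying regularity adjustment $N^{Z_3-W_3}$ cancels it to exactly $N^{-\r_0}$ with nothing left over. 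So the step ``spends a small part of the strict slack $W_2>Z_2$ \dots\ which restores a strictly negative power of $N$'' does not exist. You correctly identify the crux as the $\I\to q$ upgrade of the Besov index, but the uniform per-block decay you rely on is simply absent when $\r_0=0$.

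What actually closes case (2), and what the paper does, is \emph{real} interpolation in the Besov regularity parameter: the $\str$-slack is spent by shifting the time exponent $Z_1\mapsto Z_1\pm\de$ and the regularity $Z_3\mapsto Z_3\pm\de(1-2\th/d)$ while holding $\reg^\th$ and $Z_2$ fixed, so that the two endpoint regularities differ; then $\|u\|_{[Z]_q}\lec\|u\|_{[Z^+]_\I}^{1/2}\|u\|_{[Z^-]_\I}^{1/2}$ by real interpolation of Besov spaces and H\"older in $t$, and each $[Z^\pm]_\I$ is reached by complex interpolation of $A,B,C$ as in case (1). In your Littlewood--Paley picture this corresponds to splitting the frequency sum at a crossover scale depending on $u$ (using one endpoint interpolant for low $N$ and the other for high $N$), not to a uniform $N^{-\e}$ gain. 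Your proof would need this crossover, or an equivalent, to close case (2).
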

\begin{proof}
Since $A_1<Z_1<B_1$, for any $0<\th_2\ll 1$ there exists $\th_1\in(0,1)$ such that 
\EQ{
 (1-\th_2)((1-\th_1)A_1+\th_1 B_1) + \th_2 C_1 = Z_1.}
Let $\ti Z:=(1-\th_2)((1-\th_1)A+\th_1 B) + \th_2 C$. Then from the assumption we have 
\EQ{ \label{ti Z and Z}
 \str^{\th}(\ti Z)\ge \str^{\th}(Z), \pq \reg^{\th}(\ti Z)\ge \reg^{\th}(Z),}
which imply $\ti Z_2\ge Z_2$ and $\ti Z_3-d\ti Z_2 \ge Z_3-d Z_2$, 
and so we have the Sobolev embedding $[\ti Z]_q\subset [Z]_q$. 
In the first case, we have $\reg^\th(\ti Z)>\reg^\th(Z)$ and so 
\EQ{
 [[[A]_\I,[B]_\I]_{\th_1},[C]_\I]_{\th_2} = [\ti Z]_\I \subset [Z]_q.}
The desired inequality follows from that for the complex interpolation. 

It remains to prove in the second case. 
By the real interpolation in the Besov space in $x$ and H\"older in $t$, we have for all $0<\de\ll 1$, 
\EQ{
 \pt \|u\|_{[Z]_q} \lec \|u\|_{[Z+]_\I}^{1/2} \|u\|_{[Z-]_\I}^{1/2}, 
 \pq Z^\pm := Z \pm \de(1,0,1-2\th/d).}
Let $0<\e\ll 1$ satisfy $\e(B_1-A_1)(1-\th_2)=\de$ and 
\EQ{
 \ti Z^\pm:=(1-\th_2)((1-\th_1\mp\e)A+(\th_1\pm\e)B) + \th_2 C.}
Then from the assumption and the definition of $Z^\pm$ and $\e$, we have 
\EQ{
 \str^{\th}(\ti Z^\pm)>\str^{\th}(Z^\pm), \pq \reg^{\th}(\ti Z^\pm)\ge \reg^{\th}(Z^\pm)=\reg^{\th}(Z),}
when $\e>0$ is small. Hence we have the Sobolev embedding 
\EQ{
 [[[A]_\I,[B]_\I]_{\th_1\pm\e},[C]_\I]_{\th_2} = [\ti Z^\pm]_\I \subset [Z^\pm]_\I,}
where the left hand side is a nested complex interpolation space. 
Now the conclusion follows from the interpolation inequality.   
\end{proof}

\subsection{Global perturbation of Strichartz norms}
Now we fix a few particular exponents. Define $H,W,K$ by 
\EQ{ \label{def WK}
 \pt H:=\paren{0,\frac 12,1}, \pq W := \left(\frac{d-1}{2(d+1)},W_1,\frac 12\right), \pq K := \left(\frac{d}{2(d+2)},K_1,\frac 12\right).} 
Then $[H]_2=L^\I_t H^1_x$ is the energy space, while $W$ and $K$ are $1$-admissible, diagonal and boundary exponents respectively for the wave ($\th=0$) and  the Klein-Gordon ($\th=1$) equations:
\EQ{
 \pt 1=\reg^0(H)=\reg^1(H)=\reg^0(W)=\reg^1(K), 
 \pr 0=\str^0(H)=\str^1(H)=\str^0(W)=\str^1(K).}
Let $eq(u)$ denote the left hand side of NLKG
\EQ{
 eq(u) := u_{tt} - \De u + u - f'(u).}
Recall the convention $u\leftrightarrow\V u$ in Section \ref{rdc 1st} to switch to the first order equations. We will treat the $H^1$ critical case \eqref{f crit} together with the subcritical case. 
Since $f_S(u)$ is for small $|u|$ and $f_L(u)$ for large $|u|$, we may freely lower $p_1$ in \eqref{f_S} and raise $p_2$ in \eqref{f sub}. 
Hence we assume \eqref{f_S} with 
\EQ{ \label{range p1}
 2_\star-2 = \frac{4}{d} < p_1 < \frac{4(d+1)}{(d+2)(d-1)},}
and we assume either $d=1$, \eqref{f exp} or \eqref{f sub} with 
\EQ{ \label{range p2}
 \frac{4d-2}{d(d-2)}< p_2 \le 2^\star-2.}

Before the main perturbation lemma, we see that $[H]_2\cap[W]_2\cap[K]_2$ is enough to bound the full Strichartz norms of the solutions. 
\begin{lem} \label{Stz extend}
Assume that $f$ satisfies \eqref{asm f}. Let $Z$, $T$ and $U$ be $1$-admissible. In the 2D exponential case \eqref{f exp}, let $\Th\in(0,1)$. 
Then there exist a constant $C_1>0$ and a continuous function $C_2:(0,\I)\to(0,\I)$ such that for any interval $I$, any $t_0\in I$ and any $w(t,x)$, we have 
\EQ{
 \|w\|_{[Z]_2(I)}  
  \pt \le C_1\|\V w(t_0)\|_{L^2_x} + C_1\|eq(w)\|_{([T^{*(1)}]_2+[U^{*(1)}]_2)(I)}
  \prqq + C_2(\|w\|_{([H]_2\cap[W]_2\cap[K]_2)(I)}),}
provided, in the exponential case, that 
\EQ{ \label{exp sub cond}
 \sup_{t\in I} \ka_0 \|\na w\|_{L^2_x}^2 \le 4\pi\Th.}
\end{lem}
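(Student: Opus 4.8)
The plan is to bootstrap from the three ``core'' norms $[H]_2$, $[W]_2$, $[K]_2$ to an arbitrary $1$-admissible $[Z]_2$ by applying the standard Strichartz estimate (the first Strichartz lemma) and estimating the nonlinear term $f'(w)$ on the right-hand side entirely in terms of the core norms. First I would split $f'(w) = f_S'(w) + f_L'(w)$ as in \eqref{def fSL}, and write the Strichartz estimate for $\V w$ with data $\V w(t_0)$ and inhomogeneity $eq(w) + f'(w)$; this produces the $\|\V w(t_0)\|_{L^2_x}$ term and the $\|eq(w)\|$ term on the right, and leaves me to bound $\|f_S'(w)\|_{[T^{*(1)}]_2}$ and $\|f_L'(w)\|_{[U^{*(1)}]_2}$ (or suitable dual admissible norms) by a continuous function of $\|w\|_{[H]_2\cap[W]_2\cap[K]_2}$. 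Since $T,U$ are arbitrary $1$-admissible exponents, it suffices to do this for one convenient admissible pair and then interpolate/embed using Lemma \ref{lem:trinterpole}, which is exactly what that lemma was set up to permit.

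The heart is the two nonlinear estimates. For the small-amplitude part $f_S'$, the pointwise bound $|f_S''(u)| \lesssim |u|^{p_1}$ (together with $f_S'(0)=f_S''(0)=0$) gives $|f_S'(w)| \lesssim |w|^{p_1+1}$, and the Hölder-continuity hypothesis in $d\ge 5$ gives the matching difference bound; feeding this through a fractional chain/Leibniz rule in Besov spaces and Hölder in space-time, one distributes the $p_1+1$ factors of $w$ among the admissible norms $H$, $W$, $K$ — the choice of $p_1$ range in \eqref{range p1} is precisely what makes the resulting exponents land in the admissible region, and since $p_1+1 > 1$ this produces a polynomial (hence continuous) dependence on the core norms, with no linear term, so it can be absorbed into $C_2$. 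For the large-amplitude part $f_L'$: in the subcritical case \eqref{f sub} with $p_2$ in the range \eqref{range p2} the same scheme works with $|f_L'(w)| \lesssim |w|^{p_2+1}$; in the $H^1$-critical case \eqref{f crit}, $f'(w) = |w|^{2^\star-1}\operatorname{sgn} w$ and one uses the critical Strichartz exponent $W$ (built for the wave scaling) to close — here one only needs $f'(w)\in$ a dual Strichartz space, not full regularity, and the remark in \eqref{f crit} that the Strichartz spaces ``can handle the sum of a critical power with a subcritical function'' is exactly this point; in the 2D exponential case \eqref{f exp} one invokes the Trudinger–Moser inequality \eqref{TM} under the subcriticality hypothesis \eqref{exp sub cond}, which with $\Th<1$ keeps $\ka_0\|\na w\|_{L^2_x}^2$ strictly below $4\pi$ and thus yields $\|e^{\ka|w|^2}-1\|_{L^1_x}$ bounded by a continuous function of $\|w\|_{L^2_x}$ (uniformly in $t$), so $f_L'(w)$ is controlled in a space-time Lebesgue norm and then placed into a dual admissible norm via Lemma \ref{lem:trinterpole}.

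The last bookkeeping step is that the estimate I get most naturally is for $\|w\|_{[Z_0]_2}$ for a specific admissible $Z_0$, whereas the statement wants arbitrary $1$-admissible $Z$. Since any $1$-admissible $Z$ embeds (after Sobolev embedding in $x$ and Hölder in $t$, i.e. the interpolation machinery of Lemma \ref{lem:trinterpole}) into combinations of $H$, $W$, $K$ and the one $Z_0$ I controlled, I close the loop by absorbing everything into $C_2$; alternatively one runs the Strichartz estimate directly with the target exponent $Z$ and only needs the nonlinearity estimate, which never referenced $Z$.

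The main obstacle I anticipate is the simultaneous handling of the critical power $|w|^{2^\star-1}$ and lower-order (subcritical or $f_S$) terms when $d\ge 5$, where $f_L$ is only Hölder, not $C^2$: one cannot simply estimate the difference of nonlinearities in a positive-regularity Besov space, so the nonlinear estimate must be done with exponents carefully chosen so that only Hölder continuity of order $p_1$ (resp. $p_2$) is used, and the admissible ranges \eqref{range p1}, \eqref{range p2} must be checked to be nonempty and compatible with the embeddings into $H$, $W$, $K$ — this is the delicate exponent-chasing the introduction warns about (``three steps to close our estimates'' when both critical powers are present). Everything else is routine Strichartz plus Hölder plus the interpolation lemma.
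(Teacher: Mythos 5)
The subcritical and critical cases of your sketch line up reasonably well with the paper's proof: the paper applies the standard Strichartz estimate, then estimates $\|f_S'(w)\|_{[K^{*(1)}]_2}\lesssim\|w\|_{[K]_2}\|w\|_{[K]_0}^{4/d}$ and $\|f_L'(w)\|_{[W^{*(1)}]_2}\lesssim\|w\|_{[W]_2}\|w\|_{[M^\sharp]_0}^{p_2}$ where $M^\sharp$ is interpolated from $H,W,K$ by Lemma~\ref{lem:trinterpole}, and this is essentially the H\"older-plus-interpolation scheme you describe.

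Where your plan has a genuine gap is the 2D exponential case. You propose to use Trudinger--Moser alone: from \eqref{exp sub cond} you get $\|e^{\ka|w|^2}-1\|_{L^1_x}$ bounded, and then claim this controls $f_L'(w)$ in a dual admissible space. But $f_L'(w)\sim |w|(e^{\ka|w|^2}-1)$, and putting it into $L^2_x$ requires, after H\"older, control of $e^{\ka|w|^2}-1$ in something stronger than $L^1_x$. The natural route, $\|e^{\ka|w|^2}-1\|_{L^2_x}^2\lesssim\|e^{2\ka|w|^2}-1\|_{L^1_x}$, forces one to double the Trudinger--Moser exponent, and this exceeds $4\pi$ unless $\Th\le 1/2$. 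Since the lemma must hold for $\Th$ arbitrarily close to $1$, this simply does not close. The paper's actual fix is the log-interpolation (Brezis--Gallouet-type) inequality from \cite{IMM2}: one writes $\|f_L'(w)\|_{L^2_x}\lesssim\|w\|_{L^\I_x}\|e^{\ka|w|^2}-1\|_{L^1_x}^{1/2}\|e^{\ka|w|^2}\|_{L^\I_x}^{1/2}$, bounds the middle factor by Trudinger--Moser, and bounds the last factor by plugging the log-inequality \eqref{log ineq} into the exponential, which turns it into a power of the H\"older norm $\|w\|_{C^\al_x}$; this is precisely why the extra exponent $X=(\nu,0,\nu-\nu^2)$, corresponding to $L^{1/\nu}_t B^{\nu-\nu^2}_{\I,2}$ (hence a H\"older space in $x$), must be interpolated from $H,W,K$. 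Without the $L^\I_x$ estimate and its logarithmic loss your proof of the exponential case does not go through.

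A smaller point: you invoke a fractional chain/Leibniz rule for the small-amplitude part, but the paper sidesteps all product rules by choosing exponents with zero regularity (the $[K]_0$, $[M^\sharp]_0$, $[L]_0$ slots carry no derivatives), so the nonlinear estimates are pure space-time H\"older; this is deliberate and is what makes the scheme robust for the H\"older-only regularity of $f$ when $d\ge 5$.
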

We remark that \eqref{exp sub cond} is needed only in the exponential case. 
\begin{proof}
We may assume $\Th>1/2$ without losing any generality. 
We introduce the new exponents $M^\sharp$ and $X$ by 
\EQ{ \label{def MsharpX}
 M^\sharp:=\frac{2}{p_2(d+1)}(1,1,0), \pq X:=(\nu,0,\nu-\nu^2),}
with some $\nu\in(0,1/10)$ satisfying $\Th < (1-\nu)^2$, where 
$M^\sharp$ is used only if $d\ge 2$ and $X$ only in the exponential case. 
In either case we have 
\EQ{
 0>\str^0(M^\sharp),\ \str^0(X), \pq 1\ge \reg^0(M^\sharp),\ 1>\reg^0(X), 
 \pq 0<M^\sharp_1,X_1<W_1.}
Hence by Lemma \ref{lem:trinterpole}(1), we have 
\EQ{
 \|w\|_{[M^\sharp]_2(I)} + \|w\|_{[X]_2(I)} \lec \|w\|_{([H]_2\cap[W]_2\cap[K]_2)(I)}.}
The Strichartz estimate gives 
\EQ{
 \|w\|_{[Z]_2(I)} \pt\lec \|\V{w}(t_0)\|_{L^2_x} + \|eq(w)\|_{([T^{*(1)}]_2+[U^{*(1)}]_2)(I)} 
 \prq + \|f'(w)\|_{([K^{*(1)}]_2+[W^{*(1)}]_2+L^1_tL^2_x)(I)}.}
By the standard nonlinear estimate we have
\EQ{
 \|f_S'(w)\|_{[K^{*(1)}]_2(I)} \lec \|w\|_{[K]_2(I)} \|w\|_{[K]_0(I)}^{4/d},}
and in the subcritical/critical cases
\EQ{ \label{fL hd}
 \|f_L'(w)\|_{[W^{*(1)}]_2(I)} \lec \|w\|_{[W]_2(I)} \|w\|_{[M^\sharp]_0(I)}^{p_2}.}
In the exponential case, there are $\ka>\ka_0$ and $\mu>0$ such that 
\EQ{ \label{Th' bd}
 \sup_{t\in I} \ka \|w\|_{H^1_\mu}^2 \le 4\pi\Th',}
where $\Th':=(1+\Th)/2<1$ and  
\EQ{ \label{def H1mu}
 \|\fy\|_{H^1_\mu} := \|\na\fy\|_{L^2_x}^2 + \mu \|\fy\|_{L^2_x}^2.}
Then we have 
\EQ{ \label{f_L bd exp}
 \|f_L'(w)\|_{L^2_x}
 \lec \||w|(e^{\ka|w|^2}-1)\|_{L^2_x}
 \lec \|w\|_{L^\I_x} \|e^{\ka|w|^2}-1\|_{L^1_x}^{1/2} \|e^{\ka|w|^2}\|_{L^\I_x}^{1/2},}
where the second factor is bounded by Trudinger-Moser
\EQ{
 \|e^{\ka|w|^2}-1\|_{L^1_x} \lec \|w\|_{L^2}^2/(1-\Th'),}
and the third factor is bounded by the following log-interpolation inequality \cite[Theorem 1.3]{IMM2}: for any $\al\in(0,1)$, $\la>1/(2\pi\al)$ and $\mu>0$, there is $C>0$ such that 
\EQ{ \label{log ineq} 
 \|\fy\|^2_{L^\infty(\R^2)}
  \le \la \|\fy\|_{H^1_\mu(\R^2)}^2 
   \left[C+\log(1+\|\fy\|_{C^{\al}(\R^2)}/\|\fy\|_{H^1_\mu(\R^2)})\right],}
for any $\fy\in H^1\cap C^\al(\R^2)$, where $C^\al=B^\al_{\I,\I}$ denotes the H\"older space. 
Plugging this with $\al:=\nu-\nu^2$ into the exponential, we get 
\EQ{
 \|e^{\ka|w|^2}\|_{L^\I_x} \pt\lec (1+\|w\|_{C^\al_x}/\|w\|_{H^1_\mu})^{\la\ka\|w\|_{H^1_\mu}^2}
 \pn\lec (1 + \ka\|w\|_{C^\al_x}^2/\Th')^{2\pi\la\Th'},}
where $\la>0$ is chosen so that 
\EQ{ 
 1 < 2\pi\la\al, \pq (2\pi\la\Th' +1)\nu =1.}
Since $f_L$ vanishes for small $|u|$, we may assume $\|w\|_{C^\al_x}\gec\|w\|_{L^\I_x}\gec 1$. Hence 
\EQ{
 \|e^{\ka|w|^2}\|_{L^\I_x} \lec \|w\|_{C^\al_x}^{4\pi\la\Th'} = \|w\|_{C^\al_x}^{2(1/\nu-1)},}
and plugging this into \eqref{f_L bd exp}, we get 
\EQ{ \label{f_L est exp}
 \|f_L'(w)\|_{L^1_t L^2_x}
 \lec \|w\|_{L^{1/\nu}_t L^\I_x} \|w\|_{L^\I_t L^2_x} \|w\|_{L^{1/\nu}_t C^\al_x}^{1/\nu-1} \lec \|w\|_{[X]_2}^{1/\nu} \|w\|_{[H]_2}.}
\end{proof}

\begin{lem} \label{PerturLem} 
Assume that $f$ satisfies \eqref{asm f}.
Let $Z$, $T$, $U$ and $V$ be $1$-admissible and $\reg^0(V)=1$. 
In the exponential case \eqref{f exp}, let $\Th\in(0,1)$. 
Then there are continuous functions $\e_0,C_0:(0,\I)^2\to(0,\I)$ such that the following holds: 
Let $I\subset \R$ be an interval, $t_0\in I$ and $\V{u},\V{w}\in C(I;L^2(\R^d))$. Let $\V{\ga_0}=e^{\iD(t-t_0)}(\V{u}-\V{w})(t_0)$ and 
assume that for some $A,B>0$ we have
\EQ{ \label{asm ebd}
  \|\V{u}\|_{L^\I_t(I;L^2_x)} + \|\V{w}\|_{L^\I_t(I;L^2_x)}  \le A,}
\EQ{ 
  \|w\|_{[W]_2(I) \cap [K]_2(I)} \le B,} 
\EQ{
 \pt\|(eq(u),eq(w))\|_{([T^{*(1)}]_2+ [U^{*(1)}]_2)(I)} 
 + \|\ga_0\|_{[V]_\I(I)} \le \e_0(A,B), }
and in the exponential case, 
\EQ{ \label{exp cond sub uw}
 \sup_{t\in I} \ka_0 \max(\|\na u\|_{L^2_x}^2,\|\na w\|_{L^2_x}^2) \le 4\pi\Th.}
Then we have 
\EQ{ 
  \|u\|_{[Z]_2(I)} \le C_0(A,B).} 
\end{lem}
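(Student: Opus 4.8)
The plan is to run the standard Kenig--Merle style long-time perturbation argument, but carried out in the combined norm $N(I):=\|\cdot\|_{([W]_2\cap[K]_2)(I)}$ rather than a single Strichartz norm, reflecting the two-power structure (critical power handled by $[W]$, lower $L^2$-critical power by $[K]$). First I would fix a small $\eta>0$ to be chosen, partition $I$ into finitely many consecutive subintervals $I_1,\dots,I_N$ on each of which $\|w\|_{N(I_j)}\le\eta$; the number $N$ of such subintervals is bounded in terms of $B$ alone (this uses $\|w\|_{[W]_2\cap[K]_2(I)}\le B$). Writing $\gamma:=u-w$, the equation for $\vec\gamma$ is $(i\partial_t+\Da)\vec\gamma = f'(\Da^{-1}\Re\vec u)-f'(\Da^{-1}\Re\vec w) + \big(eq(w)-eq(u)\big)$, and on each $I_j$ the Strichartz estimate (together with Lemma \ref{Stz extend} to promote control of $[W]_2\cap[K]_2$ to control of the full $[Z]_2$ norm) gives a closed estimate for $\|\gamma\|_{N(I_j)}$ provided the driving data — $\|\vec\gamma(t_j)\|_{L^2_x}$, the source term $eq(u),eq(w)$, and the free evolution $\gamma_0$ in $[V]_\I$ — are small depending on $\eta$.

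The engine is the difference estimate for the nonlinearity. For the small-$|u|$ part $f_S$, the hypothesis \eqref{f_S} with $p_1$ in the range \eqref{range p1} gives, via the exotic/standard Strichartz pairing used already in Lemma \ref{Stz extend},
\EQ{
 \|f_S'(u)-f_S'(w)\|_{[K^{*(1)}]_2(I_j)} \lec \|\gamma\|_{[K]_2(I_j)}\big(\|u\|_{[K]_0(I_j)}+\|w\|_{[K]_0(I_j)}\big)^{4/d},
}
and for $f_L$ in the subcritical/critical case \eqref{f sub}, \eqref{f crit}, the H\"older continuity hypotheses give a similar bound in $[W^{*(1)}]_2$ with the exponent $p_2$ and the auxiliary norm $[M^\sharp]_0$; in the exponential case one instead reproduces the chain of log-interpolation and Trudinger--Moser estimates from the proof of Lemma \ref{Stz extend}, where the uniform subcriticality \eqref{exp cond sub uw} is exactly what makes the exponential integrable and supplies a uniform constant. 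In every case the right-hand side is small once the $N$-norms on $I_j$ are small, so one gets an inequality of the schematic form $\|\gamma\|_{N(I_j)}\le C(A)\big(\|\vec\gamma(t_j)\|_{L^2_x}+\|\gamma_0\|_{[V]_\I}+\varepsilon_{\mathrm{source}}\big)+C(A)\eta^{\theta}\|\gamma\|_{N(I_j)}$ for some $\theta>0$; choosing $\eta$ small absorbs the last term.

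Then I would iterate over $j=1,\dots,N$: the output endpoint datum $\|\vec\gamma(t_{j+1})\|_{L^2_x}$ is controlled by $\|\vec\gamma(t_j)\|_{L^2_x}$ plus the nonlinear error on $I_j$, so each step multiplies the relevant small quantity by a bounded factor $C(A)$, and after $N$ steps the total is $\le C(A)^N\varepsilon$. Hence, choosing $\varepsilon_0(A,B)$ smaller than this accumulated constant times the target tolerance, we conclude $\|\gamma\|_{N(I)}$, and therefore (again by Lemma \ref{Stz extend} applied to $u=w+\gamma$, noting \eqref{asm ebd} supplies the $L^2_x$ control and \eqref{exp cond sub uw} the subcriticality needed there) $\|u\|_{[Z]_2(I)}\le C_0(A,B)$, with $C_0$ built from $C(A)$, $N(B)$ and the constants of Lemma \ref{Stz extend}. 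The main obstacle I anticipate is bookkeeping the \emph{combined} norm through the difference estimates when both the $H^1$-critical power and the $L^2$-critical power are present: one cannot freely mix dual Strichartz norms in the exotic regime, so the $f_L$ and $f_S$ differences must be estimated with a common time exponent and matched against a single interpolated auxiliary space ($M^\sharp$, and $X$ in the exponential case), which is precisely the ``three-step'' complication flagged in the introduction to Section \ref{sect:str}; the exponential case additionally requires keeping the Trudinger--Moser constant uniform across the partition, which is why \eqref{exp cond sub uw} enters as a hypothesis rather than being derived.
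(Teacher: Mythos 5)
Your overall outline matches the spirit of the paper's argument in the low-dimensional subcase—partition the interval so $w$ is small in some Strichartz-type norm, run a contraction on each piece, and promote to full Strichartz via Lemma~\ref{Stz extend}—but there are two material gaps that keep this from being a proof.

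\textbf{The iteration variable is wrong.} You propose to propagate $\|\V\ga(t_j)\|_{L^2_x}$ across subintervals, arguing that each step multiplies a ``small quantity'' by $C(A)$. But $\|\V\ga(t_0)\|_{L^2_x}$ is \emph{not} assumed small: the hypotheses only control $\|\V\ga_0\|_{[V]_\I(I)}$, a Strichartz-type norm of the free evolution, while the $L^2$ norm of $\V\ga(t_0)$ can be of order $A$. Iterating on the $L^2$ data therefore gives $\sim C(A)^{n}A$, which is useless. The paper instead estimates, on each $I_j$, both $\|\ga-\ga_j\|_{\X(I_j)}$ and $\|\ga_{j+1}-\ga_j\|_{\X(\R)}$ where $\V\ga_j$ is the free evolution from $t_j$, so that the quantity that actually propagates is $\|\ga_j\|_{\X(t_j,t_n)}$; this \emph{does} start small because $\|\ga_0\|_{\X(I)}$ is controlled by interpolating $[V]_\I$ against the a~priori $L^\I_tL^2_x$, $[W]_2$, $[K]_2$ bounds via Lemma~\ref{lem:trinterpole}(2). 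Without this reformulation the bootstrap cannot close.

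\textbf{The $d\ge 5$ case is not addressed.} You explicitly note the ``three-step complication'' but do not supply it, and in fact your proposed difference estimate
\[
\|f_S'(u)-f_S'(w)\|_{[K^{*(1)}]_2(I_j)} \lec \|\ga\|_{[K]_2(I_j)}\bigl(\|u\|_{[K]_0}+\|w\|_{[K]_0}\bigr)^{4/d}
\]
cannot hold as written for $d\ge 5$: $[K^{*(1)}]_2$ carries positive Besov regularity, but $f_S''$ is only H\"older of exponent $p_1<1$ in that range (see \eqref{f_S}), so the fractional chain rule needed to put $\ga$ in a Besov norm on the right fails. (Even for $d\le 4$ the paper avoids this, estimating the difference in $L^1_tL^2_x$ against the regularity-free spaces $[S]_0$, $[L]_0$.) The paper's resolution when $d\ge5$ is a genuinely three-stage scheme: first bound $\ga$ in the regularity-free norm $\Y_0=[W]_0\cap[R]_0$ by ordinary Strichartz; next bound $\ga$ in $\ti\Y=[\ti N]_2\cap[P]_2$ via the exotic Strichartz lemma, whose dual pairing $Y$ has $\dec^\th(Y)>1$ and therefore gains two orders of regularity from the data while only requiring H\"older continuity of $f''$; finally estimate $u$ itself in $\Y=[W]_2\cap[K]_2$ and use this to justify the bootstrap hypothesis by Lemma~\ref{lem:trinterpole}(2). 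This machinery—the exponents $M,\ti M,\hat M,N,\ti N,Q,P,R,Y$ and the auxiliary space $\ti\Y$—is the content of the harder half of the proof, and gesturing at ``a single interpolated auxiliary space'' does not substitute for it.

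One further small point: your use of $N(I)=[W]_2\cap[K]_2$ as the contraction norm already in the subcritical/low-dimensional case differs from the paper, which iterates in the weaker space $\X$ ($L^p_{t,x}$-type) and only afterwards recovers the $[W]_2\cap[K]_2$ bound by one extra application of Strichartz; this matters for the same H\"older-vs-Lipschitz reason as above.
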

\begin{rem}
\eqref{exp cond sub uw} is needed only in the exponential case. The above lemma remains valid in the lower critical case $p_1=4/d=2_\star-2$, if we assume in addition that 
\EQ{
 \|\ga_0\|_{[K]_0(I)} \le \e_0(A,B).}
We will indicate the necessary modifications in the proof. 
\end{rem}

\begin{proof}[Proof of Lemma \ref{PerturLem}]
We restrict $p_1,p_2$ as in \eqref{range p1} and \eqref{range p2}, without losing any generality. 
In the following, $C(\cdot,\dots)$ denotes arbitrary positive constants which may depend continuously on the indicated parameters. Let $\de\in(0,1)$ be a fixed small number, whose smallness will be specified by the following arguments. 
Let 
\EQ{
 e:=eq(u)-eq(w), \pq \ga:=u-w.} 
Then we have the equation for the difference 
\EQ{
 \ddot \ga - \De \ga + \ga = f'(w+\ga)-f'(w)-e, \pq \V{\ga}(t_0)=\V{\ga_0}(t_0).} 
First note that by Lemma \ref{Stz extend}, we have the full Strichartz norms on $w$. 

Next we estimate the difference $u-w$ in the easier case $d\le 4$. 
We define new exponents $S,L$ and a space $\X$ by 
\EQ{ \label{def SL}
 \pt [S]_0 := L^{p_1+1}_t L^{2(p_1+1)}_x, \pq [L]_0 := L^{p_2+1}_t L^{2(p_2+1)}_x,
 \pr \X:=\CAS{[S]_0 &(d=1) \\ [S]_0\cap[X]_2 &\eqref{f exp}, \\ [S]_0\cap[L]_0 &\text{(otherwise).}}}
Thanks to the restrictions \eqref{range p1} and \eqref{range p2}, we have 
\EQ{
 0>\str^1(S),\ \str^0(L), \pq 1>\reg^1(S),\ \reg^0(L).}
Hence by Lemma \ref{lem:trinterpole}(2) with $C:=V$, we get for some $\th_1,\th_2\in(0,1)$, 
\EQ{ \label{S' start}
 \|\ga_0\|_{\X(I)} \lec A^{1-\th_1}\e_0^{\th_1} + A^{1-\th_2}\e_0^{\th_2}.}
If $p_1\to 4/d$, then $\str^0(S)\to 0$, and we would need the smallness in $[K]_0(I)$. 

Since $w\in \X(I)$ by Lemma \ref{Stz extend}, there exists a partition of the right half of $I$: 
\EQ{
 \pt t_0<t_1<\cdots<t_n,\pq I_j=(t_j,t_{j+1}),\pq I\cap(t_0,\I)=(t_0,t_n)}
such that $n\le C(A,B,\de)$ and 
\EQ{ \label{w bd lowD} 
 \|w\|_{\X(I_j)} \le \de \pq(j=0,\dots,n-1).}
We omit the estimate on $I\cap(-\I,t_0)$ since it is the same by symmetry. 

Let $\ga_j$ be the free solution defined by 
\EQ{
 \V \ga_j := e^{\iD(t-t_j)}\V \ga(t_j).}
Then the Strichartz estimate applied to the equations of $\ga$ and $\ga_{j+1}$ implies 
\EQ{ \label{est S'}
 \pn\|\ga-\ga_j\|_{\X(I_j)} + \|\ga_{j+1}-\ga_j\|_{\X(\R)}
 \pt\lec \|f'(w+\ga)-f'(w)\|_{L^1_tL^2_x(I_j)} 
  \prq+ \|e\|_{([U^{*(1)}]_2+[T^{*(1)}]_2)(I_j)}.}
The nonlinear difference is estimated as follows. 
For smaller $|u|$, we have by H\"older 
\EQ{ \label{fS diff lD}
 \|f_S'(w+\ga)-f_S'(w)\|_{L^1_t L^2_x} 
  \lec \|(w,\ga)\|_{[S]_0}^{p_1}\|\ga\|_{[S]_0},}
and for larger $|u|$ for $d\ge 2$ in the subcritical/critical cases,
\EQ{ \label{fL diff lD}
 \|f_L'(w+\ga)-f_L'(w)\|_{L^1_t L^2_x}
  \lec \|(w,\ga)\|_{[L]_0}^{p_2}\|\ga\|_{[L]_0}.}
If $d=1$, let $C(\nu)=\sup_{|u|\le\nu}|f_L''(u)|/|u|^{p_1}$. Then we have 
\EQ{ 
 \|f_L'(w+\ga)-f_L'(w)\|_{L^1_t L^2_x}
 \pt\lec C(\|w\|_{L^\I_{t,x}}+\|\ga\|_{L^\I_{t,x}})\|(w,\ga)\|_{[S]_0}^{p_1}\|\ga\|_{[S]_0}
 \pr\lec C(\|(w,\ga)\|_{L^\I_t H^1_x})\|(w,\ga)\|_{[S]_0}^{p_1}\|\ga\|_{[S]_0}. }
In the exponential case, there exist $\ka>\ka_0$ and $\mu>0$ such that \eqref{Th' bd}. 
Let $w_\th=w+\th\ga=(1-\th)w+\th u$ for $\th\in[0,1]$. Then we have $\ka \|w_\th\|_{H^1_\mu}^2\le 4\pi\Th'$, where $\Th'=(1+\Th)/2$ and $H^1_\mu$ is defined in \eqref{def H1mu}. In the same way as for \eqref{f_L est exp}, we obtain 
\EQ{ \label{fL diff exp}
 \pt\|f_L'(w+\ga)-f_L'(w)\|_{L^1_tL^2_x}
 \pn\le\int_0^1\|f''_L(w_\th)\ga\|_{L^1_tL^2_x} d\th
 \pr\lec \sup_{\th\in[0,1]}\|w_\th\|_{[H]_2}\|w_\th\|_{[X]_2}^{1/\nu-1}\|\ga\|_{[X]_2} \lec A\|(w,\ga)\|_{[X]_2}^{1/\nu-1}\|\ga\|_{[X]_2}.}

Thus in all cases, assuming  
\EQ{ \label{asm ga S'}
 \|\ga\|_{\X(I_j)} \le \de \ll 1,\pq (j=0,\dots,n-1),}
where the smallness depends on $A$ (and $\Th$), we get
\EQ{ \label{S' iterate}
 \|\ga\|_{\X(I_j)} + \|\ga_{j+1}\|_{\X(t_{j+1},t_n)}
 \le C\|\ga_j\|_{\X(t_j,t_n)}+\e_0,}
for some absolute constant $C\ge 2$. Then by \eqref{S' start} and iteration in $j$ we get 
\EQ{ \label{S' conclude}
 \|\ga\|_{\X(I)} \lec (2C)^n(A^{1-\th_1}\e_0^{\th_1}+A^{1-\th_2}\e_0^{\th_2}) \le C(A,B)(\e_0^{\th_1}+\e_0^{\th_2}).}
Choosing $\e_0(A,B)$ sufficiently small, we can make the last bound much smaller than $\de$, and thus the assumption \eqref{asm ga S'} is justified by continuity in $t$ and induction on $j$. 
Then repeating the estimate \eqref{est S'} once more, we can estimate the full Strichartz norms on $\ga$, which implies also the bound on $u$. 

Next we estimate the difference $u-w$ in the harder case $d\ge 5$, where we need the new exponents $\ti M$, $M$, $\ti N$, $N$, $R$, $Q$, $P$, and $Y$ defined by 
\EQ{ \label{def M2X}
 \pt M = \frac{2}{d+1}\left[\frac{1}{p_2}(1-d,2,0) + \frac{d-2}{4}(d,-1,0)\right],
 \pr \ti N = \frac{2}{d+1}\left[\paren{\frac{1}{2},\frac{d-1}{4},1}+\left(1-\frac{d-2}{4}p_2\right)(-d,1,0)\right],
 \pr \ti M = M + \frac{2}{p_2(d+1)}(0,1/d,1),
 \pq N = \ti N - \frac{2}{d+1}(0,1/d,1), 
 \pr Q = \frac{\paren{1,2,2}}{p_1(d+1)}, 
 \pq P = \frac{\paren{4,d-1,4}}{2(d+1)},
 \pq Y = \frac{\paren{6,d+3,4}}{2(d+1)},
 \pr R = \paren{\frac{(d+4)}{2(d+2)(p_1+1)},R_1,\frac{1}{2}}.}
In the case $p_2>1$, we need another exponent
\EQ{ \label{def hatM}
 \hat M := \ti M + \frac{2(p_2-1)}{p_2(d+1)}(0,1/d,1),}
and if $p_2\le 1$ then we put $\hat M=\ti M$. 
Note that $p_1<1$ under \eqref{range p1} for $d\le 5$. 
Then we have the sharp Sobolev embedding 
\EQ{
 [\hat M]_q \subset [\ti M]_q \subset [M]_q, \pq [\ti N]_q \subset [N]_q,}
and nonlinear and interpolation relations 
\EQ{ \label{interpolation relation}
 R + p_1 R^0 = K^{*(1)}, \pq R=(1-\al)W+\al K, \pq M^\sharp = (1-\be)W^0+\be R^0,}
for some $\al,\be\in(0,1)$, thanks to \eqref{range p1} and \eqref{range p2}. 
 $Y$ is a non-admissible exponent satisfying   
\EQ{
 Y \pt= \ti N + p_2 M = N + p_2 \ti M = P + p_1 Q^{0} = P^0 + p_1 Q,}
where the second and the last identities follow from $P_3=p_1Q_3$, $\ti N_3=p_2\ti M_3$, and the above sharp embeddings. 
If $p_2>1$, we have in addition
\EQ{
 Y = N + \hat M + (p_2-1)M.}
Moreover, these exponents satisfy (when $d\ge 5$) 
\begin{equation} \begin{gathered}
  1 = \reg^0(\ti N) = - \reg^0(Y) \ge \reg^0(\hat M),
 \pq 1 > \reg^1(Q), \reg^1(P),  
 -\reg^1(Y),
 \\ 0> \str^0(\hat M), \str^0(\ti N), \str^1(Q), \str^1(P), 
 \\  \str^0(\ti N) \le \str^0(Y)-2, \pq \str^1(P)=\str^1(Y)-2, 
 \\ 0\le \hat M_1, \hat M_2, Q_1, Q_2, R_1 < 1/2, 
 \pq 1< \dec^0(Y), 
  \dec^1(Y), 
 \\ Y_2<\frac 12 + \frac 1{d}, \pq \ti N_2>\frac 12-\frac 1{d-1}, \pq P_2>\frac 12- \frac 1{d}.
\end{gathered} \end{equation}
$\reg^0(\hat M)=1$ only if $p_2=2^\star-2=4/(d-2)$. 
Lemma \ref{lem:trinterpole}(1) implies that 
\EQ{
 \pt\|w\|_{([Q]_{2p_1} \cap [\hat M]_{2} \cap [\ti M]_{2p_2})(I)} \lec \|w\|_{([H]_2\cap[K]_2\cap[W]_2)(I)} \lec A+B.}
As before, we divide $I\cap(t_0,\I)$ into $t_0<\cdots<t_n$, $n\le C(A,B)$ such that 
\EQ{ \label{w bd highD}
 \|w\|_{([Q]_{2p_1} \cap [\hat M]_{2} \cap [\ti M]_{2p_2} \cap [K]_2 \cap [W]_2)(I_j)} \le \de \ll 1, \pq(j=0,\dots,n-1).}
We also introduce the following spaces: 
\EQ{ \label{def YY}
 \pt \Y_0:=[W]_0\cap[R]_0, \pq \ti\Y :=[\ti N]_2 \cap [P]_2, \pq \Y :=[W]_2\cap[K]_2, 
 \pr \Y_0^*:=[W^{*(1)}]_0+[K^{*(1)}]_0, \pq \Y^*:=[W^{*(1)}]_2 + [K^{*(1)}]_2.}
Our proof for $d\ge 5$ consists of three steps:
\begin{enumerate}
\item We estimate $\ga$ in $\Y_0$, assuming it is bounded in some norm similar to \eqref{w bd highD}. Here we can use the standard Strichartz because the estimates do not contain spatial derivative. 
\item We estimate $\ga$ in $\ti\Y$, under the same assumption on $\ga$. Here we use the exotic Strichartz. 
\item We estimate $u$ in $\Y$ by using the bounds in $[\ti N]_2\cap[R]_0$. 
The assumption in the previous steps is justified once we get a better bound. 
\end{enumerate}
Actually we could skip the first step, by using interpolation in the last step to bound $[R]_0$ by the other norms. 
However, if $p_1=4/d$ the lower critical power, then $R=K$ and the first step becomes necessary. 

Assuming that 
\EQ{ \label{asm ga bd}
 \|\ga\|_{([Q]_{2p_1} \cap [\hat M]_{2p_2} \cap [R]_0 \cap [M^\sharp]_0)(I_j)} \le \de \pq(j=0,\dots,n-1),}
we have by Strichartz and H\"older (since $W^0$ and $R^0$ are $1/2$-admissible)
\EQ{
 \pt\|\ga-\ga_j\|_{\Y_0(I_j)} + \|\ga_{j+1}-\ga_j\|_{\Y_0(\R)}
 \pr\lec \|f'(w+\ga)-f'(w)\|_{\Y_0^*(I_j)}+\|e\|_{\Y^*(I_j)}
 \pr\lec \|(w,\ga)\|_{[R]_0(I_j)}^{p_1}\|\ga\|_{[R]_0(I_j)} + \|(w,\ga)\|_{[M^\sharp]_0(I_j)}^{p_2}\|\ga\|_{[W]_0(I_j)} + \e_0
 \pr\lec \de^{p_1}\|\ga\|_{\Y_0(I_j)} + \e_0,}
where we used \eqref{w bd highD} and \eqref{asm ga bd}. 
By Lemma \ref{lem:trinterpole}(2), we have 
\EQ{ \label{S0 start} 
 \|\ga_0\|_{\Y_0(I)} \lec A^{1-\th_3}\e_0^{\th_3} + A^{1-\th_4}\e_0^{\th_4},}
for some $\th_3,\th_4\in(0,1)$. 
Note that $\str^1(R)\to 0$ as $p_1\to 4/d$, hence in the lower critical case we would need $\ga_0$ to be small in $[K]_0$. 
By the same argument as for \eqref{S' conclude}, we obtain 
\EQ{
 \|\ga\|_{\Y_0(I)} \le C(A,B)(\e_0^{\th_3}+\e_0^{\th_4}) \ll \de.}

Next, still assuming \eqref{asm ga bd}, we have by the exotic Strichartz estimate, 
\EQ{
 \pt\|\ga-\ga_j\|_{\ti \Y(I_j)} + \|\ga_{j+1}-\ga_j\|_{\ti \Y(\R)}
 \pn\lec \|f'(w+\ga)-f'(w)\|_{[Y]_2(I_j)} + \|e\|_{\Y^*(I_j)},}
where the nonlinear difference is estimated by 
\EQ{ \label{fL diff hD}
 \pn\|f_L'(w+\ga)-f_L'(w)\|_{[Y]_2} 
 \pt\lec  \|(w,\ga)\|_{[M]_0}^{p_2}\|\ga\|_{[\ti N]_2} 
   + \|(w,\ga)\|_{[\ti M]_{2p_2}}^{p_2}\|\ga\|_{[N]_0}
   \prq+ \|(w,\ga)\|_{[M]_0}^{p_2-1}\|(w,\ga)\|_{[\hat M]_2}\|\ga\|_{[N]_0}
,}
where the last term is for $p_2> 1$ while the second last is for $p_2\le 1$, and similarly
\EQ{ \label{fS diff hD}
 \pn\|f_S'(w+\ga)-f_S'(w)\|_{[Y]_2} 
 \pt\lec \|(w,\ga)\|_{[Q]_0}^{p_1}\|\ga\|_{[P]_2} + \|(w,\ga)\|_{[Q]_{2p_1}}^{p_1}\|\ga\|_{[P]_0}.} 
Thus we obtain
\EQ{ 
 \|\ga-\ga_j\|_{\ti \Y(I_j)} + \|\ga_{j+1}-\ga_j\|_{\ti \Y(\R)}\lec \de^{p_1} \|\ga\|_{\ti \Y(I_j)} + \e_0,}
where we used \eqref{w bd highD}, \eqref{asm ga bd}, and the following embeddings in $x$
\EQ{
 [Q]_{2p_1}\subset[Q]_0, \pq [P]_2\subset[P]_0, \pq [\hat M]_{2}+[\ti M]_{2p_2}\subset[M]_0, \pq [\ti N]_2\subset[N]_0.} 
By Lemma \ref{lem:trinterpole} and Strichartz, we have 
\EQ{
 \pt \|\ga_0\|_{[\ti N]_2(I)} \lec \|\ga_0\|_{[H]_2(I)\cap[W]_2(I)}^{1-\th_5}\|\ga_0\|_{[M]_0(I)}^{\th_5} \lec A^{1-\th_5}\e_0^{\th_5},
 \pr \|\ga_0\|_{[P]_2(I)} \lec \|\ga_0\|_{[H]_2(I)\cap[K]_2(I)}^{1-\th_6}\|\ga_0\|_{[M]_0(I)}^{\th_6}\lec A^{1-\th_6}\e_0^{\th_6},}
for some $\th_5,\th_6\in(0,1)$. 
Note that $\str^{1}(P)$ is away from $0$ as $p_1\to 4/d$, and so $\th_5,\th_6$ are uniformly bounded from below. 
Thus by the same argument as for \eqref{S' conclude}, 
\EQ{
 \|\ga\|_{\ti \Y(I)} \le C(A,B)(\e_0^{\th_5}+\e_0^{\th_6}) \ll \de.}
Hence under the assumption \eqref{asm ga bd} we have obtained  
\EQ{ \label{ga small}
 \|\ga\|_{[W]_0(I)\cap [R]_0(I)\cap[\ti N]_2(I)\cap[P]_2(I)} \lec C(A,B)\sum_{k=3}^6\e_0^{\th_k} \ll\de.}

Finally by Strichartz, \eqref{w bd highD} and \eqref{asm ga bd}, we have 
\EQ{ \label{est u in S}
 \|u\|_{\Y(I_j)} \pt\lec \|\V{u}(t_j)\|_{L^2_x} + \|eq(u)+f'(u)\|_{\Y^*(I_j)}
 \pr\lec A + \e_0 + \|u\|_{[R]_0(I_j)}^{p_1}\|u\|_{[R]_2(I_j)} + \|u\|_{[M^\sharp]_0(I_j)}^{p_2}\|u\|_{[W]_2(I_j)}
 \pr\lec A + \e_0 + \de^{p_1}\|u\|_{\Y(I_j)}.}
Hence we obtain 
\EQ{ \label{bd u in S}
 \|u\|_{\Y(I_j)} \lec A+\e_0,}
and so
\EQ{
 \|u\|_{\Y(I)} \lec n(A+\e_0)\le C(A,B),}
which is extended to the full Strichartz norms by Lemma \ref{Stz extend}. 

It remains to justify \eqref{asm ga bd}. By Lemma \ref{lem:trinterpole}(2), we have 
\EQ{
 \pt \|\ga\|_{[Q]_{2p_1}\cap [\hat M]_2 \cap [\ti M]_{2p_2}} \lec \sum_{k=7,8}\|\ga\|_{[H]_2\cap[K]_2\cap[W]_2}^{1-\th_k}\|\ga\|_{[P]_2\cap[\ti N]_2}^{\th_k},}
for some $\th_7,\th_8\in(0,1)$. If $p_1=4/d$, then we need to add $[K]_0$ to the last factor. 

In either case, by \eqref{bd u in S}, \eqref{w bd highD}, \eqref{ga small}, and \eqref{interpolation relation}, we obtain  
\EQ{
 \|\ga\|_{([Q]_{2p_1}\cap [\hat M]_2 \cap [\ti M]_{2p_2} \cap [R]_0 \cap [M^\sharp]_0)(I_j)} \lec C(A,B)\e_0^{\th},}
for some $\th\in(0,1)$. 
By choosing $\e_0(A,B)$ sufficiently small, the last bound can be made much smaller than $\de$. 
Then the assumption \eqref{asm ga bd} is justified by continuity in $t$ and induction in $j$. 
Thus we have obtained the desired estimates. 
\end{proof} 

\section{Profile decomposition}
In this section, following Bahouri-G\'erard and Kenig-Merle, we investigate behavior of general sequences of solutions, by asymptotic expansion into a series of transformation sequences of fixed space-time functions, called profiles. 
This is the fundamental part for the construction of {\it a critical element} in the next section. 

\subsection{Linear profile decomposition} \label{ss:lin prof}
Here we give the Klein-Gordon version of Bahouri-G\'erard's profile decomposition for the massless free wave equation. 
The only essential difference is that the massive equation does not commute with the scaling transforms, but the proof goes almost the same. 
 
For simple presentation, we introduce the following notation. 
For any triple $(t_\heartsuit^\diamondsuit,x_\heartsuit^\diamondsuit,h_\heartsuit^\diamondsuit)\in\R^{1+d}\times(0,\I)$ with arbitrary suffix $\heartsuit$ and $\diamondsuit$, let $\tau_\heartsuit^\diamondsuit$, $T_\heartsuit^\diamondsuit$ and $\Da_\heartsuit^\diamondsuit$ respectively denote the scaled time shift, the unitary and the self-adjoint operators in $L^2(\R^d)$, defined by 
\EQ{ \label{def Tnj}
 \tau_\heartsuit^\diamondsuit=-\frac{t_\heartsuit^\diamondsuit}{h_\heartsuit^\diamondsuit}, \pq T_\heartsuit^\diamondsuit \fy (x) = (h_\heartsuit^\diamondsuit)^{-d/2} \fy\left(\frac{x-x_\heartsuit^\diamondsuit}{h_\heartsuit^\diamondsuit}\right), 
 \pq \Da_\heartsuit^\diamondsuit = \sqrt{-\De+(h_\heartsuit^\diamondsuit)^2}.}
We denote the set of Fourier multipliers on $\R^d$: 
\EQ{ \label{def MP}
 \MP=\{\mu=\F^{-1}\ti\mu \F \mid \ti\mu\in C(\R^d),\ \exists\lim_{|x|\to\I}\ti\mu(x)\in\R\}.}
(practically we need only $1$ and $|\na|\LR{\na}^{-1}$ in $\MP$). 
Also recall the correspondence $u\leftrightarrow \V u$ defined in Section \ref{rdc 1st}. 
\begin{lem}[Linear profile decomposition] \label{LPD crit}
Let $\V v_n=e^{\iD t}\V v_n(0)$ be a sequence of free Klein-Gordon solutions with bounded $L^2_x$ norm. 
Then after replacing it with some subsequence, 
there exist $K\in\{0,1,2\dots,\I\}$ and, for each integer $j\in[0,K)$, $\fy^j \in L^2(\R^d)$ and $\{(t_n^j,x_n^j,h_n^j)\}_{n\in\N}\subset \R\times\R^d\times(0,1]$ satisfying the following. Define $\V v_n^j$ and $\V w_n^k$ for each $j<k\le K$ by
\EQ{
  \V{v}_n^j = e^{\iD(t-t_n^j)}T_n^j \fy^j, \pq \V{v}_n = \sum_{j=0}^{k-1} \V{v}_n^j + \V w_n^k,}
then we have 
\EQ{ 
 \lim_{k\to K} \limsup_{n\to\I} \|\V w_n^k\|_{L^\I_t(\R;B^{-d/2}_{\I,\I}(\R^d))}=0,}
and for any Fourier multiplier $\mu\in\MP$, any $l<j<k\le K$ and any $t\in\R$, 
\EQ{ \label{orth}
 \lim_{n\to\I} |\log(h_n^l/h_n^j)| + \frac{|t_n^l-t_n^j| + |x_n^l-x_n^j|}{h_n^l} = \I,}
\EQ{ \label{L2 orth}
 \lim_{n\to\I} \LR{\mu \V v_n^l(t)|\mu \V v_n^j(t)}_{L^2_x} = 0 
 =  \lim_{n\to\I} \LR{\mu \V v_n^j(t)|\mu \V w_n^k(t)}_{L^2_x}.}
Moreover, each sequence $\{h_n^j\}_{n\in\N}$ is either going to $0$ or identically $1$ for all $n$. 
\end{lem}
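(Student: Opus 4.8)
The plan is to follow the Bahouri--G\'erard scheme for the massless wave equation, the only genuinely new ingredient being the failure of $e^{\iD t}$ to commute with the scalings $T_n^j$.

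\textbf{One--profile extraction.} First I would prove the usual defect-of-compactness step: given a sequence $\V v_n$ of free Klein--Gordon solutions bounded in $L^2_x$ with $\nu:=\limsup_n\|\V v_n\|_{L^\I_t B^{-d/2}_{\I,\I}}>0$, one passes to a subsequence, chooses $t_n$ almost realizing the supremum, then a dyadic frequency $N_n\gec 1$ and a point $x_n$ with $|P_{N_n}\V v_n(t_n)(x_n)|\gec \nu N_n^{d/2}$, sets $h_n:=1/N_n\in(0,1]$ and lets $T_n$ be the associated scaling. Since a Littlewood--Paley piece of an $L^2$ function is an inner product against a fixed Schwartz kernel, along a further subsequence $(T_n)^{-1}e^{\iD t_n}\V v_n(0)$ converges weakly in $L^2$ to a profile $\fy$ with $\|\fy\|_{L^2}\gec\nu$, the implicit constant being \emph{independent} of $\|\V v_n(0)\|_{L^2}$ (this is the mass-critical scaling at work). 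Because $\{N_n\}$ is a bounded-below sequence of dyadic numbers it is, along the subsequence, either eventually constant — and then, after absorbing a fixed scaling into $\fy$, we may take $h_n\equiv1$ — or tends to $\I$, in which case $h_n\to0$; this is exactly the dichotomy asserted in the last sentence of the lemma. Setting $\V v_n^0:=e^{\iD(t-t_n)}T_n\fy$, we get $(T_n)^{-1}e^{\iD t_n}(\V v_n-\V v_n^0)(0)\rightharpoonup0$ and hence, by unitarity, $\|(\V v_n-\V v_n^0)(0)\|_{L^2}^2=\|\V v_n(0)\|_{L^2}^2-\|\fy\|_{L^2}^2+o(1)$.

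\textbf{Iteration, smallness of the remainder, orthogonality of parameters.} I would then iterate on $\V w_n^0:=\V v_n$, $\V w_n^{k+1}:=\V w_n^k-\V v_n^k$, extracting $\fy^k$ and $(t_n^k,x_n^k,h_n^k)$, passing to a diagonal subsequence, and stopping (with $K:=k$) if the relevant $\limsup$ vanishes. Writing $A_k:=\limsup_n\|\V w_n^k(0)\|_{L^2}$ and $\nu_k:=\limsup_n\|\V w_n^k\|_{L^\I_t B^{-d/2}_{\I,\I}}$, the Pythagorean identity gives $A_{k+1}^2\le A_k^2-c\,\nu_k^2$, so $\sum_k\nu_k^2<\I$ and therefore $\nu_k\to0$ as $k\to K$, which is the stated smallness of $\V w_n^k$. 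For \eqref{orth}: if it failed for some $l<j$, then along a subsequence $h_n^j/h_n^l\to\rho\in(0,\I)$ and the rescaled shifts $(t_n^l-t_n^j)/h_n^l$ and $(x_n^l-x_n^j)/h_n^l$ converge, so the transition operators $V_n=(T_n^j)^{-1}e^{\iD(t_n^j-t_n^l)}T_n^l$ converge strongly in $L^2$ to a fixed unitary operator; but $(T_n^l)^{-1}e^{\iD t_n^l}\V w_n^j(0)\rightharpoonup0$ by construction (a short induction over the intermediate profiles, using the orthogonality of $h^l,\dots,h^{j-1}$ already obtained), so this would force $\fy^j=0$, contradicting $\|\fy^j\|_{L^2}\gec\nu_j>0$.

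\textbf{Almost orthogonality in $L^2$.} For \eqref{L2 orth}, given $\mu\in\MP$ and a fixed $t$, I would transport $\mu\V v_n^l(t)$ and $\mu\V v_n^j(t)$ into the $l$-frame, reducing the first identity to $\langle\mu'\fy^l,\,V_n\,\mu''\fy^j\rangle_{L^2}\to0$ where $V_n$ is a transition operator as above and $\mu',\mu''$ are bounded conjugates of $\mu$; by \eqref{orth} either the frequency supports separate (if $|\log(h_n^l/h_n^j)|\to\I$), or the spatial supports separate (if the scales are comparable but $|x_n^l-x_n^j|/h_n^l\to\I$), or the free propagators disperse the two pieces apart (if scales and positions are comparable but $|t_n^l-t_n^j|/h_n^l\to\I$, using the $L^1\!\to\!L^\I$ decay of $e^{is\Da_n^l}$ uniformly for $h_n^l$ in a compact subset of $[0,\I)$), and a Schwartz-truncation argument closes each case. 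The second identity follows at once from $(T_n^j)^{-1}e^{\iD t_n^j}\V w_n^k(0)\rightharpoonup0$ for $k>j$, since $\mu\V v_n^j(t)$ pulled back to the $j$-frame is the fixed function $\mu'\fy^j$.

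\textbf{The main obstacle.} The only point that is not a routine transcription of the wave case is the non-commutativity of $e^{\iD t}$ with the scalings: conjugating by $T_n^j$ turns the propagator into $e^{i(t/h_n^j)\Da_n^j}$ with $\Da_n^j=\sqrt{-\De+(h_n^j)^2}$, so every limiting statement about transition operators and every dispersive estimate must be made uniform for $h_n^j$ ranging over a compact subset of $[0,\I)$, and one must track that the limit propagator is the wave one ($|\na|$) when $h_n^j\to0$ and the Klein--Gordon one ($\Da$) when $h_n^j\equiv1$ (in the latter case the decay is in fact better than for the wave). I expect this uniformity — continuity of $(s,h)\mapsto e^{is\sqrt{-\De+h^2}}$ in the strong topology together with uniform $L^1\!\to\!L^\I$ bounds — to be the part requiring the most care, everything else being parallel to Bahouri--G\'erard.
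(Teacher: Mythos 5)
Your proposal is correct and is essentially the same argument as the paper's proof: the single-profile extraction at a near-extremizing $(t_n,x_n)$ and Littlewood-Paley piece $k_n\ge 0$ (so $h_n=2^{-k_n}\le 1$, giving your dichotomy $h_n\equiv 1$ or $h_n\to 0$ after a subsequence), the iteration with the Pythagorean $L^2$-decomposition forcing $\|\fy^k\|_{L^2}\to 0$ and hence the Besov smallness of $\V w_n^k$, and the orthogonality statements via weak convergence of the transition operators $S_n^{j,l}=(T_n^j)^{-1}e^{\iD(t_n^j-t_n^l)}T_n^l$ (weakly to $0$ under \eqref{orth}, strongly to a unitary otherwise), together with the strong convergence of the rescaled multipliers coming from the definition of $\MP$. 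The one stylistic difference is that you frame the remainder decay as $\sum_k\nu_k^2<\I$, whereas the paper passes through $\nu_k\lec\|\fy^k\|_{L^2}\to 0$, but these are the same estimate read in two ways; and your ``main obstacle'' discussion of the uniformity in $h\in[0,1]$ of the operators $e^{is\sqrt{-\De+h^2}}$ is exactly what the paper handles through the conjugated propagators $\Da_n^j=\sqrt{-\De+(h_n^j)^2}$ and the uniform $\S\to\S'$ decay.
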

We call such a sequence $\{\V v_n^j\}_{n\in\N}$ \emf{a free concentrating wave} for each $j$, and $\V w_n^k$ \emf{the remainder}. 
We say that $\{(t_n^j,x_n^j,h_n^j)\}_{n}$ and $\{(t_n^k,x_n^k,h_n^k)\}$ are \emf{orthogonal} when \eqref{orth} holds. Note that \eqref{L2 orth} implies 
\EQ{ \label{L2 decop}
 \lim_{n\to\I}\left[\|\V v_n(t)\|_{L^2_x}^2-\sum_{j<k}\|\V v_n^j(t)\|_{L^2_x}^2-\|\V w_n^k\|_{L^2_x}^2\right]=0.}
We remark that the case $h_n^j\to\I$ is excluded by the presence of the mass, or more precisely by the use of inhomogeneous Besov norm for the remainder. 
\begin{proof}
We introduce a Littlewood-Paley decomposition for the Besov norm. Let $\La_0(x)\in \S(\R^d)$ such that its Fourier transform $\ti\La_0(\x)=1$ for $|\x|\le 1$ and $\ti\La_0(\x)=0$ for $|\x|\ge 2$. Then we define $\La_k(x)$ for any $k\in\N$ and $\La_{(0)}(x)$ by the Fourier transforms 
\EQ{
 \ti\La_k(\x) = \ti\La_0(2^{-k}\x) - \ti\La_0(2^{-k+1}\x),
 \pq \ti\La_{(0)} = \ti\La_0(\x) - \ti\La_0(2\x).}
Let 
\EQ{ 
 \nu:=\limsup_{n\to\I}\|\V v_n\|_{L^\I_t B^{-d/2}_{\I,\I}}
 \sim \limsup_{n\to\I} \sup_{t\in\R,\ x\in\R^d,\ k\ge 0} 2^{-k d/2}|\La_k*\V{v}_n(t,x)|.} 
If $\nu=0$, then we are done with $K=0$. 
Otherwise, there exists a sequence $(t_n,x_n,k_n)$ such that for large $n$
\EQ{ 
 2^{-k_n d/2}|\La_{k_n}*\V{v}_n(t_n,x_n)| \ge \nu/2.} 
Now we define $h_n$ and $\psi_n$ by 
\EQ{
 h_n = 2^{-k_n}, \pq \V{v}_n(t_n,x) = T_n \psi_n.}
Since $\psi_n$ is bounded in $L^2_x$, it converges weakly to some $\psi$ in $L^2_x$, up to an extraction of a subsequence. Moreover, 
\EQ{
 2^{-k_n d/2}|\La_{k_n}*\V{v}_n(t_n,x_n)|=\CAS{|\La_0*\psi_n(0)| &(k_n=0) \\ |\La_{(0)}*\psi_n(0)| &(k_n \ge 1),}}
and hence by the weak convergence and by Schwarz 
\EQ{
 \|\psi\|_{L^2_x} \gec |\LR{\La_0|\psi}|+|\LR{\La_{(0)}|\psi}| \ge \nu/2.} 
If $h_n\to 0$, then we put $(t_n^0,x_n^0,h_n^0)=(t_n,x_n,h_n)$ and $\fy^0=\psi$. Otherwise, we may assume that $h_n\to\exists h_\I>0$, by extracting a subsequence, and we put 
\EQ{
 (t_n^0,x_n^0,h_n^0)=(t_n,x_n,1), \pq \fy^0=h_\I^{-d/2}\psi(x/h_\I).} 
Then we have $T_n\psi-T_n^0\fy^0\to 0$ strongly in $L^2_x$. 
Now we define $\V v_n^0$ and $\V w_n^1$ by 
\EQ{
 \V v_n^0=e^{\iD(t-t_n^0)}T_n^0\fy^0, \pq \V w_n^1=\V v_n-\V v_n^0.} 
Then $(T_n^0)^{-1}\V w_n^1(t_n^0)=(T_n^0)^{-1}T_n\psi_n-\fy^0\to 0$ weakly in $L^2$, and $\mu T_n^0 = T_n^0 \mu_n^0$, where $\mu_n^0$ denotes the Fourier multiplier whose symbol is the rescaling of $\mu$'s, that is $\ti\mu(\x/h_n^0)$. By the definition of $\MP$, the symbol of $\mu_n^0$ converges including the case $h_n^0\to 0$, so $\mu_n^0\to\exists\mu_\I^0$ converges strongly on $L^2(\R^d)$. Hence 
\EQ{
\LR{\mu\V v_n^0(t_n^0)|\mu\V w_n^1(t_n^0)}_{L^2_x}
 = \LR{\mu_n^0\fy^0|\mu_n^0(T_n^0)^{-1}\V w_n^1(t_n^0)}_{L^2_x}
 \to 0.}
The left hand side is preserved in $t$, hence the above holds at any $t$. 
This is the decomposition for $k=1$. 

Next we apply the above procedure to the sequence $\V w_n^1$ in place of $\V v_n$. Then either the Besov norm goes to $0$ and $K=1$, or otherwise we find the next concentrating wave $\V v_n^1$ and the remainder $\V w_n^2$, such that for some $(t_n^1,x_n^1,h_n^1)$ and $\fy^1\in L^2(\R^d)$,
\EQ{ 
 \pt\V w_n^1 = \V v_n^1 + \V w_n^2, \pq \V v_n^1=e^{\iD(t-t_n^1)}T_n^1\fy^1,
 \pq \LR{\mu\V v_n^1(t)|\mu\V w_n^2(t)}_{L^2_x} \to 0,}
 $(T_n^1)^{-1}\V w_n^2(t_n^1)\to 0$ weakly in $L^2_x$ as $n\to\I$, and 
\EQ{ \label{w B bd}
 \pt \limsup_{n\to\I} \|\V w_n^1\|_{L^\I_t B^{-d/2}_{\I,\I}} \lec \|\fy^1\|_{L^2}.}

Iterating the above procedure, we obtain the desired decomposition. 
The $L^2$ orthogonality implies that $\|\fy^k\|_{L^2_x}\to 0$ as $k\to\I$, and then \eqref{w B bd} (for general $k$) gives the decay of the remainder in the Besov norm. 

It remains to prove the orthogonality \eqref{orth} as well as \eqref{L2 orth}. 
First we have 
\EQ{ \label{vnlj prod}
 \LR{\mu\V v_n^l(0)|\mu\V v_n^j(0)}
 = \LR{e^{-\iD t_n^l}T_n^l \mu_n^l \fy^l|e^{-\iD t_n^j}T_n^j \mu_n^j\fy^j}
 = \LR{S_n^{j,l}\mu_n^l\fy^l|\mu_n^j\fy^j},}
where $\ti\mu_n^l=\ti\mu(\x/h_n^l)$ as before, and $S_n^{j,l}$ is defined by
\EQ{ \label{def Snjl}
 S_n^{j,l} := (T_n^j)^{-1}e^{\iD(t_n^j-t_n^l)} T_n^l 
 = e^{-i\Da_n^j t_n^{j,l}}(T_n^j)^{-1}T_n^l = e^{-i\Da_n^j t_n^{j,l}} T_n^{j,l},}
with the sequence 
\EQ{ \label{def njl}
 (t_n^{j,l},x_n^{j,l},h_n^{j,l}):= (t_n^l-t_n^j,x_n^l-x_n^j,h_n^l)/h_n^j.}
Using the last formula in \eqref{def Snjl}, \eqref{orth} and uniform time decay of $e^{i\Da_n^j t}:\S\to\S'$, it is easy to observe that $S_n^{j,l}\to 0$ weakly on $L^2_x$ as $n\to\I$ for all $j<l$. 
Since $\ti\mu_n^l=\ti\mu(\x/h_n^l)$ and $\ti\mu_n^j$ are convergent, \eqref{vnlj prod} also tends to $0$. Then we have also 
\EQ{
 \LR{\mu\V v_n^j(t)|\mu\V w_n^k(t)}_{L^2_x}
 = \LR{\mu\V v_n^j(t)|\mu\V w_n^{j+1}(t)-\sum_{m=j+1}^{k-1}\mu\V v_n^m(t)}_{L^2_x}
 \to 0,}
thus we obtain \eqref{L2 orth}. 
Now suppose that \eqref{orth} fails, then there exists a minimal $(l,j)$ breaking \eqref{orth}, with respect to the natural order 
\EQ{
 (l_1,j_1)\le (l_2,j_2) \iff l_1\le l_2 \text{ and } j_1\le j_2.}
Then by extracting a subsequence, we may assume that $h_n^l\to h_\I^l$, $\log(h_n^l/h_n^j)$, $(t_n^l-t_n^j)/h_n^l$ and $(x_n^l-x_n^j)/h_n^l$ all converge. Now we inspect
\EQ{ \label{w expand}
 (T_n^l)^{-1}\V w_n^{l+1}(t_n^l)
 = \sum_{m=l+1}^j S_n^{l,m} \fy^m + S_n^{l,j} (T_n^j)^{-1}\V w_n^{j+1}(t_n^j).}
where $S_n^{l,j}$ converges strongly to a unitary operator, due to the convergence of $(t_n^{l,j},x_n^{l,j},h_n^{l,j})$ and $h_n^l$. 
Since $S_n^{l,m}\to 0$ for $m<j$ and $(T_n^j)^{-1}\V w_n^{j+1}(t_n^j)\to 0$ weakly in $L^2_x$, we deduce from the weak limit of \eqref{w expand} that $\fy^k=0$, a contradiction. This proves the orthogonality \eqref{orth}. 
\end{proof}

Those free concentrating waves with scaling going to $0$ are vanishing in any Besov space with less regularity. Hence in the subcritical case, we may freeze the scaling to $1$ by regarding them as a part of remainder. 
Hence we have 
\begin{cor} \label{LPD sub}
Let $\V v_n$ be a sequence of free Klein-Gordon solutions with bounded $L^2_x$ norm.  
Then after replacing it with some subsequence, 
there exist $K\in\{0,1,2\dots,\I\}$ and, for each integer $j\in[0,K)$, $\fy^j \in L^2(\R^d)$ and $\{(t_n^j,x_n^j)\}_{n\in\N}\subset \R\times\R^d$ satisfying the following. Define $\V v_n^j$ and $\V w_n^k$ for each $j<k\le K$ by
\EQ{
  \V{v}_n^j = e^{\iD(t-t_n^j)}\fy^j(x-x_n^j), \pq \V{v}_n = \sum_{j=0}^{k-1} \V{v}_n^j + \V w_n^k,}
then for any $s<-d/2$, we have
\EQ{ 
 \lim_{k\to K} \limsup_{n\to\I} \|\V w_n^k\|_{L^\I(\R;B^s_{\I,1}(\R^d))}=0,}
and for any $\mu\in\MP$, any $l<j<k\le K$ and any $t\in\R$, 
\EQ{ \label{L2 orth M}
 \lim_{n\to\I} \LR{\mu \V v_n^l|\mu \V v_n^j}_{L^2_x}^2 = 0
 = \lim_{n\to\I} \LR{\mu \V v_n^j|\mu \V w_n^k}_{L^2_x},}
\EQ{ \label{orth sub}
 \lim_{n\to\I} |t_n^j-t_n^k| + |x_n^j-x_n^k| = \I.}
\end{cor}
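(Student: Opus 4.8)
The plan is to deduce Corollary~\ref{LPD sub} from Lemma~\ref{LPD crit} by throwing into the remainder every free concentrating wave whose scale degenerates ($h_n^j\to 0$) and keeping only the profiles with $h_n^j\equiv 1$; such degenerating waves are invisible in $B^s_{\I,1}$ when $s<-d/2$. Concretely, I would apply Lemma~\ref{LPD crit} to obtain, along a subsequence, $K$, profiles $\fy^j\in L^2(\R^d)$, parameters $(t_n^j,x_n^j,h_n^j)\in\R\times\R^d\times(0,1]$ with each $\{h_n^j\}_n$ either $\to 0$ or $\equiv 1$, concentrating waves $\V v_n^j=e^{\iD(t-t_n^j)}T_n^j\fy^j$, and remainders with $\lim_k\limsup_n\|\V w_n^k\|_{L^\I_tB^{-d/2}_{\I,\I}}=0$. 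Put $J_c=\{j<K:h_n^j\to 0\}$, enumerate $J_1=\{j<K:h_n^j\equiv 1\}$ increasingly as $j_0<j_1<\cdots$, and take as the corollary's data the relabelled profiles $\ti\fy^i:=\fy^{j_i}$, parameters $(\ti t_n^i,\ti x_n^i):=(t_n^{j_i},x_n^{j_i})$, $\ti K:=\#J_1$, and concentrating waves $\ti{\V v}_n^i:=e^{\iD(t-\ti t_n^i)}\ti\fy^i(\cdot-\ti x_n^i)=\V v_n^{j_i}$. For $k\le\ti K$ fix an integer $m=m(k)$ with $j_{k-1}<m\le j_k$ (conventions $j_{-1}=-1$, $j_{\ti K}=\I$), so that $\{0,\dots,m-1\}\cap J_1=\{j_0,\dots,j_{k-1}\}$ and the corollary's remainder $\ti{\V w}_n^k:=\V v_n-\sum_{i<k}\ti{\V v}_n^i$ satisfies the identity
\[
 \ti{\V w}_n^k=\V w_n^m+\sum_{j<m,\ j\in J_c}\V v_n^j .
\]

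The core of the proof is the claim: if $h:=h_n^j\to 0$ then $\sup_{t\in\R}\|\V v_n^j(t)\|_{B^s_{\I,1}(\R^d)}\to 0$ for every $s<-d/2$. I would prove it by a Littlewood--Paley decomposition, writing $P_k$, $P_{\le 0}$ for the dyadic pieces. At frequencies $\lec 1$ the free evolution is unitary on $L^2$ and Bernstein holds with an absolute constant, so the low-frequency part of $\V v_n^j(t)$ is bounded in $L^\I_x$, uniformly in $t$, by $\|P_{\le 0}T_n^j\fy^j\|_{L^2}=(\int_{|\z|\lec h}|\hat\fy^j(\z)|^2\,d\z)^{1/2}$, which $\to 0$ as $h\to 0$. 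At frequencies $\sim 2^k$ with $k\ge 0$, unitarity and Bernstein give $2^{ks}\|P_k\V v_n^j(t)\|_{L^\I_x}\lec 2^{k(s+d/2)}b_k$, with $b_k:=\|P_kT_n^j\fy^j\|_{L^2}=(\int_{|\z|\sim 2^kh}|\hat\fy^j|^2)^{1/2}$ satisfying $\sum_kb_k^2\lec\|\fy^j\|_{L^2}^2$ and, for each fixed $k$, $b_k\to 0$ as $h\to 0$. Since $s+d/2<0$, splitting $\sum_{k\ge 0}2^{k(s+d/2)}b_k$ at a large $N$ gives a tail $\lec 2^{-N\e}\|\fy^j\|_{L^2}$ by Cauchy--Schwarz (with $\e:=-(s+d/2)/2>0$) and a finite head each of whose terms $\to 0$; this proves the claim, uniformly in $t$. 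The same elementary splitting also yields the Besov embedding $B^{-d/2}_{\I,\I}(\R^d)\subset B^s_{\I,1}(\R^d)$ for $s<-d/2$, whence $\|\V w_n^m\|_{L^\I_tB^s_{\I,1}}\lec\|\V w_n^m\|_{L^\I_tB^{-d/2}_{\I,\I}}$.

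Assembling: for each fixed $k$, $\sum_{j<m(k),\ j\in J_c}\|\V v_n^j\|_{L^\I_tB^s_{\I,1}}$ is a finite sum each of whose terms $\to 0$ as $n\to\I$ by the claim, so by the displayed identity and the embedding $\limsup_n\|\ti{\V w}_n^k\|_{L^\I_tB^s_{\I,1}}\lec\limsup_n\|\V w_n^{m(k)}\|_{L^\I_tB^{-d/2}_{\I,\I}}$. Letting $k\to\ti K$ forces $m(k)\to\I$ (because $j_{k-1}\ge k-1$ if $\ti K=\I$; and if $\ti K<\I$, the identity is valid for every $m>j_{\ti K-1}$, so we may send $m\to\I$ at $k=\ti K$), so the right-hand side $\to 0$: this is the required decay of the corollary's remainder. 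The orthogonality \eqref{orth sub} is \eqref{orth} specialised to $h_n^{j_i}=h_n^{j_l}=1$, namely $\lim_n|\ti t_n^i-\ti t_n^l|+|\ti x_n^i-\ti x_n^l|=\I$. Finally \eqref{L2 orth M} is inherited from \eqref{L2 orth}: distinct profiles are pairwise asymptotically $L^2$-orthogonal, so $\LR{\mu\ti{\V v}_n^l|\mu\ti{\V v}_n^i}_{L^2_x}\to 0$ for $l<i$ and every $\mu\in\MP$; and since $\ti{\V v}_n^i=\V v_n^{j_i}$ with $j_i<m$, each summand of $\ti{\V w}_n^k=\V w_n^m+\sum_{j<m,\ j\in J_c}\V v_n^j$ is asymptotically orthogonal to $\mu\ti{\V v}_n^i$ by \eqref{L2 orth}, giving $\LR{\mu\ti{\V v}_n^i|\mu\ti{\V w}_n^k}_{L^2_x}\to 0$.

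I expect the concentrating-wave vanishing estimate to be the main obstacle: reconciling the $L^2$-normalised concentration at a scale $h\to 0$ with the inhomogeneous, frequency-localised norm $B^s_{\I,1}$, and obtaining decay uniformly in time — which is exactly where the strict inequality $s<-d/2$ and the free dynamics enter. The remaining steps are relabelling bookkeeping.
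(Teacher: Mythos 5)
Your proof is correct and is precisely the approach the paper sketches (the paper states the corollary with only the one-line remark that concentrating waves with $h_n^j\to 0$ vanish in Besov norms of lower regularity, and that one can therefore absorb them into the remainder). Your Littlewood--Paley computation showing $\sup_t\|\V v_n^j(t)\|_{B^s_{\I,1}}\to 0$ when $h_n^j\to 0$ and $s<-d/2$, together with the relabelling bookkeeping and the embedding $B^{-d/2}_{\I,\I}\subset B^s_{\I,1}$, supplies exactly the details the paper leaves to the reader.
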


The orthogonality holds also for the nonlinear energy, which implies that the decomposition is closed in $\ti\K^+$. 
Recall the vector notation for the energy given in Section \ref{rdc 1st}. 
We will also use the following estimates for $1<p<\I$, 
\EQ{ \label{diff conc D}
 \pt \|[|\na|-\LR{\na}_n]\fy\|_{L^p_x} \lec h_n\|\LR{\na/h_n}^{-1}\fy\|_{L^p_x}, 
 \pr \|[|\na|^{-1}-\LR{\na}_n^{-1}]\fy\|_{L^p_x} \lec \|\LR{\na/h_n}^{-2}|\na|^{-1}\fy\|_{L^p_x},}
which hold uniformly for $0<h_n\le 1$, by Mihlin's theorem on Fourier multipliers.
\begin{lem} \label{F orth}
Assume that $f$ satisfies \eqref{asm f}. Let $\V v_n$ be a sequence of free Klein-Gordon solutions satisfying $\V v_n(0)\in\ti\K^+$ and $\limsup_{n\to\I} \ti E(\V v_n(0)) < m$. 
Let $\V v_n=\sum_{j<k}\V v_n^j + \V w_n^k$ be the linear profile decomposition given by Lemma \ref{LPD crit}. Except for the $H^1$ critical case \eqref{f crit}, it may be given by Lemma \ref{LPD sub} too. 
Then we have $\V v_n^j(0)\in\ti\K^+$ for large $n$ and all $j<K$, and 
\EQ{ \label{E decop}
 \lim_{k\to K}\limsup_{n\to\I} \Bigl|\ti E(\V v_n(0)) - \sum_{j<k} \ti E(\V v_n^j(0)) - \ti E(\V w_n^k(0)) \Bigr| = 0.}
Moreover we have for all $j<K$ 
\EQ{
 0\le \liminf_{n\to\I} \ti E(\V v_n^j(0)) \le \limsup_{n\to\I} \ti E(\V v_n^j(0)) \le \limsup_{n\to\I} \ti E(\V v_n(0)),}
where the last inequality becomes equality only if $K=1$ and $\V w_n^1\to 0$ in $L^\I_t L^2_x$. 
\end{lem}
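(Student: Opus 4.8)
The plan is to split $\ti E(\fy)=\tfrac12\|\fy\|_{L^2_x}^2-F(\LR{\na}^{-1}\Re\fy)$ into its quadratic and nonlinear parts. The quadratic part is handled for free by the $L^2$-orthogonality \eqref{L2 decop} of Lemma \ref{LPD crit} (resp.\ Corollary \ref{LPD sub}), so the whole lemma reduces to an $\F$-orthogonality: for each fixed $k<K$,
\[
 F\bigl(\LR{\na}^{-1}\Re\V v_n(0)\bigr)=\sum_{j<k}F\bigl(\LR{\na}^{-1}\Re\V v_n^j(0)\bigr)+F\bigl(\LR{\na}^{-1}\Re\V w_n^k(0)\bigr)+o_n(1),
\]
together with $F(\LR{\na}^{-1}\Re\V w_n^k(0))\to0$ as $k\to K$; the same statements for $K^N_{\al,\be}=-\Dab F$ then follow by the identical argument and give the additivity of $\ti K$ as well.

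For the $\F$-orthogonality I would first use $\LR{\na}^{-1}T_n^j=h_n^jT_n^j(\Da_n^j)^{-1}$ and the commutation $e^{\iD t}T_n^j=T_n^je^{it(h_n^j)^{-1}\Da_n^j}$, together with the comparison bounds \eqref{diff conc D}, to replace each $\LR{\na}^{-1}\Re\V v_n^j(0)$ — up to an error vanishing in $H^1$ — by a genuine transformation $T_n^j\Phi_n^j$ of a fixed profile: a translate of $\LR{\na}^{-1}\Re e^{-it_n^j\LR{\na}}\fy^j$ when $h_n^j\equiv1$, and a $\dot H^1$-concentration of $|\na|^{-1}\Re e^{i\tau_n^j|\na|}\fy^j$ when $h_n^j\to0$ (the replacement $\Da_n^j\to|\na|$ inside the oscillation being licensed by \eqref{diff conc D} after a harmless frequency truncation of $\fy^j$). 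Since $F$ and $K^N$ are integrals of local densities bounded as in \eqref{GN bd F} (or \eqref{TM bd F}), the cross contributions of two profiles with orthogonal parameters \eqref{orth} vanish — their densities are asymptotically supported either at separated points (when $h_n^l\sim h_n^j$, by $|x_n^l-x_n^j|/h_n^j\to\I$) or at incompatible scales (when $|\log(h_n^l/h_n^j)|\to\I$) — and the cross terms with $\V w_n^k$ vanish by the asymptotic orthogonality in \eqref{L2 orth}. Finally $F(\LR{\na}^{-1}\Re\V w_n^k(0))\to0$: in the subcritical and exponential cases one interpolates the smallness of $\|\V w_n^k\|_{B^{-d/2}_{\I,\I}}$ against the $L^2$-bound to get smallness in $L^{p_1+2}_x\cap L^{p_2+2}_x$ and invokes \eqref{GN bd F}/\eqref{TM bd F}; in the $H^1$-critical case one uses the refined Sobolev inequality of Bahouri-G\'erard \cite{BG}, $\|g\|_{L^{2^\star}_x}^{2^\star}\lec\|g\|_{\dot H^1_x}^2\|g\|_{\dot B^{1-d/2}_{\I,\I}}^{2^\star-2}$, the low-frequency part being controlled by the $L^2_x$-bound on the free solution $\V w_n^k$.

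It remains to place each profile in $\ti\K^+$ and to get the non-negativity. Write $\psi^{n,j}:=\Re\LR{\na}^{-1}\V v_n^j(0)$, so $\ti E(\V v_n^j(0))=J(\psi^{n,j})+\tfrac12\|\Im\LR{\na}^{-1}\V v_n^j(0)\|_{H^1_x}^2$, and by \eqref{def tiK} membership in $\ti\K^+$ (under $\ti E<m$) is equivalent to $K_{\al,\be}(\psi^{n,j})\ge0$. In the $H^1$-critical case the constraint is transparent: $\V v_n(0)\in\ti\K^+$ gives, via $K_{0,1}(\psi^n)\ge0$, the bound $J(\psi^n)\ge\tfrac1d\|\na\psi^n\|_{L^2_x}^2$, hence $\|\na\psi^n\|_{L^2_x}^2\le d\,\ti E(\V v_n(0))<d\,m=\|\na Q\|_{L^2_x}^2$, which by the $\dot H^1$-orthogonality (\eqref{L2 orth} with $\mu=|\na|\LR{\na}^{-1}$) descends to $\|\na\psi^{n,j}\|_{L^2_x}^2<\|\na Q\|_{L^2_x}^2$ for large $n$; the sharp Sobolev inequality then yields $\|\psi^{n,j}\|_{L^{2^\star}_x}^{2^\star}<\|\na\psi^{n,j}\|_{L^2_x}^2$, whence $K_{0,1}(\psi^{n,j})>0$ and $\ti E(\V v_n^j(0))\ge J(\psi^{n,j})\ge\tfrac1d\|\na\psi^{n,j}\|_{L^2_x}^2\ge0$ (and likewise for $\V w_n^k(0)$), so that \eqref{E decop} forces $\ti E(\V v_n^j(0))\le\ti E(\V v_n(0))+o_n(1)<m$. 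In the subcritical and exponential cases the same conclusion is reached by an induction on the profile index: the vanishing profiles are disposed of by Lemma \ref{K>0 near 0}, and for the others one propagates — via the additivity of $K_{\al,\be}$, $J$, $\ti E$, the uniform lower bound of Lemma \ref{lem:K bd}, the coercivity $\|\psi\|_{H^1_x}^2\lec J(\psi)$ on $\{K_{1,0}\ge0\}$ (Lemma \ref{E0 equiv K+}), and the energy budget $\sum_{l<k}\ti E(\V v_n^l(0))+\ti E(\V w_n^k(0))\le\ti E(\V v_n(0))+o_n(1)<m$ — both the sign $K_{\al,\be}\ge0$ and the strict bound $J<m$ from $\V v_n(0)$ down to each $\V v_n^j(0)$. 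With every profile in $\ti\K^+$, the displayed chain of inequalities follows: the $\liminf\ge0$ has just been shown, and $\limsup_n\ti E(\V v_n^j(0))\le\limsup_n\ti E(\V v_n(0))$ follows from \eqref{E decop} and the non-negativity of the remaining terms; finally, equality for some $j$ forces all other profiles and the remainder to carry zero limiting energy, hence — by the coercivity $\ti E\gec\|\cdot\|_{L^2_x}^2$ on $\ti\K^+$ (Lemma \ref{E0 equiv K+}), together with the corresponding coercivity for the remainder coming from \eqref{GN bd F}/sharp Sobolev — zero $L^2_x$-norm, which in view of \eqref{L2 decop} is only possible when $K=1$ and $\V w_n^1\to0$ in $L^\I_tL^2_x$.

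The main obstacle is the $\F$-orthogonality, and inside it the fact that $\LR{\na}^{-1}$ does not commute with the concentration transforms $T_n^j$ — the very failure of scale invariance this paper revolves around. It is overcome only through the quantitative comparison estimates \eqref{diff conc D}, which also dictate the dichotomy $h_n^j\equiv1$ (genuine $H^1$-translation profiles, with $H^1$-limits) versus $h_n^j\to0$ (massless $\dot H^1$-concentration profiles, where the mass term drops out of $\ti E$ in the limit, matching $m=J^{(0)}(Q)$ in the critical case). A second, genuinely delicate point is the propagation of the $\ti\K^+$-constraint to the individual profiles in the subcritical/exponential cases, where — unlike the critical case — there is no single orthogonally additive quantity characterizing the potential-well side, which forces the inductive argument built on the uniform lower bound of Lemma \ref{lem:K bd}.
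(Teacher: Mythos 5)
Your route for the $F$-orthogonality part of the lemma matches the paper's: split $\ti E$ into quadratic and nonlinear pieces, handle the quadratic part by the $L^2$-orthogonality \eqref{L2 decop}, dispose of the remainder by the Besov decay, of profiles with $\tau_n^j\to\pm\I$ by $L^p$-dispersion, and of cross-contributions by the orthogonality of parameters. Your direct sharp-Sobolev argument for $\ti\K^+$-membership in the $H^1$-critical case is correct and is a reasonable alternative to the paper's uniform route.

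The gap is in the $\ti\K^+$-membership argument for the subcritical and exponential cases. Your ``induction on the profile index'' is circular as sketched: the ``energy budget $\sum_{l}\ti E(\V v_n^l(0))\le\ti E(\V v_n(0))+o_n(1)<m$'' bounds each individual $\ti E(\V v_n^j(0))$ only after one already knows the other summands are non-negative, and the dichotomy of Lemma \ref{lem:K bd} presupposes $J(\psi^{n,j})<m$, which is exactly the information you are trying to propagate; neither Lemma \ref{lem:K bd} nor Lemma \ref{E0 equiv K+} gives you a sign for $K_{\al,\be}(\psi^{n,j})$ without an a priori input. The paper breaks the circularity through Lemma \ref{Korth}, whose mechanism your sketch does not reproduce: replace the possibly-negative $\ti E$ by $H_{\al,\be}=(1-\Dab/\bar\mu)J$, which is non-negative on all of $H^1$ by Lemma \ref{mount}; note that $K(\psi_l)<0$ for any single $l$ forces $H(\psi_l)\ge m$ by the characterization \eqref{min H}; and derive a contradiction by summing $H\ge 0$ over all indices and bounding $\sum_j H(\psi_j)\le \ti E(\sum\fy_j)-\ti K(\sum\fy_j)/\bar\mu+C\e<m$ via the approximate additivity of both $\ti E$ and $\ti K$ (the latter you do establish, so you have the ingredients but not the closing step). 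Two smaller points you should attend to: in the exponential case the Trudinger--Moser bounds you invoke first require placing all profiles and remainders uniformly in the subcritical TM regime, which the paper obtains from Lemma \ref{E bd exp}; and the vanishing of the remainder's contribution to $F$ must be handled perturbatively through $|F(v_n)-F(v_n^{<k})|$ (H\"older against $e^{q\ka|\cdot|^2}-1$, as in the paper), not via $F(w_n^k)\to0$ alone, since for the exponential nonlinearity $F$ is not even approximately additive before the orthogonality is in hand.
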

\begin{proof}
First we see that in the exponential case \eqref{f exp}, all the profiles and remainders are in the subcritical regime. Since $\V v_n(0)\in\ti\K^+$, Lemma \ref{E bd exp} implies 
\EQ{
 \|\na\LR{\na}^{-1}\Re\V v_n(0)\|_{L^2_x}^2 + \|\Im\V v_n(0)\|_{L^2_x}^2 < 2m \le 4\pi/\ka_0.}
For any $(\th_0,\dots,\th_k)\in\C^{1+k}$ satisfying $\|\th\|_{L^\I}=\max_j|\th_j|\le 1$, let 
\EQ{
 v_n^\th = \sum_{j<k}\th_j v_n^j + \th_k w_n^k.}
Then choosing $\mu=|\na|\LR{\na}^{-1}\in\MP$ in \eqref{L2 orth M}, we get 
\EQ{
 \limsup_{n\to\I}\sup_{t\in\R} \|\na v_n^\th\|_{L^2_x}^2 
 \le \limsup_{n\to\I} \|\na\LR{\na}^{-1}\V v_n\|_{L^2_x}^2=:M < 4\pi/\ka_0.}
Hence there exist $\ka>\ka_0$ and $q\in(1,2)$ such that $q\ka M < 4\pi$. 

Now we start proving \eqref{E decop} in all the cases. Since the linear version of \eqref{E decop} is given by Lemma \ref{LPD crit}, it suffices to show the orthogonality in $F$, i.e. 
\EQ{ \label{F decop}
  \lim_{k\to K}\limsup_{n\to\I} \Bigl|F(v_n(0)) - \sum_{j<k} F(v_n^j(0)) - F(w_n^k(0))\Bigr| = 0.}
For this we may neglect $w_n^k$, because by the decay in $B^{1-d/2}_{\I,\I}$ and interpolation with the $H^1$ bound we have
\EQ{
 \lim_{k\to K}\limsup_{n\to\I}\|w_n^k(0)\|_{L^{p}_x} = 0 \pq(2<p\le 2^\star).}
In the exponential case, we deal with it as follows. Let $v_n^{<k+\th}=v_n-(1-\th)w_n^k$ for $0\le\th\le 1$. Using the H\"older and Trudinger-Moser inequalities, we get 
\EQ{
 \pt|F(v_n)-F(v_n^{<k})| \le \int_0^1\int |f'(v_n^{<k+\th})w_n^k| dxd\th
 \pr\le\int_0^1d\th \|e^{q\ka|v_n^{<k+\th}|^2}-1\|_{L^1_x}^{1/q} \|w_n^k\|_{L^{q'}_x}
 \pn\le \int_0^1d\th \left[\frac{\|v_n^{<k+\th}\|_{L^2_x}^2}{4\pi-q\ka M}\right]^{1/q}\|w_n^k\|_{L^{q'}_x}.}
In the subcritical/exponential cases, it suffices to have the decay in $B^s_{\I,1}$ for all $s<1-d/2$, which is given by Lemma \ref{LPD sub}. 
Thus in any case we may replace $v_n(0)$ by $v_n^{<k}(0)$ in \eqref{F decop}. 

Next we may discard those $j$ for which $\tau_n^j=-t_n^j/h_n^j\to\pm\I$, since for any $p\in(2,2^\star]$ satisfying $1/p=1/2-s/d$ with $s\in(0,1]$, we have 
\EQ{
 \|v_n^j(0)\|_{L^{p}_x} \lec \|e^{-i\LR{\na}_n^j \tau_n^j}|\na|^{-s}\fy^j\|_{L^{p}_x} \to 0\pq (n\to\I),}
by the decay of $e^{i\LR{\na}_n^jt}$ in $\S\to L^p$ as $|t|\to\I$, which is uniform in $n$, and the Sobolev embedding $\dot H^s_x\subset L^p_x$. 

So extracting a subsequence, we may assume that $\tau_n^j\to\exists\tau_\I^j\in\R$ for all $j$. Let 
\EQ{
 \psi^j := \Re e^{-i\LR{\na}_\I^j\tau_\I^j}\fy^j \in L^2_x(\R^d)}
Then $v_n^j(0)-\LR{\na}^{-1}T_n^j\psi^j \to 0$ strongly in $H^1_x$, thus \eqref{F decop} has been reduced to 
\EQ{ \label{F decop red}
 |F(\sum_{j<k}\LR{\na}^{-1}T_n^j\psi^j)-\sum_{j<k}F(\LR{\na}^{-1}T_n^j\psi^j)| \to 0.}

In the subcritical/exponential cases, 
if $h_n^j\to 0$ then $\LR{\na}^{-1}T_n^j\psi^j\to 0$ strongly in $L^p_x$ for $2\le p<2^\star$, so it can be neglected. Hence we may assume that $h_n^j\equiv 1$. Then each $T_n^j\LR{\na}^{-1}\psi^j$ is getting away from the others as $n\to\I$, and so \eqref{F decop red} follows. 

In the critical case, if $h_n^j\to 0$ then we have by \eqref{diff conc D}, 
\EQ{
 \|\LR{\na}^{-1}T_n^j\psi^j-h_n^jT_n^j|\na|^{-1}\psi^j\|_{L^{2^\star}_x}
 \lec \|\LR{\na/h_n^j}^{-2}|\na|^{-1}\psi^j\|_{L^{2^\star}_x}\to 0.}
Hence we may replace $\LR{\na}^{-1}T_n^j\psi^j$ in \eqref{F decop red} by $h_n^jT_n^j\hat\psi^j$ for some $\hat\psi^j\in L^{2^\star}$, including the case $h_n^j\equiv 1$.  
Then we may further replace each $\hat\psi^j$ by 
\EQ{
 \check\psi_n^j(x):=\hat\psi^j(x)\times\CAS{0 &\text{$\exists l<j$ s.t. $h_n^l<h_n^j$ and $(x-x_n^{j,l})/h_n^{j,l}\in\supp\hat\psi^l$},\\
 1 &\text{otherwise},}}
where $(x_n^{j,l},h_n^{j,l})$ is defined in \eqref{def njl}, 
because \eqref{orth} after the above reduction implies either $h_n^{j,l}\to 0$ or $|x_n^{j,l}|\to\I$, and so $\check\psi^j\to\hat\psi^j$ at almost every $x\in\R^d$ as $n\to\I$, and strongly in $L^{2^\star}_x$ by the dominated convergence theorem. Now the decomposition is trivial 
\EQ{
 F(\sum_{j<k}h_n^jT_n^j\check\psi^j_n) = \sum_{j<k}F(h_n^jT_n^j\check\psi_n^j),}
by the support property of $\check\psi_n^j$. Thus we have obtained \eqref{F decop} and \eqref{E decop}. 

By exactly the same argument, we obtain also
\EQ{
 \lim_{k\to K}\limsup_{n\to\I}\Bigl|\ti K_{\al,\be}(\V v_n(0))-\sum_{j<k}\ti K_{\al,\be}(\V v_n^j(0)) - \ti K_{\al,\be}(\V w_n^k(0))\Bigr| = 0.} 
The remaining conclusions follow from the next lemma. 
\end{proof}

\begin{lem}[Decomposition in $\tilde\K^+$] \label{Korth}
Assume that $f$ satisfies \eqref{asm f}. Let $k\in\N$ and $\fy_0,\dots,\fy_k\in H^1(\R^d)$. Assume that 
\EQ{
 \pt \ti E(\sum_{j=0}^k \fy_j) \le m-\de,
 \pq \ti K_{\al,\be}(\sum_{j=0}^k \fy_j) \ge -\e, 
 \pr \ti E(\sum_{j=0}^k \fy_j) \ge \sum_{j=0}^k \ti E(\fy_j) -\e,
 \pq \ti K_{\al,\be}(\sum_{j=0}^k \fy_j) \le \sum_{j=0}^k \ti K_{\al,\be}(\fy_j)+\e,}
for some $(\al,\be)$ in \eqref{range albe} and some $\de,\e>0$ satisfying $\e(1+2/\bar\mu)<\de$. Then $\ti\fy_j\in\ti\K^+$ for all $j=0,\dots,k$, i.e. $0\le \ti E(\fy_j)<m$ and $\ti K_{\al,\be}(\fy_j)\ge 0$ for all $(\al,\be)$ in \eqref{range albe}. 
\end{lem}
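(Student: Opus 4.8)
The plan is to show each individual profile $\fy_j$ lies in $\ti\K^+$, i.e.\ satisfies $0\le\ti E(\fy_j)<m$ and $\ti K_{\al,\be}(\fy_j)\ge 0$. The key structural input is the $(\al,\be)$-independence of $\ti\K^+$ established in Lemma \ref{K indp} and the characterization \eqref{def tiK}, together with the uniform coercivity bounds of Lemma \ref{lem:K bd}: once $\ti E(\fy_j)<m$ we know that $\ti K_{\al,\be}(\fy_j)$ is either bounded below by a positive multiple of the quadratic part $\ti K^Q_{\al,\be}$ (hence nonnegative) or bounded above by $-\bar\mu(m-\ti E(\fy_j))<0$, so there is a definite sign gap. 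The whole difficulty is therefore to \emph{propagate the smallness}: from the near-additivity of $\ti E$ and $\ti K$ under the (almost orthogonal) sum, exclude the possibility that any single $\fy_j$ falls on the ``bad'' side.

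First I would record the elementary consequence of $\ti K^Q_{\al,\be}\ge 0$ and $\ti E = \ti E^Q - F$ with $F\ge 0$ near zero that $\ti E(\fy)\ge 0$ whenever $\ti K_{\al,\be}(\fy)\ge 0$; more precisely, $\ti H_{\al,\be}:=(1-\Dab/\bar\mu)\ti E$ is nonnegative and monotone along the rescaling, so $\ti E(\fy)\ge -\ti K_{\al,\be}(\fy)/\bar\mu$ for every $\fy$ (the $\V u$-analogue of the inequality used repeatedly in Section \ref{sect:var}). Applying this to each $\fy_j$ and summing gives $\sum_j \ti E(\fy_j)\ge -\sum_j\ti K_{\al,\be}(\fy_j)/\bar\mu$. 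Combining with the two near-additivity hypotheses $\sum_j\ti E(\fy_j)\le \ti E(\sum_j\fy_j)+\e\le m-\de+\e$ and $\sum_j\ti K_{\al,\be}(\fy_j)\ge \ti K_{\al,\be}(\sum_j\fy_j)-\e\ge -2\e$, one obtains that each partial sum of the $\ti E(\fy_j)$ stays strictly below $m$: indeed every individual term satisfies
\EQ{
 \ti E(\fy_j) \le \sum_{i\ne j}\frac{\ti K_{\al,\be}(\fy_i)}{\bar\mu} + (m-\de+\e) \le m-\de+\e+\frac{2\e}{\bar\mu} < m,
}
using $\e(1+2/\bar\mu)<\de$ to absorb the error, provided one already knows $\ti K_{\al,\be}(\fy_i)\ge 0$ for $i\ne j$. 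So the statement has the flavour of a simultaneous bootstrap over $j$, and the natural device is an induction/continuity argument.

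Concretely I would argue as follows. Order the indices and prove by induction on $r=0,1,\dots,k$ that $\ti K_{\al,\be}(\fy_j)\ge 0$ and $\ti E(\fy_j)<m$ for all $j\le r$. For the inductive step, suppose for contradiction that $\ti K_{\al,\be}(\fy_{r})<0$. By the just-proved bound (which only needs $\ti K_{\al,\be}(\fy_i)\ge 0$ for $i<r$, available by induction, and the trivial lower bound $\ti E(\fy_i)\ge -\ti K_{\al,\be}(\fy_i)/\bar\mu$ for $i>r$) we still get $\ti E(\fy_r)<m$; but then Lemma \ref{lem:K bd} applied through the correspondence \eqref{def tiK} forces $\ti K_{\al,\be}(\fy_r)\le -\bar\mu(m-\ti E(\fy_r))$. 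Feeding this back into $\sum_j\ti K_{\al,\be}(\fy_j)\ge -2\e$ and using the nonnegativity of the other $\ti K$-terms yields $\bar\mu(m-\ti E(\fy_r))\le \sum_{i\ne r}\ti K_{\al,\be}(\fy_i)+2\e$; but $\sum_{i\ne r}\ti K_{\al,\be}(\fy_i)\le \ti K_{\al,\be}(\sum\fy_j)+\e-\ti K_{\al,\be}(\fy_r)\le 2\e+\bar\mu(m-\ti E(\fy_r))$ is not yet a contradiction by itself, so one instead compares against the energy: since $\ti E(\fy_r)\le m-\de+\e+2\e/\bar\mu < m-$ (something positive), $m-\ti E(\fy_r)$ is bounded below by $\de-\e-2\e/\bar\mu>0$, whence $\ti K_{\al,\be}(\fy_r)\le -\bar\mu(\de-\e-2\e/\bar\mu)<0$ is a fixed negative amount; then summing and using near-additivity of $\ti K$ gives $\ti K_{\al,\be}(\sum_j\fy_j)\le -\bar\mu(\de-\e-2\e/\bar\mu)+\e < -\e$ once $\e$ is small relative to $\de$, contradicting the hypothesis $\ti K_{\al,\be}(\sum_j\fy_j)\ge-\e$. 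This closes the induction and gives $\ti K_{\al,\be}(\fy_j)\ge 0$ and $\ti E(\fy_j)<m$ for every $j$; the lower bound $\ti E(\fy_j)\ge 0$ then follows from $\ti H_{\al,\be}\ge 0$, and the ``for all $(\al,\be)$'' clause is the $(\al,\be)$-independence of $\ti\K^+$ from Lemma \ref{K indp}.

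The main obstacle I anticipate is bookkeeping the error constants so that the single inequality $\e(1+2/\bar\mu)<\de$ really suffices to run the induction — in particular making sure that at each stage the ``good'' profiles contribute a genuinely nonnegative amount to $\sum\ti K_{\al,\be}(\fy_j)$ (not merely $\ge-\e$ each, which would accumulate badly), and that the gap furnished by Lemma \ref{lem:K bd} beats the cumulative error. A secondary subtlety is the transfer between $K$ on real $H^1$ functions and $\ti K$ on the complex $L^2$-level: this is exactly the content of \eqref{def tiK} and the inequality $\ti K(\fy)\ge K(\Re\LR{\na}^{-1}\fy)$, so one must phrase Lemma \ref{lem:K bd}'s dichotomy in the $\ti K$ form before using it, which is routine given the discussion following \eqref{def tiK}. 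Everything else is a finite induction with explicit constants.
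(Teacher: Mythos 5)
There is a genuine gap in the proposal, and it comes in two places.

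First, a sign error: from $\ti H_{\al,\be}=(1-\Dab/\bar\mu)\ti E\ge 0$ the correct elementary inequality is $\ti E(\fy)\ge +\ti K_{\al,\be}(\fy)/\bar\mu$, not $\ti E(\fy)\ge -\ti K_{\al,\be}(\fy)/\bar\mu$. Indeed, writing $\psi_1=\Re\LR{\na}^{-1}\fy$, $\psi_2=\Im\LR{\na}^{-1}\fy$, one has $\ti E-\ti K/\bar\mu=H(\psi_1)+H^Q(\psi_2)\ge 0$, whereas $\ti E+\ti K/\bar\mu=(1+\Dab/\bar\mu)J(\psi_1)+(\,\cdots)$ has no definite sign and is genuinely false as a universal lower bound. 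Consequently the displayed chain $\ti E(\fy_j)\le \sum_{i\ne j}\ti K(\fy_i)/\bar\mu+(m-\de+\e)\le m-\de+\e+2\e/\bar\mu$ is not justified: the first inequality requires the wrong sign, and the second one requires the \emph{upper} bound $\sum_{i\ne j}\ti K(\fy_i)\le 2\e$, but the near-additivity hypothesis only yields a \emph{lower} bound $\sum_j\ti K(\fy_j)\ge -2\e$ and gives no control on $\ti K(\fy_i)$ from above.

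The more serious failure is in the closing contradiction. You conclude from $\ti K(\fy_r)\le-\bar\mu(\de-\e-2\e/\bar\mu)$ and ``near-additivity of $\ti K$'' that $\ti K(\sum_j\fy_j)\le -\bar\mu(\de-\e-2\e/\bar\mu)+\e<-\e$. But the hypothesis is $\ti K(\sum\fy_j)\le \sum_j\ti K(\fy_j)+\e$, so to extract a small upper bound on $\ti K(\sum\fy_j)$ you would need $\sum_{i\ne r}\ti K(\fy_i)\le 0$; nothing in the hypotheses or the inductive hypothesis forces this — in fact your own earlier estimate shows $\sum_{i\ne r}\ti K(\fy_i)$ can be a large positive quantity (at least $\bar\mu(\de-\e)-2\e$). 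So the contradiction does not materialize, and the induction is circular as well: bounding $\ti E(\fy_r)$ needs sign information on $\ti K(\fy_i)$ for $i>r$, which the induction on $r$ does not supply.

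The paper's argument is a one-shot contradiction that sidesteps both problems. Suppose $\ti K(\fy_l)<0$ for some $l$. Then $K(\psi_{1,l})\le\ti K(\fy_l)<0$, so by the variational characterization \eqref{min H}, $H(\psi_{1,l})\ge m$. Since $\ti E(\fy_j)-\ti K(\fy_j)/\bar\mu=H(\psi_{1,j})+H^Q(\psi_{2,j})\ge 0$ for every $j$, the sum $\sum_j[\ti E(\fy_j)-\ti K(\fy_j)/\bar\mu]\ge H(\psi_{1,l})\ge m$. On the other hand, the two near-additivity hypotheses give $\sum_j\ti E(\fy_j)-\sum_j\ti K(\fy_j)/\bar\mu\le \ti E(\sum\fy_j)-\ti K(\sum\fy_j)/\bar\mu+\e(1+1/\bar\mu)\le (m-\de)+\e/\bar\mu+\e(1+1/\bar\mu)=m-\de+\e(1+2/\bar\mu)<m$, a contradiction. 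Notice this only ever uses $\sum_j\ti K(\fy_j)\ge -2\e$ (the lower bound) and never needs to control any individual $\ti K(\fy_i)$ from above, which is exactly the control your argument lacks. With $\ti K(\fy_j)\ge 0$ established for all $j$, the nonnegativity $\ti E(\fy_j)\ge J(\psi_{1,j})=H(\psi_{1,j})+K(\psi_{1,j})/\bar\mu\ge 0$ and the bound $\ti E(\fy_j)<m$ then follow immediately, with no induction at all.
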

\begin{proof}
Let $\psi_j=\Re\LR{\na}^{-1}\fy_j$ and suppose that $\ti K(\fy_l)<0$ for some $l$. Then $K(\psi_l)\le \ti K(\fy_l)<0$ and so $H(\psi_l)\ge m$. 
Since $H$ is non-negative, 
\EQ{
 m \pn\le \sum_{j=0}^k H(\psi_j) \pt\le \sum_{j=0}^k[H(\psi_j)+H^Q(\Im\LR{\na}^{-1}\fy_j)]= \sum_{j=0}^k [\ti E(\fy_j)-\ti K(\fy_j)/\bar\mu] 
 \pr\le \ti E(\sum_{j=0}^k \fy_j) - \ti K(\sum_{j=0}^k \fy_j)/\bar\mu + \e(1+1/\bar\mu) < m,}
where $H^Q$ denotes the quadratic part of $H$. 
Hence $K(\psi_j)\ge 0$ for all $j$, and so $\ti E(\fy_j)\ge J(\psi_j)=H(\psi_j)+K(\psi_j)/\bar\mu\ge 0$. 
\end{proof}

\subsection{Nonlinear profile decomposition}
The next step is to construct a similar decomposition for the nonlinear solutions with the same initial data. 

First we construct a nonlinear profile corresponding to a free concentrating wave. Let $\V{v}_n$ be a free concentrating wave for a sequence $(t_n,x_n,h_n)\in\R\times\R^d\times(0,1]$, 
\EQ{
 (i\p_t + \Da)\V{v}_n = 0, \pq \V{v}_n(t_n) = T_n \psi, \pq \psi(x)\in L^2,}
satisfying $\V{v}_n(0)\in\ti\K^+$. 
Here we use Lemma \ref{LPD crit} only in the $H^1$ critical case, and Lemma \ref{LPD sub} in the subcritical/exponential cases. Hence $h_n\to 0$ can happen only in the critical case, otherwise $h_n\equiv 1$. 
Let $u_n$ be the nonlinear solution with the same initial data 
\EQ{
 (i\p_t + \Da)\V{u}_n = f'(u_n), \pq \V{u}_n(0)=\V{v}_n(0) \in\ti\K^+,}
which may be local in time. 
Next we define $\V{V}_n$ and $\V{U}_n$ by undoing the transforms 
\EQ{
 \V{v}_n = T_n \V{V}_n((t-t_n)/h_n), \pq \V{u}_n = T_n \V{U}_n((t-t_n)/h_n).}
Then they satisfy the rescaled equations 
\EQ{
 \V{V}_n = e^{it\Da_n}\psi, \pq \V{U}_n = \V{V}_n -i \int_{\tau_n}^t e^{i(t-s)\Da_n}f'(\Re\Da_n^{-1}\V U_n)ds,}
where $\tau_n=-t_n/h_n$. 
Extracting a subsequence, we may assume convergence 
\EQ{
 h_n\to \exists h_\I\in[0,1], \pq \tau_n\to\exists \tau_\I\in[-\I,\I].}
Then the limit equations are naturally given by 
\EQ{ \label{NP eq}
  \V{V}_\I = e^{it\Da_\I}\psi, \pq \V{U}_\I = \V{V}_\I -i \int_{\tau_\I}^t e^{i(t-s)\Da_\I}f'(\hat U_\I)ds,}
where $\hat U_\I$ is defined by 
\EQ{ \label{def hat U}
 \hat U_\I := \Re\LR{\na}_\I^{-1}\V U_\I 
 = \CAS{\Re\LR{\na}^{-1}\V U_\I &(h_\I=1), \\ 
  \Re|\na|^{-1}\V U_\I &(h_\I=0).}}
The unique existence of a local solution $\V{U}_\I$ around $t=\tau_\I$ is known in all cases, including $h_\I=0$ and $\tau_\I=\pm\I$ (the latter corresponding to the existence of the wave operators), by using the standard iteration with the Strichartz estimate. 
In the exponential case, it requires that $\V{U}_\I$ is in the subcritical regime in the Trudinger-Moser inequality. 
It is guaranteed by Lemma \ref{F orth}, because $\V V_\I(t)\in\ti\K^+$ for $t$ close to $\tau_\I$, and so $\V U_\I(t)\in\ti\K^+$ for all $t$ in its existence interval. 

$\V U_\I$ on the maximal existence interval is called the \emf{nonlinear profile} associated with the free concentrating wave $\V{v}_n$. 
The \emf{nonlinear concentrating wave} $\V u_{(n)}$ associated with $\V{v}_n$ is defined by
\EQ{ \label{def u(n)}
 \V{u}_{(n)} = T_{n} \V{U}_\I((t-t_n)/h_n).}
If $h_\I=1$ then $u_{(n)}$ solves NLKG. 
If $h_\I=0$ then it solves
\EQ{ \label{eq waveNP}
 (\p_t^2 - \De + 1)u_{(n)} \pt= (i\p_t+\LR{\na})\V u_{(n)}
 \pr= (\LR{\na}-|\na|)\V u_{(n)} + f'(|\na|^{-1}\LR{\na}u_{(n)}).}
The existence time of $u_{(n)}$ may be finite and even go to $0$, but at least we have
\EQ{
 \pt\|\V{u}_n(0)-\V{u}_{(n)}(0)\|_{L^2_x} =\|\V{V}_n(\tau_n)-\V{U}_\I(\tau_n)\|_{L^2_x}
 \prq\le \|\V{V}_n(\tau_n)-\V{V}_\I(\tau_n)\|_{L^2_x}
  + \|\V{V}_\I(\tau_n)-\V{U}_\I(\tau_n)\|_{L^2_x} \to 0.}

Let $u_n$ be a sequence of (local) solutions of NLKG in $\K^+$ around $t=0$, and let $v_n$ be the sequence of the free solutions with the same initial data. We consider the linear profile decomposition given by Lemma \ref{LPD crit} or \ref{LPD sub}, 
\EQ{
 \V v_n = \sum_{j=0}^{k-1} \V{v}_n^j + \V w_n^k, \pq \V{v}_n^j = e^{\iD(t-t_n^j)}T_n^j \fy^j.}
With each free concentrating wave $\{\V v_n^j\}_{n\in\N}$, we associate the nonlinear concentrating wave $\{\V u_{(n)}^j\}_{n\in\N}$. 
A \emf{nonlinear profile decomposition} of $u_n$ is given by 
\EQ{ \label{def NPD}
 \V u^{<k}_{(n)} := \sum_{j=0}^{k-1} \V u_{(n)}^j.}
We are going to prove that $\V u_{(n)}^{<k}$ is a good approximation for $\V u_n$, provided that each nonlinear profile has finite global Strichartz norm (in Lemma \ref{NP Stz}). 
Now we define the Strichartz norms for the profile decomposition, using the notation in Section \ref{Stz notation}. Let $ST$ and $ST^*$ be the function spaces on $\R^{1+d}$ defined by
\EQ{ \label{def ST}
 ST=[W]_2\cap[K]_2, \pq ST^* = [W^{*(1)}]_2+[K^{*(1)}]_2+L^1_t L^2_x,}
where the exponents $W$ and $K$ as well as their duals are as defined in \eqref{def WK} and \eqref{exp change}. The Strichartz norm for the nonlinear profile depends on the scaling $h_\I^\diamondsuit$ for any suffix $\diamondsuit$;
\EQ{ \label{def STI}
  ST_\I^\diamondsuit := \CAS{[W]_2\cap[K]_2 &(h_\I^\diamondsuit=1),\\ [W]_2^\bul &(h_\I^\diamondsuit=0).}}
In other words, we take the scaling invariant part if $h_n^\diamondsuit\to+0$, which can happen only in the $H^1$ critical case. The following estimate is convenient to treat the concentrating case: 
For any $S\in[0,1]\times[0,1/2]\times[0,1]$ we have 
\EQ{ \label{hat U bd u}
 \|u_{(n)}\|_{[S]_2(\R)} \lec (h_n)^{1-\reg^0(S)}\|\hat U_\I\|_{[S]_2^\bul(\R)},}
where $\hat U_\I$ is as defined in \eqref{def hat U}. 
Indeed, using $\dot B^0_{p,2}\subset L^p$ with $p=1/S_2\ge 2$ in the lower frequencies, we have 
\EQ{
 \|u_{(n)}\|_{[S]_2} \pt\lec \||\na|^{-S_3}\LR{\na}^{S_3}u_{(n)}\|_{[S]_2^\bul}
 \pr\sim (h_n)^{1-\reg^0(S)}\|\Re|\na|^{-S_3}\LR{\na}_n^{S_3-1}\V U_\I^j\|_{[S]_2^\bul}
 \pn\lec (h_n)^{1-\reg^0(S)}\|\hat U_\I^j\|_{[S]_2^\bul}.}

Concerning the orthogonality in the Strichartz norms, we have 
\begin{lem}
Assume that $f$ satisfies \eqref{asm f}. 
Suppose that in the nonlinear profile decomposition \eqref{def NPD} we have
\EQ{
 \|\hat U_\I^j\|_{ST_\I^j(\R)}+\|\V U_\I^j\|_{L^\I_tL^2_x(\R)}<\I} 
for each $j<K$. Then we have for any finite interval $I$, any $j<K$ and any $k\le K$,  
\EQ{ \label{ST lim}
 \limsup_{n\to\I} \|u_{(n)}^j\|_{ST(I)} \lec \|\hat U_\I^j\|_{ST_\I^j(\R)},}
\EQ{ \label{ST orth} 
 \limsup_{n\to\I} \|u_{(n)}^{<k}\|_{ST(I)}^2 \lec \limsup_{n\to\I}\sum_{j<k}\|u_{(n)}^j\|_{ST(I)}^2,}
where the implicit constants do not depend on $I$, $j$ or $k$. We have also
\EQ{ \label{f ST decop}
 \lim_{n\to\I} \|f'(u_{(n)}^{<k})-\sum_{j<k}f'((\LR{\na}_\I^j)^{-1}\LR{\na}u_{(n)}^j)\|_{ST^*(I)} = 0.}
\end{lem}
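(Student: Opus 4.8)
The plan is to prove the three assertions in order, treating each free concentrating wave by passing to the rescaled solution $\V U_\I^j$ and then exploiting the orthogonality of the parameter sequences $\{(t_n^j,x_n^j,h_n^j)\}$ established in Lemma \ref{LPD crit} (resp.\ Lemma \ref{LPD sub}). First, for \eqref{ST lim}: since $u_{(n)}^j(t,x)=(h_n^j)^{-d/2}U_\I^j((t-t_n^j)/h_n^j,(x-x_n^j)/h_n^j)$ up to the $|\na|\leftrightarrow\LR{\na}$ correction, I would use the estimate \eqref{hat U bd u} together with $\reg^0(W)=\reg^0(K)=1$ (so the power of $h_n^j$ is $0$) to bound $\|u_{(n)}^j\|_{ST(I)}$ by $\|\hat U_\I^j\|_{[W]_2^\bul\cap[K]_2^\bul(\R)}$. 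When $h_\I^j=1$ this is exactly $\|\hat U_\I^j\|_{ST_\I^j}$; when $h_\I^j=0$ one needs that the inhomogeneous $[K]_2$ norm of $u_{(n)}^j$ stays bounded even though $K$ carries the Klein--Gordon exponent $\th=1$ — here the point is that on the rescaled solution the mass term becomes subprincipal as $h_n^j\to 0$, so the limit profile solves \eqref{eq waveNP} and its relevant Strichartz norm is the homogeneous wave one $[W]_2^\bul$. I would handle the $[K]$-piece by splitting into low and high frequency (relative to $1/h_n^j$): the low part is controlled by $\dot B^0_{p,2}\subset L^p$ as in the derivation of \eqref{hat U bd u}, and the high part gains a positive power of $h_n^j$.

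For \eqref{ST orth}, the standard argument is to expand $\|u_{(n)}^{<k}\|_{ST(I)}^2$ using the (quasi-)triangle inequality and show that all cross terms $\int u_{(n)}^l u_{(n)}^j$ with $l\neq j$ vanish as $n\to\I$. This is where the orthogonality \eqref{orth} of the parameters enters: after changing variables to the $j$-th frame, the $l$-th factor becomes $S_n^{j,l}$-transported data acting on $\psi^l$, and $S_n^{j,l}\to 0$ weakly on $L^2_x$ by the argument following \eqref{def Snjl}; combined with the global-in-time Strichartz bounds on each profile (the hypothesis of the lemma) and a density/approximation of the profiles by Schwartz space-time functions, one gets $\|u_{(n)}^l u_{(n)}^j\|_{L^{q}_{t,x}}\to 0$ for the relevant Lebesgue exponents. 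One then sums, using that $\sum_j\|\hat U_\I^j\|_{ST_\I^j}^2<\I$ (a consequence of \eqref{E decop}/Lemma \ref{Korth} and the global Strichartz bounds), to absorb the tail of the series. The case distinctions $h_\I^j\in\{0,1\}$ and whether two profiles have comparable or separating scales $h_n^{j,l}\to 0$ or $x_n^{j,l}\to\I$ must be carried along, exactly mirroring the case analysis in the proof of Lemma \ref{F orth}.

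For \eqref{f ST decop}, I would write $f'(u_{(n)}^{<k})-\sum_{j<k}f'((\LR{\na}_\I^j)^{-1}\LR{\na}u_{(n)}^j)$ and estimate it in $ST^*$ by the nonlinear difference estimates already used in Lemma \ref{PerturLem} (the various H\"older bounds \eqref{fS diff lD}--\eqref{fL diff hD}, and in the exponential case \eqref{fL diff exp} together with the subcritical Trudinger--Moser control coming from Lemma \ref{E bd exp}/Lemma \ref{F orth}). The difference splits into two types of terms: (i) the mismatch between $u_{(n)}^j=\LR{\na}^{-1}\Re\V u_{(n)}^j$ and $(\LR{\na}_\I^j)^{-1}\LR{\na}u_{(n)}^j$, which is nonzero only when $h_\I^j=0$ and is then $O(h_n^j)$ in the relevant norms by \eqref{diff conc D}; and (ii) the genuine cross terms in $f'(\sum_j u_{(n)}^j)-\sum_j f'(u_{(n)}^j)$, each of which contains a product $u_{(n)}^l\cdot u_{(n)}^j$ with $l\neq j$ and hence tends to $0$ by the same orthogonality input as in \eqref{ST orth}. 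One truncates the sum over $j$ at some $k_0<k$ using the decay $\|\hat U_\I^j\|_{ST_\I^j}\to 0$, estimates the finitely many remaining cross terms by orthogonality, and lets $n\to\I$ then $k_0\to\I$.

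The main obstacle I anticipate is the bookkeeping in the concentrating case $h_\I^j=0$: one must consistently pass between the Klein--Gordon norms used in $ST$ and the homogeneous wave norms natural for the limit profile $\V U_\I^j$ (which solves \eqref{eq waveNP}, not NLKG), control the error terms $(\LR{\na}-|\na|)$ uniformly in $h_n^j$ via \eqref{diff conc D}, and make sure the nonlinear estimates survive this limit. The orthogonality mechanics themselves are by now routine (Bahouri--G\'erard/Kenig--Merle), but combining them with the inhomogeneity of the equation and, in the exponential case, with the Trudinger--Moser subcriticality constraint, is where the real work lies.
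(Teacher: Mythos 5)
Your overall structure is right, and you correctly identify the main inputs (the parameter orthogonality, the rescaling estimate \eqref{hat U bd u}, the difference \eqref{diff conc D} to handle the $\LR{\na}$ vs.\ $|\na|$ mismatch, and the Trudinger--Moser subcriticality in the exponential case). There is, however, a genuine gap in your approach to \eqref{ST orth} (and the same issue affects \eqref{f ST decop}).

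You propose to ``expand $\|u_{(n)}^{<k}\|_{ST(I)}^2$ and show that all cross terms $\int u_{(n)}^l u_{(n)}^j$ vanish''. This is only meaningful for Hilbert-space norms, whereas $ST=[W]_2\cap[K]_2$ is an intersection of spaces of the form $L^p_t B^{1/2}_{q,2}$ with $p,q\ne 2$; there is no inner product and no cross-term expansion of the square. Showing that the product $u_{(n)}^l\, u_{(n)}^j$ tends to $0$ in some space-time Lebesgue norm does not, by itself, yield the almost-orthogonality estimate you want. The paper instead proceeds by smooth space-time truncation: approximate each $\hat U_\I^j$ by a compactly supported $\hat U_{\I,R}^j$, transport these back, and use the parameter orthogonality \eqref{orth sub} (or \eqref{orth} in the critical case) to conclude that, for fixed $k$ and $n$ large, the supports of the truncated $u_{(n),R}^j$ are \emph{exactly disjoint}. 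Then one writes the Besov norm via finite differences $\de_m^l$, uses that at fixed $(t,l,m)$ the $L^q_x$ norm over disjoint supports is an $\ell^q_j$ sum, embeds $\ell^q\hookrightarrow\ell^2$ (as $q\ge 2$), and finishes with Minkowski to pull $\ell^2_j$ outside. For \eqref{f ST decop} the disjointness gives the much stronger statement $f'(\sum_j u_{(n),R}^j)=\sum_j f'(u_{(n),R}^j)$ exactly for large $n$, which is essential because for non-polynomial $f$ (the exponential case, and the splitting $f=f_S+f_L$) there is simply no ``cross term'' decomposition of $f'(\sum u^j)-\sum f'(u^j)$. Your ``each term contains a product $u_{(n)}^l\cdot u_{(n)}^j$'' step does not have a literal meaning here.

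Two smaller points on \eqref{ST lim}. First, when $h^j_\I=0$ the paper shows the $[K]_2$ part of the $ST$ norm actually \emph{vanishes} as $n\to\I$, not merely stays bounded; this is obtained by H\"older in time (using that $I$ is finite) through an auxiliary exponent $V$ with $\reg^0(V)=1$, which after \eqref{hat U bd u} produces a factor $(h_n)^{1/(2(d+2))}\to 0$. Your low/high frequency split could plausibly be made to give the same positive power of $h_n$, but you should make explicit that the outcome is vanishing, since that is what allows $ST^j_\I=[W]_2^\bul$ alone to control the limit. Second, your remark that ``the limit profile solves \eqref{eq waveNP}'' is correct but not what is used at this point: the bound in \eqref{ST lim} is purely linear (a rescaling estimate), not an appeal to the equation for $\hat U_\I^j$.
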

\begin{proof}
First note that if $h_\I^j=1$ then $u_{(n)}^j$ is just a sequence of space-time translations of $\hat U_\I^j$. In particular, \eqref{ST lim} is trivial in that case. 

Next we prove \eqref{ST lim} in the case $h_\I^j=0$, which is only in the $H^1$ critical case.  For the moment we drop the superscript $j$. 
For the $[W]_2$ part, \eqref{hat U bd u} gives us 
\EQ{
 \|u_{(n)}\|_{[W]_2(I)} \pt\lec \|\hat U_\I\|_{[W]_2^\bul(\R)}
 =\|\hat U_\I\|_{ST_\I^j(\R)}.} 
For the $[K]_2$ part, let $V$ be the following interpolation between $H$ and $W$  
\EQ{ \label{def V}
 V := \frac{1}{d+2} H + \frac{d+1}{d+2} W = K + \frac{\paren{-1,0,1}}{2(d+2)}.}
Then using H\"older in $t$ and \eqref{hat U bd u} together with $\reg^0(K)=(d+1)/(d+2)$, we get 
\EQ{ \label{vanish K}
 \|u_{(n)}\|_{[K]_2(I)} 
 \pt\lec \|u_{(n)}\|_{[V^{\frac 12}]_2(I)}|I|^{\frac{1}{2(d+2)}}
 \pn\lec (h_n)^{\frac{1}{2(d+2)}}\|\hat U_\I\|_{[V]_2^\bul(\R)}|I|^{\frac{1}{2(d+2)}} \to 0,}
as $n\to\I$. 
Thus we have proved \eqref{ST lim}. 

Next we prove \eqref{ST orth} in the subcritical/exponential cases. 
Define $\hat U_{\I,R}^j$, $u_{(n),R}^j$ for $R\gg 1$ and $u_{(n),R}^{<k}$ by 
\EQ{
 \hat U_{\I,R}^j = \chi_R(t,x) \hat U_\I^j, \pq u_{(n),R}^j = T_n^j \hat U_{\I,R}^j(t-t_n^j),  \pq u_{(n),R}^{<k} = \sum_{j<k} u_{(n),R}^j,}
where $\chi_R$ is the cut-off defined in \eqref{def chiR}. Then we have
\EQ{
 \|u_{(n)}^{<k}-u_{(n),R}^{<k}\|_{ST(\R)}
 \le \sum_{j<k} \|(1-\chi_R(t,x))\hat U_\I^j\|_{ST(\R)} \to 0,\pq(R\to+0)}
so we may replace $u_{(n)}^{<k}$ by $u_{(n),R}^{<k}$. 
Let $\de_m^l$ denote the difference operator
\EQ{ \label{def diff}
 \de_m^l \fy(x) = \fy(x-2^{-m}e_l)-\fy(x),}
where $e_l$ denotes the $l$-th unit vector in $\R^d$. 
Each Besov norm in $ST$ is equivalent to 
\EQ{ \label{Besov diff}
 \sum_{l=1}^d \Bigl\|\sum_{j<k}2^{sm} \de_m^l u_{(n),R}^{j} \Bigr\|_{L^p_t \ell^2_{m\ge 0} L^q_x} + \|\sum_{j<k}u_{(n),R}^{j}\|_{L^p_t L^q_x},}
where $(1/p,1/q,s)=W$ or $K$. \eqref{orth sub} implies that each $\supp u_{(n),R}^j$ is away from the others at least by distance $2$ for large $n$, and then $\supp\de_m^l u_{(n),R}^j$ are also disjoint for $j<k$ at each $l,m$. Hence the first norm in \eqref{Besov diff} equals
\EQ{ \label{Minkow}
 \|2^{sm}\de_m^l u_{(n),R}^{j}\|_{L^p_t \ell^2_{m\ge 0} L^q_x \ell^2_{j<k}}
 \pt\le \|2^{sm}\de_m^l u_{(n),R}^{j}\|_{\ell^2_{j<k} L^p_t \ell^2_{m\ge 0} L^q_x} 
 \pn\lec \|u_{(n),R}^j\|_{\ell^2_{j<k} L^p_t B^s_{q,2}},}
where the first inequality is by Minkowski. 
Thus we have obtained \eqref{ST orth} in the subcritical/exponential cases. 

Next we prove \eqref{ST orth} in the $H^1$ critical case. 
For the nonlinear concentrating waves with $h_\I^j=1$, the above argument works. For those with $h_\I^j=0$, the $K$ component is vanishing by \eqref{vanish K}. 
Hence it suffices to estimate $[W]_2$ in the case all $h_n^j$ tend to $0$ as $j\to\I$. Using that $W_3=1/2\in(0,1)$, we have
\EQ{ \label{u by check}
 \|u_{(n)}^{<k}\|_{[W]_2(\R)} \pn\lec\||\na|^{-1}\LR{\na}u_{(n)}^{<k}\|_{[W]_2^\bul(\R)}
 \pt= \|\Re |\na|^{-1}\V u_{(n)}^{<k}\|_{[W]_2^\bul(\R)}
 \pr\sim \|\sum_{j<k}\check u_{n,m}^{j,l}\|_{L^p_t \ell^2_{m\in\Z} L^q_x},}
where we put $(1/p,1/q,s)=W$ and 
\EQ{
 \check u_{n,m}^{j,l}:= 2^{sm}\de_m^l h_n^j T_n^j \hat U_\I^j((t-t_n^j)/h_n^j),}where $\de_m^l$ is the difference operator defined in \eqref{def diff}. 
For $R\gg 1$, let 
\EQ{
 \check u_{n,m,R}^{j,l}(t,x):=\CAS{\chi_{h_n^jR}(t-t_n^j,x-x_n^j)\check u_{n,m}^{j,l}(t,x) &(|m-\log_2h_n^j|\le R)\\
 0 &(|m-\log_2h_n^j|>R),}}
where $\chi_{*}$ is as in \eqref{def chiR}. 
Then by the same computation as for \eqref{hat U bd u}, we have
\EQ{
 \|\check u_{n,m}^{j,l}-\check u_{n,m,R}^{j,l}\|_{L^p_t \ell^2_{m\in\Z} L^q_x}
 \lec \|2^{sm}\de_m^l\hat U_\I^j\|_{L^p_{t} \ell^2_m L^q_x(|t|+|m|+|x|>R)} \to 0,}
as $R\to\I$ uniformly in $n$. Hence we may replace $\check u_{n,m}^{j,l}$ by $\check u_{n,m,R}^{j,l}$ in \eqref{u by check}. 
The orthogonality \eqref{orth} implies that $\{\supp_{(t,m,x)}\check u_{n,m,R}^{j,l}\}_{j<k}$ becomes mutually disjoint for large $n$. 
Then arguing as in \eqref{Minkow}, we obtain \eqref{ST orth}. 

To prove \eqref{f ST decop} in the subcritical/exponential cases is easier than \eqref{ST orth}, because after the smooth cut-off, we have for large $n$ 
\EQ{
 f'(u_{(n),R}^{<k})=\sum_{j<k}f'(u_{(n),R}^j).}
Note that the $u_{(n)}^j\in ST$ implies that the full Strichartz norms are finite by Lemma \ref{Stz extend}. The error for $f'(u_{(n)}^{<k})$ coming from the cut-off is small in $ST^*$ by \eqref{fS diff lD}--\eqref{fL diff exp} if $d\le 4$. When $d\ge 5$, the difference estimates in the proof of Lemma \ref{PerturLem} are not sufficient because they control only the exotic norm $Y$. In order to estimate the difference in the admissbile dual norm $ST^*(I)$, we introduce the following new exponents:
\EQ{
 H_\e:=(\e^2,\frac{1-\e}{2},0), \pq W_\e:=W-p_2\e(d,-1,0), \pq M^\sharp_\e:=M^\sharp+\e(d,-1,0),}
where $W$ and $M^\sharp$ were defined in \eqref{def WK} and \eqref{def MsharpX}, and $\e\in(0,p_1)$ is fixed small enough to have 
\EQ{ \label{def norm_eps}
 \pt \str^0(H_\e),\str^0(M^\sharp_\e),\str^0(W_\e)<0, \pq \reg^0(H_\e)<1,
 \pr \reg^0(W_\e)=\reg^0(W)=1,\pq \reg^0(M^\sharp_\e)=\reg^0(M^\sharp)\le 1,
 \pr W_\e+p_2M^\sharp_\e=W+p_2M^\sharp=W^{*(1)}.}
Then we have for any $u$ and $v$, 
\EQ{ \label{fS diff ad}
 \|f_S'(u)-f_S'(v)\|_{L^1_tL^2_x(I)} \lec |I|^{1-\e^2}\|u-v\|_{[H_\e]_0(I)}(\|u\|_{L^\I_tL^2_x(I)}+\|v\|_{L^\I_tL^2_x(I)})^\e,}
because $|f_S'(u)-f_S'(v)|\lec |u-v|(|u|+|v|)^\e$. For large $u$, we have if $p_2\ge 1$, 
\EQ{ \label{fL diff adLip}
 \pt\|f_L'(u)-f_L'(v)\|_{[W^{*(1)}]_2} 
 \pr\lec \|u\|_{[M^\sharp_\e]_0}^{p_2}\|u-v\|_{[W_\e]_2} + \|u-v\|_{[M^\sharp_\e]_0}(\|u\|_{[M^\sharp_\e]_0}+\|v\|_{[M^\sharp_\e]_0})^{p_2-1}\|v\|_{[W_\e]_2},}
and if $p_2<1$, 
\EQ{ \label{fL diff adHol}
 \|f_L'(u)-f_L'(v)\|_{[W_\e^{*(1)}]_2} \lec \|u\|_{[M_\e^\sharp]_0}^{p_2}\|u-v\|_{[W_\e]_2} + \|u-v\|_{[M_\e^\sharp]_0}^{p_2}\|v\|_{[W_\e]_2}.}
The latter estimate is not Lipschitz in $u-v$, but sufficient for our purpose here.\footnote{The situation is different from the long-time iteration in the previous section, where we needed the exotic Strichartz estimate in order to get the Lipschitz estimate for the iteration along the numerous time intervals.} 
Thus we obtain \eqref{f ST decop} in the subcritical/exponential cases.

It remains to prove \eqref{f ST decop} in the $H^1$ critical case, where we need further cut-off to get a disjoint sum. 
First we see that each $u_{(n)}^j$ in $u_{(n)}^{<k}$ may be replaced with 
\EQ{
 u_{\LR{n}}^j :=(\LR{\na}_\I^j)^{-1}\LR{\na}u_{(n)}^j= h_n^j T_n^j\hat U_\I^j((t-t_n^j)/h_n^j).}
For the moment we drop the superscript $j$. Let $p_2=4/(d-2)$ and $h_\I=0$. 
If $d\le 4$, then we have by using \eqref{fL diff lD} and \eqref{diff conc D}
\EQ{
 \pn\|f'(u_{(n)})-f'(u_{\LR{n}})\|_{L^1_tL^2_x(\R)}
 \pt\lec \|u_{\LR{n}}\|_{[L]_0(\R)}^{p_2}\|u_{(n)}-u_{\LR{n}}\|_{[L]_0(\R)}
 \pr\sim \|\hat U_\I\|_{[L]_0(\R)}^{p_2}\|[|\na|\LR{\na}_n^{-1}-1]\hat U_\I\|_{[L]_0(\R)}
 \pr\lec \|\hat U_\I\|_{[L]_0(\R)}^{p_2}\|\LR{\na/h_n}^{-2}\hat U_\I\|_{[L]_0(\R)}\to 0,}
since $\hat U_\I\in[H]_2^\bul\cap[W]_2^\bul\subset[L]_0$ by the homogeneous version of Lemma \ref{lem:trinterpole}(1). 

If $d\ge 5$, we introduce a new exponent 
\EQ{ \label{def G}
 G:=\frac{d-2}{d+2}\paren{\frac{1}{d+1},\frac{d+3}{2(d+1)},0}.}
Then $\reg^0(G)=1$, $\str^0(G)<0$ and 
\EQ{
 (2^\star-1)G = W^{*(1)} - \frac{(1,0,1)}{2}.} 
Hence 
\EQ{ \label{u(n)2U}
 \|f'(u_{(n)})-f'(u_{\LR{n}})\|_{[W^{*(1)}]_2(I)}
 \pt\lec \|f'(u_{(n)})-f'(u_{\LR{n}})\|_{[W^{*(1)}]_2^\bul(\R)} 
 \prq+|I|^{1/2} \|f'(u_{(n)})-f'(u_{\LR{n}})\|_{[(2^\star-1)G]_0(I)},}
where the first term on the right is dominated by (the homogeneous version of \eqref{fL diff adLip}--\eqref{fL diff adHol})
\EQ{
 \pt \|u_{\LR{n}}\|^{p_2}_{[M^\sharp_\e]_0(\R)} \|u_{(n)}-u_{\LR{n}}\|_{[W_\e]_2^\bul(\R)}
 \pn+ \|u_{(n)}-u_{\LR{n}}\|^{\th}_{[M^\sharp_\e]_0(\R)} \|(u_{\LR{n}},u_{(n)})\|_{[W_\e]_2^\bul(\R)}^{p_2-\th}
 \pr\lec \|\hat U_\I\|^{p_2}_{[M^\sharp_\e]_0(\R)} \|\LR{\na/h_n}^{-2}\hat U_\I\|_{[W_\e]_2^\bul(\R)}
 \pn+ \|\LR{\na/h_n}^{-2}\hat U_\I\|^{\th}_{[M^\sharp_\e]_0(\R)} \|\hat U_\I\|_{[W_\e]_2^\bul(\R)}^{p_2-\th},}
where $\th:=\min(p_2,1)$. 
The right hand side goes to $0$, since $\hat U_\I\in[H]_2^\bul\cap[W_\e]_2^\bul\subset[M^\sharp_\e]_0$ by the homogeneous version of Lemma \ref{lem:trinterpole}(1). 
Similarly, the last term in \eqref{u(n)2U} is bounded by 
\EQ{
 \|u_{\LR{n}}\|_{[G]_0(\R)}^{p_2} \|u_{(n)}-u_{\LR{n}}\|_{[G]_0(\R)}
 \sim \|\hat U_\I\|_{[G]_0(\R)}^{p_2} \|\LR{\na/h_n}^{-2}\hat U_\I\|_{[G]_0(\R)}\to 0.}
Thus it suffices to show 
\EQ{
 \|f'(\sum_{j<k}u_{\LR{n}}^{j})-\sum_{j<k}f'(u_{\LR{n}}^j)\|_{ST^*(I)}\to 0.}
Now we define $\hat U_{n,R}^j$ for any $R\gg 1$ by 
\EQ{
 \hat U_{n,R}^j(t,x) \pt= \chi_R(t,x) \hat U_\I^j(t,x)
 \pr\times \prod\{(1-\chi_{h_n^{j,l}R})(t-t_n^{j,l},x-x_n^{j,l}) \mid 1\le l<k,\ h_n^lR<h_n^j\},}
where $\chi_R$ and $(t_n^{j,l},x_n^{j,l},h_n^{j,l})$ are as defined respectively in \eqref{def chiR} and \eqref{def njl}. 
Then it is uniformly bounded in $[H]_2^\bul(\R)\cap[W]_2^\bul(\R)$, and 
$\hat U_{n,R}^j\to\chi_R \hat U_\I^j$ in $[M^\sharp]_0(\R)$ as $n\to\I$, because either $h_n^{j,l}\to 0$ or $|t_n^{j,l}|+|x_n^{j,l}|\to\I$ by the orthogonality \eqref{orth}. 
Then by the homogeneous version of Lemma \ref{lem:trinterpole}(2), it converges also in $[L]_0(\R)$ (if $d\le 4$), $[W_\e]_2^\bul(\R)$ and $[M_\e^\sharp]_0(\R)$. 
Moreover, we have $\chi_R\hat U_\I^j\to \hat U_\I^j$ as $R\to\I$ in the same spaces. 

Hence we may replace $u_{\LR{n}}^j$ by $u_{\LR{n},R}^j := h_n^jT_n^j \hat U_{n,R}^j((t-t_n^j)/h_n^j)$, 
and then we get the desired result, since $\{\supp_{(t,x)}u_{\LR{n},R}^j\}_{j<k}$ are mutually disjoint for large $n$, and so
\EQ{
 f'(\sum_{j<k} u_{\LR{n},R}^j) = \sum_{j<k} f'(u_{\LR{n},R}^j),}
which concludes the proof of \eqref{f ST decop}. 
\end{proof}

The next lemma is the conclusion of this section. 
\begin{lem} \label{NP Stz}
Assume that $f$ satisfies \eqref{asm f}. Let $u_n$ be a sequence of local solutions of NLKG around $t=0$ in $\K^+$ satisfying $\limsup_{n\to\I}E(u_n)<m$. 
Suppose that in its nonlinear profile decomposition \eqref{def NPD}, every nonlinear profile $\V U_\I^j$ has finite global Strichartz and energy norms, i.e. 
\EQ{
 \|\hat U_\I^j\|_{ST_\I^j(\R)} + \|\V U_\I^j\|_{L^\I_t L^2_x(\R)} <\I.}
Then $u_n$ is bounded for large $n$ in the Strichartz and the energy norms, i.e.
\EQ{
 \limsup_{n\to\I} \|u_n\|_{ST(\R)} + \|\V u_n\|_{L^\I_t L^2_x(\R)} < \I.}
\end{lem}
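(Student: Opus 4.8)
The plan is to realize $u_n$ as a small perturbation of the superposition of its nonlinear profiles $\V u_{(n)}^{<k}=\sum_{j<k}\V u_{(n)}^j$, for $k$ large and then $n\to\I$, and to conclude by the perturbation Lemma \ref{PerturLem}. First I would observe that only finitely many profiles are ``large'': since each $\V v_n^j$ is a free solution, $\|\V v_n^j(0)\|_{L^2_x}=\|\fy^j\|_{L^2_x}$, so the $L^2$ almost-orthogonality \eqref{L2 decop} gives $\sum_{j<K}\|\fy^j\|_{L^2_x}^2\le\limsup_n\|\V v_n(0)\|_{L^2_x}^2<\I$; in particular $\|\fy^j\|_{L^2_x}\to0$ as $j\to K$, so there is $J<K$ such that for all $j\ge J$ the datum $\fy^j$ is in the small-data regime, whence the nonlinear profile $\V U_\I^j$ is global with $\|\hat U_\I^j\|_{ST_\I^j(\R)}+\|\V U_\I^j\|_{L^\I_tL^2_x(\R)}\lec\|\fy^j\|_{L^2_x}$ (in the exponential case $\V v_n^j(0)\in\ti\K^+$, by Lemma \ref{F orth}, keeps each profile Trudinger--Moser subcritical via Lemma \ref{E bd exp}). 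Combining this with the hypothesis for $j<J$, the Strichartz orthogonality \eqref{ST lim}--\eqref{ST orth} yields, uniformly in $k$,
\[
 \limsup_{n\to\I}\|u_{(n)}^{<k}\|_{ST(\R)}^2\lec\sum_{j<J}\|\hat U_\I^j\|_{ST_\I^j(\R)}^2+\sum_{j\ge J}\|\fy^j\|_{L^2_x}^2=:B^2<\I,
\]
while conservation of $\ti E$ along each $\V U_\I^j$ together with the $L^2$-orthogonality of the $\V u_{(n)}^j(t)$ at each fixed $t$ (take $\mu=1$ in Lemma \ref{F orth}) gives $\limsup_n\|\V u_{(n)}^{<k}\|_{L^\I_tL^2_x(\R)}\le A<\I$, again uniformly in $k$. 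In the exponential case, taking $\mu=|\na|\LR{\na}^{-1}$ and using Lemma \ref{E bd exp} on each profile one gets $\limsup_n\ka_0\|\na u_{(n)}^{<k}\|_{L^2_x}^2\le\limsup_n\ka_0\|\na\LR{\na}^{-1}\V v_n(0)\|_{L^2_x}^2<4\pi$; fix $\Th\in(1/2,1)$ above this value, and note the same bound holds for $u_n$ since $u_n\in\K^+$ with $E(u_n)<m$.

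Next I would check that $\V u_{(n)}^{<k}$ is an approximate solution of \NLKG\ with error small in $ST^*$. By \eqref{def u(n)} and \eqref{eq waveNP},
\[
 eq(u_{(n)}^{<k})=\sum_{j<k}r_{(n)}^j+\Bigl(\textstyle\sum_{j<k}f'((\LR{\na}_\I^j)^{-1}\LR{\na}u_{(n)}^j)-f'(u_{(n)}^{<k})\Bigr),
\]
where $r_{(n)}^j=0$ if $h_\I^j=1$ and $r_{(n)}^j=(\LR{\na}-|\na|)\V u_{(n)}^j$ if $h_\I^j=0$ (the latter only in the $H^1$ critical case). The bracketed difference tends to $0$ in $ST^*$ by \eqref{f ST decop}; each $r_{(n)}^j\to0$ (say in $[K^{*(1)}]_2(\R)$) by \eqref{diff conc D} together with $h_n^j\to0$, and for fixed $k$ only finitely many $j$ occur. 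Hence $\|eq(u_{(n)}^{<k})\|_{ST^*(\R)}\to0$ as $n\to\I$ for each fixed $k$. For the initial data, $\V u_n(0)=\sum_{j<k}\V v_n^j(0)+\V w_n^k(0)$ while $\V u_{(n)}^{<k}(0)=\sum_{j<k}\V u_{(n)}^j(0)$ with $\|\V v_n^j(0)-\V u_{(n)}^j(0)\|_{L^2_x}\to0$, so the free evolution $\ga_0$ of the difference equals the free evolution of $\V w_n^k(0)$ up to an $L^2$-small (for fixed $k$, as $n\to\I$) error; interpolating the uniform energy bound on $\V w_n^k$ with its remainder decay in $B^{-d/2}_{\I,\I}$ from Lemma \ref{LPD crit} (the Bahouri--G\'erard improved-Strichartz inequality, legitimate because $[V]_\I$, $B^{-d/2}_{\I,\I}$ and $\dot H^1$ share the critical scaling and $\reg^0(V)=1$) shows $\limsup_n\|\ga_0\|_{[V]_\I(\R)}\to0$ as $k\to K$.

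With these bounds I would diagonalize: choose $k\ge J$ so that $\limsup_n\|\ga_0\|_{[V]_\I}<\e_0(A,B)/2$, then $n$ so large that $\|eq(u_{(n)}^{<k})\|_{ST^*}<\e_0(A,B)/2$; the subcriticality hypothesis \eqref{exp cond sub uw} holds for both $u_n$ and $u_{(n)}^{<k}$ by the first paragraph. Lemma \ref{PerturLem} with $w=u_{(n)}^{<k}$ and $u=u_n$ then gives $\|u_n\|_{ST(\R)}\le C_0(A,B)$ for $n$ large; in particular $u_n$ is global, and $\|\V u_n\|_{L^\I_tL^2_x}$ is bounded by conservation of $E$ and Lemma \ref{E0 equiv K+}, which completes the proof.

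The main obstacle I anticipate is making the constants $A$ and $B$ (and, in the exponential case, the threshold $\Th$) that are fed into Lemma \ref{PerturLem} independent of both $n$ and $k$: this is exactly where the Strichartz orthogonality \eqref{ST orth}, the $L^2$ almost-orthogonality \eqref{L2 decop}, and small-data theory for the tail $j\ge J$ must be combined, and where the $H^1$-critical profiles with $h_\I^j=0$ --- which solve the wave equation \eqref{eq waveNP} rather than \NLKG\ --- have to be controlled so that their contribution $r_{(n)}^j$ to the error of $\V u_{(n)}^{<k}$ genuinely vanishes in $ST^*$. Once the uniform bounds are secured, the remainder is the standard two-parameter limit ($k$ first to absorb the profile tail and the remainder, then $n$ to absorb the nonlinear error).
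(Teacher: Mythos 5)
Your overall strategy --- realize $u_n$ as a perturbation of a nonlinear profile sum and close with Lemma~\ref{PerturLem} --- is the same as the paper's, but you make one structural choice differently, and it leaves a genuine gap. You take the approximate solution $w$ in Lemma~\ref{PerturLem} to be $u_{(n)}^{<k}$ \emph{alone}, so that the free part $\ga_0$ carries the remainder: $\V\ga_0 = \V w_n^k + o_{L^2}(1)$. You then need $\|\ga_0\|_{[V]_\I(\R)}$ small, and you claim this follows by ``interpolating'' the energy bound on $\V w_n^k$ with its Besov decay from Lemma~\ref{LPD crit}, attributing this to the Bahouri--G\'erard improved Strichartz inequality and justifying it by scaling bookkeeping ($\reg^0(V)=1$). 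This is where the argument breaks down as written. The exponent $V$ satisfies $\str^0(V)=0$, i.e.\ it sits exactly on the Strichartz-admissible line, while the remainder decay norm (say $C=(0,0,1-d/2)$ at the $H^1$ level) has $\str^0(C)=-(d-1)/2<0$. Since $\str^0$ is affine, any convex combination hitting $V$ with positive weight on $C$ must use some $[A]_\I$ with $\str^0(A)>0$; but free solutions are \emph{not} a priori bounded in such spaces, and the paper's interpolation Lemma~\ref{lem:trinterpole} explicitly requires either $\str^0(C)\ge\str^0(Z)$ (fails: $-(d-1)/2<0$) or $\str^0(A),\str^0(B)>\str^0(Z)=0$ (no admissible choice). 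So neither clause of Lemma~\ref{lem:trinterpole} applies, and ``same scaling'' is not a proof: boundedness in $L^\I_tL^2_x$ plus smallness in $L^\I_tB^{-d/2}_{\I,\I}$ does not automatically give smallness in every same-scaled norm (it certainly does not give smallness in $L^\I_tL^2_x$ itself). The Bahouri--G\'erard refined Strichartz estimate \emph{is} a true result, but it is not an interpolation inequality; it needs a Littlewood--Paley/almost-orthogonality argument, is stated in the literature for the massless wave equation rather than Klein--Gordon, and is nowhere proved in this paper. As it stands, your proof invokes a nontrivial lemma that is simply not available.

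The paper sidesteps this by choosing $w=u_{(n)}^{<k}+w_n^k$ as the approximate solution. Then the data difference $\V u_n(0)-\V w(0)=\sum_{j<k}(\V v_n^j(0)-\V u_{(n)}^j(0))\to 0$ in $L^2_x$ (see \eqref{init app}), so $\|\ga_0\|_{[V]_\I}$ is trivially $o(1)$ by the ordinary Strichartz estimate --- no refined estimate needed. The price is the extra nonlinear error $f'(u_{(n)}^{<k}+w_n^k)-f'(u_{(n)}^{<k})$ in $eq(w)$, which must be shown small in $ST^*$ (this is \eqref{rest in f}). Crucially, that difference is estimated in spaces like $[S]_0,[L]_0,[X]_2,[H_\e]_0,[M^\sharp_\e]_0,[W_\e]_2$, all of which have $\str^\th<0$, so here Lemma~\ref{lem:trinterpole} \emph{does} apply to propagate the Besov decay of $w_n^k$ into smallness. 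In short, the paper's choice trades a smallness requirement on a $\str=0$ norm (which the available tools cannot reach) for smallness requirements on $\str<0$ norms (which they can). To repair your proof along your own route you would need to prove a Klein--Gordon analogue of the refined Strichartz inequality; along the paper's route you would instead redo the last step with $w=u_{(n)}^{<k}+w_n^k$ and verify \eqref{rest in f}. The rest of your argument --- the finitely-many-large-profiles reduction via \eqref{L2 decop} and \eqref{small prof}, the uniform $A,B$ via \eqref{ST lim}--\eqref{ST orth}, the treatment of the $h_\I^j=0$ linear error term $r_{(n)}^j$, and the diagonalization in $(k,n)$ --- matches the paper's.
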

\begin{proof}
We will apply the perturbation lemma to $u_{(n)}^{<k}+w_n^k$ as an approximate solution. First observe that 
\EQ{ \label{init app}
 \|\V u_n(0)-\V u_{(n)}^{<k}(0)-w_n^k(0)\|_{L^2_x} 
 \pt\le \sum_{j<k}\|\V v_n^j(0)-\V u_{(n)}^{j}(0)\|_{L^2_x} = o(1),}
and
\EQ{ \label{NP E bd}
 \|\V u_n(0)\|_{L^2}^2 \pt= \|\V v_n\|_{L^2_x}^2
 \ge \sum_{j<k} \|\V v_n^j\|_{L^2_x}^2 + o(1)
 \pn= \sum_{j<k} \|\V u_{(n)}^j(0)\|_{L^2_x}^2 + o(1),}
where $o(1)\to 0$ as $n\to\I$. 
Hence except for a finite set $J\subset\N$, the energy of $u_{(n)}^j$ with $j\not\in J$ is smaller than the iteration threshold, which implies  
\EQ{ \label{small prof}
 \|u_{(n)}^j\|_{ST(\R)} \lec \|\V u_{(n)}^j(0)\|_{L^2_x} \pq(j\not\in J).}
Combining \eqref{ST orth}, \eqref{ST lim}, \eqref{small prof} and \eqref{NP E bd}, we obtain for any finite interval $I$, 
\EQ{
 \sup_k \limsup_{n\to\I} \|u_{(n)}^{<k}\|_{ST(I)}^2
 \pt\lec \sum_{j\in J}\|\hat U_{\I}^j\|_{ST_\I^j}^2 + \limsup_{n\to\I} \|\V u_n(0)\|_{L^2_x}^2
 \pn<\I.}
The equation of $u_{(n)}^{<k}$ is given by 
\EQ{
 eq(u_{(n)}^{<k}) = \sum_{j<k}(\LR{\na}-\LR{\na}_\I^j)\V u_{(n)}^j 
 \pn+ f'(u_{(n)}^{<k}) - \sum_{j<k}f'(u_{\LR{n}}^j),}
where $u_{\LR{n}}^j=(\LR{\na}_\I^j)^{-1}\LR{\na}u_{(n)}^j$ as before. 
The nonlinear part goes to $0$ by \eqref{f ST decop}, while the linear part vanishes if $h_\I^j=1$, and is dominated if $h_\I^j=0$ by 
\EQ{
 \|(\LR{\na}-|\na|)\V u_{(n)}^j\|_{L^1_tL^2_x(I)}
 \pt\lec |I|\|\LR{\na}^{-1}\V u_{(n)}^j\|_{L^\I_t L^2_x(\R)}
 \pr\sim |I|\|\LR{\na/h_n^j}^{-1}\V U_\I^j\|_{L^\I_t L^2_x(\R)}\to 0\pq(n\to\I),}
by continuity in $t$ for bounded $t$, and by the scattering of $\hat U_\I^j$ for $|t|\to\I$, which follows from $\|\hat U_\I^j\|_{[W]_2^\bul(\R)}<\I$. 
Hence Lemma \ref{Stz extend} gives for any $1$-admissible $Z$ 
\EQ{
 \sup_k \limsup_{n\to\I} \|u_{(n)}^{<k}\|_{[Z]_2(\R)} <\I.}

On the other hand, by Lemma \ref{lem:trinterpole} we can extend the smallness of $w_n^k$ from $L^\I_t B^{s}_{\I,\I}$ to the other spaces that we need for the nonlinear difference estimates, i.e. $[S]_0$, $[L]_0$, $[X]_2$, $[H_\e]_0$, $[M^\sharp_\e]_0$, and $[W_\e]_2$, depending on $d$ and $f$. 
In addition, in the exponential case \eqref{f exp}, Lemmas \ref{F orth} and \ref{E bd exp} imply that $u_{(n)}^{<k}$ and $w_n^k$ are both in the subcritical regime for the Trudinger-Moser inequality.  
Putting them together with the above bounds on $u_{(n)}^{<k}$ in the nonlinear difference estimates \eqref{fS diff lD}--\eqref{fL diff exp} or \eqref{fS diff ad}--\eqref{fL diff adHol}, we get 
\EQ{ \label{rest in f}
 \lim_{k\to K}\limsup_{n\to\I}\|f'(u_{(n)}^{<k}+w_n^k)-f'(u_{(n)}^{<k})\|_{ST^*(I)}= 0,}
and so 
\EQ{
 \lim_{k\to K}\limsup_{n\to\I}\|eq(u_{(n)}^{<k}+w_n^k)\|_{ST^*(I)}=0.}

Hence for $k$ sufficiently close to $K$ and $n$ large enough, 
the true solution $u_n$ and the approximate solution $u_{(n)}^{<k}+w_n^k$ satisfy all the assumptions of the perturbation Lemma \ref{PerturLem}. 
Hence $u_n$ is bounded in global Strichartz norms for large $n$. 
\end{proof}

\section{Extraction of a critical element}
In this section, we prove that if uniform global Strichartz bound fails strictly below the variational threshold $m$, then we have a global solution in $\K^+$ with infinite Strichartz norm and with the minimal energy, which is called {\it a critical element}. 

Let $E^\star$ be the threshold for the uniform Strichartz bound. 
More precisely, 
\EQ{ \label{def E*}
 E^\star := \sup\{A>0 \mid S(A)<\I\},}
where $S(A)$ denotes the supremum of $\|u\|_{ST(I)}$ for any strong solution $u$ in $\K^+$ on any interval $I$ satisfying $E(u)\le A$. 

The small energy scattering tells us $E^\star>0$, and the presence of the ground state tells us $E^\star\le m$, at least in the subcritical case, and also in the other cases if we allow complex-valued solutions, because the stationary solutions with different masses yield standing wave solutions of the original NLKG. 
Anyway, we are going to prove $E^\star\ge m$ by contradiction. 

We remark that there is an alternative threshold: 
\EQ{
 \pt E^\star_{FS} := \sup \left\{A>0 \middle| 
  \begin{aligned}
   \pt\text{If $u$ is a solution in $\K^+$ of NLKG}
   \pr\text{with $E(u)\le A$, then $\|u\|_{ST(\R)}<\I$ }
  \end{aligned} 
 \right\}.}
Obviously $E^\star\le E^\star_{FS}$. 
Kenig-Merle \cite{KM1} chose this definition. 
The advantage of using $E^\star$ is that $E^\star\ge m$ implies uniform bound on the global Strichartz norms below $m$, which is very important in applications where we want to perturb the equation.  

The next lemma is the conclusion of this section.
\begin{lem} \label{exist CE}
Assume that $f$ satisfies \eqref{asm f}, and 
let $u_n$ be a sequence of solutions of NLKG in $\K^+$ on $I_n\subset\R$ satisfying 
\EQ{
  E(u_n)\to E^\star<m, \pq \|u_n\|_{ST(I_n)} \to \I \pq(n\to\I).}
Then there exists a global solution $u_*$ of NLKG in $\K^+$ satisfying 
\EQ{ 
  E(u_*)=E^\star, \pq \|u_*\|_{ST(\R)}=\I.}
In addition, there are a sequence $(t_n,x_n)\in\R\times\R^d$ and $\fy\in L^2(\R^d)$ such that along some subsequence, 
\EQ{ \label{CE init}
 \|\V u_n(0,x) - e^{-\iD t_n}\fy(x-x_n)\|_{L^2_x} \to 0.}
\end{lem}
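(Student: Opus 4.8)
The plan is to run the Bahouri--G\'erard/Kenig--Merle concentration--compactness scheme on the initial data $\V u_n(0)$, feeding it the linear profile decomposition (Lemma~\ref{LPD crit}, or Lemma~\ref{LPD sub} away from the $H^1$ critical case), the energy/constraint decoupling of Lemma~\ref{F orth}, and the nonlinear profile stability of Lemma~\ref{NP Stz}. First I would translate each $u_n$ in time so that, calling $0$ the new reference time, $\|u_n\|_{ST(I_n\cap(-\I,0))}=\|u_n\|_{ST(I_n\cap(0,\I))}$, which still tends to $\I$; this symmetric splitting is available because $I\mapsto\|u_n\|_{ST(I)}$ is continuous and monotone, and it is what forces the extracted solution to fail to scatter in both time directions. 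Passing to a subsequence, apply the linear profile decomposition to $\V v_n:=e^{\iD t}\V u_n(0)$, obtaining profiles $\fy^j$, parameters $(t_n^j,x_n^j,h_n^j)$ and a remainder $\V w_n^k$. By Lemma~\ref{F orth}, for large $n$ each $\V v_n^j(0)\in\ti\K^+$, the energies decouple as in \eqref{E decop}, and $0\le\lim_n\ti E(\V v_n^j(0))\le\lim_n\ti E(\V v_n(0))=\lim_n E(u_n)=E^\star<m$; moreover the nonlinear profile $\V U_\I^j$ is defined and stays in $\ti\K^+$ on its maximal interval, with $\ti E(\V U_\I^j)=\lim_n\ti E(\V v_n^j(0))$.

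The core is a dichotomy: I claim there is exactly one profile, it carries the full energy $E^\star$, it does not concentrate, it fails to scatter, and the remainder vanishes in $L^2_x$. Suppose not. Then in every alternative---two or more profiles; a single profile of energy $<E^\star$ (so the remainder need not vanish); a single energy-$E^\star$ profile that concentrates ($h_\I^j=0$); a single energy-$E^\star$, non-concentrating profile with finite $ST$ norm; or no profile at all---\emph{every} nonlinear profile has finite global Strichartz norm $\|\hat U_\I^j\|_{ST_\I^j(\R)}<\I$ together with $\|\V U_\I^j\|_{L^\I_tL^2_x}<\I$. Indeed a non-concentrating profile then has $E(\V U_\I^j)<E^\star$, so by the very definition \eqref{def E*} of $E^\star$ (and Lemma~\ref{E0 equiv K+} for the $L^\I_tL^2_x$ bound) it is bounded in $ST$ on its maximal, hence global, interval; a concentrating profile occurs only in the $H^1$ critical case, where $\V U_\I^j$ is governed by the massless critical wave equation \eqref{CNW} with massless energy $\le E^\star<m=J^{(0)}(Q)$, so by Kenig--Merle \cite{KM1} it scatters, i.e. $\|\hat U_\I^j\|_{[W]_2^\bul(\R)}<\I$. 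Then Lemma~\ref{NP Stz} (applied with the empty profile family in the last alternative) yields $\limsup_n\|u_n\|_{ST(\R)}<\I$, contradicting the hypothesis. Hence $K=1$, $\lim_n\ti E(\V v_n^0(0))=E^\star$, $h_\I^0=1$ (so $\V U_\I^0$ solves NLKG), $\|\hat U_\I^0\|_{ST(\R)}=\I$, and $\ti E(\V w_n^1(0))\to0$; since the remainder is also small in $B^{-d/2}_{\I,\I}$, interpolation with the energy bound kills the nonlinear part, so in fact $\|\V w_n^1(0)\|_{L^2_x}\to0$.

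It remains to produce $u_*$. The shift $\tau_\I^0:=\lim_n(-t_n^0/h_n^0)$ lies in $[-\I,\I]$; the values $\pm\I$ would make $\V U_\I^0$ asymptotically free near the reference time and would contradict the two-sided divergence of $\|u_n\|_{ST}$, so $\tau_\I^0\in\R$, and after one more time translation we may take $\tau_\I^0=0$. Then $\V U_\I^0$ is the NLKG solution with the limiting data, it lies in $\K^+$ with $E(\V U_\I^0)=E^\star$, and it is global: on $\K^+$ the conserved energy controls $\|\V u\|_{L^2_x}^2=2E^Q(u)$ by Lemma~\ref{E0 equiv K+}, and $\K^+$ is invariant under the flow (a crossing $K(u(t_*))=0$ at energy $<m$ forces $u(t_*)=0$ by the characterization \eqref{min J}, hence $u\equiv0$ by uniqueness), so the energy-space local theory extends it to all of $\R$. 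Setting $u_*:=\LR{\na}^{-1}\Re\V U_\I^0$ gives $E(u_*)=E^\star$ and $\|u_*\|_{ST(\R)}=\|\hat U_\I^0\|_{ST(\R)}=\I$. Finally \eqref{CE init} is read off directly: with $t_n:=t_n^0$, $x_n:=x_n^0$, $\fy:=\fy^0$ and $h_n^0\equiv1$ one has $\V u_n(0)=e^{-\iD t_n}\fy(\cdot-x_n)+\V w_n^1(0)$, and $\|\V w_n^1(0)\|_{L^2_x}\to0$.

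The step I expect to be the main obstacle is handling a concentrating profile in the $H^1$ critical case: one must verify that as $h_n^j\to0$ the nonlinear profile genuinely degenerates to a solution of \eqref{CNW}---controlling the error term $(\LR{\na}-|\na|)\V u^j_{(n)}$ in \eqref{eq waveNP} and checking that membership in $\ti\K^+$ for the massive problem puts the massless solution on the side $\|\na\cdot\|_{L^2}<\|\na Q\|_{L^2}$ where Kenig--Merle \cite{KM1} applies below $E^\star<m=J^{(0)}(Q)$. Everything else---the orthogonality of $\ti E$ and $\ti K_{\al,\be}$, the closedness of $\K^+$ under the decomposition in the three nonlinearity regimes, and the interpolation needed to transfer the remainder's smallness to the function spaces used in the difference estimates---is supplied by Lemmas~\ref{F orth}, \ref{Korth}, \ref{NP Stz} and the Strichartz estimates of Section~\ref{sect:str}.
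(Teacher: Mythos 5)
Your outline follows the paper's architecture (linear/nonlinear profile decomposition, then a dichotomy using the definition of $E^\star$, Lemma \ref{F orth}, Lemma \ref{NP Stz}, and Kenig--Merle for the concentrating profile in the critical case), and that part is sound. Two things differ, one cosmetic and one a genuine gap.

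The cosmetic difference: you normalize time so that $\|u_n\|_{ST(I_n\cap(-\I,0))}=\|u_n\|_{ST(I_n\cap(0,\I))}$ in order to rule out $\tau_\I^0=\pm\I$ and then assert two-sided failure. The lemma does not require two-sided failure, only $\|u_*\|_{ST(\R)}=\I$, so this extra normalization and the (somewhat heuristic) exclusion of $\tau_\I^0=\pm\I$ are not needed; the paper simply shifts so that $0\in I_n$, allows the nonlinear profile to be defined via the wave operator when $\tau_\I^0=\pm\I$, and observes that $\|\hat U_\I^0\|_{ST}$ on the maximal interval is infinite regardless.

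The genuine gap is globality of $\hat U_\I^0$. You argue that since $u_*\in\K^+$ and the conserved energy controls $\|\V u\|_{L^2_x}^2$, ``the energy-space local theory extends it to all of $\R$.'' This is only correct for the $H^1$-subcritical (and, thanks to Lemma \ref{E bd exp}, the $2$D exponential) cases, where the local existence time depends only on the $H^1\times L^2$ norm. In the $H^1$-critical case \eqref{f crit}, a uniform energy bound does \emph{not} preclude finite-time blowup: the blowup criterion is failure of the Strichartz norm, not of the $H^1$ norm, and the local time of existence depends on the profile of the data, not just its size. The paper's proof addresses precisely this: assuming $\hat U_\I^0$ has a finite maximal time, one takes $t_n$ tending to that endpoint, notes that $\hat U_\I^0(\cdot+t_n)$ again satisfies the hypotheses (with $\|\cdot\|_{ST}\to\I$ by the local theory's blowup criterion), re-applies the one-profile extraction to obtain \eqref{init CE}, and then uses smallness of the \emph{free} Strichartz norm on short intervals to run the contraction argument past the supposed blowup time --- a compactness step that cannot be replaced by an a priori $H^1$ bound. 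You would need to supply this argument (or an equivalent one) to close the critical case.
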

We call such a global solution $u_*$ \emf{a critical element}. 
Observe that by the definition of $E^\star$, we can find such a sequence $u_n$, once we have $E^\star<m$. 

\begin{proof}
We can translate $u_n$ in $t$ so that $0\in I_n$ for all $n$. 
Then we consider the linear and nonlinear profile decompositions of $u_n$, using Lemma \ref{LPD crit} in the $H^1$ critical case \eqref{f crit} and Lemma \ref{LPD sub} in the subcritical/exponential cases. 
\EQ{
 \pt e^{\iD t}\V u_n(0)=\sum_{j<k}\V v_n^j + \V w_n^k, \pq \V v_n^j = e^{\iD (t-t_n^j)}T_n^j\fy^j,
 \pr u_{(n)}^{<k} = \sum_{j<k} u_{(n)}^j, \pq \V u_{(n)}^j = T_n^j \V U_\I^j((t-t_n^j)/h_n^j),
 \pr \|\V v_n^j(0)-\V u_{(n)}^j(0)\|_{L^2_x} \to 0 \pq (n\to\I).} 
Lemma \ref{NP Stz} precludes that all the nonlinear profiles $\V U_\I^j$ have finite global Strichartz norm. 
On the other hand, every solution of NLKG in $\K^+$ with energy less than $E^\star$ has global finite Strichartz norm by the definition of $E^\star$. 
Hence by Lemma \ref{F orth} we deduce that there is only one profile i.e. $K=1$, and moreover 
\EQ{
 \ti E(\V u_{(n)}^0)=E^\star, \pq \V u_{(n)}^0(0)\in\ti\K^+, \pq \|\hat U_\I^0\|_{ST_\I^0(\R)}=\I, \pq \lim_{n\to\I}\|\V w_n^1\|_{L^\I_t L^2_x}= 0.}
If $h_n^0\to 0$ in the critical case, then $\hat U_\I^0=|\na|^{-1}\Re\V U_\I^0$ solves the massless equation 
\EQ{
 (\p_t^2-\De)\hat U_\I^0 = f'(\hat U_\I^0),}
and satisfies 
\EQ{
 E^0(\hat U_\I^0)=E^\star<m=J^{(0)}(Q), \pq K^w(\hat U_\I^0(0))\ge 0, \pq \|\hat U_\I^0\|_{[W]_2^\bul}=\I,}
where $Q$ is the massless ground state and $K^w$ is the massless version of $K$.  
However, Kenig-Merle \cite{KM1} has proven that there is no such solution.\footnote{\cite{KM1} is restricted to the dimensions $d\le 5$ for simplicity of the perturbation argument, but the elimination of critical elements works in any higher dimensions.} 
Hence $h_n^0\equiv 1$ in all cases, and we obtain \eqref{CE init}. 

Hence $h_n^0\equiv 1$ in all cases, and we obtain \eqref{CE init}. 

It remains to prove that $\hat U_\I^0=\LR{\na}^{-1}\Re\V U_\I^0$ is a global solution. 
Suppose not. Then we can choose a sequence $t_n\in\R$ approaching the maximal existence time. 
Since the sequence of solutions $\hat U_\I^0(t+t_n)$ satisfies the assumption of this lemma, we may apply the above argument to it. In particular, from \eqref{CE init} we obtain 
\EQ{ \label{init CE}
 \|\V U_\I^0(t_n) - e^{-\iD t_n'}\psi(x-x_n')\|_{L^2_x} \to 0,}
for some $\psi\in L^2_x$ and another sequence $(t_n',x_n')\in\R\times\R^d$. 
Let $\V v:=e^{\iD t}\psi$. Since it is a free solution, for any $\e>0$ there is $\de>0$ such that for any interval $I$ satisfying $|I|\le2\de$, we have $\|\LR{\na}^{-1}\V v\|_{ST(I)}\le\e$, where $ST=[W]_2\cap[K]_2$ as in \eqref{def ST}. Then \eqref{init CE} implies that
\EQ{
 \limsup_{n\to\I} \|\LR{\na}^{-1}e^{\iD t}\V U_\I^0(t_n)\|_{ST(-\de,\de)}\le\e.}
If $\e>0$ is small enough, this implies that the solution $\hat U_\I^0$ exists on $(t_n-\de,t_n+\de)$, by the iteration argument, for large $n$. This contradicts the choice of $t_n$. Hence $\hat U_\I^0$ is global and so a critical element. 
\end{proof}

\section{Extinction of the critical element} 
In this section, we prove that the critical element can not exist by deriving a contradiction from a few properties of it. The main idea follows \cite{KM1,KM2}. 
Let $u_c$ be a critical element given by Lemma \ref{exist CE}. 
Since NLKG is symmetric in $t$, we may assume that $\|u_c\|_{ST(0,\I)}=\I$. 
We call such $u$ a \emf{forward critical element}. 
Note that since the critical element is in $\K^+$, we have $E^Q(u;t)\sim E(u)$ uniformly, by Lemma \ref{E0 equiv K+}. 

\subsection{Compactness}
First we show that the trajectory of a forward critical element is precompact for positive time in the energy space modulo spatial translations. 
\begin{lem} \label{precompact}
Assume that $f$ satisfies \eqref{asm f}, and 
let $u_c$ be a forward critical element. 
Then there exists $c:(0,\I)\to\R^d$ such that the set 
\EQ{ 
  \{(u,\dot u)(t,x-c(t)) \mid 0<t<\infty\} } 
is precompact in $H^1(\R^d)\times L^2(\R^d)$. 
\end{lem}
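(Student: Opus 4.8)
The plan is to argue by contradiction, following the scheme used to extract the critical element (proof of Lemma \ref{exist CE}). Since $u_c$ is a global solution and the flow is continuous in $H^1\times L^2$, any sequence $t_n\in(0,\I)$ with a bounded subsequence yields, along that subsequence and without translating, a convergent subsequence of $(u_c,\dot u_c)(t_n)$; hence it suffices to treat $t_n\to\I$ and to show that $\V{u_c}(t_n)$ is precompact in $L^2(\R^d)$ modulo spatial translations (recall $u\leftrightarrow\V u$ from Section \ref{rdc 1st}, under which this is equivalent to precompactness of $(u_c,\dot u_c)(t_n)$ in $H^1\times L^2$ modulo translations). Since $(u_c,\dot u_c)(t)$ lies in $\K^+$ for all $t$ and $E(u_c)=E^\star<m$, Lemma \ref{E0 equiv K+} gives $\|\V{u_c}(t)\|_{L^2}^2=2E^Q(u_c;t)\sim E^\star$ uniformly, so the sequence is bounded in $L^2$ and $\V{u_c}(t_n)\in\ti\K^+$.

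First I would apply the linear profile decomposition (Lemma \ref{LPD crit} in the $H^1$ critical case \eqref{f crit}, Lemma \ref{LPD sub} otherwise) to the free evolutions of $\V{u_c}(t_n)$ and form the associated nonlinear profiles. By Lemma \ref{F orth} each profile lies in $\ti\K^+$; moreover, unless there is exactly one profile and the remainder tends to $0$ in $L^\I_tL^2_x$, each profile carries strictly less energy than $E^\star$, so each nonlinear profile $\hat U_\I^j$ has finite global Strichartz norm — by the definition of $E^\star$ when $h_\I^j=1$, and by the Kenig-Merle result \cite{KM1} on the massless equation when $h_\I^j=0$ (there $E^0(\hat U_\I^j)\le E^\star<m=J^{(0)}(Q)$). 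Lemma \ref{NP Stz} would then force $\limsup_n\|u_c(\cdot+t_n)\|_{ST(\R)}<\I$, contradicting $\|u_c(\cdot+t_n)\|_{ST(\R)}=\|u_c\|_{ST(\R)}\ge\|u_c\|_{ST(0,\I)}=\I$. Hence $K=1$, $\lim_n\|\V w_n^1\|_{L^\I_tL^2_x}=0$, and in the critical case $h_n^0\to0$ is excluded once more by \cite{KM1} (otherwise $\hat U_\I^0$ would solve the massless equation in $\K^+$ with $E^0(\hat U_\I^0)=E^\star<m$ and infinite Strichartz norm). Therefore $h_n^0\equiv1$ and
\EQ{
 \V{u_c}(t_n)=e^{-\iD t_n^0}\fy^0(x-x_n^0)+\rho_n,\pq\|\rho_n\|_{L^2}\to0.
}

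It remains to show that $t_n^0$ is bounded; granted that, a subsequence has $t_n^0\to\tau\in\R$, whence $\V{u_c}(t_n,\cdot+x_n^0)\to e^{-\iD\tau}\fy^0$ in $L^2$, giving the precompactness with $c(t_n)=-x_n^0$, and a function $c:(0,\I)\to\R^d$ as in the statement is then produced by the standard selection argument (keeping the translated trajectory in a fixed compact subset of $H^1\times L^2$). To bound $t_n^0$, let $\phi$ be the free solution with $\V\phi(0)=\fy^0$, which satisfies $\|\phi\|_{ST(\R)}<\I$ by the Strichartz estimate ($ST$ being $1$-admissible), and let $v_n$ be the free solution with $\V{v_n}(0)=\V{u_c}(t_n)$; then $v_n$ equals, up to the spatial translation by $x_n^0$ and an error with $ST(\R)$-norm $o(1)$ (the free evolution of $\rho_n$), the time-translate $\phi(\cdot-t_n^0)$, so $\|v_n\|_{ST(0,\I)}=\|\phi\|_{ST(-t_n^0,\I)}+o(1)$ and $\|v_n\|_{ST(-\I,0)}=\|\phi\|_{ST(-\I,-t_n^0)}+o(1)$. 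If $t_n^0\to-\I$ along a subsequence, then $\|v_n\|_{ST(0,\I)}\to0$; since the NLKG solution with data $\V{u_c}(t_n)$ is $u_c(\cdot+t_n)$ and small-data scattering gives $\|u\|_{ST(I)}\lec\|v\|_{ST(I)}$ (with $v$ the corresponding free solution) once $\|v\|_{ST(I)}$ is small, we get $\|u_c\|_{ST(t_n,\I)}\to0$, contradicting $\|u_c\|_{ST(t,\I)}=\I$ for every $t\in\R$. If instead $t_n^0\to+\I$, then $\|v_n\|_{ST(-\I,0)}\to0$, so $\|u_c\|_{ST(-\I,t_n)}\to0$; as $t_n\to\I$ this makes $\|u_c\|_{ST(\R)}$ arbitrarily small, impossible since $E(u_c)=E^\star>0$. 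Hence $t_n^0$ is bounded and the proof concludes.

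The main obstacle is the first reduction — to a single, non-concentrating profile with vanishing remainder — which repeats the proof of Lemma \ref{exist CE} with $u_c(\cdot+t_n)$ in place of the sequence of solutions there, and so draws on essentially all of the earlier machinery (the profile decompositions of Lemmas \ref{LPD crit}--\ref{F orth}, the long-time perturbation Lemma \ref{PerturLem}, the nonlinear stability Lemma \ref{NP Stz}, and the Kenig-Merle elimination of a massless critical element). Once that structure is available, ruling out $t_n^0\to\pm\I$ is a soft dichotomy resting only on the decay of free Strichartz norms at temporal infinity together with $\|u_c\|_{ST(t,\I)}=\I$ for all $t$; the one delicate point there is that small-data theory must propagate genuine smallness, not merely finiteness, of the Strichartz norm.
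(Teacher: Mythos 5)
Your proof is correct and follows essentially the same route as the paper's: the paper simply applies Lemma \ref{exist CE} to $u_c(\cdot+t_n)$ as a black box to obtain the single-profile representation $\V{u_c}(t_n)\approx e^{-\iD t_n'}\fy(x-x_n')$, whereas you unfold the profile-decomposition argument again, but the content is identical, and your subsequent elimination of $t_n'\to\pm\I$ via the decay of the free Strichartz norm at temporal infinity coincides with the paper's.
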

\begin{proof}
The proof of Kenig-Merle \cite{KM1} can be adapted verbatim, but we give a sketch for the sake of completeness. Recall the convention $u\leftrightarrow \V u$ defined in Section \ref{rdc 1st}. 

It suffices to prove precompactness of $\{\V u(t_n)\}$ in $L^2_x$ for any $t_1,t_2,\dots>0$. 
If $t_n$ converges, then it is trivial from the continuity in $t$. 
Hence we may assume that $t_n\to\I$. 
Applying Lemma \ref{exist CE} to the sequence of solutions $u(t+t_n)$, we get another sequence $(t_n',x_n')\in\R^{1+d}$ and $\fy \in L^2$ such that 
\EQ{ \label{decop for utn}
 \V{u}(t_n,x) - e^{-\iD t_n'} \fy(x-x_n') \to 0 \IN{L^2_x}\pq(n\to\I).}

If $t_n'\to-\I$, then we have 
\EQ{
 \|e^{\iD t}\V{u}(t_n)\|_{ST(0,\I)} = \|e^{\iD t}\fy\|_{ST(-t_n',\I)} + o(1)  \to 0,}
so that we can solve NLKG of $u$ for $t>t_n$ with large $n$ globally by iteration with small Strichartz norms, contradicting its forward criticality. 

If $t_n'\to+\I$, then we have 
\EQ{
 \|e^{\iD t}\V{u}(t_n)\|_{ST(-\I,0)} = \|e^{\iD t}\fy\|_{ST(-\I,-t_n')} + o(1) \to 0,}
so that we can solve NLKG of $u$ for $t<t_n$ with large $n$ with diminishing Strichartz norms, which implies $u=0$ by taking the limit, a contradiction. 

Thus $t_n'$ is precompact, so is $\V{u}(t_n,x+x_n')$ in $L^2_x$ by \eqref{decop for utn}. 
\end{proof}

As a consequence, the energy of $u$ stays within a fixed radius for all positive time, modulo arbitrarily small rest. More precisely, we define the exterior energy by 
\EQ{ \label{def ER}
 E_{R,c}(u;t) = \int_{|x-c|\ge R} |u_t|^2 + |\na u|^2 + |u|^2 + |f(u)| + |uf'(u)| dx,}
for any $R>0$ and $c\in\R^d$. Then we have 
\begin{cor}
Let $u$ be a forward critical element. Then for any $\e>0$, there exist $R_0(\e)>0$ and $c(t):(0,\I)\to\R^d$ such that at any $t>0$ we have
\EQ{ \label{R_0 ext bd}
 E_{R_0,c(t)}(u;t) \le \e E(u).}
\end{cor}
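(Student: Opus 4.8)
The plan is to derive the exterior energy bound directly from the precompactness established in Lemma~\ref{precompact}, together with the uniform equivalence $E^Q(u;t)\sim E(u)$ valid in $\K^+$ (Lemma~\ref{E0 equiv K+}) and the fact that $F(u)$ and the nonlinear terms $|f(u)|+|uf'(u)|$ are controlled by the $H^1$ norm via Sobolev or Trudinger--Moser. First I would observe that $E_{R,c}(u;t)$ is, for each fixed $t$, a nonincreasing function of $R$ that tends to $0$ as $R\to\I$, and that it is dominated by the ``displaced tail norm'' of $(\V u(t,\cdot+c(t)))$ outside the ball of radius $R$ in $L^2_x$, plus the tail of the corresponding nonlinear quantities, which are in turn bounded by a modulus-of-continuity function of the $H^1\times L^2$ tail (using that $f(0)=f'(0)=f''(0)=0$ and the relevant growth bounds \eqref{GN bd F}, \eqref{GN bd 1D} or \eqref{TM bd F}).

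The key step is a standard compactness argument. Suppose the conclusion fails: then there is $\e_0>0$ and a sequence $t_n>0$ such that for every $c\in\R^d$ and every $R>0$ (in particular taking $R=n$), $E_{n,c}(u;t_n)>\e_0 E(u)$. By Lemma~\ref{precompact}, after passing to a subsequence, $(\V u(t_n,\cdot-c(t_n)))$ converges in $L^2_x$ to some limit $\V\psi$, equivalently $(u,\dot u)(t_n,\cdot-c(t_n))\to(\psi_0,\psi_1)$ in $H^1\times L^2$. For the limit, $E_{R,0}(\psi;0)\to 0$ as $R\to\I$ by dominated convergence, so we may fix $R_0$ with $E_{R_0,0}(\psi;0)<\e_0 E(u)/2$. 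Then, writing $E_{R_0,c(t_n)}(u;t_n)$ in terms of the translated functions and using that translation by $c(t_n)$ turns the region $\{|x-c(t_n)|\ge R_0\}$ into $\{|x|\ge R_0\}$, the convergence in $H^1\times L^2$ together with the continuity of the nonlinear functionals $F$, $uf'(u)$ on $H^1$ (from the Sobolev/Trudinger--Moser bounds quoted in the excerpt) gives $E_{R_0,c(t_n)}(u;t_n)\to E_{R_0,0}(\psi;0)<\e_0 E(u)/2$, contradicting $E_{n,c(t_n)}(u;t_n)\ge E_{R_0,c(t_n)}(u;t_n)>\e_0 E(u)$ for large $n$. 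Finally one uniformizes over $t$: the same $R_0(\e)$ works for all $t>0$ because the precompact set has a single uniform tail bound, which is exactly what the above argument shows for an arbitrary sequence.

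I expect the main obstacle to be the handling of the nonlinear tail terms $|f(u)|+|uf'(u)|$ in the definition of $E_{R,c}$ rather than the purely quadratic part. One must check that $\int_{|x|\ge R}(|f(u)|+|uf'(u)|)\,dx$ is continuous with respect to $H^1$-convergence and vanishes as $R\to\I$ uniformly over the precompact trajectory; in the subcritical case this follows from \eqref{GN bd F}/\eqref{GN bd 1D}, and in the 2D exponential case one needs the subcritical Trudinger--Moser bound on $\K^+$ from Lemma~\ref{E bd exp} together with \eqref{TM bd F} to ensure the exponential integral is finite and tail-small, while in the $H^1$ critical case one uses the $L^{2^\star}$ tail control. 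Once that continuity-plus-tightness input is in place, the contradiction argument above is routine, and the factor $E(u)$ on the right-hand side is just the normalization making the statement scale-free in the energy.
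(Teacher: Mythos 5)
Your proposal is correct and takes the natural route: precompactness in $H^1\times L^2$ modulo translation (Lemma~\ref{precompact}) forces uniform tightness of the tails, and the nonlinear terms $|f(u)|+|uf'(u)|$ are handled by continuity of the associated functionals under $H^1$-convergence (with the Trudinger--Moser subcriticality from Lemma~\ref{E bd exp} in the exponential case). This is exactly the argument implicit in the paper, which states the corollary without proof.

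Two small slips, neither fatal: the negation you write ("for every $c\in\R^d$ and every $R>0$") overclaims --- failure only gives, for each $R_0$, some $t_{R_0}$ with $E_{R_0,c(t_{R_0})}(u;t_{R_0})>\e_0 E(u)$ at the particular center $c(t_{R_0})$ supplied by Lemma~\ref{precompact}, and indeed that is all you use downstream. Also the final chain should read $E_{n,c(t_n)}(u;t_n)\le E_{R_0,c(t_n)}(u;t_n)$ for $n\ge R_0$ (exterior of the larger ball is smaller, density nonnegative), so the contradiction is between the lower bound $\e_0 E(u)<E_{n,c(t_n)}\le E_{R_0,c(t_n)}$ and the limit $E_{R_0,c(t_n)}\to E_{R_0,0}(\psi;0)<\e_0 E(u)/2$.
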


\subsection{Zero momentum and non-propagation} 
Next we observe that the critical element can not move with any positive speed in the sense of energy. 
For that we first need to see that the (conserved) momentum 
\EQ{ \label{def P}
 P(u) := \int_{\R^d} u_t \na u dx \in \R^d}
is zero for any critical element $u$.
\begin{lem}
For any critical element $u$, we have $P(u)=0$.
\end{lem}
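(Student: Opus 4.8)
The plan is to use the Lorentz invariance of NLKG together with the variational constraints on the critical element. The equation $\square u + u = f'(u)$ is invariant under Lorentz boosts, so if $P(u) \neq 0$, we can pick a boost in the direction of $P(u)$ to produce a new solution $\ti u$ whose momentum vanishes; we must check that $\ti u$ still lies in $\K^+$ and still has energy $E^\star$ (or at most $E^\star$), and that it still has infinite $ST$-norm so that it contradicts the minimality/nonexistence of a critical element with smaller energy --- or, more directly, that the boosted solution has \emph{strictly smaller} energy than $u$, contradicting that $u$ achieves the minimal energy $E^\star$. The key algebraic fact is the standard relation for the boosted energy: if $u$ has energy $E$ and momentum $P$ along the $x_1$-axis, then the boosted solution with relative velocity $v$ has energy $E_v = \gamma(E - v P_1)$ with $\gamma = (1-v^2)^{-1/2}$, so choosing $v$ with the same sign as $P_1$ and small enough, $E_v < E$.

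The steps, in order, would be: (1) Suppose $P(u) \neq 0$; by spatial rotation assume $P(u) = (P_1, 0, \dots, 0)$ with $P_1 > 0$. (2) For $|v| < 1$ define the Lorentz-boosted function $u_v(t,x) := u(\gamma(t - v x_1), \gamma(x_1 - vt), x_2,\dots,x_d)$, which solves NLKG again because the d'Alembertian and the zeroth-order term $u$ are both Lorentz-invariant (this is exactly where the \emph{massive} structure is compatible with boosts, unlike scaling). (3) Verify that $u_v(0)$ still has finite energy and compute $E(u_v) = \gamma(E(u) - vP_1)$ and $P(u_v)_1 = \gamma(P_1 - vE(u))$, using that $E^Q(u;t) \sim E(u)$ uniformly (Lemma \ref{E0 equiv K+}) and the precompactness from Lemma \ref{precompact} to control the time-slice data and justify that $u_v$ is a genuine finite-energy solution on a neighborhood of $t=0$. (4) Check that $u_v(0) \in \K^+$: since $E(u_v) < E(u) \le E^\star < m$ for small $v>0$, and $K_{1,0}(u_v(0)) \geq 0$ follows because $u_v(0)$ is close (for small $v$) to data in the open set $\K^+$ (using that $\K^{+}$ is open, shown in Lemma \ref{K indp}), or alternatively by a connectedness/continuity argument along $v$. (5) Conclude that $u_v$ is a solution in $\K^+$ with energy strictly below $E^\star$, hence has finite global $ST$-norm by the definition of $E^\star$; but the $ST$-norm is invariant (up to constants, or can be shown not to drop to finiteness) under the boost since $u$ had infinite $ST$-norm on $(0,\I)$ --- giving a contradiction. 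Therefore $P(u) = 0$.

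The main obstacle I expect is step (5), reconciling the global space-time norm $ST$ (which is built from non-Lorentz-invariant mixed Lebesgue norms $L^p_t B^s_{q,2}$) with the Lorentz boost: a boost mixes $t$ and $x_1$, so $\|u_v\|_{ST}$ is not literally comparable to $\|u\|_{ST}$, and one cannot directly transfer ``infinite $ST$-norm'' from $u$ to $u_v$. The cleaner route, which I would actually pursue, avoids this entirely: since $E(u_v) < E^\star$, if $u_v \in \K^+$ then $u_v$ must \emph{scatter} (this is part of what we are proving below $E^\star$, or rather: by definition of $E^\star$, solutions in $\K^+$ with energy $< E^\star$ have finite $ST$-norm and hence scatter). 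But $u_v$ is obtained from the non-scattering solution $u$ by a bounded, invertible change of variables on any compact region, and the precompactness modulo translations of $u$ (Lemma \ref{precompact}) transfers to $u_v$; a scattering solution cannot have a precompact (modulo translation) non-trivial trajectory, forcing $u_v \equiv 0$, hence $u \equiv 0$, contradiction. So the real work is: (a) making the Lorentz boost rigorous at the level of finite-energy solutions, using the uniform-in-time energy bound and finite speed of propagation / the exterior energy decay \eqref{R_0 ext bd} to control the boosted initial data; and (b) the energy identity $E(u_v) = \gamma(E(u) - v P_1)$, which requires writing $E$ and $P$ as components of the energy-momentum tensor and checking the boundary terms vanish thanks to the spatial localization of the critical element. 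Step (a) is where the lack of scaling invariance is irrelevant but the inhomogeneity (mass term) is in fact helpful, and step (b) is the bookkeeping that, given precompactness, is routine.
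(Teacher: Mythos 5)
Your main approach---boosting the critical element along the direction of the momentum to produce a solution in $\K^+$ with strictly smaller energy---is exactly the paper's approach, and steps (1)--(4) are in substance the same. Two remarks on the execution: instead of the global formula $E(u_v)=\gamma(E-vP_1)$ (which requires the energy--momentum-tensor bookkeeping you flag in (b)), the paper simply differentiates at $\la=0$ and obtains $\p_\la\big|_{\la=0}E(L_j^\la u)=P_j(u)$ directly from the wave equation; this is enough to get a strict energy decrease for small $\la$ of the right sign and avoids any boundary-term discussion. The continuity of $\la\mapsto L_j^\la u$ in the energy space, needed to keep the boosted data in the open set $\K^+$, is justified in the paper by first cutting off the initial data (local well-posedness plus finite speed of propagation make the boost continuous on compactly supported data) and then approximating.

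Your step (5) is where the proposal has a genuine gap, and you yourself suspect it. Your ``cleaner route'' asserts that the precompactness of $\{\vec u(t,\cdot-c(t)):t>0\}$ transfers to $u_v$. That transfer is not clear: the fixed-$t$ slices of $u_v$ correspond to slanted hyperplanes $\{\gamma(t-vx_1)=\text{const}\}$ in the $u$-coordinates, which for $x_1$ ranging over $\R$ sweep through \emph{all} $u$-times (including negative ones, where Lemma~\ref{precompact} says nothing), and the drift $c(t)$ is controlled only through the non-propagation estimate, not globally bounded. So one cannot read off precompactness of $u_v$'s trajectory without a substantial extra argument. The paper's resolution of the step-(5) obstacle is different and much simpler: the change-of-variables Jacobian of $L_j^\la$ in the $(t,x_j)$-plane is $1$, so $L_j^\la$ \emph{preserves every unmixed $L^p_{t,x}(\R^{1+d})$ norm}. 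Since, for solutions in $\K^+$ with uniformly bounded energy, finiteness of the $ST$-norm is equivalent to finiteness of the diagonal space-time Lebesgue norm $\|u\|_{L^p_{t,x}}$ (finite $ST$ implies finite $L^p_{t,x}$ by Sobolev embedding; finite $L^p_{t,x}$ plus the energy bound gives the full Strichartz bound by the subdivision/iteration argument of Lemma~\ref{Stz extend}), the property ``infinite Strichartz norm'' \emph{is} Lorentz-invariant even though the $ST$-norm itself is not. With that observation, the boosted solution contradicts the definition of $E^\star$ directly, and no compactness transfer is needed. You should replace the precompactness argument in step (5) by this Jacobian observation.
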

\begin{proof}
For $j=1,\dots,d$ and $\la\in\R$, we define the operator $L_j^\la$ of Lorentz transform
\EQ{
 \pt L_j^\la u(x_0,\dots,x_d) = u(y_0,\dots,y_d), 
 \pr y_0 = x_0 \cosh\la + x_j \sinh\la, \pq y_j = x_0 \sinh\la + x_j\cosh\la, \pq y_k=x_k\ (k\not=0,j),}
then we have $L_j^\al L_j^\be = L_j^{\al+\be}$. 
Since $\p_\la y_0 = y_j$ and $\p_\la y_j = y_0$, we have
\EQ{
 \p_\la L_j^\la u = L_j^\la[(x_j\p_t + t\p_j)u].}
Also we have
\EQ{
 \pt \p_t L_j^\la = L_j^\la(s\p_t  + c \p_j), \pq \p_{tt} L_j^\la= L_j^\la(s^2 \p_{tt} + 2sc \p_{tj} + c^2 \p_{jj}),
 \pr \p_j L_j^\la = L_j^\la(c \p_t  + s\p_j ), \pq \p_{jj} L_j^\la= L_j^\la(c^2 \p_{tt} + 2sc \p_{tj} + s^2 \p_{jj}),}
where $s:=\sinh\la$ and $c:=\cosh\la$. In particular $[\p_t^2 - \De, L_j^\la]= 0$, and so $L_j^\la$ maps global solutions to themselves.
For the space-time norm, we have
\EQ{
 \iint L_j^\la v dtdx_j = \iint v \left|\mat{c & s \\ s & c}\right| dtdx_j
  = \iint v dt dx_j,}
hence $L_j^\la$ preserves all $L^p_{t,x}(\R^{1+d})$ norm. 
For any solution $u$, we have 
\EQ{
 \pt \p_\la^0 E(L_j^\la u) = \LR{u_t|\p_\la^0\p_t L_j^\la u} + \LR{\na u|\p_\la^0\na L_j^\la u} + \LR{u-f'(u)|\p_\la^0 L_j^\la u}
 \pr=\LR{u_t|x_ju_{tt}+tu_{tj}+u_j} + \LR{u_k|x_ju_{kt}+tu_{kj}+\de_{kj}u_t} 
 \pr\pq\pq\pq +\LR{u-f'(u)|x_ju_t+tu_j}
 \pr=\LR{x_ju_t|\De u} + 2\LR{u_t|u_j} -\LR{x_ju_{kt}|u_k} = \LR{u_t,u_j}=P(u),}where $\p_\la^0:=\p_\la|_{\la=0}$. 
If $P_j(u)\not=0$ for some $j$, then we obtain another global solution $L_j^\la u$, which has smaller energy and infinite Strichartz norm. It also belongs to $\K^+$, by continuity. More precisely, the continuity of $L_j^\la u$ in $\la$ in the energy space easily follows from the local wellposedness if $u$ has compactly supported initial data. Then the original solution is approximated by smooth cut-off using the finite propagation property. 
Thus we obtain another critical element with less energy, a contradiction. 
Hence $P(u)=0$. 
\end{proof}

Next we see stillness of critical elements in terms of the energy propagation. 
For any $R>0$, we define the localized center of energy $X_R(t)\in\R^d$ by 
\EQ{ \label{def XR}
 X_R(u;t) := \int \chi_R(x) xe(u)(t,x) dx,}
where $\chi_R$ is as defined in \eqref{def chiR}, and 
$e(u)$ denote the energy density of $u$, namely
\EQ{ \label{def e}
 e(u)=(|u_t|^2+|\na u|^2+|u|^2)/2-f(u).} 
From the energy identity $\dot e(u)=\na\cdot(u_t\na u)$, we get for any solution $u$
\EQ{
 \frac{d}{dt} X_R(u;t) = -d P(u) + \int[d(1-\chi_R(x))+(r\p_r)\chi_R(x)]u_t\na u.}
If $u$ is a critical element, the first term disappears by the above lemma, so we have
\EQ{ \label{propa est}
 \left|\frac{d}{dt} X_R(u;t)\right| \lec E_{R,0}(u;t).} 
Moreover, since $u$ is in ${\mathcal K}^+$, by Lemma \ref{lem:K bd} there exists $\de_0\in (0,1)$ such that
\EQ{ \label{de inf}
 K_{1,0}(u(t)) \ge \de_0 \|u(t)\|_{H^1}^2}
for all $t\in\R$. 
\begin{lem} \label{lem:non-propa}
Let $u$ be a forward critical element, and let $R_0(\e)>0$, $c(t)\in\R^d$ and $\de_0>0$ be as in \eqref{R_0 ext bd} and \eqref{de inf}. If $0<\e\ll\de_0$ and $R\gg R_0(\e)$ then we have 
\EQ{
 |c(t)-c(0)| \le R-R_0(\e),}
for $0<t<t_0$ till some $t_0\gec \de_0 R/\e$.
\end{lem}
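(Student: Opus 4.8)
The plan is to control the motion of the energy center $c(t)$ via the transport estimate \eqref{propa est}, combined with the exterior energy smallness \eqref{R_0 ext bd}, using a continuity/bootstrap argument. First I would observe that at $t=0$ we may normalize $c(0)=0$ (replacing $u$ by a spatial translate), and then work with the localized center $X_R(u;t)$ defined in \eqref{def XR}. The key comparison is between $X_R$ and $c(t)$: by \eqref{R_0 ext bd}, for $R \gg R_0(\e)$, all but an $\e E(u)$ fraction of the energy sits inside the ball of radius $R_0(\e)$ around $c(t)$, so $X_R(u;t)$ is within $O(\e R\, E(u))$ of $c(t)\cdot(\text{total localized mass})$; more precisely $|X_R(u;t) - E(u)\, c(t)|$ (or the analogous quantity with $e(u)$ replaced by the positive part) is $\lec \e R\, E(u)$ as long as $|c(t)| \le R - R_0(\e)$, because then the ball $|x-c(t)|\le R_0(\e)$ is contained in $|x|\le R$ where $\chi_R = 1$. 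Recall also that $E^Q(u;t)\sim E(u)\sim \|u(t)\|_{H^1}^2$ uniformly by Lemma \ref{E0 equiv K+} and \eqref{de inf}, so all these energies are comparable to a fixed positive constant.

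The core of the argument is a bootstrap. Suppose that on a maximal interval $[0,t_0)$ we have $|c(t)-c(0)| = |c(t)| \le R - R_0(\e)$. On this interval the ball $|x-c(t)| \le R_0(\e)$ lies in $\{|x| \le R\}$, hence $E_{R,0}(u;t) \le E_{R_0,c(t)}(u;t) \le \e E(u)$ by \eqref{R_0 ext bd}. Plugging this into \eqref{propa est} gives $|\frac{d}{dt}X_R(u;t)| \lec \e E(u)$, so $|X_R(u;t) - X_R(u;0)| \lec \e E(u)\, t$. On the other hand, from the two-sided comparison $|X_R(u;t) - E(u) c(t)| \lec \e R\, E(u)$ at both $t$ and $0$, we get
\EQ{
 E(u)|c(t) - c(0)| \lec \e R\, E(u) + \e E(u)\, t.
}
Dividing by $E(u)\gec 1$ yields $|c(t)-c(0)| \lec \e R + \e t$. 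Therefore, as long as $t \lec R$, say $t \le t_0$ with $t_0 \gec \de_0 R/\e$ chosen with the implicit constants so that the right side is $\le R - R_0(\e)$ (using $\e \ll \de_0$ and $R \gg R_0(\e)$ to absorb $R_0(\e)$ and the $\e R$ term into a small fraction of $R$), the bootstrap assumption is strictly improved, so the maximal such $t_0$ satisfies the claimed lower bound $t_0 \gec \de_0 R/\e$ and on $[0,t_0]$ we have $|c(t)-c(0)| \le R - R_0(\e)$.

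The main obstacle I expect is making the comparison between $X_R$ and $c(t)$ rigorous, i.e. quantifying "the energy is concentrated near $c(t)$" into a genuine bound $|X_R(u;t) - (\text{mass}) c(t)| \lec \e R E(u)$. The subtlety is that the energy density $e(u)$ from \eqref{def e} is not pointwise nonnegative (because of the $-f(u)$ term), so one cannot directly say the bulk of a positive measure sits near $c(t)$; one should instead work with the manifestly nonnegative density appearing in \eqref{def ER}, namely $|u_t|^2 + |\na u|^2 + |u|^2 + |f(u)| + |u f'(u)|$, split $e(u) = $ (nonnegative part) $-$ (controlled remainder), and note that the remainder plus the difference between the two densities is itself controlled by $E_{R_0,c(t)}$ on the far region and is comparable to $E(u)$ on the near region — and crucially that $\int (\text{nonnegative density}) \gec E(u) \gec 1$ so that dividing is legitimate. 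Also one must be slightly careful that $X_R$ as literally defined uses $xe(u)$ rather than $(x-c(t))e(u)$, so the relevant identity is really about $\frac{d}{dt}X_R$ which, after using $P(u)=0$, only sees the cut-off region $|x|\gtrsim R$, exactly where $E_{R,0}(u;t)$ is small; this is already packaged in \eqref{propa est}, so the real work is purely the spatial concentration estimate. Once that is in hand, the time integration and bootstrap are routine.
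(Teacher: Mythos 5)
Your proposal is correct, and it takes a mildly different route from the paper's. Both proofs share the same skeleton: normalize $c(0)=0$, work on the interval up to the exit time $t_0$, use \eqref{R_0 ext bd} to control $E_{R,0}(u;t)$ and hence $|\frac{d}{dt}X_R|$ via \eqref{propa est}, and convert the slow drift of $X_R$ into a bound on $|c(t)|$. The difference is in how the bound on $|c(t)|$ is extracted from $X_R$: the paper expands the scalar $c(t)\cdot X_R(u;t)=|c(t)|^2\int\chi_R e(u)\,dx+\int\chi_R\,c\cdot(x-c)e(u)\,dx$ and lower-bounds the coefficient $\int\chi_R e(u)\,dx$ by $\de_0 E(u)$ via the $K_{1,0}$-coercivity \eqref{de inf}, which is where $\de_0$ enters and yields $\de_0|c(t)|\lec R_0+R\e+\e t$. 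You instead set up the direct two-sided comparison $|X_R(u;t)-E(u)\,c(t)|\lec (R_0+R\e)E(u)$ on the bootstrap interval and combine it with the FTC on $X_R$, getting $|c(t)|\lec R_0+R\e+\e t$ with no $\de_0$ loss; this actually gives the marginally stronger $t_0\gec R/\e$, which of course still implies the stated $t_0\gec\de_0 R/\e$. You also correctly flag the key technical point, that $e(u)$ is not sign-definite: what the comparison needs is $\int|e(u)|\,dx\lec E(u)$, which is available from Lemma \ref{E0 equiv K+} (since $u\in\K^+$ gives $F(u)\lec E^Q(u)\sim E(u)$), plus smallness of the exterior pieces via $E_{R_0,c(t)}$, exactly as you say. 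One small remark: your formulation ``on a maximal interval $[0,t_0)$'' tacitly assumes $c(t)$ is regular enough that the maximal interval is nontrivial; the paper handles this by invoking finite speed of propagation to ensure $t_0=\inf\{t>0:|c(t)|\ge R-R_0\}>0$, and you should either cite the same fact or observe that $c(t)$ may be chosen continuous up to an $O(R_0)$ ambiguity. This does not affect the rest of your argument.
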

\begin{proof}
By translation in $x$, we may assume that $c(0)=0$. Let $t_0$ be the final time for the above property
\EQ{
 t_0 = \inf\{t>0 \mid |c(t)|\ge R-R_0\}.}
Then the finite speed of propagation implies that $t_0>0$. For any $0<t<t_0$ we have $|c(t)|\le R-R_0$, hence by \eqref{R_0 ext bd} we have $E_{R,0}\le \e E(u)$, and so by \eqref{propa est} we get
\EQ{
 \left|\frac{d}{dt} X_R(u;t)\right| \lec \e E(u).}
Next we expand it around $c$:
\EQ{ \label{c expand} 
 c(t)\cdot X_R(u;t) = |c(t)|^2 \int \chi_R(x) e(u) dx + \int \chi_R(x) c\cdot(x-c) e(u) dx,}
where the first term on the right is bounded from below by
\EQ{
 \pt E(u) - \int (1-\chi_R(x))e(u) dx 
 \pr\ge \|\dot u(t)\|_{L^2_x}^2/2 + K_{1,0}(u(t)) - CE_{R,0}(t) 
 \ge \de_0 E(u) - C\e E(u)\gec \de_0 E(u),}
since $\e\ll\de_0$. 
The second term of \eqref{c expand} is dominated by splitting the integral into $|x-c|\le R_0$ and $|x-c|\ge R_0$. 
In the interior it is bounded by using the energy bound, and in the exterior it is bounded by using \eqref{R_0 ext bd}.
Thus we obtain 
\EQ{
 \left|\int \chi_R(x) c\cdot(x-c) e(u) dx\right| \lec (R_0 + R \e)E(u)|c|.}
In the same way we have 
\EQ{
 |X_R(u;0)| \lec (R_0 + R \e)E(u),}
since $c(0)=0$. Thus we get 
\EQ{
 \de_0 E(u) |c(t)| \lec (R_0  + R \e + \e t)E(u),}
and sending $t\to t_0$, we get 
\EQ{
 \de_0 R \lec \e t_0.}
\end{proof}

\subsection{Dispersion and contradiction}
Finally we use the localized virial identity to see dispersion of the critical element, which will contradict the above  non-propagation property. For any $R>0$, we define the localized virial $V_R(u;t)\in\R$ by 
\EQ{ \label{def VR}
 V_R(u;t) := \LR{\chi_R(x) u_t| (x\cdot\na +\na\cdot x)u},}
where $\chi_R$ is as defined in \eqref{def chiR}. 
Then we have for any solution $u$,
\EQ{
 \frac{d}{dt} V_R(u;t) \pt= -\int \chi_R(x)[2|\na u|^2-d(D-2)f(u)] + \frac{d}{2}|u|^2\De\chi_R(x) dx
 \prq - \int r\p_r\chi_R(x)[|u_t|^2+2|u_r|^2-|\na u|^2-|u|^2+2f(u)]  dx
 \pr\le -K_{d,-2}(u(t)) + CE_{R,0}(u;t).} 
If $u$ is a critical element, then $u\in{\mathcal K}^+$ and hence by Lemma \ref{lem:K bd}, there exists $\de_2\in (0,1)$ such that 
\EQ{
 K_{d,-2}(u(t))\ge \de_2\|\na u(t)\|_{L^2_x}^2} 
for all $t>0$. Thus we obtain, integrating in $t$, 
\EQ{ \label{loc vir}
 V_R(u;t_0) \le V_R(u;0) - \de_2 \int_0^{t_0} \|\na u(t)\|_{L^2_x}^2 dt + C\e E(u) t_0.}

Now by the compactness Lemma \ref{precompact}, we have 
\begin{lem}
Let $u$ be a forward critical element. Then for any $\e>0$ there exists $C>0$ such that 
\EQ{
 \|u(t)\|_{L^2_x}^2 \le C\|\na u(t)\|_{L^2_x}^2 + \e \|\dot u(t)\|_{L^2_x}^2,}
for all $t>0$. 
\end{lem}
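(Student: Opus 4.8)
The plan is to obtain this directly from the precompactness in Lemma~\ref{precompact}, by turning the desired inequality into a boundedness statement for a continuous functional on a compact set. Since $\|u(t)\|_{L^2}$, $\|\na u(t)\|_{L^2}$ and $\|\dot u(t)\|_{L^2}$ are all invariant under the translation $x\mapsto x-c(t)$, it suffices, for each fixed $\e>0$, to prove a bound
\[
 \|\fy\|_{L^2}^2 \le C\,\|\na\fy\|_{L^2}^2 + \e\,\|\psi\|_{L^2}^2
\]
with a uniform constant $C=C(\e)$ for every $(\fy,\psi)$ in the compact closure $\mathcal{C}\subset H^1\times L^2$ of the trajectory $\{(u,\dot u)(t,\cdot-c(t))\mid t>0\}$; applying this at each $t>0$ and translating back in $x$ then yields the statement.

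The essential observation is that $(0,0)\notin\mathcal{C}$. To see this I would use the free energy $E^Q(\fy,\psi)=\frac12\big(\|\psi\|_{L^2}^2+\|\fy\|_{H^1}^2\big)$, which is continuous on $H^1\times L^2$. Along the trajectory we have $u(t)\in\K^+$, hence $K_{1,0}(u(t))\ge0$, so Lemma~\ref{E0 equiv K+} together with conservation of $E$ gives $E^Q(u;t)\ge E(u)=E^\star$; since $E^\star>0$ by small-energy scattering, this lower bound passes to the closure, i.e.\ $E^Q\ge E^\star$ on all of $\mathcal{C}$, whereas $E^Q(0,0)=0$.

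It remains to put these together. Consider the continuous functional $\Psi(\fy,\psi):=\|\na\fy\|_{L^2}^2+\e\|\psi\|_{L^2}^2$ on $H^1\times L^2$: it vanishes precisely when $\na\fy=0$ and $\psi=0$, i.e.\ only at $(0,0)$, because a function in $H^1(\R^d)$ with vanishing gradient is constant, hence $0$. By the previous paragraph $\Psi>0$ on the compact set $\mathcal{C}$, so $\de:=\min_{\mathcal{C}}\Psi>0$; on the other hand $\|\fy\|_{L^2}^2$ is bounded on $\mathcal{C}$, say by $M<\infty$. Then for every $(\fy,\psi)\in\mathcal{C}$ we get $\|\fy\|_{L^2}^2\le M\le (M/\de)\,\Psi(\fy,\psi)$, which is the claim with $C:=M/\de$. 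The only nontrivial input is the exclusion of the origin from $\mathcal{C}$, which is exactly where the hypotheses that $u$ is a genuine critical element ($E^\star>0$) and the bound $E^Q\gec E$ on $\K^+$ are used; everything else is soft compactness and continuity.
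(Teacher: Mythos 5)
Your reduction is sound: translation invariance lets you work on the compact closure $\mathcal{C}\subset H^1\times L^2$ of the translated trajectory from Lemma \ref{precompact}, and your argument that $(0,0)\notin\mathcal{C}$ (via $E^Q\ge E=E^\star>0$ on $\K^+$, Lemma \ref{E0 equiv K+}, passing to the closure by continuity of $E^Q$) is exactly the key input, as in the paper. The gap is in the final line. From $\Psi(\fy,\psi):=\|\na\fy\|_{L^2}^2+\e\|\psi\|_{L^2}^2\ge\de:=\min_{\mathcal C}\Psi>0$ and $\|\fy\|_{L^2}^2\le M$ you obtain
\EQ{
 \|\fy\|_{L^2}^2 \le \frac{M}{\de}\Psi(\fy,\psi) = \frac{M}{\de}\|\na\fy\|_{L^2}^2 + \frac{M}{\de}\,\e\,\|\psi\|_{L^2}^2,
}
whose coefficient on $\|\psi\|_{L^2}^2$ is $(M/\de)\e$, not $\e$. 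Unless $M/\de\le1$ this is strictly weaker than the claim, and there is no a priori reason for $M/\de\le1$; in fact $\de=\de(\e)$ could tend to $\min_{\mathcal C}\|\na\fy\|_{L^2}^2$, which may be $0$ (nothing rules out a point $(0,\psi)$ with $\|\psi\|_{L^2}^2=2E^\star$ in $\mathcal C$), so replaying the argument with a smaller auxiliary $\e'$ does not repair it either. The precise coefficient $\e$ is what is used immediately afterward (with $\e=1/4$) to absorb $\|u\|_{L^2}^2$ into $\|\dot u\|_{L^2}^2$ in the virial estimate, so the weakening is not harmless.

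The correct way to close, using exactly your two ingredients, is the paper's contradiction: suppose $\|u(t_n)\|_{L^2_x}^2>n\|\na u(t_n)\|_{L^2_x}^2+\e\|\dot u(t_n)\|_{L^2_x}^2$. Boundedness of $\|u(t_n)\|_{L^2_x}$ forces $\|\na u(t_n)\|_{L^2_x}\to0$; by precompactness (and your observation that the only $H^1$ limit with vanishing gradient is $0$) one has $u(t_n,\cdot-c(t_n))\to0$ in $H^1_x$, so $\|u(t_n)\|_{L^2_x}\to0$, and then the failing inequality forces $\|\dot u(t_n)\|_{L^2_x}\to0$; hence $E^Q(u;t_n)\to0$, contradicting $E^Q(u;t)\gec E(u)=E^\star>0$. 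In other words, keep your whole setup but replace the ``$\min\Psi$'' step by this targeted contradiction.
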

\begin{proof}
Otherwise there exists a sequence $t_n>0$ such that 
\EQ{
 \|u(t_n)\|_{L^2_x}^2 > n\|\na u(t_n)\|_{L^2_x}^2 + \e \|\dot u(t_n)\|_{L^2_x}^2.}
Since $u$ is $L^2_x$ bounded, it follows $\|\na u(t_n)\|_{L^2_x}\to 0$. 
Then Lemma \ref{precompact} implies that, after passing to a subsequence, $u(t_n)\to 0$ strongly in $H^1_x$, then the above inequality implies that $\dot u(t_n)\to 0$ too. 
Hence $E^Q(u;t_n)\to 0$, which contradicts the energy equivalence, Lemma \ref{E0 equiv K+}. 
\end{proof}
Multiplying the equation with $u$, and then applying the above lemma with $\e=1/4$, we obtain  
\EQ{
 \p_t \LR{u|\dot u} \pt= \int_{\R^d} |\dot u|^2 - |\na u|^2 - |u|^2 + Df(u) dx.
  \pr\ge \int_{\R^d} |\dot u|^2/2 + |u|^2 - C|\na u|^2 dx,}
with some $C>0$. Hence 
\EQ{
 \int_0^{t_0} \|\dot u\|_{L^2_x}^2 + \|u\|_{L^2_x}^2 dt \lec E(u) + \int_0^{t_0}\|\na u\|_{L^2_x}^2 dt,} 
and so 
\EQ{ \label{int E bd H1}
 t_0 E(u) \le \int_0^{t_0} E^Q(u;t)dt \lec E(u) + \int_0^{t_0}\|\na u\|_{L^2_x}^2 dt.} 

Now we choose positive $\e\ll\de_2\de_0$ and $R\gg R_0(\e)$.
Then by Lemma \ref{lem:non-propa} there exists $t_0\sim \de_0 R/\e$ such that $E_{R,0}(u;t)\le\e E(u)$ for $0<t<t_0$.
Then from \eqref{loc vir} and \eqref{int E bd H1}, we have 
\EQ{
 -V_R(u;t_0)+V_R(u;0) \gec [\de_2 t_0 - C\e t_0 - C]E(u) \gec \de_2 t_0 E(u) \sim \frac{\de_2\de_0R}{\e}E(u),}
while the left hand side is dominated by $RE(u)$, which is a contradiction when $\e/\de_2\de_0$ is sufficiently small. \qedsymbol 

\appendix

\section{The range of scaling exponents}
In Section \ref{sect:var}, we have shown that $m_{\al,\be}$ in \eqref{min J} is positive and achieved (after modification of the mass in the critical/exponential cases) if $(\al,\be)$ satisfies \eqref{range albe}. 
Here we see that it is also necessary, modulo the obvious symmetry $(\al,\be)\to(-\al,-\be)$. 
For simplicity, we consider only the pure power nonlinearity. 

\begin{prop} \label{need range}
Assume that neither $(\al,\be)\in\R^2$ nor $(-\al,-\be)$ satisfies \eqref{range albe}. Then there exists $q\in(2_\star,2^\star)$ such that we have $m_{\al,\be}=-\I$ for $f(\fy)=|\fy|^{q}$. 
\end{prop}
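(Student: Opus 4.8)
The plan is to show that if $(\al,\be)$ fails \eqref{range albe} and the opposite sign fails as well, then the scaling $\fy\mapsto\fy^\la_{\al,\be}$ drives $J$ to $-\I$ along a suitable fixed profile $\fy$ on the constraint $K_{\al,\be}=0$. For the pure power $f(\fy)=|\fy|^q$ with $2_\star<q<2^\star$, everything is explicit: writing $a=2\al+(d-2)\be$, $b=2\al+d\be$, and $c=\al q+d\be$ (the exponent coming from $\Dab$ applied to $F(\fy)=\int|\fy|^q/q$ via \eqref{Dab & D}, since $D|\fy|^q=q|\fy|^q$), one has for $\fy^\la=\fy^\la_{\al,\be}$
\EQ{
 J(\fy^\la) = \frac{e^{a\la}}{2}\|\na\fy\|_{L^2}^2 + \frac{e^{b\la}}{2}\|\fy\|_{L^2}^2 - \frac{e^{c\la}}{q}\|\fy\|_{L^q}^q,
 \pq K_{\al,\be}(\fy^\la) = \frac a2 e^{a\la}\|\na\fy\|_{L^2}^2 + \frac b2 e^{b\la}\|\fy\|_{L^2}^2 - \frac cq e^{c\la}\|\fy\|_{L^q}^q.
}
So the problem reduces to the elementary one: given that the triple of signs/relations among $a,b,c$ obtained from $(\al,\be)$ and $(-\al,-\be)$ both violate \eqref{range albe}, produce a profile $\fy$ and a direction $\la\to\pm\I$ along which $K_{\al,\be}(\fy^\la)$ passes through $0$ while $J(\fy^\la)\to-\I$ — then by continuity pick the crossing value $\la_0$ and rescale $\psi:=\fy^{\la_0}$, which satisfies $K_{\al,\be}(\psi)=0$ and $J(\psi)$ as negative as we please (by first choosing $\fy$ with huge $\|\fy\|_{L^q}$, or by noting the crossing point moves). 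Since $\m_{\al,\be}$ is an infimum over the constraint, this gives $\m_{\al,\be}=-\I$.

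First I would enumerate the ways \eqref{range albe} can fail, up to the symmetry $(\al,\be)\leftrightarrow(-\al,-\be)$ that leaves $K_{\al,\be}$ unchanged in sign structure (it only flips $\la\to-\la$). The condition \eqref{range albe} is $\al\ge0$, $b\ge0$, $a\ge0$, $(\al,\be)\ne(0,0)$; failing it for BOTH signs means we cannot have all of $\al,a,b$ of one consistent sign. The cases are essentially: (i) $a<0<b$ or $b<0<a$ (the two quadratic exponents have strictly opposite signs), and (ii) $a,b$ both $>0$ (or both $<0$) but $\al<0$ (resp. $\al>0$), i.e. the gradient/mass terms are fine but the sign of $\al$ is wrong relative to the nonlinear exponent. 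In case (i), note $a<b$ always forces $\be>0$ and $b<a$ forces $\be<0$; and $c=\al q+d\be$ lies strictly between $\frac q2 a$ and... — more precisely one checks $c-a=\al(q-2)+2\be$ and $c-b=\al(q-2)$, and using $2_\star<q<2^\star$ one finds $c$ can be positioned so that, as $\la\to+\I$ (if $c>\max(a,b)$) or $\la\to-\I$ (if $c<\min(a,b)$), the nonlinear term dominates, sending $J\to-\I$; and whichever of $e^{a\la},e^{b\la}$ survives in $K$ has a coefficient of the right sign to guarantee a sign change of $K_{\al,\be}(\fy^\la)$. In case (ii) the analysis is similar: $\al$ having the wrong sign flips the sign of the nonlinear coefficient $-\frac cq$ in $K$ relative to the quadratic ones at one end, forcing a crossing, while simultaneously $c$ sits outside $[\min(a,b),\max(a,b)]$ so $J$ is unbounded below along that scaling.

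The main obstacle is the bookkeeping: verifying in each of the finitely many sub-cases that the specific exponent $c=\al q+d\be$ can be made to lie strictly above $\max(a,b)$ or strictly below $\min(a,b)$ by choosing $q\in(2_\star,2^\star)$ appropriately (this is why the statement only claims existence of some such $q$), AND that at the corresponding end $K_{\al,\be}(\fy^\la)$ genuinely changes sign rather than, say, staying nonnegative. The cleanest way is to compute $c-a=\al(q-2)+2\be$ and $c-b=\al(q-2)$ and exploit that $q-2$ ranges over the open interval $(4/d,\,4/(d-2))$ (or $(4/d,\I)$ for $d\le 2$): if $\al\ne0$ this lets $c-b$ have either large or small magnitude with the sign of $\al$, and if $\al=0$ then $\be\ne0$, $a=b=d\be$, $c=d\be=a=b$, which is the degenerate boundary case that actually satisfies \eqref{range albe} up to sign — so $\al\ne0$ throughout the cases we must treat, making the interval of available $q$-values genuinely useful. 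Once $c$ is outside the interval spanned by $a,b$, picking $\fy$ a fixed bump function and letting $\la$ go to the appropriate infinity gives $J(\fy^\la)\to-\I$ and an intermediate-value crossing of $K_{\al,\be}$, completing the argument.
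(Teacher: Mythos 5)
The overall plan—reduce to explicit computations with the three exponents $a=2\al+(d-2)\be$, $b=2\al+d\be$, $c=\al q+d\be$ and exhibit a family of test functions on the constraint $K_{\al,\be}=0$ along which $J\to-\I$—is the right idea, and the enumeration of cases is roughly parallel to the paper's. However the core mechanism of your argument has a genuine gap, and one of the stated exponent claims is false.

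The gap: you propose to find a crossing $\la_0$ of $K_{\al,\be}(\fy^\la_{\al,\be})=0$ along the $(\al,\be)$-scaling and then conclude that $J(\fy^{\la_0})$ is ``as negative as we please.'' But by definition $\frac{d}{d\la}J(\fy^\la_{\al,\be})=K_{\al,\be}(\fy^\la_{\al,\be})$, so any zero of $K$ along this scaling is exactly a critical point of $\la\mapsto J(\fy^\la_{\al,\be})$; the scaling direction is tangent to a level set of $J$ on the constraint. Picking the crossing value therefore yields a fixed, finite value of $J$, not something going to $-\I$. Moreover, replacing $\fy$ by $\fy^{\mu}_{\al,\be}$ only translates the crossing $\la_0$ and leaves $J(\fy^{\la_0})$ unchanged, so this one-parameter deformation is useless. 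Your fallback (``choosing $\fy$ with huge $\|\fy\|_{L^q}$'') appeals to a different deformation—pure amplitude multiplication $\fy\mapsto\nu\fy$—but you never verify that the critical value actually tends to $-\I$ as $\nu$ varies; this is exactly where the real work is.

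There is also a factual error in your case (ii) ($a,b>0$ but $\al<0$, i.e.\ the paper's cases $\al<0$, $\U\mu\ge 0$). You claim $c$ sits outside $[\min(a,b),\max(a,b)]$, but if $\al<0$, $\be>0$, $a=\U\mu>0$ and $q\in(2_\star,2^\star)$, then $c-b=\al(q-2)<0$ and $c-a=\al(q-2)+2\be>4\al/(d-2)+2\be=2a/(d-2)>0$, so in fact $a<c<b$ always; $c$ lies strictly \emph{inside} $(a,b)$. (It lies outside only in the degenerate sub-case $\bar\mu=0$.) So there is no direction of the $(\al,\be)$-scaling along which the nonlinear term dominates, and the one-parameter strategy collapses here too.

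What the paper does instead is introduce a genuine second degree of freedom. In the case $\al<0$, $\U\mu>0$, it uses the two-parameter family $\nu\fy(x/\la)$, tunes $\nu\to 1{+}$ so that the ``constrained'' dilation parameter $\la(\nu)\to\I$, and then reads off $J\to-\I$ from the expansion $\bar\mu J=K+\be\|\na\cdot\|_{L^2}^2+\al p F$ with $\al<0$. In the case $\U\mu<0<\bar\mu$ it uses $e^{i\x x}\nu\fy$, exploiting a phase to inflate the kinetic term independently of the $L^2$ and $L^q$ norms, together with a carefully chosen $q$ making $\al(q-2)+\bar\mu<0<\al(q-2)+2\be$. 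Neither of these families is the $(\al,\be)$-scaling, and without one of them your argument cannot escape the stationarity obstruction noted above.
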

\begin{proof}
By symmetry with respect to $(\al,\be)\to(-\al,-\be)$, we may assume that $\be>0$ and $\bar\mu=2\al+d\be>0$. 

First we consider the case $\al<0$ and $\U\mu>0$, which implies that $d\ge 2$. 
Let $(2_\star,2^\star)\ni q=2+p$, then we have 
\EQ{
 \al p + \bar\mu \ge d\U\mu/(d-2)>0.}
Decompose $K(\fy)$ by 
\EQ{
 K=K_1+K_2,\pq K_1(\fy)=\U\mu\frac{\|\na\fy\|_{L^2}^2}{2},
 \pq K_2(\fy)=\bar\mu\frac{\|\fy\|_{L^2}^2}{2}-(\al p+\bar\mu)F(\fy).}
Suppose that $0\not=\fy\in H^1(\R^d)$ satisfies $K_2(\fy)=0$. 
If there is no such $\fy$, then $K$ is positive definite and the minimization set in \eqref{min J} becomes empty. 
Let $1<\nu\to 1+0$, then we have 
\EQ{ \label{K asy nufy}
 0>K_2(\nu\fy)\to K_2(\fy)=0,\pq K_1(\nu\fy)\to K_1(\fy)>0.}
Now let $\la(\nu)>0$ solve 
\EQ{
 0=K(\nu\fy(x/\la)) = \la^{d-2}K_1(\nu\fy)+\la^dK_2(\nu\fy),}
in other words $\la(\nu)=[-K_2(\nu\fy)/K_1(\nu\fy)]^{1/2}$. 
Then $\la(\nu)\to\I$ as $\nu\to 1+0$ due to \eqref{K asy nufy}. 
Since 
\EQ{
 \bar\mu J(\psi)=K(\psi)+\be\|\na\psi\|_{L^2}^2+\al pF(\psi),}
we obtain 
\EQ{
 \bar\mu J(\nu\fy(x/\la)) = \be\nu^2\la^{d-2}\|\na\fy\|_{L^2}^2 + \al p\la^d F(\nu\fy) \to -\I,}
which implies that $m=-\I$. 

Next, if $\bar\mu=0>\al$, which implies $d\ge 2$, then for any nonzero $\fy\in H^1(\R^d)$ satisfying $K(\fy)=0$ we have 
\EQ{
 K(\fy(x/\la))=\la^d K(\fy)=0,}
and similarly as above, $J(\fy(x/\la))=O(-\la^d)\to\I$ as $\la\to\I$. 

Finally consider the case $\U\mu<0<\bar\mu$. 
Then $\al p+2\be=0$ has a solution $p\in(4/d,2^\star-2)$. 
Since $\al p+\bar\mu=\al p+2\be+\U\mu$, there exists $p\in(4/d,2^\star-2)$ such that 
\EQ{
 \al p + \bar\mu < 0 < \al p + 2 \be.}
Then $K^N(\fy)=-(\al p+\bar\mu)F(\fy)$ is positive and so for any $\fy\in H^1(\R^d)$, $K(\nu\fy)\ge 0$ if $\nu\gg 1$. 
Since the kinetic term in $K$ is negative, there exists $\xi(\nu)\in\R^d$ such that $K(e^{i\x x}\nu\fy)=0$. Since 
\EQ{
 -\U\mu J(\psi) = -K(\psi) + 2\be\frac{\|\fy\|_{L^2}^2}{2} - (\al p+2\be) F(\psi),}
we obtain 
\EQ{
 -\U\mu J(e^{i\x x}\nu\fy)=2\be\nu^2\frac{\|\fy\|_{L^2}^2}{2} - (\al p+2\be)F(\nu\psi) \to -\I,}
which implies that $m=-\I$. 
\end{proof}
The above proof shows that if $\al<0$ and $\U\mu\ge 0$ then $m=-\I$ for all $q\in(2,2^\star]$. The choice of $q$ was needed only in the other region. 

\section{Table of Notation}
The notation below applies to any $s\in\R$, $\nu\ge 0$, $(\al,\be)\in\R^2$, $j,k\in\Z$, $Z\in\R^3$, $I\subset\R$, $\fy,\psi\in H^1(\R^d)$, $u\in C_t(H^1_x(\R^d))$, any suffix $\diamondsuit,\heartsuit$, any sequence $\fy_n\in H^1(\R^d)$, and any functional $G$ on $H^1(\R^d)$. 
{
\small

\begin{longtable}{l|l}
   \tabhead{Dimension and scaling}
 $d\in\N$, $2_\star,2^\star>0$: space dimension and critical powers & \eqref{def 2star} \\
 $\al,\be\in\R$, $\bar\mu\ge\U\mu\ge0$: scaling exponents and their functions &  \eqref{def mu} \\
 $\fy_{\al,\be}^\la$, $\Dab G$: rescaled family and scaling derivative & \eqref{def scaling},\eqref{def Dab} \\ 
 \pq (subscript of the form $\spadesuit_{\al,\be}$ is often omitted as $\spadesuit$) \\  
   \tabhead{\it 1st order representation}
 $\V u\ \leftrightarrow\ u$: linked with each other by & \eqref{def vec} \\
   \tabhead{Nonlinearity} 
 $F(\fy),f(s)\ge 0$: nonlinear energy and its density & \eqref{def J} \\
 $f_S(s),f_L(s)\ge 0$: small and large parts of $f$ & \eqref{def fSL} \\
 $p_1,p_2>0$, $\ka_0\ge 0$: leading powers of $f_S$ and $f_L$ & \eqref{f_S}, \eqref{f sub}, \eqref{f exp} \\
   \tabhead{Functionals} 
 $J(\fy),J^{(\nu)}(\fy)\in\R$: static energy, with mass change & \eqref{def J}, \eqref{def Jc} \\
 $K_{\al,\be}(\fy),K^{(c)}_{\al,\be}(\fy)\in\R$, $H_{\al,\be}(\fy)\ge 0$: derivatives of $J$ & \eqref{def K},\eqref{def H} \\
 $K_{\al,\be}^Q(\fy), K_{\al,\be}^N(\fy)\in\R$:  quadratic and nonlinear parts of $K$ & \eqref{def KQN} \\
 $E(u;t),E(\fy,\psi),e(u)\in\R$:  total energy and its density & \eqref{def E},\eqref{def e} \\
 $E^Q(u;t),E^Q(\fy,\psi)\ge 0$:  linear energy & \eqref{def E^Q} \\
 $\ti E(\fy),\ \ti K_{\al,\be}(\fy)\in\R$:  vector versions of $E$ and $K$ & \eqref{def tiE} \\
 $P(u;t), E_{R,c}(u;t)\in\R$:  momentum and exterior energy & \eqref{def P},\eqref{def ER} \\
 $X_R(u;t), V_R(u;t) \in\R$:  localized energy center and virial & \eqref{def XR},\eqref{def VR} \\
   \tabhead{Variational splittings} 
 $m_{\al,\be},E^\star \ge 0$:  static and scattering energy thresholds & \eqref{min J},\eqref{def E*} \\
 $\K_{\al,\be}^\pm$, $\ti\K_{\al,\be}^+$:  splitting below the threshold & \eqref{def Kpm},\eqref{def tiK} \\
 $\CTM^\nu(G),\CTM^\star(G)\in[0,\I]$:  Trudinger-Moser ratio & \eqref{def CTMA},\eqref{def CTMS} \\
 $\mm(G)\in[0,\I]$:  Trudinger-Moser threshold on $\dot H^1$ & \eqref{def mm} \\
 $conc.G((\fy_n)_n)\in\R$:  concentration at $x=0$ & \eqref{def conc} \\
   \tabhead{Function spaces and exponents} 
 $[Z]_\nu(I),[Z]_0(I),[Z]_\nu^\bul(I)$:  Lebesgue-Besov spaces on $I\times\R^d$ & \eqref{def Yq} \\ 
 $Z^s, Z^{*(s)}\in\R^3$:  regularity change and dual of exponents & \eqref{exp change} \\
 $\reg^\th(Z),\str^\th(Z),\dec^\th(Z)\in\R$:  regularity and decay indexes & \eqref{def regstr} \\
 $H,W,K,M^\sharp,V\in\R^3$:  exponents for $d\in\N$ & \eqref{def WK},\eqref{def MsharpX},\eqref{def V}\\
 $X,S,L\in\R^3$:  exponents for $d\le 4$ & \eqref{def MsharpX},\eqref{def SL}\\
 $\ti M,M,\hat M,\ti N,N,Q,P,Y,R,G\in\R^3$:  exponents for $d\ge 5$ & \eqref{def M2X},\eqref{def hatM},\eqref{def G} \\
 $H_\e,W_\e,M^\sharp_\e\in\R^3$:  exponents for $d\ge 5$ & \eqref{def norm_eps} \\ 
 $H^1_\nu$, $\MP$:  $H^1(\R^2)$ and a set of Fourier multipliers on $\R^d$ & \eqref{def H1mu}, \eqref{def MP} \\
 $\X$, $\Y$, $\Y_0$, $\ti\Y$, $\Y_0^*$, $\Y^*$: Strichartz-type spaces & \eqref{def SL}, \eqref{def YY} \\
 $ST(I),ST^*(I),ST_\I^\diamondsuit(I)$:  Strichartz-type spaces on $I\times\R^d$& \eqref{def ST},\eqref{def STI} \\ 
   \tabhead{Profile decomposition}
 $(t_\heartsuit^\diamondsuit,x_\heartsuit^\diamondsuit,h_\heartsuit^\diamondsuit)\in\R^{1+d}\times[0,1]$: time-space-scale shift parameter & Section \ref{ss:lin prof}\\
 $\ga_\heartsuit^\diamondsuit=-t_\heartsuit^\diamondsuit/h_\heartsuit^\diamondsuit\in\R$: rescaled time shift \\
 $h_\I^\diamondsuit\in\{0,1\}$,$\ga_\I^\diamondsuit\in[-\I,\I]$: limit of $h_n^\diamondsuit$ and $\ga_n^\diamondsuit$ \\
 $T_\heartsuit^\diamondsuit \fy, \LR{\na}_\heartsuit^\diamondsuit \fy$: operators dependent on $(x_\heartsuit^\diamondsuit,h_\heartsuit^\diamondsuit)$ & \eqref{def Tnj} \\
 $(t^{jl}_n,x^{jl}_n,h^{jl}_n)$, $S^{jl}_n u$: relative shift and transform & \eqref{def njl},\eqref{def Snjl} \\
 $\tau_\heartsuit^\diamondsuit\in\R$, $\tau_\I^\diamondsuit\in[-\I,\I]$: scaled time shift and its limit & \eqref{def Tnj}\\
 $\V U_\I^\diamondsuit$, $\hat U_\I^\diamondsuit$, : nonlinear profiles (scaled limit) & \eqref{NP eq},\eqref{def hat U} \\
 $\V u_{(n)}^j$, $\V u_{(n)}^{<k}$: nonlinear profiles (in original scales) & \eqref{def u(n)},\eqref{def NPD} \\
 \hline
\end{longtable}
}

\section*{Acknowledgments}
The authors thank Guixiang Xu for pointing out several mistakes in the first manuscript. 
S.~Ibrahim is partially supported by NSERC\# 371637-2009 grant and a start up fund from University of Victoria.


\begin{thebibliography}{10}
\bibitem{AT}
S.~Adachi and K.~Tanaka, {\em Trudinger type inequalities in $\mathbb R^N$ and their best exponents.} Proc.~Amer.~Math.~Soc. {\bf 128} (2000), no.~7, 2051--2057.
\bibitem{A}
Adimurthi, {\em Existence of positive solutions of the semilinear Dirichlet problem with critical growth for the n-Laplacian.} Ann. Sc. Norm. Sup. Pisa {\bf 17} (1990), 393--413. 

\bibitem{AS}
Adimurthi and M.~Struwe {\em Global compactness properties of semilinear elliptic equations with critical exponential growth.} J. Functional Analysis {\bf 175} (2000), 125--167. 

\bibitem{AN}
T.~Akahori and H.~Nawa, {\em Blowup and Scattering problems for the Nonlinear Schr\"odinger equations.} Preprint, arXiv:1006.1485.

\bibitem{Aubin}
T. Aubin, {\em \'Equations diff\'erentielles non lin\'eaires et probl\`eme de Yamabe concernant la courbure scalaire.} J. Math. Pures Appl., {\bf IX,} S\'erie 55 (1976), 269--296.


\bibitem{BG}
H.~Bahouri and P.~G\'erard, {\em High frequency approximation of solutions to critical nonlinear wave equations.}, American Journal of Mathematics, {\bf 121} (1999) 131--175. 

\bibitem{BS}
H.~Bahouri and J.~Shatah, {\em Decay estimates for the critical semilinear wave equation.}  Ann. Inst. H. Poincar\'e Anal. Non Lin\'eaire {\bf 15} (1998), 783--789. 

\bibitem{Bourgain1}{J. Bourgain}, {\em Global wellposedness of defocusing critical nonlinear Schr\"odinger equation in the radial case}, J. Amer. Math. Soc. {\bf 12} (1999) no.~1, 145--171. 

\bibitem{Brenner}
P.~Brenner, {\it On space-time means and everywhere defined scattering operators for nonlinear Klein-Gordon equations.} Math. Z. {\bf 186}  (1984),  no. 3, 383--391. 

\bibitem{Caz}
T.~Cazenave, {\em Equations de Schr\"odinger non lin\'eaires en dimension deux.} Proc.~Roy.~Soc.~Edinburgh Sect. A {\bf 84} (1979), no.~3-4, 327--346.

\bibitem{CC}
L.~Carleson, A.~Chang, {\em On the existence of an extremal function for an inequality of J. Moser.} Bull. Sci. Math. {\bf 110} (1986), 113--127. 

\bibitem{IMMC}{J. Colliander, S. Ibrahim, M. Majdoub and N. Masmoudi}, 
{\em Energy critical NLS in two space dimension}, Submitted (2008).

\bibitem{CKM}
R.~C\^ote, C.~E.~Kenig and F.~Merle, {\em Scattering Below Critical Energy for the Radial 4D Yang-Mills Equation and for the 2D Corotational Wave Map System.} Commun. Math. Phys. {\bf 284} (2008), 203--225. 

\bibitem{FMR}
D.~G.~de Figueiredo, O.~H.~Miyagaki and B.~Ruf, {\em Elliptic equations in $\mathbb{R}^2$ with nonlinearities in the critical growth range.} Calc. Var. {\bf 3} (1995), 139--153. 

\bibitem{DHR}
T.~Duyckaerts, J.~Holmer and S.~Roudenko {\em Scattering for the non-radial 3D cubic nonlinear Schr\"odinger equation.} Math. Res. Lett. {\bf 15} (2008), no.~6, 1233--1250. 

\bibitem{Flu}
M.~Flucher, {\em Extremal functions for the Trudinger-Moser inequality in 2 dimensions.} Comm. Math. Helv. {\bf 67} (1992) 471--479. 

\bibitem{GV1}{J. Ginibre and G. Velo},
{\em The global Cauchy problem for nonlinear Klein-Gordon equation},
Math. Z., {\bf 189}, 487--505, 1985.

\bibitem{GV2}{J. Ginibre and G. Velo},
{\em Time decay of finite energy solutions of the non linear Klein-Gordon and Schr\"odinger equations}, 
Ann. Inst. Henri. Poincar\'e, {\bf 43} (1985) 399--442. 

\bibitem{I}
S.~Ibrahim, {\em Geometric-optics for nonlinear concentrating waves in focusing and non-focusing two geometries}. Commun. Contemp. Math. {\bf 6}  (2004),  no.~1, 1--23.

\bibitem{IMM1}
S.~Ibrahim, M.~Majdoub and N.~Masmoudi,
{\em Global solutions for a semilinear, two-dimensional Klein-Gordon equation with exponential-type nonlinearity.}
Comm.~Pure Appl.~Math. {\bf 59} (2006), no.~11, 1639--1658.

\bibitem{IMM2}
S.~Ibrahim, M.~Majdoub and N.~Masmoudi,
{\em Double logarithmic inequality with a sharp constant.} Proc. Amer. Math. Soc. {\bf 135} (2007), no.~1, 87--97.

\bibitem{IMM3}
S.~Ibrahim, M.~Majdoub and N.~Masmoudi,
{\em Instability of $H^1$-super-critical nonlinear waves.}, C. R. Math. Acad. Sci. Paris  345  (2007),  no.~3, 133--138.

\bibitem{2DcNLKG}
S.~Ibrahim, M.~Majdoub, N.~Masmoudi and K.~Nakanishi, 
{\em Scattering for the two-dimensional energy-critical wave equation.} Duke Math. J. {\bf 150} (2009), no.~2, 287--329. 

\bibitem{TM}
S.~Ibrahim, N.~Masmoudi and K.~Nakanishi, in preparation. 

\bibitem{JL} 
L.~Jeanjean and S.~Le Coz {\em Instability for standing waves of nonlinear Klein-Gordon equations via mountain-pass arguments}, Trans. Amer. Math. Soc. {\bf 361} (2009),  no.~10, 5401--5416. 

\bibitem{KM1}
C.~Kenig and F.~Merle, {\em Global well-posedness, scattering and blow-up for the energy-critical focusing non-linear wave equation.} 
Acta Math. {\bf 201} (2008),  no.~2, 147--212.

\bibitem{KM2}
C.~Kenig and F.~Merle, {\em Global well-posedness, scattering and blow-up for the energy-critical, focusing, non-linear Schr\"odinger equation in the radial case}, Invent. Math. {\bf 166} (2006), no.~3, 645--675

\bibitem{KVZ1}
R.~Killip, M.~Visan and X.~Zhang, {\em The mass-critical nonlinear Schr\"dinger equation with radial data in dimensions three and higher.} Preprint, arXiv:0708.0849v1. 

\bibitem{KVZ2}
R.~Killip, T.~Tao and M.~Visan, {\em The cubic nonlinear Schr\"odinger equation in two dimensions with radial data.} J. Euro. Math. Soc. {\bf 11} (2009), no.~6, 1203--1258.

\bibitem{KS}
J.~Krieger and W.~Schlag, {\em Concentration compactness for critical wave maps.} Preprint, arXiv:0908.2474v1. 

\bibitem{MNO}
S.~Machihara, K.~Nakanishi and T.~Ozawa, {\it Nonrelativistic limit in the energy space for nonlinear Klein-Gordon equations.} Math. Ann. {\bf 322}  (2002),  no.~3, 603--621. 

\bibitem{Moser}
J.~Moser, {\em A sharp form of an inequality by N. Trudinger.} Indiana Univ. Math. J. {\bf 20} (1979) 1077--1092. 

\bibitem{NO1}
M. ~Nakamura and T. ~Ozawa, {\it Global solutions in the critical Sobolev space for the wave equations with
nonlinearity of exponential growth}, Math. Z. {\bf 231} (1999), 479--487. 

\bibitem{NO2}
M. ~Nakamura and T. ~Ozawa, {\it The Cauchy problem for nonlinear wave equations in the Sobolev space of critical order}, Discrete and Continuous Dynamical Systems, {\bf 5} (1999), no.~1, 215--231. 

\bibitem{2Dsubcrit}
K.~Nakanishi, {\em Energy scattering for nonlinear Klein-Gordon and Schr\"odinger equations in spatial dimensions $1$ and $2$.} J.~Funct.~Anal. {\bf 169} (1999), no.~1, 201--225.

\bibitem{3Dcrit}
K.~Nakanishi, {\em Scattering theory for the nonlinear Klein-Gordon equation with Sobolev critical power.} Internat.~Math.~Res.~Notices {\bf 1999}, no.~1, 31--60.

\bibitem{Nak-rem}
K.~Nakanishi, {\em Remarks on the energy scattering for nonlinear Klein-Gordon and Schrdinger equations.}   Tohoku Math. J. (2)  {\bf 53} (2001),  no. 2, 285--303.

\bibitem{OT}
{M.~Ohta and G. Todorova}, {\em Strong instability of standing waves for the nonlinear Klein-Gordon equation and the Klein-Gordon-Zakharov system}, SIAM Journal on Mathematical Analysis {\bf 38} no. 6 (2007), 1912--1931.

\bibitem{PS} 
L. E. Payne and D. H. Sattinger, {\em Saddle points and instability of
nonlinear hyperholic equations}, Israel J. Math. {\bf 22} (1975), 272--303.

\bibitem{Ruf}
{B.~Ruf}, {\em A sharp Trudinger-Moser type inequality for
unbounded domains in $\mathbb R\sp 2$.} J.~Funct.~Anal. {\bf 219}
(2005), no.~2, 340--367.

\bibitem{Shatah} 
{J.~Shatah}, {\em Unstable ground state of nonlinear Klein-Gordon equations}, Trans. Amer. Math. Soc. {\bf 290} (1985), 701--710. 

\bibitem{ST}
J.~Sterbenz and D.~Tataru, {\em Regularity of Wave-Maps in dimension 2+1.} Preprint, arXiv:0907.3148v1. 

\bibitem{Strauss}{W.~Strauss}, {\em A. Nonlinear wave equations}. Conf. Board of the Math. Sciences, {\bf 73}, Amer. Math. Soc., 1989.

\bibitem{Stru1}
{M.~Struwe}, {\em Semilinear wave equations}, Bull. ~Amer.
~Math. ~Soc., {\bf N.S}, {\bf 26} (1992), 53--85. 

\bibitem{Tao}
T.~Tao. {\em Global regularity of wave maps, III--VII.} Preprint. arXiv:0805.4666, 0806.3592, 0808.0368, 0906.2833, 0908.0776. 

\bibitem{TaoVisan}
T.~Tao and M.~Visan, 
{\it Stability of energy-critical nonlinear Schr\"odinger equations in high dimensions.} 
Electron. J. Differential Equations (2005), No.~118, 28 pp.

\bibitem{TaoVisanZhang}
T.~Tao, M.~Visan and X.~Zhang, {\it The nonlinear Schr\"odinger equation with combined power-type nonlinearities.} 
Comm. Partial Differential Equations {\bf 32}  (2007),  no.~7-9, 1281--1343. 

\bibitem{Z}
Zhang, Jian {\em Sharp conditions of global existence for nonlinear
Schr\"odinger and Klein-Gordon equations}.  Nonlinear Anal. {\bf 48} (2002),  no.~2, 191--207, 
 
\end{thebibliography}
\end{document}